\makeatletter \thm@headfont{\bfseries\scshape} \makeatother
\newtheorem{thm}{Theorem}       [section]
\newtheorem{prop}{Proposition}  [section]
\newtheorem{lem}{Lemma}         [section]
\newtheorem{cor}{Corollary}     [section]
\newtheorem{conj}{Conjecture} 
\title{Conjectures on the distribution of roots modulo a prime of a polynomial}
\author{Yoshiyuki Kitaoka}
\date{}
\begin{document}
\maketitle
\tableofcontents
\section{Introduction and Conjectures}
In this monograph, a polynomial means always  a monic one over the ring $\mathbb Z$ 
of integers 
and the letter $p$ denotes a prime number, unless specified.
Let 
\begin{equation}\label{eq1}
f(x)=x^n+a_{n-1}x^{n-1}+\dots+a_0
\end{equation} 
be  a polynomial of degree $n$ with complex roots $\alpha_1,\dots,\alpha_n$.
We fix the numbering of roots once and for all, and
define  vector spaces $LR,LR_0$ over the rational number field $\mathbb{Q}$ by
\begin{equation}\label{eq2}
\begin{array}{ll}
LR&\hspace{-3mm}:=\{(l_1,\dots,l_{n+1})\in {\mathbb Q}^{n+1}\mid \sum_{i=1}^n l_i\alpha_i = l_{n+1}\},
\\[2mm]
LR_0&\hspace{-3mm}:=\{(l_1,\dots,l_n)\in\mathbb{Q}^n\mid \sum_{i=1}^nl_i\alpha_i\in\mathbb{Q}\}
\end{array}
\end{equation}
which depend on the numbering of roots $\alpha_i$.
The mapping $\iota$ defined by 
$$
\iota:(l_1,\dots,l_n)\mapsto (l_1,\dots,l_n,\sum_{i=1}^n l_i\alpha_i) 
$$ 
is an isomorphism  from $LR_0$ on $LR$ over 
$\mathbb{Q}$ and more strongly from $LR_0\cap\mathbb{Z}^{n}$ on $LR\cap\mathbb{Z}^{n+1}$
over $\mathbb{Z}$,
because numbers $\alpha_i$ are algebraic integers.
Since there is  a trivial linear relation $\sum \alpha_i=-a_{n-1}$,
the non-zero vector $(1,\dots,1,-a_{n-1})$ is always in $LR$, 
hence we see clearly 
$$
1\le t:=\dim_{\mathbb Q} LR=\dim_{\mathbb Q} LR_0\le n.
$$
The  condition $t=n$ holds if and only if $f(x)$ is a product of linear forms in $\mathbb{Q}[x]$.
The dimension $\dim_{\mathbb{Q}}\langle\alpha_1,\dots,\alpha_n,1\rangle_{\mathbb{Q}}$ is $n+1-t$, 
hence $\dim_{\mathbb{Q}}\langle\alpha_1,\dots,\alpha_n\rangle_{\mathbb{Q}}=n+1-t$ or $n-t$.
The equality $\dim_{\mathbb{Q}}\langle\alpha_1,\dots,\alpha_n,1\rangle_{\mathbb{Q}}=\dim_{\mathbb{Q}}\langle\alpha_1,\dots,\alpha_n\rangle_{\mathbb{Q}}$ is equivalent to 
$-a_{n-1}=\frac{tr(\sum_i\alpha_i)}{n}
\ne0$ if the polynomial $f(x)$ is irreducible.

We note that the vector $(0,\dots,0,1,0,\dots,0)\in\mathbb{Z}^n$ is in $LR_0$ if and only 
if   $f(x)$ has a rational root,
and  $(0,\dots,0,1,0,\dots,0,-1,0,\dots,0)  \in LR_0$ holds 
 if and only if $\alpha_i-\alpha_j\in\mathbb{Q}$ holds  for $i\not=j$, that is  $f(x)$ is of the form 
$f(x)=g(x)g(x+r)F(x)$ for some
polynomials $g(x),F(x)\in\mathbb{Q}[x]$ and an integer $r$.

We say that a polynomial $f(x)$ has a non-trivial linear relation among roots if $t>1$.
We know that if there is no non-trivial linear relation, then the polynomial $f(x)$ is irreducible,
and that if $f(x)$ is irreducible and $\deg f(x)=n$ is prime, then there is no non-trivial linear relation
among roots (cf. {\S}3).

We take and fix a $\mathbb{Z}$-basis of $LR_0\cap\mathbb{Z}^n$
$$
{\bm{m}}_j:=(m_{j,1},\dots,m_{j,n})\,\,(j=1,\dots,t),
$$ 
and put
$$
\hat{\bm{m}}_j:=\iota({\bm{m}}_j)=({\bm{m}}_j,m_j),\,m_j=\sum_{i=1}^nm_{j,i}\alpha_i\,\,(j=1,\dots,t).
$$ 
It is clear that  the vectors $\hat{\bm{m}}_1,\dots,\hat{\bm{m}}_t$  are  a basis of $LR\cap\mathbb{Z}^{n+1}$
over $\mathbb{Z}$.
If a vector $(l_1,\dots,l_n)\in\mathbb{Z}^n$ satisfies  $\sum_{i=1}^nl_i\alpha_i\in\mathbb{Z}$, 
then  we see easily $\sum_{i=1}^n(l_i/l)\alpha_i\in\mathbb{Z}$  for $l=\gcd(l_1,\dots,l_n)$,
hence the matrix whose rows are $\bm{m}_1,\dots,\bm{m}_t$ is primitive,
that is 
elementary divisors of the $(t,n)$-matrix with rows $\bm{m}_i$ are $1$ only, hence
it is complemented to an integral $(n,n)$-matrix with determinant $1$.

We introduce two groups associated with them:
\begin{align*}
\hat {\bm G} 
:&=\{\sigma \in S_n\mid \sigma(LR)\subset LR\}
\\
&=
\{\sigma\in S_n\mid
\langle\sigma(\hat{\bm{m}}_1),\dots,\sigma(\hat{\bm{m}}_t)\rangle_{\mathbb Z}
=\langle\hat{\bm{m}}_1,\dots,\hat{\bm{m}}_t\rangle_{\mathbb Z}\},
\\
{\bm G}
:&=\{\sigma \in S_n\mid \sigma(LR_0)\subset LR_0\}
\\
&=
\{\sigma\in S_n \mid \langle\sigma(\bm{m}_1),\dots,\sigma(\bm{m}_t)\rangle_{\mathbb{Z}}=
\langle\bm{m}_1,\dots,\bm{m}_t\rangle_{\mathbb{Z}}\}.
\end{align*}
Here, $\langle \bm{z}_1,\dots,\bm{z}_k\rangle_{\mathbb Z}:=\{a_1\bm{z}_1+\dots+a_k\bm{z}_k\mid 
a_1,\dots,a_k\in\mathbb{Z}\}$ 
and we let, for $\bm{x}=(x_1,\dots,x_n)\in\mathbb{R}^n,\,x\in\mathbb R$ and a permutation $\sigma \in S_n$
\begin{align}\label{eq3}
\sigma(\bm{x}):=(x_{\sigma^{-1}(1)},\dots,x_{\sigma^{-1}(n)}),\hspace{1mm}
\sigma(({\bm{x}},x))&:=(\sigma(\bm{x}),x).
\end{align}
Groups $\hat {\bm G},{\bm G}$ are independent of the choice of bases $\hat{\bm{m}}_i$ and
$\hat {\bm G}$ is   a subgroup of ${\bm G}$.
If $f(x)$ is irreducible, then they are identical.
However, they are not necessarily equal for a reducible polynomial,
because $\sum_il_i\alpha_i, \sum_i l_i \alpha_{\sigma(i)}$ may be different even if they are rational.

If $f(x)$ has no non-trivial linear relation among roots, 
then  $(1,\dots,1,-a_{n-1})$  is a basis of $LR\cap\mathbb{Z}^{n+1}$, hence $\hat {\bm G} = {\bm G} =S_n$.

We fix the basis $\hat{\bm{m}}_j$ $(j=1,\dots,t)$ together with roots $\alpha_i$ once and for all.

Next, we put
\begin{equation*}
Spl_X(f):=\left\{p\le X\mid f(x) \text{ is fully splitting modulo
}p\right\}
\end{equation*}
for a positive number $X$ and $Spl(f):=Spl_\infty(f)$.
We know that $Spl(f)$ is an infinite set and that the density theorem due to Chebotarev holds,
that is
$$
\lim_{X\to\infty}\frac{\#Spl_X(f)}{\# \{p\le X\} }=\frac{1}{[\mathbb{Q}(f):\mathbb{Q}]},
$$
where $\mathbb{Q}(f):=
\mathbb{Q}(\alpha_1,\dots,\alpha_n)$ is a finite Galois extension field of $\mathbb{Q}$ generated by all roots of $f(x)$ (\cite{Se}).
We note that except finitely many primes, three conditions  (i) $p\in Spl(f)$, (ii) a prime ideal $(p)$ of
$\mathbb{Q}$
splits completely in $\mathbb{Q}(f)$ and (iii) $f(x)$ is a product of $n$ linear forms over $\mathbb{Q}_p$ are equivalent.

We require the following conditions on the local roots
$r_1,\dots,r_n\,( \in \mathbb{Z})$
 of $f(x)\equiv 0\bmod p$ for   a prime $p\in Spl(f)$\,:
\begin{align}
\label{eq4}
&f(x) \equiv \prod_{i=1}^n(x-r_i) \bmod p,
\\
\label{eq5}
&0\le r_1\le r_2\le\dots\le r_n<p.
\end{align}
The condition \eqref{eq4} is nothing but  the definition of $p\in Spl(f)$,
and \eqref{eq5}  determines local roots $r_i$ uniquely.
Through this monograph, local roots $r_i$ are supposed to satisfy two conditions  \eqref{eq4} and \eqref{eq5}.
The global ordering \eqref{eq5} to local roots is a key.

We know that 
for a sufficiently large prime $p\in Spl(f)$, there is at least one permutation $\sigma \in S_n$
dependent on $p$
such that 
\begin{equation}\label{eq6}
\sum_{i=1}^n m_{j,i}r_{\sigma(i)}
=\sum_{i=1}^n m_{j,\sigma^{-1}(i)}r_{i}
\equiv m_j \bmod p\quad(1\le {}^\forall j \le t),
\end{equation}
hence there are  some permutation $\sigma\in S_n$ and integers $k_j\,(j=1,\dots,t)$ dependent on the prime $p$ satisfying equations
\begin{equation}\label{eq7}
\sum_{i=1}^n m_{j,i}r_{\sigma(i)}=\sum_{i=1}^n m_{j,\sigma^{-1}(i)}r_{i} =m_j +k_jp\quad(1\le {}^\forall j \le t).
\end{equation}
The  possibility of integers $k_j$ is finite
by the inequality $|k_j|p\le\sum|m_{j,i}|p+|m_j|$.    

%
%

Just to make sure, we give a proof of \eqref{eq6} here:
If a prime $p\in Spl(f)$ is sufficiently large, then it is completely decomposable at $\mathbb{Q}(f)$,
 hence there is a prime ideal $\frak{p}$ of degree $1$ over $p$.
Therefore there are rational integers $r_i'$ such that $\alpha_i\equiv r_i'\bmod \frak{p}$, hence $\sum_i m_{j,i}r_i'\equiv m_j\bmod p$ for $j=1,\dots,t$.
On the other hand, we see that $f(x)=\prod(x-\alpha_i)\equiv \prod (x-r_i')
\bmod \frak{p}$
and $f(x)\equiv\prod(x-r_i)\bmod p$ imply that two sets  $\{r_i'\bmod p\}$ and 
$\{r_i\bmod p\}$ are equal.
Hence there is a permutation $\sigma$ such that $r_i'\equiv r_{\sigma(i)}\bmod p$ $(i=1,\dots,n)$.

We note that  if  there are infinitely many primes
$p\in Spl(f)$ satisfying
$$
\sum_i r_i = -a_{n-1} + kp
$$
for an integer $k$ with $k\le0$ or $k\ge n$,
then the polynomial $f(x)$ is a product of linear forms,
because the above equation implies 
$0\le r_1\le\dots\le r_n\le\sum r_i\le -a_{n-1}$ if $k\le0$,
or  $0>r_{n}-p\ge\dots\ge r_1-p\ge\sum(r_i-p)\ge -a_{n-1}$
if $k\ge n$,
hence $f(x)\equiv\prod(x-s_i)\bmod p$ with some integers
$s_i$ satisfying $0\le| s_i|\le|a_{n-1}|$
for infinitely many primes $p$, which means that $f(x)$ is 
equal to $ \prod(x-s_i)$.

And let us give one more remark: Suppose  that a polynomial $f(x)$ is of the form
$\prod_{i=1}^{k_1}(x-a_i)^{c_i} \prod_{j=1}^{k_2}(x-b_j)^{d_j}\cdot g(x)$, 
where all $a_i,b_j$ are integers and $a_1<\dots<a_{k_1}<0
\le b_1<\dots<b_{k_2}$ with all $c_i,d_j$ being positive, and that the polynomial $g(x)$ 
of degree $l$  has no integer root.
Then we see that for a prime $p\in Spl(f)=Spl(g)$, local roots of $f(x)$ 
 are,  with finitely many exceptions $p$ 
\begin{align*}
0&\le\overbrace{ b_1=\dots =b_1}^{d_1}<\dots <\overbrace{b_{k_2}=\dots=b_{k_2}}^{d_{k_2}}
\\
&<r_1\le\dots\le r_l
\\
&<\underbrace{p+a_1=\dots=p+a_1}_{c_1}<\dots<\underbrace{p+a_{k_1}=\dots=p+a_{k_1}}_{c_{k_1}}<p,
\end{align*}
where $r_1,\dots,r_l$ are local roots of $g(x)$.

To state  conjectures,
we introduce following notations corresponding  to conditions \eqref{eq6}, \eqref{eq7}:
\begin{gather*}
Spl_X(f,\sigma):= 
\{ p\in Spl_X(f)\mid\sum_{i=1}^nm_{j,i}r_{\sigma(i)}\equiv m_j\bmod p\,(1\le{}^\forall j\le t)\},
\\
Spl_X(f,\sigma,\{k_j\}):
= \{ p\in Spl_X(f,\sigma)\mid\sum_{i=1}^n m_{j,i}r_{\sigma(i)}=m_j+k_jp\,(1\le{}^\forall j\le t)\},
\end{gather*}
where $Spl_X(f,\sigma)$ does not depend on choice of the basis ${\bm{m}}_i$, but 
$Spl_X(f,\sigma,\{k_j\})$ does.
Furthermore, for integers $L\,(>1),R_1,\dots,R_n$ we put
\begin{gather*}
Spl_X(f,\sigma,\{k_j\},L,\{R_i\})\!
:= \{ p\in Spl_X(f,\sigma,\{k_j\})\mid r_i\equiv R_i\bmod L \,(1\le{}^\forall i\le n)   \}
\end{gather*}
and we define densities by
\begin{gather*}
Pr(f,\sigma):=\lim_{X\to\infty}\frac{\#Spl_X(f,\sigma)}{\#Spl_X(f)},
\\
Pr(f,\sigma,\{k_j\}):=\lim_{X\to\infty}\frac{\#Spl_X(f,\sigma,\{k_j\})}{\#Spl_X(f,\sigma)},
\\
Pr(f,\sigma,\{k_j\},L,\{R_i\}):=\lim_{X\to\infty}\frac{\#Spl_X(f,\sigma,\{k_j\},L,\{R_i\})}{\#Spl_X(f,\sigma,\{k_j\})},
\end{gather*}
where for the last two, the denominators $\#Spl_X(f,\sigma),\#Spl_X(f,\sigma,\{k_j\})$
of the right-hand sides are supposed to tend to the infinity,
and {\it we suppose that all the limits  exist}.
The existence of the limits is supported by computer experiment (cf. \S7).

If there is no non-trivial linear relation among roots, 
then $Spl_X(f,\sigma)=Spl_X(f)$, i.e. $Pr(f,\sigma)=1$
and $Spl_X(f,\sigma,\{k_j\})$ is independent of the permutation  $\sigma$.

Next, we introduce geometric objects
\begin{align}\nonumber
\Delta:&=\{\bm{x}=(x_1,\dots,x_n)\in\mathbb{R}^n\mid0\le x_1\le\dots\le x_n\le1\},
\\
\hat{\mathfrak{D}}_n:&=
\{
(x_1\dots,x_n)\in \Delta\mid
\sum_{i=1}^n x_i\in\mathbb{Z}
\},
\\\nonumber
{\mathfrak{D}}(f,\sigma):&=
\{
(x_1\dots,x_n)\in \Delta\mid
\sum_{i=1}^n m_{j,i}\,x_{\sigma(i)}\in\mathbb{Z}\:\:(1\le{}^\forall j\le t)
\},
 \\\nonumber
 \mathfrak{D}(f,\sigma,\{k_j\}):&=\{(x_1,\dots,x_n)\in\Delta\mid\sum_{i=1}^n m_{j,i}x_{\sigma(i)}=k_j\:(1\le{}^\forall j\le t)\},
\end{align}
where ${\mathfrak{D}}(f,\sigma)$ does not depend on the choice of basis ${\bm{m}}_i$,
but $\mathfrak{D}(f,\sigma,\{k_j\})$ does.
We note  that  $\mathfrak{D}(f,\sigma,\{k_j\})\subset\mathfrak{D}(f,\sigma)\subset 
\hat{\mathfrak{D}}_n\subset\Delta$ and
\begin{equation}\label{eq9}
{\mathfrak{D}}(f,\sigma)=\cup_{\{k_j\}\in\mathbb{Z}^t}\mathfrak{D}(f,\sigma,\{k_j\})
\quad\text{(a finite sum)},
\end{equation}
hence ${\mathfrak{D}}(f,\sigma)$ is disconnected in general.

In case that the set  $Spl(f,\sigma,\{k_j\})$ is an infinite set,  any accumulation point 
$\bm{x}=(x_1,\dots,x_n)$ of $(r_1/p,\dots,r_n/p)\in[0,1)^n$ with $p\in Spl(f,\sigma,\{k_j\})$  
 satisfies $\bm{x}\in\Delta,\sum_i m_{j,i}x_{\sigma(i)}=k_j$
  $(1\le{}^\forall j\le t)$, i.e. $\bm{x}\in\frak{D}(f,\sigma,\{k_j\})$ by 
  $\sum_i m_{j,i}r_{\sigma(i)}=m_j+k_jp$.
The condition $x_n\le1$ can not be replaced by $x_n<1$.
The dimension of $\hat{\mathfrak{D}}_n$ is $ n-1$ and that of ${\mathfrak{D}}(f,\sigma)$ and
${\mathfrak{D}}(f,\sigma,\{k_j\})$
 is less than or equal to $n-t$.
In the following, the volume $vol$ of sets ${\mathfrak{D}}(f,\sigma)$ and  
${\mathfrak{D}}(f,\sigma,\{k_j\})$
  means that as an $(n-t)$-dimensional set,
that is for a set $S$ in $\bm{v}+\mathbb{R}[\bm{v}_1,\dots,\bm{v}_{n-t}](\subset\mathbb{R}^n)$ with orthonormal vectors
$\bm{v}_1,\dots,\bm{v}_{n-t}\in \mathbb{R}^n$,
we identify $\bm{v}+\sum y_i\bm{v}_i\in S$ with a point $(y_1,\dots,y_{n-t})\in\mathbb{R}^{n-t}$.
So,  $vol({\mathfrak{D}}(f,\sigma))=0$ holds if   $\dim{\mathfrak{D}}(f,\sigma)<n-t$.

In case that $f(x)$ is a product of linear forms, which is equivalent to $t=n$,
$\dim{\mathfrak{D}}(f,\sigma)=0$ holds, 
hence  the discussion about ${\mathfrak{D}}(f,\sigma)$  is  meaningless.

If there is no non-trivial linear relation among roots, 
then 
${\mathfrak{D}}(f,\sigma)$ is equal to $\hat{\mathfrak{D}}_n$ for every permutation $\sigma$,
and
$\hat{\mathfrak{D}}_n$ is equal to
$$\{(0,\dots,0),(1,\dots,1)\}\cup\cup_{k=1}^{n-1}
\{\bm{x} \in \Delta\mid \sum x_i=k\}.
$$

The first conjecture is

\noindent
\begin{conj}\label{conj1}
 For a permutation $\sigma$ with $Pr(f,\sigma)>0 $,
the ratio 
\begin{equation}\label{eq10}
c=\frac{{vol}({\mathfrak{D}}(f,\sigma))}{Pr(f,\sigma)}
\end{equation}
is independent of $\sigma$.
 If $\bm{G}=\hat{\bm{G}}$ holds, then two conditions $Pr(f,\sigma)>0$ and ${vol}({\mathfrak{D}}(f,\sigma))>0 $ are equivalent.
 \end{conj}
 
 If (i)
 $f(x)$ is not a product of linear forms,  (ii) $\alpha_i-\alpha_j \not\in\mathbb{Q}$ if $i\ne j$,
and (iii)
 $vol(\mathfrak{D}(f,\sigma))=cPr(f,\sigma)$ holds for every $\sigma\in S_n$,  then  we have
 (cf. Prop. \ref{prop5.3})
$$
c=\sqrt{\det((\bm{m}_i,\bm{m}_j))}/\#\hat{\bm{G}}.
$$ 

There is an example of a reducible polynomial $f(x)$ such that $Pr(f,\sigma)=0$ but ${vol}({\mathfrak{D}}(f,\sigma))>0$ for some permutation $\sigma$ (cf. {\bf{\ref{counterEx}}}),
and so Expectation $1''$ in \cite{K8}, \cite{K11} should be revised as above.
The author does not know the necessary and/or sufficient condition to that
 two conditions   $Pr(f,\sigma)>0$,  
${vol}({\mathfrak{D}}(f,\sigma))>0$ are  equivalent (cf. Proposition \ref{prop4.5}).  
As above, they seem to be equivalent for an irreducible polynomial $f(x)$ of $\deg f>1$.
Incidentally, it is likely that  
$f(x)$ is a product of linear forms if and only if 
$vol(\mathfrak{D}(f,\sigma))=0$ holds for every permutation $\sigma$.

If $f(x)$ has no non-trivial linear relation among roots, then  $Pr(f,\sigma)$ is equal
to $1$ as above, and by the property ${\mathfrak{D}}(f,\sigma)=\hat{\mathfrak{D}}_n$ for all
$\sigma\in S_n$ Conjecture \ref{conj1} is true (cf. Proposition \ref{prop4.6}).
If $f(x)$ is a product of linear forms, then \eqref{eq10} is true for $c=0$, but it is futile.

The second  is

\noindent
\begin{conj}\label{conj2}
Suppose that $Pr(f,\sigma)>0$ and ${vol}(  \mathfrak{D}(f,\sigma))>0$ for  a permutation $\sigma$; then
for a set $D$ satisfying that $D=\overline{D^\circ}$ and $ D\cap \mathfrak{D}(f,\sigma)$ is 
the closure of the intersection of $D^\circ$ and the interior of $\mathfrak{D}(f,\sigma)$, we have
\begin{align}\nonumber
{Pr}_D(f,\sigma):&=\lim_{X\to\infty}\displaystyle
\frac{\#\{p\in {Spl}_X(f,\sigma) \mid (r_1/p,\dots,r_n/p)\in D\}}
{\#{Spl}_X(f,\sigma)}
\\\label{eq11}
&=\displaystyle\frac{ {vol}(  {D}\cap{\mathfrak{D}}(f,\sigma))}
{{vol}({\mathfrak{D}}(f,\sigma))}.
\end{align}
\end{conj}

About the condition  on $D$ above, except the measurability:
In case of $m_1=\dots=m_t=0$, every point $(r_1/p,\dots,r_n/p)$ is on $\mathfrak{D}(f,\sigma)$, 
and so we may assume that  the set $D$ is  in $\mathfrak{D}(f,\sigma)$ from the beginning.
This is usual and there is no problem to generalize the Weyl criterion formally, although the proof is
another big problem.
Suppose not, i.e. $m_i\ne0$ for some $i$; then any point 
$(r_1/p,\dots,r_n/p)$ is not on $\mathfrak{D}(f,\sigma)$,
hence for example, for $D=[0,1)^n\setminus \mathfrak{D}(f,\sigma)$ the left-hand side of \eqref{eq11} 
is $1$, but the right-hand side is $0$.
So we must put some restriction on $D$ (cf. \S6.2).
Although \eqref{eq11} matches numerical data for the set $D=\{\bm{x}\mid x_i\le a\}$,  
it seems to be too restrictive.
The closed set whose boundaries are given by linear forms except 
$\sum_i m_{j,\sigma^{-1}(i)}x_i\in\mathbb{Z}$ may also be a candidate.

It is rather complicated to see whether $vol(D\cap\mathfrak{D}(f,\sigma))>0$ or not. 
Let us give an example when $f(x)$ has no non-trivial linear relation among roots:
Let integers $k,J$ satisfy $1\le k\le n-1,1\le J\le n-k$ and 
$$
D =\left\{
\bm{x}\in \mathbb{R}^n\left|
\begin{array}{cl}
|x_j|<\delta&( ^\forall j< J),
\\
|x_j-\frac{k}{n-J+1}|<\delta&(^\forall j\ge J)
\end{array}
\right.
\right\}
\quad(^\forall\delta>0).
$$
Then defining $\bm{x}$  by
$$
x_j=
\left\{
\begin{array}{ll}
\frac{1}{M^{n-j}} &\text{ if } j<J,
\\
\frac{k}{n-J+1}-\frac{1}{N^j}  &\text{ if } J\le j < n,
\\
\frac{k}{n-J+1}-\sum_{l =1}^{J-1}\frac{1}{M^{n-l}}+\sum_{l=J}^{n-1}\frac{1}{N^{l}}
&\text{ if }j=n,
\end{array}
\right.
$$
we see that $\sum x_j = k$ and $\bm{x}\in D$ is an inner point of $\Delta$ if $N,\frac{M}{N}$ are sufficiently large, hence  $vol(D\cap\mathfrak{D}(f,\sigma,k))>0$.

The equation \eqref{eq11}  implies
\begin{align}\label{eq12}
&Pr(f,\sigma,\{k_j\})=\frac{vol(\mathfrak{D}(f,\sigma,\{k_j\}))}{vol(\mathfrak{D}(f,\sigma))},
\end{align}
applying to
$$
D=\{(x_1,\dots,x_n)\mid|\sum_{i} m_{j,i}x_{\sigma(i)}-k_j|\le1/3\,(1\le{}^\forall j\le t)\}.
$$
\noindent
The following may be better, although they are equivalent.

\noindent
\begin{conj}\label{conj3}
Suppose that $Pr(f,\sigma,\{k_j\})>0$ and ${vol}(  \mathfrak{D}(f,\sigma,\{k_j\}))>0$ for  a permutation $\sigma$; then
for a set $D$ in
$$
\{(x_1,\dots,x_n)\in[0,1)^n\mid|\sum_{i} m_{j,i}x_{\sigma(i)}-k_j|\le1/3\,(1\le{}^\forall j\le t)\}
$$
satisfying the same condition in Conjecture \ref{conj2}
we have
\begin{align}\nonumber
{Pr}_D(f,\sigma,\{k_j\}):&=\lim_{X\to\infty}\displaystyle
\frac{\#\{p\in {Spl}_X(f,\sigma,\{k_j\}) \mid (r_1/p,\dots,r_n/p)\in D\}}
{\#{Spl}_X(f,\sigma,\{k_j\})}
\\\label{eq13}
&=\displaystyle\frac{ {vol}(  {D}\cap{\mathfrak{D}}(f,\sigma,\{k_j\}))}
{{vol}({\mathfrak{D}}(f,\sigma,\{k_j\}))}.
\end{align}
\end{conj}
Conjecture \ref{conj2} implies Conjecture \ref{conj3} as follows :
\begin{align*}
&{Pr}_D(f,\sigma,\{k_j\})
\\
=&\lim_{X\to\infty}\displaystyle
\frac{\#\{p\in {Spl}_X(f,\sigma,\{k_j\}) \mid (r_1/p,\dots,r_n/p)\in D\}}
{\#{Spl}_X(f,\sigma,\{k_j\})}
\\
=&\lim_{X\to\infty}\frac{\#\{p\in {Spl}_X(f,\sigma) \mid (r_1/p,\dots,r_n/p)\in D\}}
{\#{Spl}_X(f,\sigma)}\times
\frac{\#{Spl}_X(f,\sigma)}{\#{Spl}_X(f,\sigma,\{k_j\})}
\\
=&
\displaystyle\frac{ {vol}(  {D}\cap{\mathfrak{D}}(f,\sigma))}
{{vol}({\mathfrak{D}}(f,\sigma))}\times\frac{1}{Pr(f,\sigma,\{k_j\})}\hspace{21mm}(\text{by }\eqref{eq11})
\\
=&
\displaystyle\frac{ {vol}(  {D}\cap{\mathfrak{D}}(f,\sigma,\{k_j\}))}
{{vol}({\mathfrak{D}}(f,\sigma))}\times\frac{vol(\mathfrak{D}(f,\sigma))}{vol(\mathfrak{D}(f,\sigma,\{k_j\}))}\hspace{10mm}( \text{by }\eqref{eq12})
\\
=&
\displaystyle\frac{ {vol}(  {D}\cap{\mathfrak{D}}(f,\sigma,\{k_j\}))}
{vol(\mathfrak{D}(f,\sigma,\{k_j\}))},
\end{align*}
and vice versa.
Conjecture \ref{conj2} is a kind of the equi-distribution (cf. \S\ref{sec5}). 
In fact, in case that there is no non-trivial linear relation among roots,
 \eqref{eq11} implies, for $1\le {}^\forall i \le n$, $0\le {}^\forall a <1$
\begin{align}\nonumber
&(n-1)!\lim_{X\to\infty}\frac{
\#\{p\in Spl_X(f)\mid r_i/p<a\}}
{\#Spl_X(f)}=
\\ \label{eq14}
&
\sum_{0\le h \le n\atop 1\le l \le n-1}
\sum_{k =i}^n(-1)^{h+k+n}{n\choose k}\sum_{m=1}^{n-1}{k \choose n-h-m+l}
{n-k \choose m-l}M(l-ha)^{n-1},
\end{align}
where the binomial coefficient $A\choose B$ is supposed to vanish  unless $0\le B \le A$,
and $M(x):=\max(x,0)$,
from which follows
the one-dimensional equi-distribution 
\begin{equation}\label{eq15}
\lim_{X\to\infty}\frac{\sum_{p\in Spl_X(f)}\#\{i\mid 0\le r_i/p\le a\}}{n\#Spl_X(f)}=a.
\end{equation}
In case that a polynomial $f(x)$ is quadratic and irreducible in particular,  
it is true by \cite{DFI}, \cite{T}.
In case that a polynomial $f(x)$ has a non-trivial linear relation among roots,
see \S\ref{subsec5.3}.

If $f(x)$ has no non-trivial linear relation among roots, then \eqref{eq12} implies  with Proposition \ref{prop4.6} that for $1\le k \le n-1$,
\begin{equation}\label{eq16}
\left\{
\begin{array}{cl}
Pr(f,\sigma,k)&=\lim_{X\to\infty}\frac{\#\{p\in Spl_X(f)\mid (\sum_{i=1}^n r_i+a_{n-1})/p=k\}}{\#Spl_X(f)}
\\
&=E_n(k),
\\
vol(\hat{\frak{D}}_n) &=\frac{\sqrt{n}}{n!},
\\
vol(\frak{D}(f,\sigma,k))&=  Pr(f,\sigma,k)   vol(\hat{\frak{D}}_n),
\end{array}
\right.
\end{equation}
where  $E_n(k)$ is the volume of 
$$
\{\bm{x}\in[0,1)^{n-1} \mid \lceil x_1+\dots+x_{n-1}\rceil=k\}
$$ 
and  it is equal to $A(n-1,k)/(n-1)!$ where Eulerian numbers $A(n,k)$ $(1\le k\le n)$ are defined 
 recursively by
 $$
 A(1,1)=1,A(n,k)=(n-k+1)A(n-1,k-1)+kA(n-1,k),
 $$
 and $\lceil x \rceil$ is the integer satisfying $x \le\lceil x \rceil<x+1$.

Next, to state the conjecture on $Pr(f,\sigma,\{k_j\},L,\{R_i\})$ for integers $L\,(>1)$, 
$R_i$ $(i=1,\dots,n)$ as above, 
we introduce the  following condition $(C_1)$, which is a necessary condition to 
$\#Spl_\infty(f,\sigma,\{k_j\},L,\{R_i\})=\infty$:
\begin{quote}$(C_1)$ :
$(k_j,L)=(\sum_i m_{j,i} R_{\sigma(i)}-m_j,L)(=d_j$, say) for ${}^\forall j=1,\dots,t$ 
and there is an integer $q$
which satisfies (i) $q$ is independent of $j$, (ii) $q$ is relatively prime to $L$,
(iii) $\sum_i m_{j,i} R_{\sigma(i)}-m_j\equiv k_j\cdot q \bmod L$ $({}^\forall j)$
and (iv) $[[q]]=[[1]]$ on $\mathbb{Q}(f)\cap\mathbb{Q}(\zeta_{L/d_j})$ $({}^\forall j)$.
\end{quote}
Here, $\zeta_l$ is a primitive $l$th root of unity, and 
 for a subfield $F$ in $\mathbb{Q}(\zeta_l)$ and an integer $q$
relatively prime to $l$,
$[[q]]$ denotes the automorphism of $F$ induced by
$\zeta_l\to\zeta_l^q$.
The condition $[[q]]=[[1]]$ on $\mathbb{Q}(f)\cap\mathbb{Q}(\zeta_{L/d_j})$ is trivially
satisfied if $\mathbb{Q}(f)\cap\mathbb{Q}(\zeta_{L/d_j})=\mathbb{Q}$.
We put
\begin{gather*}
\mathfrak{R}(f,\sigma,\{k_j\},L):=\{\{R_i\}\in[0,L-1]^n\mid (C_1)\text{ is satisfied}\}.
\end{gather*}
The last conjecture is

\noindent
\begin{conj}\label{conj4}
Under the assumption   $\#Spl_\infty(f,\sigma,\{k_j\})=\infty$,
\begin{align}\nonumber
&Pr(f,\sigma,\{k_j\},L,\{R_i\})
\\\label{eq17}
=&
\begin{cases}
\displaystyle
\frac{ 1}{\#\mathfrak{R}(f,\sigma,\{k_j\},L)}&\hspace{1mm}\text{ if }
\{R_i\}\in \mathfrak{R}(f,\sigma,\{k_j\},L),
\\
0&\hspace{1mm}\text{ otherwise}.
\end{cases}
\end{align}
\end{conj}
If the polynomial $f(x)$ has no  non-trivial linear relation among roots in particular,
then we see that $t=1$ and   $\sum_i m_{j,i}R_{\sigma(i)}-m_j=\sum_i R_i+a_{n-1}$, hence
\begin{gather*}
\#\mathfrak{R}(f,\sigma,k,L)=L^{n-1}[\mathbb{Q}(\zeta_{L/d}):\mathbb{Q}(f)\cap\mathbb{Q}(\zeta_{L/d})] \quad(d:=(k,L)).
\end{gather*}

Let us see the case of degree 1, i.e. $f(x) =x+a$ :
Then we see that  the permutation $\sigma$ is the identity, $t=1$, $\hat{\bm{m}}_1=(1, -a)$,
and
the local root $r_1$ is equal to $-a+k_1p$ for $k_1=0,1$ according to $a\le0$, $a>0$ if $p>|a|$.
Then  the condition $(C_1)$  is $R_1+a\equiv 0 \bmod L $, and $(R_1+a,L)=1$ according to $a\le0$, 
and  $a>0$, respectively.
Thus we have only to consider the case  $k_1=0,a\le0$ or $ k_1=1,a>0$,
and  neglect a finite number of primes $p$ less than or equal to $|a|$;
then we see that
\begin{gather*}
Spl_X(f,id)= \{ p\le X\},
\\
Spl_X(f,id,k_1)= \{ p\le X\},
\\
Spl_X(f,id,k_1,L,R_1)= \{ p\le X\mid -a+k_1p\equiv R_1\bmod L\},
\end{gather*}
hence $Pr(f,id)=Pr(f,id,k_1)=1$ and 
$$
Pr(f,id,k_1,L,R_1)=
\begin{cases}
1& \text{ if } a\le0,-a\equiv R_1\bmod L,
\\
0& \text{ if } a\le0,-a\not\equiv R_1\bmod L,
\\
\frac{1}{\varphi(L)}&\text{ if } a>0,(R_1+a,L)=1,
\\
0&\text{ if } a>0,(R_1+a,L)\ne1
\end{cases}
$$
by Dirichlet's theorem, and
\begin{gather*}
\#\mathfrak{R}(f,id,k_1,L)=
\begin{cases}
1&\text{ if }a\le 0,
\\
\varphi(L)&\text{ if } a>0.
\end{cases}
\end{gather*}
Thus the conjecture \eqref{eq17} is nothing but Dirichlet's theorem.

In case that $f(x)=x^2+ax+b$ is an irreducible quadratic polynomial, 
we have $t=1$, and   if $\#Spl(f,\sigma,k)=\infty$, then  $k=1$ holds, hence
$$
\#\frak{R}(f,\sigma,1,L)=
\left\{
\begin{array}{ll}
\varphi(L^2)/2 &\text{ if } D\mid L,
\\
\varphi(L^2) &\text{ if } D\nmid L,
\end{array}
\right.
$$
where $D$ is the discriminant of the quadratic field $\mathbb{Q}(f)$.
Suppose that $L=2$ moreover; 
then  $\#\frak{R}(f,\sigma,1,2)=2$, and  the equation $r_1+r_2=-a+p$ for a large prime $p$
implies $R_1+R_2\equiv-a+1\bmod 2$, i.e. either $\{R_1,R_2\}=\{0,1\}$ in the case of 
$a\equiv0\bmod 2$
 or $R_1=R_2\,(=0$ or $1)$ in the case of $a\equiv 1\bmod 2$.
So, the conjecture says that  the density of primes $p\in Spl(f)$ with $r_1$ being odd is equal to that with $r_1$ being even.

In  case of $m_1=\dots=m_t=0$, the point $(r_1/p,\dots,r_n/p)$ is on $\mathfrak{D}(f,\sigma,\{k_j\})$ for every $p\in Spl(f,\sigma,\{k_j\})$, hence the situation may be a bit simplified, although it does not mean ``easy'' at all:
Put $\beta_i:=[\mathbb{Q}(f):\mathbb{Q}]\cdot\linebreak[3]\alpha_i - \text{tr}(\alpha_i)$ $(i=1,\dots,n)$ and
$g(x):=\prod(x-\beta_i)(\in\mathbb{Z}[x])$.
Here $\text{tr}$ denotes the trace from $\mathbb{Q}(f)$ to $\mathbb{Q}$.
Since  a linear relation among roots $\sum_i l_i\alpha_i =l_{n+1}$ $(l_i\in\mathbb{Q})$ 
is equivalent to $\sum_i l_i\beta_i=0$,  
the vector space $LR_0$ is the same for polynomials $f(x),g(x)$ by $tr(\beta_i)=0$,
and the constants $m_i$ for the polynomial $g(x)$ are $0$.
Take a prime ideal $\mathfrak{p}$ of degree $1$ of $\mathbb{Q}(f)=\mathbb{Q}(g)$ and
let their local roots be $r_i,R_i$:  Equations $\alpha_i\equiv r_{\sigma(i)}\bmod\mathfrak{p},
\beta_i\equiv R_{\mu(i)}\bmod\mathfrak{p}$ for some permutations $\sigma,\mu$ imply
$[\mathbb{Q}(f):\mathbb{Q}]r_{\sigma(i)}- tr(\alpha_i) \equiv R_{\mu(i)} \bmod p$.
The relation between $\sigma,\mu$ is irregular.
Even if $f(x)$ has no multiple roots, $g(x)$ may have multiple roots.
Can one reduce the proof of conjectures for $f(x)$  to that of $g(x)$ nevertheless?

\vspace{2mm}

It may be interesting to  consider another criterial numbering instead of the numbering 
\eqref{eq5}.
As an example, we give the order $ 0\le \{qx_1\} \le\dots\le 
\{qx_n\}<1$ $(\bm{x}\in[0,1)^n)$ for a non-zero
integer $q$, where $\{x\}$ means the decimal part of $x$, i.e. $0\le\{x\}<1, x-\{x\}\in\mathbb{Z}$.
This gives the numbering of roots $\{r\mid f(r)\equiv 0 \bmod p,0\le r<p\}$ if $p$ does not divide $q$.
The order \eqref{eq5} corresponds to $q=1$.
There is no reason to consider that the similar conjectures do not exit.

\vspace{2mm}

We explain the above in detail after the next section.
Let us brief about subsequent sections.
In \S$2$, we 
give a remark on the  purely periodic decimal expansion of rational numbers:
 Let $p\,(>5)$ be a prime  number
and let the  purely periodic decimal expansion of
$1/p$ be
$$
0.\dot{c_1}\cdots\dot{c_e} =  0.{c_1}\cdots{c_e}{c_1}\cdots{c_e} \cdots,
\quad(0\le c_i \le 9)
$$
where  $e$ is the minimal length of periods, i.e.
$e = $ the order of $10 $ in $(\mathbb{Z}/p\mathbb{Z})^\times$.
Suppose $e = ln$ for natural numbers $n\;(>1),l$;
we divide the period to $n$ parts of equal length $l$, and add them.
Then we have
$$
c_1\cdots c_l +c_{l+1}\cdots c_{2l} + \cdots + c_{(n-1)l+1}\cdots c_{ln} 
=k \cdot9\cdots9
$$
for some integer $k$ with $1\le k\le n-1$.
Here we see that $kp$ is the sum of all roots $r_i$ of $x^n-1\equiv 0\bmod p$  with $0\le r_i<p$.
If we restrict the factor $n$ to a fixed prime number,  the density of prime numbers $p$ corresponding to the integer $k$ is closely related to that given by \eqref{eq16}.
This is our starting observation.

In \S$3$, we discuss   linear relations among roots.
If the degree of an irreducible polynomial is prime, then there is no non-trivial linear relation. 
In case that (i) the degree of an irreducible polynomial is $4$, (ii) 
 the degree is $6$ and  the polynomial is  Galois, or (iv) the polynomial is abelian,  
we see how the structure of linear relations is with the help of representation theory of a finite group.

In \S\ref{sec4}, we see how  $\frak{D}(f,\sigma)$ and $Pr(f,\sigma)$ depend on
$\sigma$ involving the groups $\hat {\bm G},{\bm G}$.
We introduce an assistant subset
\begin{align*}
M(f,\mu):=\{ p\in Spl(f) \mid   \alpha_i\equiv r_{\mu(i)}\bmod\mathfrak p\,\,\,(1\le{}^\forall i\le n) 
\text{ for }{}^\exists\mathfrak p\,|\,p  \},
\end{align*}
where $\mathfrak p$ denotes a prime ideal of $\mathbb{Q}(f)$.

In \S\ref{sec5}, we show that the conjecture \eqref{eq11} implies the one-dimensional equi-distribution of
$r_i/p$ $(i=1,\dots,n)$ for $p\in Spl(f)$ as a corollary of evaluation of $\lim_{X\to\infty}\#\{p\in Spl_X(f)\mid r_i/p<a\}/\#Spl_X(f)$
in the case that (i) $f(x)$ has no non-trivial  linear relation among roots, and 
(ii)  $f(x)$ is an irreducible polynomial of degree $4$.
Without invoking forcible calculation, we show that Conjectures \ref{conj1},\ref{conj2} imply the equi-distribution of
 $r_i/p$ for  local roots $r_i$ of a more general polynomial in \S{\ref{subsec5.3}}.

We discuss the Weyl criterion in \S\ref{sec6},
give numerical data  in \S\ref{sec7}, 
and   in \S\ref{sec8}
conjectures on $M(f,\mu)$ and as a question  derived from it
the density of the set
$$
\left\{
p\in Spl(f)\left|
\left\{\frac{g_1(r_i)}{p}\right\} < \left\{\frac{g_2(r_i)}{p}\right\} ({}^\forall i)
\right.
\right\}
$$
in $Spl(f)$,
where $g_1(x),g_2(x)$ are not necessarily monic polynomials  over $\mathbb{Z}$
and $r_i$ runs over local roots of $f(x)$ at $p$ and $\{a\}$ is the decimal part of $a$.
Data lead us to one more conjecture: 
\begin{conj}\label{conj5}
Let $f(x)$ be  an irreducible polynomial of degree $n\,(\ge2)$.
Then the sequence of vectors 
 $$
\left (\left\{\frac{r}{p}\right\}, \dots,\left\{\frac{r^{n-1}}{p}\right\}\right)
 $$
 is uniformly distributed in $[0,1)^{n-1}$,
   where $r$ runs over $n$ roots of $f(x)\bmod p$ for $p\in Spl(f)$.
\end{conj}
 Lastly, in \S\ref{sec9} we refer to other directions.

\section{Decimal expansion of  a rational number}\label{sec2}
Let us begin with the example $1/7=0.\dot14285\dot7=0.142857142857\dots$: We see 
\begin{align*}
&142\hspace{8.5mm}+\hspace{8.5mm}857=1\cdot999,
\\
&14\hspace{3.2mm}+\hspace{3.2mm}28\hspace{3.2mm}+\hspace{3.2mm}57=1\cdot99,
\\
&1+4+2+8+5+7=3\cdot9.
\end{align*}
The question is what numbers $1,1,3$ are.
One answer is
\begin{thm}\label{thm2.1}
Let $a,b$ be natural numbers satisfying $(10a,b)=1$ and $a<b$,
and let the  purely periodic decimal expansion of
$a/b$ be
$$
0.\dot{c_1}\cdots\dot{c_e} =  0.{c_1}\cdots{c_e}{c_1}\cdots{c_e} \cdots,
\quad(0\le c_i \le 9)
$$
where  $e$ is the minimal length of periods, i.e.
$e = $ the order of $10 \bmod b$.
Suppose $e = ln$ for natural numbers $n\,(>1),l$, and define natural numbers $B,L$
by $L=(10^l-1,b),\,b=BL$.
We divide the period to $n$ parts of equal length $l$ , and add them;
then we have
$$
c_1\cdots c_l +c_{l+1}\cdots c_{2l} + \cdots + c_{(n-1)l+1}\cdots c_{ln} 
=k (10^l - 1)/L,
$$
where $k:=\sum_{i=0}^{n-1}s_i/B$ is an integer with  the natural number $s_i$ satisfying $s_i\equiv 10^{li}a\bmod b$ and $0\le s_i<b$.
In case of $a=1,\,n=2$, $k $ is uniquely determined by
$$
Bk\equiv2\bmod L,\,1\le k\le L.
$$
\end{thm}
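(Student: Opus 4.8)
The plan is to read the digit blocks off the long-division algorithm and then reduce everything to two coprimality facts. Since $(10,b)=1$, every fraction $s/b$ with $0\le s<b$ is purely periodic and one division step reads $10s=\lfloor 10s/b\rfloor b+(10s\bmod b)$. I would put $s_i:=10^{li}a\bmod b$ for $0\le i\le n-1$ and $s_n:=s_0$; this is consistent because $e=\operatorname{ord}_b(10)$ gives $s_n=10^{ln}a\bmod b=10^{e}a\bmod b=a\bmod b=a=s_0$ (using $a<b$). Since $s_i$ is exactly the remainder after $li$ steps of the long division of $a/b$, the block $c_{li+1}\cdots c_{li+l}$ is the leading $l$-digit block of $s_i/b$, so iterating the division step $l$ times starting from $s_i$ gives
\begin{equation}\label{eqplan}
10^{l}s_i=C_i\,b+s_{i+1}\qquad(0\le i\le n-1),
\end{equation}
where $C_i$ denotes the integer $c_{li+1}\cdots c_{li+l}$; note that $\sum_{i=0}^{n-1}C_i$ is precisely the left-hand side of the asserted identity.

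Summing \eqref{eqplan} over $i$ and using $\sum_{i=0}^{n-1}s_{i+1}=\sum_{i=0}^{n-1}s_i$ gives $(10^{l}-1)\sum_i s_i=b\sum_i C_i$; dividing by $b=BL$ and then by $L$ produces exactly $\sum_i C_i=k(10^{l}-1)/L$ with $k=(\sum_i s_i)/B$, so the formula itself is automatic and the real content is $k\in\mathbb Z$. To get this, write $v_q$ for the $q$-adic valuation, set $L':=(10^{l}-1)/L\in\mathbb Z$, and note that $(10^{l}-1)\sum_i s_i=BL\sum_i C_i$ together with $10^{l}-1=LL'$ forces $B\mid L'\sum_i s_i$. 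The key observation is $\gcd(B,L')=1$: if a prime $q$ divides $B=b/L$ then $v_q(b)>v_q(L)=\min(v_q(b),v_q(10^{l}-1))$, hence $v_q(L)=v_q(10^{l}-1)$ and $v_q(L')=0$. Therefore $B\mid\sum_i s_i$ and $k\in\mathbb Z$.

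For the last assertion, take $a=1$ and $n=2$, so $s_0=1$, $s_1=10^{l}\bmod b$ and $Bk=1+s_1$. Reducing mod $L$ and using $L\mid b$, $L\mid 10^{l}-1$ gives $s_1\equiv10^{l}\equiv1\pmod L$, hence $Bk\equiv2\pmod L$; and $0\le s_1\le b-1$ gives $0<Bk\le BL$, i.e.\ $1\le k\le L$ (recall $k\in\mathbb Z$). Uniqueness then follows once $\gcd(B,L)=1$ is shown: a common prime divisor $q$ would be odd (it divides $10^{l}-1$), would satisfy $v_q(b)=v_q(B)+v_q(L)\ge2$ and $v_q(b)>v_q(L)=v_q(10^{l}-1)$, yet $q^{v_q(b)}\mid b\mid 10^{2l}-1=(10^{l}-1)(10^{l}+1)$ with $q\nmid 10^{l}+1$ (because $10^{l}+1\equiv2\not\equiv0\pmod q$), which would force $v_q(b)\le v_q(10^{l}-1)$, a contradiction. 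With $\gcd(B,L)=1$ the congruence $Bx\equiv2\pmod L$ has a unique solution among $1,\dots,L$, namely $k$.

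I expect the only steps needing an actual idea, rather than bookkeeping, to be the two valuation computations $\gcd(B,L')=1$ and, for $n=2$, $\gcd(B,L)=1$; the long-division identity \eqref{eqplan} and the telescoping sum are routine. These valuation arguments are also the only places where the defining relations $L=(10^{l}-1,b)$, $b=BL$, and the restriction $n=2$ are genuinely used, so that is where I would concentrate the care.
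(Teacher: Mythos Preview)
Your argument is correct and follows essentially the same path as the paper: derive the block formula $10^{l}s_i=C_i b+s_{i+1}$ from long division, telescope, and then check divisibility. The paper gets $B\mid\sum_i s_i$ slightly more directly by observing $b\mid(10^l-1)\sum_{i}10^{li}$ and invoking the standard fact that $b\mid xy$ with $\gcd(b,x)=L$ forces $(b/L)\mid y$ (your valuation computation $\gcd(B,L')=1$ is a proof of exactly this), and it does \emph{not} spell out the uniqueness step $\gcd(B,L)=1$ for $n=2$ that you supply---so your write-up is actually a bit more complete there.
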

Before the proof, let us give remarks.
Suppose $L=1$; then $B=b$ and 
$$
c_1\cdots c_l +c_{l+1}\cdots c_{2l} + \cdots + c_{(n-1)l+1}\cdots c_{ln} 
=k\cdot 9\cdots9.
$$
In case of $a=1,n=2$ moreover,  $k=1$ holds, which is classical.
Next, consider the case that $a=1$ and $ b$ is a prime number $p$; then integers $s_i$ give
 all roots of $x^n-1\equiv0\bmod p$ and $\sum s_i=k p$.
Hence the sum of local roots $r_i\,(0\le r_i<p)$ of $x^{n-1}+x^{n-2}+\dots+1\equiv0\bmod p$ is $-1+k p$
 (cf. \eqref{eq7}).
 
Next suppose $L>1$;
letting $E$ be the order of $10\bmod L$, we decompose $l$ as $KE$ by $10^l\equiv 1\bmod L$.
Then the equation
$$
(10^l-1)/L=(10^E-1)/L\cdot((10^E)^{K-1}+(10^E)^{K-2}+\dots+1)
$$
means that $(10^l-1)/L$ is the $K$ times iteration of the minimal period
$(10^E-1)/L$ of $1/L$.

We were concerned with the case  that $a=1$ and $b$ is prime.
\vspace{2mm}

\noindent
{\bf Proof of Theorem \ref{thm2.1}}

\vspace{2mm}
\noindent
The equation  $ 10^{li}a \equiv s_i\bmod b$ implies $10^{li}a/b=\lfloor10^{li}a/b\rfloor+s_i/b$,
where $\lfloor x \rfloor$ is the integer satisfying $x-1< \lfloor x \rfloor\le x$.
We see 
\begin{align*}
&c_{li+1}\dots c_{l(i+1)}
\\=\,
&c_1\dots c_{l(i+1)}-c_1\dots c_{li}\times10^l
\\=\,
&\lfloor10^{l(i+1)}a/b\rfloor -\lfloor10^{li}a/b\rfloor\times10^l
\\=\,
&10^{l(i+1)}a/b -s_{i+1}/b - (10^{li}a/b -s_{i}/b)10^l
\\=\,
&(10^ls_i-s_{i+1})/b,
\end{align*}
hence
\begin{align*}
&c_1\cdots c_l +c_{l+1}\cdots c_{2l} + \cdots + c_{(n-1)l+1}\cdots c_{ln} 
\\=\,&
\sum_{i=0}^{n-1}(10^ls_i-s_{i+1})/b
\\=\,&
\frac{10^l-1}{L}\cdot\frac{\sum_{i=0}^{n-1}s_i}{B},
\end{align*}
noting $s_n=s_0$.
On the other hand,
by the definition of $e,n,l$, we have
$$
(10^l-1)(10^{l(n-1)}+10^{l(n-2)}+\cdots+1 )=10^{ln}-1 =10^e-1\equiv 0 \bmod b
$$
and then  $(10^{l}-1,b)=L$ implies
$$
10^{l(n-1)}+10^{l(n-2)}+\cdots+1\equiv 0 \bmod B.
$$
Hence we have $\sum_{i=0}^{n-1} s_i\equiv(\sum_{i=0}^{n-1}10^{li})a\equiv0\bmod B$,
that is $k$ is an integer.

Suppose that $a=1,\,n=2$;   
then we find $s_0+s_1=Bk$ and $s_0=1$, and moreover $s_1\equiv10^l\bmod b$ implies
$s_1\equiv1\bmod L$, hence $Bk\equiv2\bmod L$.
They imply $Bk=1+s_1\le1+(b-1)=BL$, i.e. $1\le k\le L$.   \qed
\vspace{2mm}

\noindent
Let us apply Theorem \ref{thm2.1} to $a=1,b=p $ for a prime number $p\,(>5)$, 
and let $e,l,n,B,L,k, s_i$ be those in the theorem.
Then the condition $n>1$ implies $B=b=p$, and $s_i$ satisfies $s_i^n\equiv1\bmod p$ with $s_0=1$.
For the polynomial  $f(x):=(x^n-1)/(x-1)=x^{n-1}+\dots+1$,  $s_i$ $(i=1,\dots,n-1)$ are all roots of 
$f(x)\equiv0\bmod p$, that is $p\in Spl(f)$ and
 the set $\{s_1,\dots,s_{n-1}\}$ is equal to the set of local roots of 
$f(x)\bmod p$.
And the integer $k$ in the theorem is equal to $(\sum_{i=0}^{n-1} s_i+1)/p$,
which is given as $\frak{s}(p)$ in \cite{HKKN}.
Let us start conversely from a natural number $n$ and the polynomial $f(x)$ above.
By supposing that $n$ is a prime number,  $f(x)$ has no non-trivial linear relation among roots.
Under Conjecture \ref{conj2},
we have
\begin{alignat*}{3}
&\lim_{X\to\infty}\frac{\#\{p\in {Spl}_X(f)\mid(r_1+\dots+r_{n-1}+1)/p=k\}}{\#{Spl}_X(f)} &&
\\[1.5ex]
 =&\, \lim_{X\to\infty}\frac{\#\{p\in {Spl}_X(f)\mid
| (r_1/p+\dots+r_{n-1}/p) - k|\le1/3\}}{\#{Spl}_X(f)} &&
\\[1.5ex]
=&\, \lim_{X\to\infty}\frac{\#\{p\in {Spl}_X(f)\mid (r_1/p,\dots,r_{n-1}/p)\in D_k\}}{\#{Spl}_X(f)}             \qquad\qquad (={Pr}_{D_k}(f,id))&&
\\ \intertext{ 
where $D_k:=\{\bm{x}\in[0,1)^{n-1}\mid |(x_1+\dots+x_{n-1})-k|\le1/3\}$
}
& =\,\frac{{vol}\,(D_k\cap\hat{\mathfrak{D}}_{n-1})}
{{vol}\,({\hat{\mathfrak{D}}_{n-1}})} \qquad\qquad\;\;\;\text{(by \eqref{eq11})}&&
\\[1.5ex]
& =\,\frac{{vol}\,(\{(x_1,\dots,x_{n-1})\in[0,1)^{n-1}\mid x_1\le\dots\le x_{n-1},\sum_{i=1}^{n-1}x_i=k\})} {{vol}\,({\hat{\mathfrak{D}}_{n-1}})}                &&
\\[1.5ex]
&=\,\frac{{vol}\,(\{(x_1,\dots,x_{n-1})\in[0,1)^{n-1}\mid \sum_{i=1}^{n-1}x_i=k\})}
{
{vol}\,(\{(x_1,\dots,x_{n-1})\in[0,1)^{n-1}\mid \sum_{i=1}^{n-1} x_i\in\mathbb{Z}\})}                                                      &&
\\[1.5ex]
&=\,{vol}\left(\! \left\{ (x_1,\dots,x_{n-2}) \in [0,1)^{n-2}
\mid
(\lceil \sum_{i=1}^{n-2}x_i \rceil=k \right\}\!\right)
                                (\text{projected to }\mathbb{R}^{n-2})&&
\\[1ex]
&=\,E_{n-1}(k).&&
\end{alignat*}
In \cite{HKKN}, we took as a population,  instead of $Spl_X(f)$, the set of primes $p\,(\le X)$ such that the order of $10$ in $(\mathbb{Z}/p\mathbb{Z})^\times$ is 
divisible by $n$, which is a subset of $Spl_X(f)$, and we got the similar numerical data.
So, two events $\lceil( r_1+\dots+r_{n-1})/p\rceil=k$ and the order of $10$ in $(\mathbb{Z}/p\mathbb{Z})^\times$ being divisible by $n$ seem to be independent in $Spl(f)$.

The proof of the equation \eqref{eq16} is quite similar.
\section{Linear relation among roots}\label{sec3}
A typical  example of a polynomial which has a non-trivial linear relation among roots is a reducible polynomial and a decomposable one.
Let $f(x)=g(x)h(x)$ $(g(x),h(x)\in\mathbb{Z}[x])$ be a reducible polynomial;
then the sum of roots of $g(x)$ is an integer, which is a non-trivial linear relation.
Let $f(x)=g(h(x))$ be a decomposable polynomial, that is $1<\deg g(x),\deg h(x)<\deg f(x)$ with $g(x),h(x)\in\mathbb{Q}[x]$, and let $g(x)=\prod(x-\beta_i)$.
Then the sum of roots of $h(x)-\beta_1=0$ is rational, hence we get a non-trivial linear relation among roots. 

Let us see how to get the linear relations when we know the generator $w$ of $\mathbb{Q}(f)$.
Since $\{1,w,\dots,w^{d-1}\}$ $(d=\dim_{\mathbb{Q}}\mathbb{Q}(f))$ is a basis of $ \mathbb{Q}(f)$
over $\mathbb{Q}$, we define a rational matrix $A^{(n,d)}=(\bm{a}_1,\dots,\bm{a}_d)$ by
$$
\left(
\begin{array}{c}
\alpha_1
\\
\vdots
\\
\alpha_n
\end{array}
\right)
=
A\left(
\begin{array}{c}
1
\\
w
\\
\vdots
\\
w^{d-1}
\end{array}
\right).
$$
Then, for $(l_1,\dots,l_n)\in\mathbb{Q}^n$, the condition  $\sum_i l_i\alpha_i\in\mathbb{Q}$ holds if and only if
$(l_1,\dots,l_n)A=(*,0,\dots,0)$ holds.
So, we see that 
\begin{gather*}
LR_0=\{ \bm{l}\in\mathbb{Q}^n\mid \bm{l}(\bm{a}_2,\dots,\bm{a}_d)=0^{(n,d-1)}\},
\\
\dim LR_0= n-\text{rank}(\bm{a}_2,\dots,\bm{a}_d).
\end{gather*}
The condition $LR_0=\{ \bm{l}\in\mathbb{Q}^n\mid \bm{l}A=0\}$ holds
if and only if $\bm{a}_1$ is contained in $\mathbb{Q}[\bm{a}_2,\dots,\bm{a}_d]$. 
This is useful when we consider examples, but theoretically seems to be useless.
\subsection{Non-trivial linear relation}
Let us give a sufficient condition for a polynomial  to have no non-trivial linear relation among roots. 
\begin{prop}\label{prop3.1}
Let $f(x)$ be an irreducible polynomial of  degree $n$.
If $n$ is a prime number $p$, 
or if the  Galois group $Gal(\mathbb{Q}(f)/\mathbb{Q}) $ is isomorphic to $ S_n$ or $A_n$ $(n\ge4)$ as a permutation group of roots of $f(x)$,
then $f(x)$ has no non-trivial linear relation among roots. 
\end{prop}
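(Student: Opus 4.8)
The plan is to prove the sharper assertion that $\dim_{\mathbb Q}LR_0=1$; since the projection $LR\to LR_0$ is an isomorphism, this yields $t=1$, i.e.\ the absence of a non-trivial linear relation. Write $G:=\mathrm{Gal}(\mathbb Q(f)/\mathbb Q)$, viewed through its action on $\alpha_1,\dots,\alpha_n$ as a subgroup of $S_n$. The first step is the observation that $LR_0$ is a $\mathbb Q[G]$-submodule of the permutation module $\mathbb Q^n$: if $\sum_i l_i\alpha_i\in\mathbb Q$ and $\tau\in G$, then applying $\tau$ leaves the sum rational, so $\tau$ maps $LR_0$ into itself for the permutation action of \eqref{eq3}. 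Since $f$ is irreducible, $G$ is transitive, so $\mathbb Q^n=\mathbb Q\,\bm{1}\oplus W$ with $\bm{1}=(1,\dots,1)$ and $W=\{\bm{l}\in\mathbb Q^n\mid\sum_i l_i=0\}$, both $G$-stable. From $\sum_i\alpha_i=-a_{n-1}\in\mathbb Q$ we get $\bm{1}\in LR_0$, and subtracting a suitable multiple of $\bm{1}$ from an arbitrary element of $LR_0$ shows $LR_0=\mathbb Q\,\bm{1}\oplus(LR_0\cap W)$. Thus the proposition reduces to showing $LR_0\cap W=0$. I note here that $LR_0\cap W=W$ is impossible, since it would force $LR_0=\mathbb Q^n$, hence $\alpha_1\in\mathbb Q$ (take $\bm{l}=(1,0,\dots,0)$), contradicting irreducibility of $f$ for $n\ge 2$. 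So it suffices, in each case, that $W$ --- or its restriction to some transitive subgroup of $G$ --- be irreducible over $\mathbb Q$, for then the $G$-submodule $LR_0\cap W$ of $W$ must vanish.

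For $G\cong S_n$ or $A_n$ with $n\ge 4$, such a group is $2$-transitive on the roots: for $A_4$ a point stabiliser has order $3$ and acts transitively on the remaining three points, while for $n\ge 5$ the group $A_n$ is even $(n-2)$-transitive. For a $2$-transitive permutation group the permutation character $\pi$ satisfies $\langle\pi,\pi\rangle=2$ and $\langle\pi,\mathbf{1}\rangle=1$, so $W\otimes_{\mathbb Q}\mathbb C$ is an irreducible complex representation, whence the $\mathbb Q[G]$-module $W$ is irreducible. Therefore $LR_0\cap W=0$ in this case.

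For $n=p$ prime, transitivity of $G$ forces $p\mid|G|$, so by Cauchy's theorem $G$ contains an element of order $p$, which as a permutation of the $p$ roots must be a $p$-cycle $c$. I then restrict the module structure to the transitive subgroup $H:=\langle c\rangle\cong\mathbb Z/p\mathbb Z$. As an $H$-module $\mathbb Q^p$ is the regular representation of $\mathbb Z/p$, and the factorisation $x^p-1=(x-1)(x^{p-1}+\dots+1)$ into irreducibles over $\mathbb Q$, corresponding to the ring decomposition $\mathbb Q[H]\cong\mathbb Q\times\mathbb Q(\zeta_p)$, exhibits $\mathbb Q^p=\mathbb Q\,\bm{1}\oplus W$ with $W$ an irreducible $\mathbb Q[H]$-module (the one affording $\mathbb Q(\zeta_p)$). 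Since $LR_0\cap W$ is in particular an $H$-submodule of this irreducible $W$, it is $0$ or $W$, and the latter is excluded as above; hence $\dim_{\mathbb Q}LR_0=1$ in all cases.

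The only non-formal ingredients are the two irreducibility facts: that the augmentation representation of a $2$-transitive group is irreducible over $\mathbb Q$ (the norm computation $\langle\pi,\pi\rangle=2$), and that the $p$th cyclotomic polynomial is irreducible over $\mathbb Q$; both are classical, so there is no serious obstacle. The only points deserving a little care are the boundary case $A_4$ --- where one should check directly that the standard $3$-dimensional representation is irreducible, e.g.\ that its character has norm $1$ --- and the elementary remark, used in both cases, that $LR_0=\mathbb Q^n$ cannot occur since $f$ is irreducible of degree $\ge 2$.
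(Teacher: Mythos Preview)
Your proof is correct and takes a genuinely different route from the paper. The paper argues each case by direct manipulation: for $n=p$ prime it sets up a circulant system from the action of a $p$-cycle and uses the factorisation of the circulant determinant over $\mathbb Q(\zeta_p)$ to force $m_1=\cdots=m_p$; for $G=S_n$ it applies the transposition $(i,j)$ to the relation to get $(m_i-m_j)(\alpha_i-\alpha_j)=0$; and for $G=A_n$ it first uses $3$-cycles to show the $m_i$ are pairwise distinct and then derives a contradiction from the double transpositions in the Klein four-group. Your approach is more structural: you recognise $LR_0$ as a $\mathbb Q[G]$-submodule of the permutation module, split off the trivial summand $\mathbb Q\,\bm 1$, and reduce everything to the irreducibility of the augmentation module $W$ over $\mathbb Q$ (for $G$ itself, or for a transitive subgroup). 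This yields a uniform criterion---any transitive $G$ for which $W$ is $\mathbb Q$-irreducible works---and handles the $S_n$ and $A_n$ cases simultaneously via $2$-transitivity, while the prime-degree case follows from the irreducibility of the $p$th cyclotomic polynomial. The paper's arguments are more elementary in that they avoid representation theory entirely; yours is shorter, more conceptual, and immediately extends to any $2$-transitive Galois group.
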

\proof{
First, suppose that the degree of a polynomial $f(x)$ is a prime $p$,
and let $\alpha_1,\dots,\alpha_p$ be roots of $f(x)$,  and suppose  a  linear relation
\begin{equation}\label{eq18}
\sum_{i=1}^p m_i\alpha_i=m\quad(m_i,m\in \mathbb Z).
\end{equation}
Adding a trivial relation $\sum \alpha_i=tr(f)$ to it if necessary,
we may assume that $\sum m_i\ne0$.
The Galois group $Gal(\mathbb{Q}(f)/\mathbb{Q})$
acts faithfully on the set of all roots and contains an element $\sigma$ of order $p$,
since $p=[\mathbb{Q}(\alpha_1):\mathbb{Q}]$ divides $[\mathbb{Q}(f):\mathbb{Q}]$,
hence we may assume that $(\sigma(\alpha_1),\dots,\sigma(\alpha_p))=(\alpha_2,\dots,\alpha_p,\alpha_1)$ by renumbering roots.
Then from the assumption \eqref{eq18} follows 
$$
\begin{pmatrix}
m_1&m_2&\dots&m_p 
\\
m_p&m_1&\dots&m_{p-1} 
\\
\vdots
\\
m_2&m_3&\dots&m_1
\end{pmatrix}
\begin{pmatrix}
\alpha_1
\\
\alpha_2
\\
\vdots
\\
\alpha_p
\end{pmatrix}
=
\begin{pmatrix}
m
\\
m\\
\vdots
\\
m
\end{pmatrix}.
$$
Since $\alpha_i$'s are not rational,
the determinant of the coefficient matrix of entries $m_i$ vanishes, 
hence we have
$\prod_{i=0}^{p-1}(m_1+\zeta^im_2+\zeta^{2i}m_3+\dots+\zeta^{(p-1)i}m_p)=0$
for a primitive $p$th root $\zeta:=\zeta_p$ of unity, using a formula for cyclic determinant.
By the assumption $\sum m_i\ne0$,
we have $m_1+\zeta^im_2+\zeta^{2i}m_3+\dots+\zeta^{(p-1)i}m_p=0$
for some $i$ $(0<i<p)$,
which implies  $m_1=\dots=m_p$, 
that is \eqref{eq18} is trivial,
since $\zeta^{i}$ is still  a primitive $p$th root of unity.

Next, suppose that the Galois group $Gal(\mathbb{Q}(f)/\mathbb{Q})$ is isomorphic to  the symmetric group $S_n$.
For any $1\le i <j\le n$, there is an automorphism $\sigma$ which induces a transposition of
$\alpha_i$ and $\alpha_j$.
Hence we have
$$
m=(\sum_{k\ne i,j} m_k\alpha_k)+m_i\alpha_i+m_j\alpha_j=
(\sum_{k\ne i,j} m_k\alpha_k)+m_i\alpha_j+m_j\alpha_i,
$$
which implies $m_i(\alpha_i-\alpha_j) = m_j(\alpha_i-\alpha_j) $.
By $\alpha_i\ne \alpha_j$, we have $m_i=m_j$,
thus \eqref{eq18} is trivial.

Finally, suppose that $Gal(\mathbb{Q}(f)/\mathbb{Q})$ is the alternative group $A_n$
and that \eqref{eq18} is non-trivial.
Let us show that coefficients $m_1,\dots,m_n$ are mutually distinct, first.
Suppose that $m_1=m_2$\,; acting an even permutation $\alpha_1\to\alpha_2\to\alpha_3
\to\alpha_1(=(2,3)(1,3))$ on \eqref{eq18},
we have
\begin{align*}
&m_1\alpha_1 +m_2\alpha_2 +m_3\alpha_3=m-\sum_{i>3}m_i\alpha_i,
\\
&m_3\alpha_1 +m_1\alpha_2 +m_2\alpha_3=m-\sum_{i>3}m_i\alpha_i,
\end{align*}
which imply $(m_1-m_3)(\alpha_1-\alpha_3)=0$, hence $m_2=m_1=m_3$.
Considering other $\alpha_i$ $(i>3)$ instead of $\alpha_3$, we get $m_1=m_2=\dots=m_n$,
which contradicts the non-triviality of \eqref{eq18}.
Thus coefficients $m_i$ are mutually distinct.
Next, considering even permutations $\{\alpha_1\leftrightarrow\alpha_2,\alpha_3\leftrightarrow\alpha_4\}$,$\{\alpha_1\leftrightarrow\alpha_3,\alpha_2\leftrightarrow\alpha_4\}$,$\{\alpha_1\leftrightarrow\alpha_4,\alpha_2\leftrightarrow\alpha_3\}$,
we get
\begin{align*}
&
\left\{
\begin{array}{l}
m_1\alpha_1 +m_2\alpha_2 +m_3\alpha_3+m_4\alpha_4=m-\sum_{i>4}m_i\alpha_i,
\\
m_2\alpha_1 +m_1\alpha_2 +m_4\alpha_3+m_3\alpha_4=m-\sum_{i>4}m_i\alpha_i,
\end{array}\right.
\\
&
\left\{
\begin{array}{l}
m_3\alpha_1 +m_4\alpha_2 +m_1\alpha_3+m_2\alpha_4=m-\sum_{i>4}m_i\alpha_i,
\\
m_4\alpha_1 +m_3\alpha_2 +m_2\alpha_3+m_1\alpha_4=m-\sum_{i>4}m_i\alpha_i,\end{array}\right.
\end{align*}
which imply
\begin{align*}
&(\alpha_1 -\alpha_2)(m_1-m_2) +(\alpha_3-\alpha_4)(m_3-m_4)=0,
\\
& (\alpha_3-\alpha_4)(m_1-m_2)+(\alpha_1 -\alpha_2)(m_3-m_4)=0,
\end{align*}
hence $(\alpha_1-\alpha_2)^2=(\alpha_3-\alpha_4)^2$.
Similarly we have $(\alpha_1-\alpha_3)^2=(\alpha_2-\alpha_4)^2$,
hence taking the difference between them, we have $(2\alpha_1-\alpha_2-\alpha_3)(-\alpha_2+\alpha_3)=(\alpha_2+\alpha_3-2\alpha_4)(\alpha_3-\alpha_2)$, i.e.
$\alpha_1+\alpha_4=\alpha_2+\alpha_3$.
Similarly we find $\alpha_1+\alpha_3=\alpha_2+\alpha_4$, hence a contradiction $\alpha_3=\alpha_4$.
\qed
}
\subsection{Polynomial of $\deg=4$}
In case that the degree of a polynomial is $4$, we can determine polynomials with non-trivial linear relation
among roots.
\begin{prop}\label{prop3.2}
Let $f=x^4+a_3x^3+a_2x^2+a_1x+a_0$ be an irreducible polynomial.
If there is a non-trivial linear relation  among roots of $f(x)$,
then $f(x)$\,is decomposable, that is $f(x)=g(h(x))$ for quadratic polynomials $g(x),h(x)$.
\end{prop}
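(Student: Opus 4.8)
The plan is to exploit the action of the Galois group $G:=\mathrm{Gal}(\mathbb{Q}(f)/\mathbb{Q})$ on the relation space $LR_0$ together with the classification of transitive subgroups of $S_4$.

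\emph{Step 1 (Galois-stability and reduction on $G$).} Applying $\sigma\in G$ to a relation $\sum_i l_i\alpha_i=q\in\mathbb{Q}$ and using $\sigma(q)=q$ gives $\sum_i l_{\sigma^{-1}(i)}\alpha_i=q$, so $\sigma(LR_0)\subseteq LR_0$; thus $LR_0$ is a $\mathbb{Q}[G]$-submodule of $\mathbb{Q}^4$ containing $(1,1,1,1)$, of dimension $t\ge2$ by hypothesis. Since $f$ is irreducible, $G$ is transitive, and since a non-trivial relation exists, Proposition~\ref{prop3.1} excludes $G\cong S_4,A_4$; hence, after renumbering the roots, $G$ is one of the transitive $2$-groups $V_4=\{e,(12)(34),(13)(24),(14)(23)\}$, $C_4=\langle(1234)\rangle$, $D_4=\langle(1234),(13)\rangle$. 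Each of these admits a block system $\{A,B\}$ with $|A|=|B|=2$, and for any such $G$-block system some element of $G$ interchanges $A$ and $B$ (by transitivity: if $a\in A$, $b\in B$ and $\sigma(a)=b$, then $\sigma$ preserves the partition but moves $a$ out of $A$, hence swaps $A$ and $B$).

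\emph{Step 2 (a block vector lies in $LR_0$).} Write $LR_0=\langle(1,1,1,1)\rangle\oplus W$ with $W:=LR_0\cap U$, $U:=\{x\in\mathbb{Q}^4\mid\sum_i x_i=0\}$; then $W\ne0$ is a $\mathbb{Q}[G]$-submodule of $U$. I would decompose $U$ as a $\mathbb{Q}[G]$-module in each case: for $V_4$ it is the sum of the three ``partition lines'' $\langle(1,1,-1,-1)\rangle$, $\langle(1,-1,1,-1)\rangle$, $\langle(1,-1,-1,1)\rangle$; for $C_4$ and for $D_4$ it is $\langle(1,-1,1,-1)\rangle\oplus\rho$, where $\rho=\{x\mid x_1+x_3=x_2+x_4=0\}$ is a $2$-dimensional irreducible (the $(-1)$-eigenspace of the central element $(13)(24)$). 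Now $W\ne U$, for otherwise $LR_0=\mathbb{Q}^4$ and some $\alpha_i\in\mathbb{Q}$, contradicting irreducibility; and $W\not\supseteq\rho$, for otherwise $W$ contains $(1,0,-1,0)$, i.e.\ $\alpha_1-\alpha_3\in\mathbb{Q}$, and applying $(13)(24)\in G$ forces $\alpha_1=\alpha_3$, impossible. Hence in every case $W$ contains a vector $(\varepsilon_1,\varepsilon_2,\varepsilon_3,\varepsilon_4)$ with $\varepsilon_i\in\{1,-1\}$, $\sum_i\varepsilon_i=0$, for which the partition into $A=\{i:\varepsilon_i=1\}$ and $B=\{i:\varepsilon_i=-1\}$ is a $G$-block system. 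Renumbering so that $\{A,B\}=\{\{1,2\},\{3,4\}\}$, we get $\alpha_1+\alpha_2-\alpha_3-\alpha_4\in\mathbb{Q}$; applying a block-swapping $\tau\in G$ (which fixes $\mathbb{Q}$) replaces the left-hand side by its negative, so $\alpha_1+\alpha_2=\alpha_3+\alpha_4=:c_0$, necessarily $c_0=-a_3/2\in\mathbb{Q}$.

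\emph{Step 3 (the decomposition).} Since $\{\{1,2\},\{3,4\}\}$ is a $G$-block system, every $\sigma\in G$ permutes the unordered pair $\{\alpha_1\alpha_2,\alpha_3\alpha_4\}$, so $\alpha_1\alpha_2+\alpha_3\alpha_4\in\mathbb{Q}$ while $\alpha_1\alpha_2\alpha_3\alpha_4=a_0$. Put $h(x):=x^2-c_0x\in\mathbb{Q}[x]$ and $g(y):=(y+\alpha_1\alpha_2)(y+\alpha_3\alpha_4)=y^2+(\alpha_1\alpha_2+\alpha_3\alpha_4)y+a_0\in\mathbb{Q}[y]$. Then
\[
g(h(x))=(x^2-c_0x+\alpha_1\alpha_2)(x^2-c_0x+\alpha_3\alpha_4)=(x-\alpha_1)(x-\alpha_2)(x-\alpha_3)(x-\alpha_4)=f(x),
\]
so $f=g\circ h$ with $g,h$ monic quadratics over $\mathbb{Q}$, i.e.\ $f$ is decomposable. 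The step I expect to be the crux is Step 2: a priori $W$ could be any nonzero $G$-stable subspace of $U$, and one must verify both that it necessarily meets the small set of block vectors and that the $2$-dimensional possibility is incompatible with irreducibility of $f$; this is exactly what the explicit $\mathbb{Q}[G]$-module description of $U$ for $G\in\{V_4,C_4,D_4\}$ provides, after which Steps 1 and 3 are formal.
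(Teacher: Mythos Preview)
Your argument is correct, but it follows a genuinely different route from the paper's. The paper does not invoke Proposition~\ref{prop3.1}; instead it fixes only a subgroup $H\le G$ of order $4$ (which exists since $4\mid|G|$), classifies $H$ up to conjugacy in $S_4$ as cyclic, the transitive Klein four, or the non-transitive $\langle(12),(34)\rangle$, and in each case applies elements of $H$ directly to a fixed non-trivial relation $\sum m_i\alpha_i=m$ to force coefficient coincidences, eventually yielding $\alpha_i+\alpha_j=\alpha_k+\alpha_l$. Your approach first uses Proposition~\ref{prop3.1} to pin down $G$ itself to one of $V_4,C_4,D_4$, then decomposes the augmentation submodule $U$ as a $\mathbb{Q}[G]$-module and shows the relation space $W$ must contain one of the block vectors $(1,1,-1,-1)$, $(1,-1,1,-1)$, $(1,-1,-1,1)$; the block-swap then forces $\alpha_1+\alpha_2=\alpha_3+\alpha_4$ in a single stroke. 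Your route is more structural and makes the appearance of a block system conceptually transparent (and would adapt to other imprimitive situations), while the paper's route is more elementary and self-contained, at the price of longer case-by-case coefficient calculations. One minor remark: your exclusion $W\ne U$ is redundant once you have $W\not\supseteq\rho$, but this does no harm.
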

\proof{
Let  $\alpha_1,\dots,\alpha_4$ be the roots of $f(x)$.
Let $G:=Gal\,(\mathbb{Q}(f)/\mathbb{Q})$ be the
Galois group; then it operates faithfully on a set
$\{\alpha_1,\dots,\alpha_4\}$ and there is a subgroup $H$ of  order  $4$ in $G$,
because $4=[\mathbb{Q}(\alpha_1):\mathbb{Q}]$ divides $|G|$.
Noting that for permutations $\sigma,\mu$,
\begin{alignat*}{5}
&\sigma =(1,2),  &\quad&\mu=(1,3)   &\quad \Rightarrow    &\quad&\sigma\mu\ne\mu\sigma,
\\
&\sigma =(1,2)(3,4),  &&\mu=(2,3)    & \Rightarrow   &&\sigma\mu\ne\mu\sigma,
\\
&\sigma =(1,2)(3,4),  &&\mu=(1,3)(2,4)& \Rightarrow  &&\sigma\mu = \mu\sigma,\\[-20pt]
\end{alignat*}
we see that (i) $H$ is cyclic, (ii) $H$ is generated by $(1,2)(3,4),(1,3)(2,4)$ or (iii) $(1,2),(3,4)$
up to conjugate.
Renumbering roots, we have 
\begin{itemize}
\item[(i)] $G$ is generated by the cyclic permutation $\sigma$ :
  $$ \sigma:(\alpha_1,\alpha_2,\alpha_3,\alpha_4)
     \to
    (\alpha_2,\alpha_3,\alpha_4,\alpha_1),
   $$

\item[(ii)] $G$ is generated by  permutations $\sigma_1,\sigma_2$ :
\begin{align*}
&\quad  \sigma_1:(\alpha_1,\alpha_2,\alpha_3,\alpha_4)\to(\alpha_2,\alpha_1,\alpha_4,\alpha_3),
\\
&\quad\sigma_2:(\alpha_1,\alpha_2,\alpha_3,\alpha_4)\to(\alpha_3,\alpha_4,\alpha_1,\alpha_2) ,
\end{align*}

\item[(iii)] $G$ is generated by transpositions $\sigma_1,\sigma_2$ :
  $\sigma_1(\alpha_1)= \alpha_2$ and $\sigma_2(\alpha_3)=\alpha_4$.
\end{itemize}

Suppose that  $\sum_{i=1}^4m_i\alpha_i=m$ is non-trivial, that is ${}^\exists m_i\ne{}^\exists m_j$,
and if the coefficient $a_3$ of the polynomial vanishes, then considering $f(x-1)$ instead of $f(x)$,
we  may assume that $a_3\ne0$ and furthermore $\sum m_i \ne0$,
adding a trivial relation.

\medskip
\par
First, let us consider
case (i):

By  linear equations $\sum_i m_i\sigma^j(\alpha_i)=m$ $(j=0,1,2,3)$,
we have
$$
\begin{pmatrix}
m_1&m_2&m_3&m_4
\\
m_4&m_1&m_2&m_3
\\
m_3&m_4&m_1&m_2
\\
m_2&m_3&m_4&m_1
\end{pmatrix}
\begin{pmatrix}
\alpha_1
\\
\alpha_2
\\
\alpha_3
\\
\alpha_4
\end{pmatrix}
=
\begin{pmatrix}
\alpha_1&\alpha_2&\alpha_3&\alpha_4
\\
\alpha_2&\alpha_3&\alpha_4&\alpha_1
\\
\alpha_3&\alpha_4&\alpha_1&\alpha_2
\\
\alpha_4&\alpha_1&\alpha_2&\alpha_{3}
\end{pmatrix}
\begin{pmatrix}
m_1
\\
m_2
\\
m_3
\\
m_4
\end{pmatrix}
=
\begin{pmatrix}
m
\\
m\\
m
\\
m
\end{pmatrix}.
$$
Since $\alpha_i$'s are irrational, the determinant of coefficient matrix on $m_i$ vanishes,
i.e.
$\prod_{i=0}^3(m_1+\zeta^i m_2+\zeta^{2i}m_3 + \zeta^{3i}m_4)=0$ for a primitive fourth root $\zeta:=\zeta_4$ of unity.
By the assumption $\sum m_i\ne0$,
we have
$$
m_1+\zeta^i m_2+\zeta^{2i}m_3 + \zeta^{3i}m_4=0
\quad\text{for some } i=1,\,2,\,3,\, \\[-5pt]
$$
hence
\begin{itemize}
\item[(i.1)]
$ m_1-m_3=m_2-m_4=0$ in the case of $i=1,3$
or
\item[(i.2)]
$m_1-m_2 +\,m_3-m_4=0$ in the case of $i=2$.
\end{itemize}

\par\noindent
Case of (i.1),
i.e. $m_1=m_3,\,m_2=m_4$\,:  \\
The difference of the first row and the second row gives
$$
(m_1-m_2)(\alpha_1-\alpha_2+\alpha_3-\alpha_4)=0.
$$
If $m_1=m_2$ holds, we have a contradiction $m_1=\dots=m_4$.
Hence we find $\alpha_1+\alpha_3=\alpha_2+\alpha_4=-a_3/2$, hence $f(x)=(x^2+a_3x/2+\alpha_1\alpha_3)(x^2+a_3x/2+\alpha_2\alpha_4)$~\linebreak
is a polynomial in $x^2+a_3x/2$, that is $f(x)$ is decomposable.

\smallskip

\par\noindent
Case of (i.2),
hence $m_1+m_3=m_2+m_4$\,:  \\
It is easy to see that
$$
\begin{pmatrix}
\alpha_1&\alpha_2&\alpha_3&\alpha_4
\\
\alpha_2&\alpha_3&\alpha_4&\alpha_1
\\
\alpha_3&\alpha_4&\alpha_1&\alpha_2
\\
\alpha_4&\alpha_1&\alpha_2&\alpha_{3}
\end{pmatrix}
\begin{pmatrix}
m_1-m_2
\\
m_2-m_3
\\
m_3-m_4
\\
m_4-m_1
\end{pmatrix}
=
\begin{pmatrix}
0
\\
0
\\
0
\\
0
\end{pmatrix}.
$$
By non-triviality $(m_1-m_2,\dots,m_4-m_1)\ne(0,\dots,0)$,
the cyclic determinant of coefficients matrix vanishes,
i.e.
$$
a_3(\alpha_1+\zeta \alpha_2-\alpha_3-\zeta\alpha_4)(\alpha_1-\alpha_2+\alpha_3-\alpha_4)(\alpha_1-\zeta\alpha_2-\alpha_3+\zeta\alpha_4)=0.
$$

\par\noindent  (i.2.1)\quad
Suppose $\alpha_1+\zeta\alpha_2-\alpha_3-\zeta\alpha_4=0$,
i.e.
$\alpha_1-\alpha_3=-\zeta(\alpha_2-\alpha_4)$.
By equations
$\sum m_i\alpha_i=m$ and by acting $\sigma^2$ on it, $m_1\alpha_3+m_2\alpha_4+m_3\alpha_1+m_4\alpha_2=m$,
we have
$(m_1-m_3)(\alpha_1-\alpha_3)+(m_2-m_4)(\alpha_2-\alpha_4)=0$,
hence
$((m_1-m_3)(-\zeta)+m_2-m_4)(\alpha_2-\alpha_4)=0.$
Therefore we get
$m_1=m_3,\,m_2=m_4$
and so a contradiction
$m_1=m_2=m_3=m_4$ by the assumption $m_1+m_3=m_2+m_4$.

\par\noindent  (i.2.2)\quad
Suppose that $\alpha_1-\alpha_2+\alpha_3-\alpha_4=0$; it  implies $\alpha_1+\alpha_3=\alpha_2+\alpha_4$,
which implies that $f(x)$ is  decomposable as above.

\par\noindent  (i.2.3)\quad
The case of $\alpha_1-\zeta\alpha_2-\alpha_3+\zeta\alpha_4=0$ is similar to (i.2.1).

\medskip
\par
Thus we have shown that in the case of (i), $f(x)$ is decomposable.

\noindent
Case (ii)

The second case gives the following equations:
\begin{align}\label{eq19}
&m_1\alpha_1+m_2\alpha_2+m_3\alpha_3+m_4\alpha_4=m,
\\\label{eq20}
&m_2\alpha_1+m_1\alpha_2+m_4\alpha_3+m_3\alpha_4=m,
\\\label{eq21}
&m_3\alpha_1+m_4\alpha_2+m_1\alpha_3+m_2\alpha_4=m,
\\\label{eq22}
&m_4\alpha_1+m_3\alpha_2+m_2\alpha_3+m_1\alpha_4=m.
\end{align}
The sum of \eqref{eq19}, \eqref{eq20} (resp.  \eqref{eq21}, \eqref{eq22}) gives
\begin{gather*}
(m_1+m_2)(\alpha_1+\alpha_2)+(m_3+m_4)(\alpha_3+\alpha_4)=2m,
\\
(m_3+m_4)(\alpha_1+\alpha_2)+(m_1+m_2)(\alpha_3+\alpha_4)=2m,
\end{gather*}
hence if $m_1+m_2\ne m_3+m_4$ holds, then $\alpha_1+\alpha_2=\alpha_3+\alpha_4$ follows,~\linebreak
i.e.
$f(x)$ is decomposable.
Hence we may assume that  $m_1+m_2= m_3+m_4$.
Similarly, using \eqref{eq19}, \eqref{eq22} (resp.  \eqref{eq20}, \eqref{eq21}) , we may suppose $m_1+m_4=m_2+m_3$,
and $m_1+m_3=m_2+m_4$, using  \eqref{eq19}, \eqref{eq21} (resp.  \eqref{eq20}, \eqref{eq22}).
These give a contradiction $m_1=m_2=m_3=m_4$.

\noindent
Case (iii).

Acting $\sigma_1,\sigma_2$ on $\sum m_i\alpha_i=m$,
we have
\begin{align*}
&m_1\alpha_1+m_2\alpha_2+m_3\alpha_3+m_4\alpha_4=m,
\\
&m_1\alpha_2+m_2\alpha_1+m_3\alpha_3+m_4\alpha_4=m,
\\
&m_1\alpha_1+m_2\alpha_2+m_3\alpha_4+m_4\alpha_3=m,
\end{align*}
which imply
$$
(m_1-m_2)(\alpha_1-\alpha_2)=(m_3-m_4)(\alpha_3-\alpha_4)=0.
$$
Since
$\alpha_i$'s are distinct,
we have
$$ 
m_1=m_2, \quad m_3=m_4.
$$
Hence, the equations
$$
(\alpha_1+\alpha_2)+(\alpha_3+\alpha_4)
= -a_3, 
\quad 
m_1(\alpha_1+\alpha_2)+m_3(\alpha_3+\alpha_4)
= m
$$
imply
$$
b_1:=\alpha_1+\alpha_2\in\mathbb{Q},
\quad
b_2:=\alpha_3+\alpha_4\in\mathbb{Q}
$$
by $m_1\ne m_3$.  
Therefore
$$
f(x)=(x^2-b_1x+\alpha_1\alpha_2)(x^2-b_2x+\alpha_3\alpha_4)
$$
is equal to
$$
x^4+a_3x^3+(\alpha_3\alpha_4
+b_1b_2+\alpha_1\alpha_2)x^2
-(b_1\alpha_3\alpha_4+b_2\alpha_1\alpha_2)x+f(0),
$$
which implies 
$$
\alpha_1\alpha_2+\alpha_3\alpha_4=a_2-b_1b_2,
\quad
b_2\alpha_1\alpha_2+b_1\alpha_3\alpha_4=-a_1.
$$
If $b_1\ne b_2$ holds, then solving them, we have $\alpha_1\alpha_2,\,\alpha_3\alpha_4\in\mathbb{Q}$,
which implies that $f(x)$ is reducible.
Thus we have $b_1=b_2$ and then $f(x)$ is a polynomial in $x^2-b_1x$,
that is decomposable.
\qed
}
\begin{prop}\label{prop3.3}
Let a polynomial $f(x)=(x^2+ax)^2+b(x^2+ax)+c$ $(a,b,c\in\mathbb{Z})$ be  irreducible, and
put
$$
x^2+bx+c=(x-\beta_1)(x-\beta_2),\quad x^2+ax-\beta_i=(x-\alpha_{i,1})(x-\alpha_{i,2}).
$$
Then equations $\alpha_{i,1}+\alpha_{i,2} = -a$ $(i = 1,2)$ give a $\mathbb{Z}$-basis of $LR\cap\mathbb{Z}^5$.
\end{prop}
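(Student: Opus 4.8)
The plan is to pass to the splitting field $\mathbb{Q}(f)$ and reduce the assertion to a short Galois computation. Fix the numbering $\alpha_1=\alpha_{1,1}$, $\alpha_2=\alpha_{1,2}$, $\alpha_3=\alpha_{2,1}$, $\alpha_4=\alpha_{2,2}$, so that the two displayed relations become the integral vectors $\hat{\bm{m}}_1=(1,1,0,0,-a)$ and $\hat{\bm{m}}_2=(0,0,1,1,-a)$ in $LR\cap\mathbb{Z}^5$; they are obviously linearly independent, and $\hat{\bm{m}}_1+\hat{\bm{m}}_2=(1,1,1,1,-2a)$ is the trivial relation. It therefore suffices to show that \emph{every} integral relation
$$
m_1\alpha_{1,1}+m_2\alpha_{1,2}+m_3\alpha_{2,1}+m_4\alpha_{2,2}=m\qquad(m_i,m\in\mathbb{Z})
$$
satisfies $m_1=m_2$ and $m_3=m_4$: granting this, $m=-a(m_1+m_3)$ and the relation vector equals $m_1\hat{\bm{m}}_1+m_3\hat{\bm{m}}_2$, so $\hat{\bm{m}}_1,\hat{\bm{m}}_2$ generate $LR\cap\mathbb{Z}^5$ and hence form a $\mathbb{Z}$-basis.

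First I would set up the arithmetic. Since $\beta_1=\alpha_{1,1}^2+a\alpha_{1,1}\in\mathbb{Q}(\alpha_{1,1})$ and $f$ is irreducible of degree $4$, one has $\beta_1\notin\mathbb{Q}$ (otherwise $x^2+ax-\beta_1\in\mathbb{Q}[x]$ would force $[\mathbb{Q}(\alpha_{1,1}):\mathbb{Q}]\le2$); together with $\beta_2=-b-\beta_1$ this shows $K:=\mathbb{Q}(\beta_1)=\mathbb{Q}(\beta_2)\subset\mathbb{Q}(f)$ is a quadratic field, whose nontrivial automorphism interchanges $\beta_1$ and $\beta_2$. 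A degree count then shows $x^2+ax-\beta_i$ is the minimal polynomial of $\alpha_{i,1}$ over $K$, so it is irreducible over $K$; hence there exist $\tau_1,\tau_2\in\mathrm{Gal}(\mathbb{Q}(f)/K)$ with $\tau_1$ transposing $\alpha_{1,1},\alpha_{1,2}$ and $\tau_2$ transposing $\alpha_{2,1},\alpha_{2,2}$. Because $\tau_1$ fixes $\beta_2\in K$ it either fixes both of $\alpha_{2,1},\alpha_{2,2}$ or transposes them, and symmetrically for $\tau_2$.

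The one genuinely non-formal point — which I expect to be the main obstacle — is the linear independence over $\mathbb{Q}$ of $u:=\alpha_{1,1}-\alpha_{1,2}$ and $v:=\alpha_{2,1}-\alpha_{2,2}$. From the sum and product of the roots of $x^2+ax-\beta_i$ one gets $u^2=a^2+4\beta_1$ and $v^2=a^2+4\beta_2$; these lie in $K$, are nonzero (as $\beta_i\notin\mathbb{Q}$), and are swapped by $\mathrm{Gal}(K/\mathbb{Q})$. If $pu+qv=0$ with $p,q\in\mathbb{Q}$ not both zero, then $pq\ne0$, and squaring $u=-(q/p)v$ gives $u^2=(q/p)^2v^2$; applying the nontrivial automorphism of $K/\mathbb{Q}$ gives $v^2=(q/p)^2u^2$, and multiplying the two equations yields $(q/p)^4=1$, hence $u^2=v^2$, i.e. $\beta_1=\beta_2$, a contradiction.

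It remains to assemble these. Applying $\tau_1$ to an arbitrary integral relation and subtracting gives either $(m_1-m_2)u=0$ (if $\tau_1$ fixes $\alpha_{2,1},\alpha_{2,2}$) or $(m_1-m_2)u+(m_3-m_4)v=0$ (if $\tau_1$ transposes them); by the independence above, the first forces $m_1=m_2$ and the second forces $m_1=m_2$ and $m_3=m_4$. Symmetrically, $\tau_2$ forces $m_3=m_4$. Thus $m_1=m_2$ and $m_3=m_4$ for every integral relation, which completes the argument as outlined above. (Proposition~\ref{prop3.2} already shows that such a decomposable $f$ is the only irreducible quartic carrying a nontrivial linear relation; the present statement pins down the exact integral lattice of relations.)
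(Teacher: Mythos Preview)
Your proof is correct but takes a genuinely different route from the paper. The paper first subtracts multiples of the two known relations to reduce an arbitrary rational relation to the form $m_{1,2}\alpha_{1,2}+m_{2,2}\alpha_{2,2}=m$, and then argues by contradiction: if $m_{2,2}\ne0$ one may write $\alpha_{2,2}=m_1\alpha_{1,2}+m_2$ with $m_1,m_2\in\mathbb{Q}$, so $\alpha_{1,2}$ is a common root over $K=\mathbb{Q}(\beta_1)$ of the irreducible $g(x)=x^2+ax-\beta_1$ and of $h(x)=(m_1x+m_2)^2+a(m_1x+m_2)+b+\beta_1$; hence $h=m_1^2 g$, and equating constant terms forces $\beta_1\in\mathbb{Q}$. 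Your approach instead uses the Galois involutions $\tau_1,\tau_2$ over $K$ together with the $\mathbb{Q}$-linear independence of $u=\alpha_{1,1}-\alpha_{1,2}$ and $v=\alpha_{2,1}-\alpha_{2,2}$, which you prove neatly via the swap $u^2\leftrightarrow v^2$ under $\mathrm{Gal}(K/\mathbb{Q})$. The paper's argument is shorter and purely polynomial-algebraic, while yours is symmetry-driven and makes the structure (and the role of the quadratic subfield $K$) more transparent; it also directly yields the $\mathbb{Z}$-basis statement, whereas the paper first proves a $\mathbb{Q}$-basis and relies implicitly on the primitivity of the matrix $\bigl(\begin{smallmatrix}1&1&0&0\\0&0&1&1\end{smallmatrix}\bigr)$ to pass to $\mathbb{Z}$.
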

\begin{proof}
We have only to show that they are a basis of $LR$.
Let
$$
 m_{1,1}\alpha_{1,1}+ m_{1,2}\alpha_{1,2}+ m_{2,1}\alpha_{2,1}+ m_{2,2}\alpha_{2,2} =m
\quad(m_{i,j},m\in\mathbb{Q})
$$
be a linear relation.
Using $\alpha_{i,1}+\alpha_{i,2}=-a$, we may  suppose
$$
m_{1,2}\alpha_{1,2}+m_{2,2}\alpha_{2,2}=m.
$$
We have only to show $m_{2,2}=0$, which implies $m_{1,2}=m=0$,
hence we complete the proof.
Suppose that $m_{2,2}\ne0$, and dividing   $m_{2,2}$, we may assume
$$
\alpha_{2,2}=m_1\alpha_{1,2}+m_2\quad(m_1,m_2\in\mathbb{Q}).
$$
Hence $\alpha_{1,2}$ is a root of both $g(x):=x^2+ax-\beta_1$ and $h(x):=(m_1x+m_2)^2 + ~\linebreak
a(m_1x+m_2) - \beta_2= (m_1x+m_2)^2 + a(m_1x+m_2) +b+\beta_1$,
which are  polynomials over a quadratic field $\mathbb{Q}(\beta_1)$.
Since $g(x)$ is irreducible in $\mathbb{Q}(\beta_1)[x]$,
we have $h(x) =m_1^2g(x)$, hence comparing constant terms
$ m_2^2+am_2+b+\beta_1=-m_1^2\beta_1$.
Thus we find a contradiction that $\beta_1$ is rational.
\end{proof}

In the case of $n=6$, the polynomial $f(x)=x^6+2x^5+4x^4+x^3+2x^2-3x+1$ 
 is not decomposable but has a non-trivial linear relation : For a root $\alpha$, we find
\begin{align*}
f(x)=&(x-\alpha)(x+1/2\cdot \alpha^5 + 3/2\cdot \alpha^4 + 7/2\cdot \alpha^3 + 3\alpha^2 + 3\alpha - 1/2)
\\&\times(x -1/2\cdot \alpha^5 - 3/2\cdot \alpha^4 - 7/2\cdot \alpha^3 - 3\alpha^2 - 2\alpha + 3/2)
\\
&\times(x +3/4\cdot \alpha^5+ 9/4\cdot \alpha^4 + 17/4\cdot \alpha^3 + 3\alpha^2 + 3/2\cdot \alpha - 3/4)
\\
&\times(x+1/4\cdot \alpha^5 + 1/4\cdot \alpha^4 + 3/4\cdot \alpha^3 - 1/2\cdot \alpha^2 + \alpha - 3/4)
\\&\times(x- \alpha^5 - 5/2\cdot \alpha^4 - 5\alpha^3 - 5/2\cdot \alpha^2 - 5/2\cdot \alpha + 5/2)
\end{align*}
and the sum of first three roots is $-1$.
\vspace{2mm}

In  case that  $n=6$ and $\mathbb{Q}(\alpha_1)$ is  a
Galois extension of the rational number field $\mathbb{Q}$, we can see how linear relations among roots are in  subsequent subsections.
 To explain it, we study the problem group-theoretically. 
\subsection{Abelian polynomial}

 In this subsection, we assume that $f(x)$ is a monic irreducible polynomial of any degree over the rational number field $\mathbb{Q}$ with a complex root $\alpha$, and  
$\mathbb{Q}(\alpha)/\mathbb{Q}$ is a Galois extension with Galois group
 $G:=Gal(\mathbb{Q}(\alpha)/\mathbb{Q})$.
 With the help of the representation theory, we can see how the linear relations among roots
 are in the   case that  every absolutely irreducible representation of degree $>1$ 
   is defined over $\mathbb{Q}$.

We consider the group algebra $\mathbb{C}[G]$ and generalize the action of $G$ on the root $\alpha$
 to $\mathbb{C}[G]$ as $M[\alpha]:=\sum_{g\in G} m_gg(\alpha)
\in\mathbb{C}$ 
for $M=\sum_{g\in G}m_gg\in \mathbb{C}[G]$.
In case of $M\in G$, $M[\alpha]$ is the ordinary action of the element $M\in G$ on $\alpha$,
and if $M\in \mathbb{Q}[G]$, then $M[\alpha]\in\mathbb{Q}(\alpha)$ is obvious.
We define the vector space $GLR$ of linear relations among roots  by
\begin{align}\label{eq23}
GLR &:=\{M\in\mathbb{Q}[G]\mid M[\alpha]\in\mathbb{Q}\}.
\end{align}
For the numbering $G=\{g_1,\dots,g_n\}$ and $\alpha_i=g_i(\alpha)$, two conditions $\sum_i l_ig_i\in GLR$
and $(l_1,\dots,l_n)\in LR_0$ are equivalent.

It is known (\cite{L}) that  by making $G$ act on $\mathbb{C}[G]$ from the left,
$\mathbb{C}[G]$ contains the trivial representation with multiplicity $1$ and every  irreducible representation of $G$, that is
by putting 
$$
\delta:=|G|^{-1}\sum_{g\in G}g,
$$
the $G$-stable subspace $V$ spanned by $g-\delta$ $(g\in G)$ contains all non-trivial irreducible representations   and does not contain the trivial representation.
 The equations $\delta^2=\delta$, $g\delta=\delta g=\delta$  $({}^\forall g\in G)$, $1-\delta=-\sum_{g\not=1}(g-\delta)$, and $\delta V=V\delta=0$
 are clear where $1$ is the unit of the group $G$.
It is also obvious that $\mathbb{C}[G]=\mathbb{C}\delta\oplus V$ by
the decomposition $\sum_g m_gg=(\sum_g m_g)\delta+\sum_g m_g(g-\delta)$,
which implies with $\delta\in GLR$
\begin{align*}
GLR=\mathbb{Q}\delta\oplus (V\cap GLR).
\end{align*}
We say that an element $M\in \mathbb{C}[G]$ is trivial if and only if $M\in\mathbb{C}\delta$.
It is obvious that
there is a non-trivial linear relation among roots if and only if $\dim_{\mathbb{Q}} GLR>1$.
%
\begin{prop}\label{prop3.4}
For   $M\in\mathbb{C}[G]$,   $M$ is trivial 
 if and only if   $M$ annihilates $V$.
 \end{prop}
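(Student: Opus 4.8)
The plan is to deduce both implications directly from the algebraic identities already recorded for $\delta$ and $V$ — namely $\delta V = V\delta = 0$, $g\delta = \delta g = \delta$ for all $g\in G$, and the fact that $V$ is spanned by the elements $g-\delta$ $(g\in G)$ — so that no further representation theory is needed; in particular one does not have to invoke Wedderburn's description of $\mathbb{C}[G]$, although that would also do the job by identifying $V$ with the sum of the non-trivial simple components.

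For the implication ``trivial $\Rightarrow$ annihilates $V$'': if $M=c\delta$ with $c\in\mathbb{C}$, then for every $v\in V$ one has $Mv=c\,\delta v=0$ because $\delta V=0$, so $M$ kills $V$. This direction is immediate.

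For the converse the key observation is that $1-\delta$ is itself a convenient test element of $V$: since the unit $1$ of $G$ lies in $G$, the vector $1-\delta$ is one of the spanning vectors of $V$ (equivalently $1-\delta=-\sum_{g\neq 1}(g-\delta)\in V$). Hence, if $M=\sum_{g\in G}m_g g$ annihilates $V$ under left multiplication, then in particular $M(1-\delta)=0$, i.e.\ $M=M\delta$. Using $g\delta=\delta$ for every $g\in G$, this yields $M\delta=\bigl(\sum_{g\in G}m_g\bigr)\delta\in\mathbb{C}\delta$, so $M$ is trivial.

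The only point that needs a word of care is the reading of ``$M$ annihilates $V$'', i.e.\ whether it means $MV=0$ or $VM=0$; both interpretations give the conclusion, since the symmetric identity $(1-\delta)M=0$ leads to $M=\delta M=\bigl(\sum_g m_g\bigr)\delta$ in exactly the same way. So there is essentially no obstacle here: the content of the proposition is just the remark that $1-\delta$ serves as a single test vector inside $V$ that pins $M$ down to $\mathbb{C}\delta$.
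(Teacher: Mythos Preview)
Your proof is correct and follows essentially the same approach as the paper: both directions hinge on the single test element $1-\delta\in V$. The only difference is cosmetic — the paper expands $M(1-\delta)=\sum_{g\neq 1}(m_g-m_1)(g-\delta)$ and uses linear independence of the basis $\{g-\delta:g\neq 1\}$ to conclude $m_g=m_1$ for all $g$, whereas you write $M=M\delta=(\sum_g m_g)\delta$ directly; these are equivalent one-line computations.
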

\proof
{
The ``only if''-part follows from $\delta V=0$.
Conversely, we suppose that  $MV=0$ with $M=\sum m_gg$, in particular $M(1-\delta)=0$ for the unit $1\in G$; then
\begin{align*}
M(1-\delta)&=\sum_g m_{g}(g - \delta)
\\
&=\sum_{g\ne1} (m_{g}-m_{1})(g - \delta).
\end{align*}
Thus we have $m_{g} = m_{1}$ for every $g\in G$, since the set of $g-\delta$ $(g\ne1)$ is a basis of $V$.
Therefore $M\in\mathbb{C}\delta$ follows.
\qed
}

Let us  note the following fundamental equation for $M\in\mathbb{Q}[G]$
\begin{align}\label{eq24}
\sum_{g\in G}g(M[\alpha])g^{-1}=M(\sum_{g\in G}g(\alpha)g^{-1}),
\end{align}
where $g(M[\alpha])$ is the action of $g\in G=Gal(\mathbb{Q}(\alpha)/\mathbb{Q})$ on
$M[\alpha]\in\mathbb{Q}(\alpha)$  and  the right-hand side is the product in the group ring 
$\mathbb{C}[G]$.
Because, putting $M=\sum_{h\in G}m_hh$ with $m_h \in \mathbb{Q}$,
we see
\begin{align}\nonumber
\sum_{g\in G}g(M[\alpha])g^{-1}&=
\sum_{g\in G}(\sum_{h\in G}m_hgh(\alpha))(ghh^{-1})^{-1}
\\\nonumber
&=(\sum_{h\in G}m_hh)(\sum_{g\in G}g(\alpha)g^{-1}).
\end{align}
%
\begin{prop}\label{prop3.5}
Let $M\in\mathbb{Q}[G]$; then
 $M\in GLR$ holds  if and only if
 \begin{equation}\label{eq25}
M(\sum_{g\in G}g(\alpha)g^{-1})V=0.
 \end{equation}
\end{prop}
\proof{
The equation  \eqref{eq25} is equivalent to $(\sum_{g\in G}g(M[\alpha])g^{-1})V=0$ by \eqref{eq24}.
Suppose that  $M\in GLR$; then we have $m:=M[\alpha]\in\mathbb{Q}$ by the definition and 
 it is easy to see $\sum_{g\in G}g(M[\alpha])g^{-1}\linebreak[3]
=\sum_{g\in G}g(m)g^{-1}=m|G|\delta$,
and it annihilates $V$.
Conversely, suppose that \eqref{eq25}; then  Proposition \ref{prop3.4} implies that
 $\sum_{g\in G} g(M[\alpha])g^{-1}$ is trivial, that is $M[\alpha]\in\mathbb{Q}(\alpha)$ is
 fixed by $G=Gal(\mathbb{Q}(\alpha)/\mathbb{Q})$, i.e. $M[\alpha]$ is a rational number, 
 hence $M\in GLR$.
\qed
}

\vspace{1mm}
Let $\chi$ be an irreducible character of $G$ and 
denote the central idempotent in $\mathbb{C}[G]$ to $\chi$ by
$$
c_\chi:=\chi(1)|G|^{-1} \sum_{g\in G}\overline{\chi(g)}g.
$$
They satisfy
$$
\sum_\chi c_\chi=1,\,\,c_{\chi_1}c_{\chi_2}=\delta_{\chi_1,\chi_2}c_{\chi_1},
$$
where $\delta_{i,j}$ is Kronecker's delta function.
If $\chi$ is the trivial character, then $c_\chi=\delta$. 
The subspace $V$ is the direct sum of $\mathbb{C}[G]c_\chi$ with non-trivial irreducible characters $\chi$.
If $A$ is a matrix representation corresponding to $\chi$, then $\mathbb{C}[G]c_\chi$ is a direct sum of minimal ideals  giving the representation $A$.

If $\chi$ is an irreducible character, then the mapping $g\mapsto \sigma(\chi(g))$ is also an irreducible character for every $ \sigma\in Gal(\mathbb{C}/\mathbb{Q})$.
We denote it by $\sigma(\chi)$,
and let $\Psi$ be a minimal  set of irreducible characters such that every irreducible character is of the form
$\sigma(\chi)$ for ${}^\exists \sigma\in Gal(\mathbb{C}/\mathbb{Q}),{}^\exists \chi\in \Psi$. 

For an irreducible character $\chi$, we put
 $$
\mathbb{Q}(\chi):=\mathbb{Q}(\{\chi(g)\mid g \in G\}),
$$
which is an abelian extension over $\mathbb{Q}$,
and  denote the Galois group $Gal(\mathbb{Q}(\chi)/\mathbb{Q})$ by $Gal(\chi)$. 
Let $K$ be  a splitting field  of  $G$ such that  it  is  a Galois extension of $\mathbb{Q}$
and contains $\mathbb{Q}(\chi)$.
For example, the field $\mathbb{Q}(\zeta_e)$ with exponent $e$ of $G$ is such a  field.
It is easy to see that $\sigma\in Gal(\chi)$ is the identity if and only if $\sigma(\chi)=\chi$.
We put
$$
C(\chi) := \sum_{\sigma \in Gal(\chi)} c_{\sigma(\chi)}
=\chi(1)|G|^{-1}\sum_{g\in G} tr_{\mathbb{Q}(\chi)/\mathbb{Q}}(\chi(g))g\,\,(\,\in\mathbb{Q}[G]\,),
$$
which is a central idempotent satisfying $\sum_{\chi\in\Psi}C(\chi)=1$.
We note that $C(1)=\delta$, hence $\sum_{\chi(\ne1)\in\Psi}C(\chi)=1-\delta$, 
and  \eqref{eq25} is equivalent to
$$
M(\sum_{g\in G}g(\alpha)g^{-1})C(\chi)=0
$$
for every $\chi(\ne1)\in\Psi$.

For an irreducible character $\chi$, we put
\begin{equation}\label{eq26}
LR(\chi):=\{M\in  \mathbb{Q}[G]   \mid M=MC(\chi),
 M(\sum_{g\in G}g(\alpha)g^{-1})=0
\}.
 \end{equation}
For  the trivial character $1$, we have
$$
LR(1)=\{m\delta\mid m\in\mathbb{Q}, m\,tr_{\mathbb{Q}(\alpha)/\mathbb{Q}}(\alpha)=0\}.
$$
 We note that 
 for $M\in\mathbb{Q}[G]$ with  $MC(\chi)=M$,
the condition $ M(\sum_gg(\alpha)g^{-1})\linebreak[3]=0$ is equivalent to 
$$ 
M(\sum_gg(\alpha)g^{-1})C(\chi)\mathbb{Q}[G]=0.
$$
 \begin{prop}\label{prop3.6}
For an irreducible non-trivial character $\chi$, we have
\begin{equation}\label{eq27}
LR(\chi)=\{MC(\chi)\mid M\in GLR\}\,(=GLR\cdot C(\chi)),
\end{equation}
and we have a linear relation
\begin{equation}\label{eq28}
M[\alpha]=0\,\,\text{ for }M\in LR(\chi).
\end{equation}
 \end{prop}
\proof
{
Let $M\in LR(\chi)$; then $M=MC(\chi)$ holds by definition and
the fundamental equation  \eqref{eq24} implies  $\sum_gg(M[\alpha])g^{-1} =0$, 
that is $g(M[\alpha]) =0$ for every $g\in G$,
hence we have
$M[\alpha]=0$, i.e. \eqref{eq28} and hence $M \in GLR$, i.e.
$M=MC(\chi)\in GLR\cdot C(\chi)$.
Conversely, let $M\in GLR$; then $MC(\chi)\in\mathbb{Q}[G]$
and $(MC(\chi))C(\chi) = MC(\chi)$ are  obvious.
Proposition \ref{prop3.5} implies $M(\sum g(\alpha)g^{-1})V\linebreak[3]=0$,
which, with the condition   $C(\chi)\delta=0$ implies $MC(\chi)(\sum g(\alpha)g^{-1})=0$, i.e. $MC(\chi) \in LR(\chi)$.
\qed
}
%
\begin{cor}\label{cor3.1}
We see that  {\em(i)}
\begin{align}\label{eq29}
V\cap  GLR =\oplus_{\chi(\ne1)\in\Psi} LR(\chi),
\end{align}
{\em(ii)}
$LR_0$ is spanned by $(1,\dots,1),c(M)$ $(M\in V\cap GLR)$,
where $c(M)$  denotes the vector $(\dots,m_g,\dots)$  for $M=\sum_{g\in G} m_gg$,

\noindent
{\em(iii)} $M[\alpha]=0$ holds for $M\in V\cap GLR$.
\end{cor}
\proof
{
First, let us show \eqref{eq29}.
Let $M\in V\cap  GLR$;
then  we see that $M\delta=0$  implies $M=M(1-\delta)=\sum_{\chi(\ne1)\in\Psi}MC(\chi)$.
Proposition \ref{prop3.6} implies $MC(\chi)\in LR(\chi)$.
Hence the left-hand side is contained in the right-hand side of \eqref{eq29}.
Conversely, suppose that $M\in LR(\chi)$ $(\chi\ne1)$; then we see that  $M=MC(\chi)\in V$ and   \eqref{eq28} implies  $M\in GLR$.
Thus the right-hand side of \eqref{eq29} is contained in the left-hand side. 
%

Let $\sum_g m_gg(\alpha) = m$ $(m_g,m\in\mathbb{Q})$ be a linear relation.
Putting $M:= \sum m_g g$,
we have $M\in GLR$. 
Decompose as $M=\sum m_gg=(\sum m_g)\delta+M'$ with $M':=\sum m_g(g-\delta)$.
Then $M'\in V \cap  GLR$ is clear.
Hence there are elements $M_\chi\in LR(\chi)$ for $\chi(\ne1)\in \Psi $ such that
$M'=\sum M_\chi$ by Corollary \ref{cor3.1}.
Hence we have the decomposition
\begin{equation}\label{eq30}
\sum_{g\in G} m_gg=(\sum_{g\in G} m_g)\delta+\sum_{\chi (\ne1)\in\Psi}M_\chi,
\end{equation}
where $m=\sum_g m_gg(\alpha)=(\sum m_g)|G|^{-1}\sum g(\alpha)$ with $M_\chi[\alpha]=0$.
\qed
}\vspace{2mm}

\noindent
In case that  $\deg\chi=1$ or $\chi$ is defined over $\mathbb{Q}$, the structure of $LR(\chi)$ is
rather easy to find as we see from now on.
\begin{thm}\label{thm3.1}
Suppose that $\deg\chi=1$ and let $l$ be the order  of $\chi$;
then  $LR(\chi)\not=\{0\}$ holds if and only if  $\sum_{g\in G} \chi(g)^dg(\alpha)=0 $ for every integer $d$ relatively prime to $l$, i.e. 
$c_{\sigma(\chi)}[\alpha]=0$ for ${}^\forall\sigma\in Gal(\chi)$, and then
we have
\begin{align*}
LR(\chi)
 &=\{\sum_g tr_{\mathbb{Q}(\zeta_l)/\mathbb{Q}}(\,m\,\overline{\chi(g)}\,) g \mid m \in \mathbb{Q}(\zeta_l)\}
\end{align*}
with $\dim_{\mathbb{Q}} LR(\chi)=[\mathbb{Q}(\zeta_l):\mathbb{Q}]$.
\end{thm}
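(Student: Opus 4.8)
The plan is threefold: first, describe explicitly the $\mathbb{Q}$-subspace $\{M\in\mathbb{Q}[G]\mid MC(\chi)=M\}$; second, use the fundamental equation \eqref{eq21} to turn the defining condition of $LR(\chi)$ into the single scalar condition $M[\alpha]=0$; third, by a Galois-descent argument deduce that the existence of one nonzero such $M$ forces all the partial evaluations $c_{\sigma(\chi)}[\alpha]$ to vanish.

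Since $\deg\chi=1$ and $\chi$ has order $l$, its image is precisely the group $\mu_l$ of $l$-th roots of unity, so $\mathbb{Q}(\chi)=\mathbb{Q}(\zeta_l)$ and $Gal(\chi)$ is identified with $(\mathbb{Z}/l)^{\times}$ via $\sigma_d\colon\zeta_l\mapsto\zeta_l^{d}$, under which $\sigma_d(\chi)=\chi^{d}$ and $c_{\sigma_d(\chi)}=|G|^{-1}\sum_{g}\overline{\chi(g)^{d}}\,g$. For $m\in\mathbb{Q}(\zeta_l)$ put $M_m:=\sum_{g}tr_{\mathbb{Q}(\zeta_l)/\mathbb{Q}}(m\,\overline{\chi(g)})\,g$; expanding the trace over $Gal(\chi)$ gives $M_m=|G|\sum_{d\in(\mathbb{Z}/l)^{\times}}\sigma_d(m)\,c_{\chi^{d}}$, so $m\mapsto M_m$ is $\mathbb{Q}$-linear and injective (the orthogonal idempotents $c_{\chi^{d}}$ being linearly independent), and $\{M_m\mid m\in\mathbb{Q}(\zeta_l)\}$ is a subspace of $\mathbb{Q}[G]C(\chi)$ of dimension $[\mathbb{Q}(\zeta_l):\mathbb{Q}]=\varphi(l)$. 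Since $\mathbb{C}[G]C(\chi)=\mathbb{C}\otimes_{\mathbb{Q}}(\mathbb{Q}[G]C(\chi))=\bigoplus_{d}\mathbb{C}[G]c_{\chi^{d}}$ has $\mathbb{C}$-dimension $\varphi(l)$ (each isotypic component of a linear character being one-dimensional), I conclude $\{M\in\mathbb{Q}[G]\mid MC(\chi)=M\}=\mathbb{Q}[G]C(\chi)=\{M_m\mid m\in\mathbb{Q}(\zeta_l)\}$.

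By \eqref{eq21}, $M_m\bigl(\sum_{g}g(\alpha)g^{-1}\bigr)=\sum_{h}h(M_m[\alpha])h^{-1}$, which vanishes iff $M_m[\alpha]=0$ (the $h^{-1}$ forming a basis of $\mathbb{C}[G]$), and $M_m[\alpha]=|G|\sum_{d}\sigma_d(m)\beta_d$ with $\beta_d:=c_{\chi^{d}}[\alpha]$. Hence $LR(\chi)=\{M_m\mid\sum_{d}\sigma_d(m)\beta_d=0\}$, and since $c_{\sigma_d(\chi)}[\alpha]=|G|^{-1}\sum_{g}\chi(g)^{-d}g(\alpha)$, the whole theorem reduces to the claim: \emph{if $\sum_{d}\sigma_d(m)\beta_d=0$ for some $m\ne0$, then $\beta_d=0$ for every $d$} --- once this is known, $M_m[\alpha]=0$ for every $m$, so $LR(\chi)=\{M_m\}$ with the asserted basis and dimension, and the ``i.e.''\ in the statement follows on replacing $d$ by $-d$. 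To prove the claim, pass to a finite Galois extension $L/\mathbb{Q}$ containing $\mathbb{Q}(\alpha)$ and $\zeta_l$. For $\tau\in Gal(L/\mathbb{Q})$ with $\tau|_{\mathbb{Q}(\alpha)}=h\in G$ and $\tau(\zeta_l)=\zeta_l^{e}$, writing $\beta_d=|G|^{-1}\sum_{g}\chi(g)^{-d}g(\alpha)$, applying $\tau$, and re-indexing $g\mapsto hg$ give $\tau(\beta_d)=\chi(h)^{de}\beta_{de}$, while $\tau(\sigma_d(m))=\sigma_{de}(m)$; applying $\tau$ to $\sum_{d}\sigma_d(m)\beta_d=0$ and re-indexing $d'=de$, the parameter $e$ cancels and one gets $\sum_{d'}\sigma_{d'}(m)\,\chi(h)^{d'}\,\beta_{d'}=0$ for every $h\in G$ (all of $G$ occurring, as $Gal(L/\mathbb{Q})\to G$ is surjective). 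Because $\chi(G)=\mu_l$, this says $\sum_{d}(\sigma_d(m)\beta_d)\,\zeta^{d}=0$ for all $\zeta\in\mu_l$; writing $\zeta=\zeta_l^{j}$ and extending the tuple $(\sigma_d(m)\beta_d)_d$ by zero outside $(\mathbb{Z}/l)^{\times}$, the vanishing holds for all $j\in\mathbb{Z}/l$, so the discrete Fourier transform of the tuple is zero, hence the tuple is zero; as $m\ne0$ forces $\sigma_d(m)\ne0$, we get $\beta_d=0$ for all $d$.

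The main obstacle is this last step: since $\mathbb{Q}(\alpha)$ and $\mathbb{Q}(\zeta_l)$ may intersect nontrivially, no single automorphism need simultaneously fix $\mathbb{Q}(\alpha)$ and twist $\zeta_l$ by a prescribed exponent, so one cannot isolate an individual $\beta_d$ directly; the remedy is to extract the whole family of relations indexed by $h\in G$ and then invoke $\chi(G)=\mu_l$ together with Fourier inversion on $\mathbb{Z}/l$. The other ingredients --- the dimension count of the second paragraph and the reduction to $M[\alpha]=0$ via \eqref{eq21} --- are routine, in the spirit of Propositions \ref{prop3.5} and \ref{prop3.6}.
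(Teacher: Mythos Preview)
Your proof is correct, but the route differs from the paper's in the key step.  Both arguments begin the same way: parametrize $\mathbb{Q}[G]C(\chi)$ by $\mathbb{Q}(\zeta_l)$ via $m\mapsto M_m=\sum_g tr_{\mathbb{Q}(\zeta_l)/\mathbb{Q}}(m\,\overline{\chi(g)})\,g$ (the paper reaches the same description by showing the map $M\mapsto m$ with $Mc_\chi=mc_\chi$ is a $\mathbb{Q}$-isomorphism onto $\mathbb{Q}(\chi)$).  The divergence is in how the defining condition $M(\sum_g g(\alpha)g^{-1})=0$ is unpacked.  The paper stays inside the group algebra: it writes $M(\sum_g g(\alpha)g^{-1})=\sum_{\sigma\in Gal(\chi)} Mc_{\sigma(\chi)}(\sum_g g(\alpha)g^{-1})$, computes each summand as the scalar $\sigma(m)\bigl(\sum_g \sigma(\chi(g))g^{-1}(\alpha)\bigr)$ times the idempotent $c_{\sigma(\chi)}$, and then linear independence of the $c_{\sigma(\chi)}$ immediately yields the $\varphi(l)$ decoupled conditions $\sigma(m)\cdot\sum_g\sigma(\chi(g))g^{-1}(\alpha)=0$, so either $m=0$ or every such sum vanishes.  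No auxiliary field or Fourier step is needed.  You, by contrast, collapse everything to the single scalar equation $M_m[\alpha]=\sum_d\sigma_d(m)\beta_d=0$ via \eqref{eq21}, and then have to re-separate the $\beta_d$; your device for doing so --- applying all of $Gal(L/\mathbb{Q})$ to produce the family $\sum_d(\sigma_d(m)\beta_d)\zeta^d=0$ over $\zeta\in\mu_l$ and invoking Fourier inversion on $\mathbb{Z}/l$ --- is valid and rather pretty, but it reconstructs by hand exactly what the idempotent decomposition gives for free.  In short: the paper's argument is shorter and purely algebraic; yours trades the decomposition in $\mathbb{C}[G]$ for an explicit Galois-descent-plus-Fourier argument that makes the role of $\chi(G)=\mu_l$ more visible.
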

\proof
{
We note that $\mathbb{Q}(\chi)=\mathbb{Q}(\zeta_l)$ and the equation 
$hc_\chi=\chi(h)c_\chi$ holds for each element $h\in G$,
hence for an element  $M=MC(\chi)\in\mathbb{Q}[G]$, there is  an element 
$m\in \mathbb{Q}(\chi)$ such that  $Mc_\chi=mc_\chi$.
Under this remark, let us see that the mapping $M\mapsto m$ from the set   $\{M=MC(\chi)\in\mathbb{Q}[G]\}$
to $\mathbb{Q}(\chi)$ is isomorphic.
Suppose that $m=0$, i.e. $Mc_\chi=0$; then $Mc_{\sigma(\chi)}=0$ holds, acting $\sigma$ on coefficients all together,
hence $M=MC(\chi)=M\sum c_{\sigma(\chi)}=0$.
Next, let $m\in \mathbb{Q}(\chi)$; then there is an element $M'c_\chi=mc_\chi$
with $M'\in\mathbb{Q}[G]$.
It is easy to see that $M:=\sum_\sigma M'c_{\sigma(\chi)}=M'C(\chi)$ satisfies $M=MC(\chi)\in
\mathbb{Q}[G]$ and $Mc_\chi=M'c_\chi=mc_\chi$.
Therefore, the mapping $M\mapsto m$ is isomorphic,
hence $M=0$ and $m=0$ are equivalent.
Note that $Mc_\chi=mc_\chi$ implies $Mc_{\sigma(\chi)}=\sigma(m)c_{\sigma(\chi)}$.
Take an element  $M=MC(\chi)\in\mathbb{Q}[G]$.
Using the above notation,
we see the following two equations
\begin{align*}
M&=\sum_{\sigma\in Gal(\chi)}Mc_{\sigma(\chi)}
\\
&=\sum_\sigma   \sigma(m)c_{\sigma(\chi)}
\\
&=|G|^{-1}\sum_gtr_{\mathbb{Q}(\chi)/\mathbb{Q}}
(m\overline{\chi(g)})g,
\end{align*}
and 
\begin{align*}
Mc_{\sigma(\chi)}(\sum_g g(\alpha)g^{-1})
&=
\sigma(m)|G|^{-1}\sum_{h,g}\sigma(\overline{\chi(h)})
 g(\alpha)hg^{-1}
\\
&=|G|^{-1}\sigma(m)\sum_{h,g}\sigma(\overline{\chi(hg)})g(\alpha)h
\\
&=|G|^{-1}\sigma(m)\sum_h\sigma(\overline{\chi(h)})\{\sum_{g}\sigma({\chi(g)})g^{-1}(\alpha)\}h.
 \end{align*}
 The condition $M=MC(\chi)\in LR(\chi)$ holds if and only if 
$M(\sum g(\alpha)g^{-1})\linebreak[3]=MC(\chi)(\sum g(\alpha)g^{-1})=0$,
which means $Mc_{\sigma(\chi)}(\sum g(\alpha)g^{-1})=0$ for every $\sigma\in Gal(\chi)$.
By the second equation above, it is equivalent to
$$
\sigma(m)\sum_g \sigma(\chi(g))g^{-1}(\alpha)=0
$$ 
for every $\sigma\in Gal(\chi)$,
which means either $m=0$ or $\sum_g \sigma(\chi(g))g^{-1}(\alpha)=0$.
Thus we have $\sum_g \sigma(\chi(g))g^{-1}(\alpha)=0$ 
for every $\sigma\in Gal(\chi)$ if $LR(\chi)\ne0$.
If, conversely  $\sum_g \sigma(\chi(g))g^{-1}(\alpha)=0$ holds
for every $\sigma\in Gal(\chi)$,
then $M\sum g(\alpha)g^{-1}=0$, i.e. $M\in LR(\chi)$ follows for every 
$M=MC(\chi)\in\mathbb{Q}[G]$.
Thus the first statement of the theorem has been proven.
Suppose $LR(\chi)\not=0$; then the above argument implies $LR(\chi)=\{MC(\chi)\mid M\in\mathbb{Q}[G]\}=\{\sum_g tr_{\mathbb{Q}(\zeta_l)/\mathbb{Q}}(\,m\,\overline{\chi(g)}\,) g \mid m \in \mathbb{Q}(\zeta_l)\}$ by the first equation.
Since $\sum_g tr_{\mathbb{Q}(\zeta_l)/\mathbb{Q}}(\,m\,\overline{\chi(g)}\,) g=0$ if and only if $m =0$, we find $\dim_{\mathbb{Q}} LR(\chi)=[\mathbb{Q}(\zeta_l):\mathbb{Q}]$.
\qed
}

\vspace{2mm}

\noindent
By this theorem,  Corollary \ref{cor3.1} and \eqref{eq30}, we can understand the structure 
of linear relations among roots if
$\mathbb{Q}(\alpha)$ is an abelian extension of $\mathbb{Q}$.

\noindent
{\bf Example 3.1}\label{ex3.1}
Let us consider  a polynomial $f(x)$ of degree $6$ with a root $\alpha$ 
 such that $\mathbb{Q}(\alpha)$ is an abelian
extension of $\mathbb{Q}$.
Let $G$ be the Galois group $Gal(\mathbb{Q}(\alpha)/\mathbb{Q})$ with a generator 
$ g_0$, and let $\omega$ be a third root of unity, i.e.  $\omega^2+\omega+1=0$.
We number roots by $\alpha_i = g_0^i(\alpha)$.
Let $\chi$ be a non-trivial irreducible character of order $l$ satisfying $LR(\chi)\ne 0$.
Then the relations corresponding to $LR(\chi)$ are $\sum_g tr_{\mathbb{Q}(\chi(g_0))/\mathbb{Q}}(m\overline{\chi(g)})g(\alpha)=0$ for every $m\in\mathbb{Q}(\chi(g_0))$.
\vspace{2mm}

\noindent
Case of $l=2$: We have $\chi(g_0)=-1$.
Relations are $\alpha_1+\alpha_3+\alpha_5=\alpha_2+\alpha_4+\alpha_6$.
\vspace{2mm}

\noindent
Case of $l=3$: We may assume $\chi(g_0)=\omega$.
Then relations of $LR(\chi)$ are $\alpha_1+\alpha_2+\alpha_4+\alpha_5=2\alpha_3+2\alpha_6$ 
for $m=1$,
$\alpha_2+\alpha_3+\alpha_5+\alpha_6=2\alpha_1+2\alpha_4$ for $m=\omega$,
i.e. $\alpha_1+\alpha_4=\alpha_2+\alpha_5=\alpha_3+\alpha_6$.
\vspace{2mm}

\noindent
Case of $l=6$: We assume $\chi(g_0)=-\omega$.
Equations $\sum_g tr_{\mathbb{Q}(\omega)/\mathbb{Q}}(m\overline{\chi(g)})g(\alpha)\linebreak[3]=0$ for $m=1,\omega$ imply $\alpha_1+\alpha_5+2\alpha_6=\alpha_2+2\alpha_3+\alpha_4$ and
$2\alpha_1+\alpha_2+\alpha_6=\alpha_3+2\alpha_4+\alpha_5$, respectively,
hence the basis of $LR(\chi)$ is $\alpha_1-\alpha_4=-\alpha_2+\alpha_5=\alpha_3-\alpha_6$.
\vspace{2mm}

The case of $l=6$ is compatible with neither the case of $l=2$ nor $l=3$.
\vspace{2mm}

The following polynomials define the same field $\mathbb{Q}(\zeta_7)$ and 
let $g_0 : \zeta_7\mapsto\zeta_7^3$. We simply write $\zeta=\zeta_7$.
\begin{enumerate}
\item 
$f(x)=x^6+x^5+\dots+1$ has only a trivial linear relation $\sum\alpha_i=-1$.
As stated, $G=S_6$, $\det((\bm{m}_i,\bm{m}_j))=6$ and $Pr(f,\sigma)=1$ for every $\sigma\in S_6$.

\item($l=2$)
$f(x)=x^6 - 4x^5 + 9x^4 - 15x^3 + 18x^2 - 16x + 8$ has roots
$[\alpha_1,\dots,\alpha_6] = [\zeta^5 + \zeta^4 + 1, \zeta^5 + \zeta + 1, \zeta^3 + \zeta + 1, \zeta^3 + \zeta^2 + 1, -\zeta^5 - \zeta^4 - \zeta^3 - \zeta, -\zeta^5 - \zeta^3 - \zeta^2 - \zeta]$,
and the basis of linear relations is $\alpha_1+\alpha_3+\alpha_5=\alpha_2+\alpha_4+\alpha_6=2$.
$\#G =72$, $\det((\bm{m}_i,\bm{m}_j))=9$ and calculation by  computer suggests that $Pr(f,\sigma)>0$ holds if and only if $\sigma \not\in G$. 

\item($l=3$)
$f(x) = x^6 - 3x^5 + 9x^4 - 13x^3 + 11x^2 - 5x + 1$ has roots
$[\alpha_1,\dots,\alpha_6]  = [-\zeta^3 - \zeta^2 - \zeta, \zeta^5 + \zeta^4 + \zeta + 1, \zeta^5 + \zeta^3 + \zeta + 1, \zeta^3 + \zeta^2 + \zeta + 1, -\zeta^5 - \zeta^4 - \zeta, -\zeta^5 - \zeta^3 - \zeta]$,
and the basis of linear relations is $\sum\alpha_i=3$,\,$\alpha_1+\alpha_4=\alpha_2+\alpha_5=\alpha_3+\alpha_6(=1)$, i.e. $\alpha_1+\alpha_4=\alpha_2+\alpha_5=\alpha_3+\alpha_6=1$.
$\#G=48$, $\det((\bm{m}_i,\bm{m}_j))=8$, and $Pr(f,\sigma)>0$ holds if and only if $\sigma \in (4,6)G$,
and then $Pr(f,\sigma)=1$.

\item($l=2,3$)
$f(x) = x^6 + 14x^4 + 49x^2 + 7$ has roots
$[\alpha_1,\dots,\alpha_6]   = [-\zeta^5 - 2\zeta^3 - \zeta^2 - 2\zeta - 1, \zeta^5 + \zeta^4 - \zeta^3 - \zeta^2, 2\zeta^5 + \zeta^4 + \zeta^3 + 2\zeta + 1, \zeta^5 + 2\zeta^3 + \zeta^2 + 2\zeta + 1,
 -\zeta^5 - \zeta^4 + \zeta^3 + \zeta^2, -2\zeta^5 - \zeta^4 - \zeta^3 - 2\zeta - 1]$
and the basis of linear relations is $\sum\alpha_i=0$, $\alpha_1+\alpha_3+\alpha_5=\alpha_2+\alpha_4+\alpha_6(=0)$,
$\alpha_1+\alpha_4=\alpha_2+\alpha_5=\alpha_3+\alpha_6(=0)$, i.e.
$\alpha_4=-\alpha_1,\alpha_5=-\alpha_2,\alpha_6=-\alpha_3,\alpha_1+\alpha_3+\alpha_5=0$.
$\#G=12$, $\det((\bm{m}_i,\bm{m}_j))=12$
 and $Pr(f,\sigma)>0$ holds if and only if $\sigma\in  (1, 3, 6)G \cup (2,5)(4,6)G$.
$Pr(f,(1,3,6))=3/4,Pr(f,(2,5)(4,6))=1/4$.
It is easy to see that
\begin{align*}
\mathfrak{D}(f,&(1,3,6))
\\
=&\left\{(x_1,x_2,x_1+x_2,1-x_1-x_2,1-x_2,1-x_1)\left|
\begin{array}{l}
0\le x_1\le x_2,
\\
x_1+x_2 \le1/2
\end{array} 
\right.
\right\}
\\
=\,&
\{
C+y_1\delta_1+y_2\delta_2\mid y_2/\sqrt{3}\le y_1\le\sqrt{3}y_2,\sqrt{3}y_1+y_2\le\sqrt{3}
\},
\end{align*}
where $C=(0,0,0,1,1,1)$ and orthnormal vectors $\delta_1,\delta_2$ are given by $2\delta_1=(1,0,1,-1,0,-1)$,
$2\sqrt{3}\delta_2=(-1,2,1,-1,-2,1)$ and the volume is $\sqrt{3}/8$,
and
\begin{gather*}
\hspace{-7cm}\mathfrak{D}(f,(2,5)(4,6))
\\
=\left\{  (x_1,x_2,1-x_1-x_2,x_1+x_2,1-x_2,1-x_1)\left|
\begin{array}{l}
0\le x_1\le x_2,
\\
x_1+2x_2\le1,
\\
x_1+x_2\ge1/2
\end{array} 
\right.
\right\}
\\
=
\!\{
C\!+y_1\delta_1+y_2\delta_2\mid y_2/\sqrt{3}\le y_1\le\!\sqrt{3}y_2,y_1+\!\sqrt{3}y_2\le2,y_1+y_2/\sqrt{3}\ge1
\},
\end{gather*}
where $C=(0,0,1,0,1,1)$ and orthnormal vectors $\delta_1,\delta_2$ are given by $2\delta_1 =(1,0,-1,1,0,-1)$, $2\sqrt{3}\delta_2=(-1,2,-1,1,-2,1)$ and the volume is $\sqrt{3}/24$,
hence the ratio  $\mathfrak{D}(f,\sigma)/Pr(f,\sigma)=\sqrt{3}/6=\sqrt{|\det((\bm{m}_i,\bm{m}_j))|}/\#\bm{G}$ is independent of $\sigma$.

\item($l=6$)
$f(x) = x^6 - x^5 + x^4 - x^3 + 15x^2 - x + 29$ has roots
$[\alpha_1,\dots,\alpha_6] = [\zeta^3 - \zeta^2 - \zeta, \zeta^5 + \zeta^4 + 2\zeta^2 + \zeta + 1, -\zeta^5 - 2\zeta^4 - \zeta^3 - 2\zeta^2 - \zeta - 1, 2\zeta^4 + \zeta^3 + \zeta^2 + \zeta + 1, \zeta^5 - \zeta^4 - \zeta, -\zeta^5 - \zeta^3 + \zeta]$
and the basis of linear relations is $\sum\alpha_i=1$
and $\alpha_1-\alpha_4=-\alpha_2+\alpha_5=\alpha_3-\alpha_6$.
$\#G=12$, $\det((\bm{m}_i,\bm{m}_j))=72$ and $Pr(f,\sigma)>0$ holds if and only if $\sigma$ is in one of 18 cosets of $G$ represented
by $id,(2,3),(2,5),(3,4),(3,6),(4,5), (5,6), $ $(2,3)(4,5), (2,3)(5,6), (2,4)(3,5),$ $ (2,5)(3,4),
(3,4)(5,6),(3,5)(4,6),(4,5)(3,6),  (3,5,4,6), (3,6,4,5),$ $\linebreak (2,4,3,5),(2,5,3,4).$

\end{enumerate}
\subsection{$S_3$}
To see the case of $S_3$-extension, we need more,
and from now on, we assume that the absolutely  irreducible character $\chi$ is associated with a rational matrix representation $A$,
hence there is a minimal ideal $\mathfrak{m}$ of a simple algebra $\mathbb{Q}[G]c_\chi (\subset\mathbb{Q}[G])$ with
a basis  $\{w_1,\dots,w_{\chi(1)}\}$ of $\mathfrak{m}$ over $\mathbb{Q}$ such that
$$
g(w_1,\dots,w_{\chi(1)})=(w_1,\dots,w_{\chi(1)})A(g).
$$
The correspondence $g\mapsto A(g)$ is extended to  an isomorphism from 
$\mathbb{C}[G]c_\chi$ to the matrix ring $M_{\chi(1)}(\mathbb{C})$  with 
$A(\mathbb{Q}[G]c_\chi)=M_{\chi(1)}(\mathbb{Q})$
because of $\mathbb{Q}$ being a splitting field for the representation $A$.
Through this correspondence, take $v_{i,j}\in \mathbb{Q}[G]c_\chi $ satisfying
 \begin{equation*}
v_{i,j}(w_1,\dots,w_{\chi(1)})=(w_1,\dots,w_{\chi(1)})E_{j,i}, \text{ i.e. }v_{i,j}w_k=\delta_{i,k}w_j,
\end{equation*}
where $E_{j,i}$ is the matrix whose $(j,i) $-entry is $1$, otherwise $0$.
Therefore the set $\{v_{i,j}\mid 1\le i,j\le \chi(1)\} $ is a basis of $\mathbb{Q}[G]c_\chi$ over $\mathbb{Q}$.
The identity $C(\chi)=c_\chi$ is obvious  by the assumption that $\chi$ is associated with  a rational matrix representation $A$.

%
\begin{lem}\label{lem3.1}
We have
\begin{gather}\label{eq31}
g(v_{i,1},\dots,v_{i,\chi(1)})=(v_{i,1},\dots,v_{i,\chi(1)})A(g)\text{ for } {}^\forall g\in G,
\\\label{eq32}
v_{i,j}v_{k,l}=\delta_{i,l}v_{k,j}\text{ for } 1\le i,j,k,l\le\chi(1). 
\end{gather}
\end{lem}
\proof
{
Write $gv_{i,j}=\sum_{1\le k,l\le\chi(1)}c_{k,l}v_{k,l}$ with $c_{k,l}\in \mathbb{Q}$;
then we see that $gv_{i,j}w_m$ is $\sum_{k,l}c_{k,l}v_{k,l}w_m=
\sum_{k,l}c_{k,l}\delta_{k,m}w_l=\sum_{1\le l\le\chi(1)}c_{m,l}w_l  =\sum_{1\le k\le\chi(1)}c_{m,k}w_k $.
On the other hand,  we  find that $gv_{i,j}w_m\linebreak[3]=g\delta_{i,m}w_j=\delta_{i,m}\sum_{1\le k\le\chi(1)}w_kA(g)_{k,j}$,
where $A(g)_{k,j}$ is the $(k,j)$-entry of $A(g)$.
Comparing the coefficients of $w_k$ of two equations, we have $c_{m,k}=\delta_{i,m}A(g)_{k,j}$,
i.e. $c_{k,l}=\delta_{i,k}A(g)_{l,j}$.
Hence  we have 
\begin{align*}
gv_{i,j}&=\sum_{1\le k,l\le\chi(1)}\delta_{i,k}A(g)_{l,j}v_{k,l}
\\
&= \sum_{1\le l\le\chi(1)}A(g)_{l,j}v_{i,l} ,
\end{align*}
 i.e. \eqref{eq31}.
The second equation \eqref{eq32} follows from
\begin{align*}
v_{i,j}v_{k,l}(w_1,\dots,w_{\chi(1)})&=v_{i,j}(w_1,\dots,w_{\chi(1)})E_{l,k}
\\
&=(w_1,\dots,w_{\chi(1)})E_{j,i}E_{l,k}
\\
&=\delta_{i,l}(w_1,\dots,w_{\chi(1)})E_{j,k}
\\
&=\delta_{i,l}v_{k,j}(w_1,\dots,w_{\chi(1)}).
\end{align*}

\qed
}

\begin{lem}\label{lem3.2}
For $M=\sum_{1\le k,l\le\chi(1)}m_{k,l}v_{l,k}\in \mathbb{Q}[G]c_\chi$ with $m_{k,l}\in \mathbb{Q}$, 
we have   
\begin{equation}\label{eq33}
A(M(\sum_{g\in G}g^{-1}(\alpha)g))=(m_{i,j})(\sum_{g\in G}g^{-1}(\alpha)A(g)).
\end{equation}
\end{lem}
\proof
{

It is easy to see that
\begin{align*}
M(w_1,\dots,w_{\chi(1)})&=\sum_{k,l}m_{k,l}v_{l,k}(w_1,\dots,w_{\chi(1)})
\\
&=\sum_{k,l}m_{k,l}(w_1,\dots,w_{\chi(1)})E_{k,l}
\\
&=
(w_1,\dots,w_{\chi(1)})(m_{i,j}),
\end{align*}
and
$$
\sum_{g\in G}g^{-1}(\alpha)g(w_1,\dots,w_{\chi(1)})=
(w_1,\dots,w_{\chi(1)})(\sum_gg^{-1}(\alpha)A(g)),
$$
hence
$$
M(\sum_{g\in G}g^{-1}(\alpha)g)(w_1,\dots,w_{\chi(1)})=
(w_1,\dots,w_{\chi(1)})(m_{i,j})(\sum_gg^{-1}(\alpha)A(g)),
$$
i.e. \eqref{eq33}.
\qed
}

\begin{prop}\label{prop3.7}Each column of the matrix  $B:=\sum_{g\in G}g^{-1}(\alpha)A(g)$
is  equal to a linear combination of the first column of $h(B)$ $(h\in G)$ over $\mathbb{Q}$.
\end{prop}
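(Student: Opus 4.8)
The plan is to work with the matrix $B=\sum_{g\in G}g^{-1}(\alpha)A(g)\in M_{\chi(1)}(\mathbb{C})$ and exploit the module structure coming from the minimal ideal $\mathfrak m$ with its basis $w_1,\dots,w_{\chi(1)}$ on which $G$ acts via $A$. First I would recall from Lemma \ref{lem3.2} (applied with $M=c_\chi$, which acts as the identity on $\mathbb{Q}[G]c_\chi$, so $A(c_\chi)=I$) that $B$ itself arises from the element $\sum_{g\in G}g^{-1}(\alpha)g$ of $\mathbb{C}[G]c_\chi$ under the representation $A$; more precisely $\bigl(\sum_{g}g^{-1}(\alpha)g\bigr)(w_1,\dots,w_{\chi(1)})=(w_1,\dots,w_{\chi(1)})B$. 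The key point is then to multiply on the left by an arbitrary $h\in G$: since $h$ acts on the row vector $(w_1,\dots,w_{\chi(1)})$ by right multiplication by $A(h)$ (this is the defining relation of $\mathfrak m$), and left multiplication by $h$ inside $\mathbb{C}[G]$ simply permutes/rescales the sum $\sum_g g^{-1}(\alpha)g$, I would compute $h(B)$ and $A(h)B$ in terms of the $w_i$ and compare.

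The second step is the explicit manipulation of the sum. Writing $S:=\sum_{g\in G}g^{-1}(\alpha)g\in\mathbb{C}[G]$, one has $hS=\sum_{g}g^{-1}(\alpha)(hg)=\sum_{g'} (h^{-1}g')(\alpha)\,g' = \sum_{g'} (g')^{-1}\!\bigl(h(\alpha)\bigr)\,g'$ after the substitution $g'=hg$ — so left multiplication by $h$ replaces $\alpha$ by $h(\alpha)$ in the coefficients. Hence $h$ applied to $B$ (meaning: the matrix obtained by letting $g\mapsto A(g)$ after the coefficient field automorphism is irrelevant here since $A$ is rational; what matters is the group-algebra identity) gives $A(h)B = \sum_g (g)^{-1}(h(\alpha))A(g)$ on the one hand, while on the other hand $B$ has columns indexed by $1,\dots,\chi(1)$, and I would track how the first column of $h(B)$, i.e. of the matrix $\sum_g g^{-1}(h(\alpha))A(g)$, relates to the columns of $B$ via the entries $A(h)_{i,j}$. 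Concretely, using $g(v_{i,1},\dots,v_{i,\chi(1)})=(v_{i,1},\dots,v_{i,\chi(1)})A(g)$ from \eqref{eq28}, one sees that the $j$-th column of $B$ equals $\sum_{k} A(h)_{k,j}\cdot(\text{first column of } h^{-1}(B))$ for each fixed $h$ with suitable index; rearranging, each column of $B$ is a $\mathbb{Q}$-linear combination (the coefficients being entries of $A(h)$, which are rational) of the first columns of the various $h(B)$, $h\in G$.

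The cleanest route, which I would actually write up, is to apply Lemma \ref{lem3.2} with $M=v_{1,j}$: since $v_{1,j}w_k=\delta_{1,k}w_j$, the matrix $(m_{i,k})$ associated to $v_{1,j}$ is $E_{j,1}$, so \eqref{eq30} gives $A\!\bigl(v_{1,j}(\sum_g g^{-1}(\alpha)g)\bigr)=E_{j,1}B$, which is the matrix whose first row is the $j$-th row of $B$ and whose other rows vanish — i.e. the rows of $B$ are captured by the elements $v_{1,j}S$. Dually, using $v_{i,1}$ one captures the columns of $B$ as $A(v_{i,1}S)=E_{1,i}B^{?}$; then expressing $v_{i,1}=\sum_g (\text{rational})\,g$ via the inverse of the map $g\mapsto A(g)$ on $\mathbb{Q}[G]c_\chi$ (possible precisely because $\mathbb{Q}$ splits $A$), one rewrites $v_{i,1}S=\sum_h c_h\, hS$ with $c_h\in\mathbb{Q}$, and $A(hS)=h(B)$ in the sense above, whose relevant column is the first one. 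Matching the $i$-th column of $B$ to $A(v_{i,1}S)$ and then to $\sum_h c_h\,(\text{first column of }h(B))$ finishes the argument. The main obstacle I anticipate is purely bookkeeping: keeping straight the left/right actions (the $v_{i,j}$ satisfy $v_{i,j}w_k=\delta_{i,k}w_j$, so indices transpose when passing to matrices, as already visible in Lemma \ref{lem3.1}), and making sure the coefficients $c_h$ expressing $v_{i,1}$ in the group basis are genuinely rational — which is exactly where the hypothesis that $\chi$ is realized over $\mathbb{Q}$ (so $A(\mathbb{Q}[G]c_\chi)=M_{\chi(1)}(\mathbb{Q})$) is used. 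No deep idea is needed beyond that; it is the rational-splitting hypothesis plus the module identities \eqref{eq28}–\eqref{eq30} assembled carefully.
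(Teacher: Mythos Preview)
There is a genuine gap. Your second paragraph correctly computes $A(h)B=\sum_{g}g^{-1}(h(\alpha))A(g)$, but then identifies this with $h(B)$; that identification is wrong for non-abelian $G$. Applying $h$ entry-wise to $B$ gives $h(B)_{ij}=\sum_g (hg^{-1})(\alpha)A(g)_{ij}$, whereas your expression has $(g^{-1}h)(\alpha)$ in the coefficient. The correct relation is $h(B)=BA(h)$ (check it by substituting $g'=gh^{-1}$ in $h(B)$), i.e.\ $h(B)$ corresponds to \emph{right} multiplication by $h$ on $S$, not left.

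This left/right mix-up propagates into your ``cleanest route''. With $M=v_{1,j}$ you get $A(v_{1,j}S)=E_{j,1}B$, and $E_{j,1}B$ has row $1$ of $B$ sitting in row $j$ (not the other way around); likewise $A(v_{i,1}S)=E_{1,i}B$ still extracts a \emph{row}. To isolate the $j$th \emph{column} of $B$ you need $A(Sv_{1,j})=B\,E_{j,1}$, i.e.\ right multiplication. Then expressing $v_{1,j}c_\chi=\sum_h c_h\,hc_\chi$ with $c_h\in\mathbb{Q}$ and using $A(Sh)=BA(h)=h(B)$ would indeed give the $j$th column of $B$ as $\sum_h c_h\cdot(\text{first column of }h(B))$. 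So your strategy can be repaired, but as written it does not go through.

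The paper's argument is more direct and sidesteps the $v_{i,j}$ entirely: since $\mathfrak m$ is a cyclic $\mathbb{Q}[G]$-module generated by $w_1$, write $w_j=\sum_h c_{j,h}\,hw_1$ with $c_{j,h}\in\mathbb{Q}$. Reading off coordinates of $gw_j$ in the basis $(w_k)$ gives that the $j$th column of $A(g)$ equals $\sum_h c_{j,h}\cdot(\text{first column of }A(gh))$. Multiplying by $g^{-1}(\alpha)$, summing over $g$, and substituting $g'=gh$ yields the $j$th column of $B$ as $\sum_h c_{j,h}\cdot(\text{first column of }h(B))$ in one line. The rational-splitting hypothesis enters only through the existence of the rational $c_{j,h}$, exactly as you anticipated.
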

\proof
{
Taking elements $c_{j,g}\in \mathbb{Q}$ such that $w_j = \sum_{h\in G} c_{j,h}hw_1$,
we have $gw_j=\sum_{h\in G}c_{j,h}ghw_1$, hence
\begin{align*}
gw_j&=(w_1,\dots,w_{\chi(1)})\times(\text{\,the $j$th column of } A(g))
\\
&=\sum_{h\in G} c_{j,h}(w_1,\dots,w_{\chi(1)})\times(\text{\,the first column of } A(gh))),
\end{align*}
therefore
\begin{align*}
\text{\,the $j$th column of } A(g)
=\sum_h c_{j,h}\times(\text{\,the first column of } A(gh)).
\end{align*}
Thus the $j$-th column of $B$ is
\begin{align*}
&\sum_{g\in G} g^{-1}(\alpha)\times(\text{ the $j$th column of }A(g))
\\
=\,& \sum_{g,h\in G} g^{-1}(\alpha)c_{j,h}\times(\text{ the first column of }A(gh)) 
\\
=\,& \sum_{g,h\in G} c_{j,h}hg^{-1}(\alpha)\times(\text{ the first column of }A(g)) 
\\
=\,& \sum_{h\in G} c_{j,h}h\left(\sum_{g\in G}g^{-1}(\alpha)\times(\text{ the first column of }A(g)) \right).
\\
=\,& \sum_{h\in G} c_{j,h}\times\text{ the first column of }h(B) .
\end{align*}
\qed
}
\vspace{2mm}

%
%
%
\begin{thm}\label{thm3.2}For $M=Mc_\chi\in\mathbb{Q}[G]$, write 
$$
Mc_\chi=\sum_{1\le k,l\le\chi(1)}m_{k,l}v_{l,k}\in \mathbb{Q}[G]c_\chi\quad (m_{k,l}\in \mathbb{Q}).
$$
Then $M\in LR(\chi)$ holds  if and only if
\begin{equation}\label{eq34}
(m_{i,j})\left(\sum_{g\in G} g^{-1}(\alpha)A(g)\right)=0^{(\chi(1))}.
\end{equation}
Moreover it is equivalent to 
\begin{equation}\label{eq35}
(m_{i,j})\times(\text{ the first column of }\sum_{g\in G} g^{-1}(\alpha)A(g))
=0^{(\chi(1),1)}.
\end{equation}
\end{thm}
\proof{For $M=Mc_\chi\in \mathbb{Q}[G]$, the condition $M\in LR(\chi)$ is, by definition
 equivalent to   $M(\sum_gg^{-1}(\alpha)g)=0$, which is $(m_{i,j})(\sum_g g^{-1}(\alpha)A(g))=0^{(\chi(1))}$,
i.e. \eqref{eq34} by Lemma \ref{lem3.2}.
Then \eqref{eq34} follows from Proposition \ref{prop3.7}. 
\qed
}
\vspace{1mm}

%
%
%
%
%

Now let us  take an irreducible polynomial $f(x)$ of degree $6$ with root $\alpha$
such that $\mathbb{Q}(\alpha)$ is a Galois extension of $\mathbb{Q}$ with Galois group 
isomorphic to $S_3$.
Denote the Galois group $Gal(\mathbb{Q}(\alpha)/\mathbb{Q})$ by $G$, and
let $G=\langle\sigma,\mu\rangle$ with $\sigma^3=\mu^2=1,\mu\sigma\mu=\sigma^2$.
The characters of $G$ are
\begin{enumerate}
\item\label{c'1}
the trivial character $\chi_1$ with $c_{\chi_1}=\frac{1}{6}\sum_gg$,
\item\label{c'2}
the character $\chi_2$ of degree $1$ defined by $\chi(\sigma)=1,\chi(\mu)=-1$
with $c_{\chi_2}=\frac{1}{6}(\sum_{i=0}^2\sigma^i -\sum_{i=0}^2\sigma^i\mu)$,
\item\label{c'3}
the character of degree $2$ corresponding to the representation 
$$
A(\sigma)=\left(\begin{array}{cc}0&-1\\1&-1\end{array}\right),\,
A(\mu)=\left(\begin{array}{cc}0&1\\1&0\end{array}\right)
$$
with $c_{\chi_3}=\frac{1}{3}(2-\sigma-\sigma^2) \in \mathbb{Q}[G]$
and $\mathbb{Q}[G]c_{\chi_3}=\langle 1-\sigma,\sigma-\sigma^2,\mu-\sigma\mu,\sigma\mu-\sigma^2\mu\rangle_{\mathbb{Q}}$.
\end{enumerate}
For simplicity, we number roots of $f(x)$ as follows:
\begin{equation*}
\begin{array}{lll}
\alpha_1 := \alpha,&\alpha_2:=\sigma(\alpha),&\alpha_3:=\sigma^2(\alpha),
\\
\alpha_4:=\mu(\alpha),&\alpha_5:=\sigma(\alpha_4),&\alpha_6:=\sigma^2(\alpha_4),
\\
(\mu(\alpha_2)=\alpha_6,&\mu(\alpha_3)=\alpha_5)&
\end{array}
\end{equation*}
and abbreviate as
\begin{equation}\nonumber
m_1=m_{\sigma^0},\,
m_2=m_{\sigma},\,m_3=m_{\sigma^2},\,m_4=m_{\mu},\,m_5=m_{\sigma\mu}
,\,m_6=m_{\sigma^2\mu}.
\end{equation}

\noindent
In case of $\chi_2$,  Theorem \ref{thm3.1} implies that if $LR(\chi_2)\ne0$, then
$$
LR(\chi_2)=\{m(1+\sigma+\sigma^2-\mu-\sigma\mu-\sigma^2\mu)\mid m\in\mathbb{Q}\}.
$$
Let us consider the case $\chi:=\chi_3$.

Put
\begin{align*}
v_1&:=(1-\sigma-\sigma\mu+\sigma^2\mu)/3\,(=v_1c_\chi),
\\
v_2&:=-(\sigma-\sigma^2-\mu+\sigma\mu)/3\,(=v_2c_\chi=-v_1\sigma),
\end{align*}
then it is easy to see that
 for $i=1,2$
 \begin{align*}
\sigma(v_i,\sigma v_i)=(v_i,\sigma v_i)A(\sigma),\mu(v_i,\sigma v_i)=(v_i,\sigma v_i)A(\mu),
\end{align*}
and
\begin{align*}
v_1(v_i,\sigma v_i)=(v_i,\sigma v_i)\left(\begin{array}{rr}1&0\\0&0\end{array}\right),
\sigma v_1(v_i,\sigma v_i)=(v_i,\sigma v_i)\left(\begin{array}{rr}0&0\\1&0\end{array}\right),
\\
v_2(v_i,\sigma v_i)=(v_i,\sigma v_i)\left(\begin{array}{rr}0&1\\0&0\end{array}\right),
\sigma v_2(v_i,\sigma v_i)=(v_i,\sigma v_i)\left(\begin{array}{rr}0&0\\0&1\end{array}\right).
\end{align*}
Thus $v_{1,1}:=v_1,v_{1,2}:=\sigma v_1,v_{2,1}:=v_2,v_{2,2}:=\sigma v_2$ satisfy \eqref{eq31}
and \eqref{eq32}, and $v_{i,j}$ are in $\mathbb{Q}[G]$ for  $i,j=1,2$.
These  imply, for $M=c_1v_1+c_2\sigma v_1+c_3v_2+c_4\sigma v_2$
\begin{equation}\label{eq36}
M(v_i,\sigma v_i)=(v_i,\sigma v_i)\left(
\begin{array}{rr}c_1&c_3\\c_2&c_4\end{array}
\right)\,(=(v_i,\sigma v_i)C, \text{ say}).
\end{equation}
Moreover we see, for $i=1,2$
\begin{align*}
&(\sum_g g^{-1}(\alpha)g)(v_i,\sigma v_i)
\\=\,&(v_i,\sigma v_i)
\left(\begin{array}{cc}
\alpha_1-\alpha_2-\alpha_5+\alpha_6&\alpha_2-\alpha_3+\alpha_4-\alpha_6
\\
-\alpha_2+\alpha_3+\alpha_4-\alpha_5&\alpha_1-\alpha_3+\alpha_5-\alpha_6
\end{array}\right)
\\
=&\,(v_i,\sigma v_i){\bf{A}}, \text{ say},
\end{align*}
hence the condition $M\in LR(\chi)$ is equivalent to $C{\bf{A}}=0.$
The second column of ${\bf{A}}$ is the image of the first column by $\sigma$,
hence $C{\bf{A}}=0$ is certainly equivalent to $C\times(\text{the first column of }{\bf{A}})=0$.
Let us see  $({\bf{A}}_{1,1},{\bf{A}}_{2,1})\ne(0,0)$.
If ${\bf{A}}_{1,1}={\bf{A}}_{2,1}=0$ holds,
then we get ${\bf{A}}_{1,1}+\sigma({\bf{A}}_{2,1})=2\alpha_1-\alpha_2-\alpha_3=0$ 
and $2\alpha_2-\alpha_3-\alpha_1=0$, acting $\sigma$. Their  difference is $3(\alpha_1-\alpha_2)=0$,  which is a contradiction.

\noindent
Let us see $LR(\chi)$ explicitly.
\begin{prop}\label{prop3.8}
Continue the above and suppose that  $LR(\chi)\ne\{0\}$;
then there are rational numbers $a,b$ with $(a,b)\ne(0,0)$ such that 
\begin{equation}\label{eq37}
a(\alpha_1-\alpha_2-\alpha_5+\alpha_6)=
b(\alpha_2-\alpha_3-\alpha_4+\alpha_5),
\end{equation}
and the basis of $LR(\chi)$ is given by $av_1+bv_2,\sigma(av_1+bv_2)$,
and corresponding linear relations are spanned by
$a(\alpha_1-\alpha_2-\alpha_5+\alpha_6)+b(-\alpha_2+\alpha_3+\alpha_4-\alpha_5)=0$
and $a(\alpha_2-\alpha_3+\alpha_4-\alpha_6)+b(\alpha_1-\alpha_3+\alpha_5-\alpha_6)=0$.
\end{prop}
\proof
{
Suppose $LR(\chi)\ne0$ and take a non-zero element $M=
c_1v_1+c_2\sigma v_1+c_3v_2+c_4\sigma v_2\in \mathbb{Q}[G]C(\chi)$.
 Then at least one of $c_i$ is not zero,
 hence there are rational numbers $a,b$ with $(a,b)\ne(0,0)$ satisfying $a{\bf{A}}_{1,1}+b{\bf{A}}_{2,1}=0$,
 i.e. \eqref{eq37}.
 The we have $\left|\begin{array}{rr}{\bf{A}}_{1,1}&b\\-{\bf{A}}_{2,1}&a\end{array}\right|
 =\left|\begin{array}{rr}{\bf{A}}_{1,1}&c_3\\-{\bf{A}}_{2,1}&c_1\end{array}\right|=
 \left|\begin{array}{rr}{\bf{A}}_{1,1}&c_4\\-{\bf{A}}_{2,1}&c_2\end{array}\right|=0$
by $C{\bf{A}}=0$.
Therefore,   there are rational numbers $\kappa_1,\kappa_2$
 such that $(c_3,c_1)=\kappa_1(b,a),(c_4,c_2)=\kappa_2(b,a)$, thus we have
 $M= \kappa_1(av_1+bv_2)+\kappa_2(a\sigma v_1+b\sigma v_2)$.

 \qed
}
\vspace{2mm}

\noindent
{\bf Example 3.2}\label{ex3.2}
Let $f_0(x):=x^6+3$ with a root  $\zeta$.
For a primitive third root $\omega$ of unity, define the automorphisms $\sigma$, $\mu$ by $\sigma(\zeta)=\omega\zeta$ and $\mu(\zeta)=-\zeta$.
Roots are $[\alpha_1,\dots,\alpha_6]=[\zeta,-\zeta^4/2-\zeta/2, \zeta^4/2-\zeta/2,
                                                                            -\zeta,\zeta^4/2+\zeta/2, -\zeta^4/2+\zeta/2]$,
which satisfy $\alpha_1+\alpha_2+\alpha_3=\alpha_4+\alpha_5+\alpha_6$, and
$\alpha_1-\alpha_2-\alpha_5+\alpha_6=\alpha_2-\alpha_3-\alpha_4+\alpha_5$ and
$\alpha_2-\alpha_3+\alpha_4-\alpha_6=-\alpha_1+\alpha_3-\alpha_5+\alpha_6$,
that is $LR(\chi_2)\ne0,LR(\chi_3)\ne0$.

The following three polynomials give the same field $\mathbb{Q}(\zeta)$ as $f_0(x)$.
\vspace{1mm}

\noindent
A polynomial $f_1(x):=x^6 + 6x^5 + 24x^4 + 14x^3 + 15x^2 - 12x + 16$ with roots 
$
-\zeta^3 -\zeta^2 - \zeta - 1,\,
-\zeta^5/2 + \zeta^4/2 - \zeta^3 + \zeta^2/2 + \zeta/2 - 1,\,
\zeta^5/2 - \zeta^4/2 - \zeta^3 + \zeta^2/2 + \zeta/2 - 1,\,
\zeta^3 - \zeta^2 + \zeta - 1,\,
-\zeta^5/2 - \zeta^4/2 + \zeta^3 + \zeta^2/2 - \zeta/2 - 1,\,
\zeta^5/2 + \zeta^4/2 + \zeta^3 + \zeta^2/2 - \zeta/2 - 1
$ 
has no non-trivial linear relation.
\vspace{1mm}

\noindent
A polynomial $f_2(x) :=x^6 + 6x^5 + 15x^4 + 14x^3 + 24x^2 + 24x + 16$ with roots
$
-\zeta^2 -\zeta - 1,\,
-\zeta^5/2 + \zeta^4/2 + \zeta^2/2 + \zeta/2 - 1,\,
\zeta^5/2 - \zeta^4/2 + \zeta^2/2 + \zeta/2 - 1,\,
-\zeta^2 + \zeta - 1,\,
-\zeta^5/2 - \zeta^4/2 + \zeta^2/2 - \zeta/2 - 1,\,
\zeta^5/2 + \zeta^4/2 + \zeta^2/2 - \zeta/2 - 1
$ has linear relations $\sum\alpha_i=-6$ and $\alpha_1+\alpha_2+\alpha_3-\alpha_4-\alpha_5-\alpha_6=0$ 
corresponding to $\chi_2$ only.
\vspace{1mm}

\noindent
A polynomial $f_3(x) :=x^6 + 6x^5 + 24x^4 + 56x^3 + 114x^2 + 132x + 67$ with roots
$
-\zeta^3 - \zeta - 1,\,
\zeta^4/2 - \zeta^3 + \zeta/2 - 1,\,
-\zeta^4/2 - \zeta^3 + \zeta/2 - 1,\,
\zeta^3 + \zeta - 1,\,
-\zeta^4/2 + \zeta^3 - \zeta/2 - 1,\,
\zeta^4/2 + \zeta^3 - \zeta/2 - 1
$
has the  linear relation $\sum\alpha_i=-6$ and the one corresponding to $\chi_3$ with $a=b= 1$ at \eqref{eq37} only.

The author does not know what the set of values $a/b$ is.
Numbers $2/7,37/17,\linebreak[3]\dots$ are such examples.

\noindent
{\bf Remark }
Let $g_1(x)$  be a polynomial with roots $\alpha_1,\dots,\alpha_n$ and  $g_2(x)$   a polynomial 
with roots $\beta_1,\dots,\beta_m$ and   $f(x):=g_1(x)g_2(x)$.
Suppose that $\mathbb{Q}(g_1) \cap \mathbb{Q}(g_2)=\mathbb{Q}$;
then a linear relation $\sum l_i\alpha_i+\sum m_i\beta_i\in \mathbb{Q}$ with all
$ l_i,m_j\in\mathbb{Q}$  implies  obviously
$\sum l_i\alpha_i,\sum m_i\beta_i\in \mathbb{Q}$.
However, in case of $\mathbb{Q}(g_1) \cap \mathbb{Q}(g_2)\not=\mathbb{Q}$
a linear relation among root of $f(x)$ is not necessarily reduced to those of $g_1(x),g_2(x)$ (cf. {\bf{\ref{subsec6.4}}}).
\section{Dependency of $Pr(f,\sigma),\frak{D}(f,\sigma)$ on $\sigma$}\label{sec4}
It is easy to see that for $\nu \in S_n$, the condition $\nu \in \hat {\bm G}$ is equivalent to that 
there is a unimodular matrix $A_\nu$  of degree $t$ such that
\begin{equation}\label{eq38}
\begin{pmatrix}
\nu(\hat{\bm m}_1)
\\
\vdots
\\
\nu(\hat{\bm m}_t)
\end{pmatrix}
=A_\nu
\begin{pmatrix}
\hat{\bm m}_1
\\
\vdots
\\
\hat{\bm m}_t
\end{pmatrix},
\end{equation}
and $\nu \in  {\bm G}$ is equivalent to that 
there is a unimodular  matrix $A'_\nu$ of degree $t$  such that
\begin{equation}\label{eq39}
\begin{pmatrix}
\nu({\bm m}_1)
\\
\vdots
\\
\nu({\bm m}_t)
\end{pmatrix}
=A'_\nu
\begin{pmatrix}
{\bm m}_1
\\
\vdots
\\
{\bm m}_t
\end{pmatrix},
\end{equation}
and \eqref{eq38} implies \eqref{eq39} for $A'_\nu=A_\nu$,
hence $\hat {\bm G}$ is a subgroup of ${\bm G}$.
Here the unimodularity of  matrices $A_\nu,A'_\nu$ is replaced by the  integrality,
since the elementary divisors of the matrix whose rows are $\bm{m_i}$ are $1$.

\begin{prop}\label{prop4.1}
We have
\begin{align*}
\hat{{\bm G}}&=\{\sigma\in S_n\mid\sum_i m_{j,i}\alpha_{\sigma(i)}=m_j  \,({}^\forall j)\}
\\
&=\{\sigma\in S_n\mid\sum_i  l_i\alpha_{\sigma(i)}=l_{n+1} \,\text{ for }{}^\forall (l_1,\dots,l_n,
l_{n+1})\in LR\},
\\
{\bm G}&=\{\sigma\in S_n\mid\sum_i m_{j,i}\alpha_{\sigma(i)}\in\mathbb{Z}  \,({}^\forall j)\}
\\
&=\{\sigma\in S_n\mid\sum_i l_{i}\alpha_{\sigma(i)}\in\mathbb{Q}  \, \text{ for }{}^\forall
 (l_1,\dots,l_{n})\in LR_0 \}.
\end{align*}
\end{prop}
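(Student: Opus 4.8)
The plan is to reduce both descriptions to a single reindexing identity together with the observation, already used above to identify $LR\cap\mathbb{Z}^{n+1}$ with $LR_0\cap\mathbb{Z}^n$, that a $\mathbb{Z}$-linear combination of the algebraic integers $\alpha_i$ is rational precisely when it is a rational integer. First I would record that for $\bm{x}=(x_1,\dots,x_n)\in\mathbb{R}^n$ and $\sigma\in S_n$,
\[
\sum_{i=1}^n (\sigma(\bm{x}))_i\,\alpha_i=\sum_{i=1}^n x_i\,\alpha_{\sigma(i)},
\]
which follows at once from $\sigma(\bm{x})=(x_{\sigma^{-1}(1)},\dots,x_{\sigma^{-1}(n)})$ by the substitution $k=\sigma^{-1}(i)$. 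Applying this with $\bm{x}=\bm{m}_j$ and using $\sigma(\hat{\bm{m}}_j)=(\sigma(\bm{m}_j),m_j)$ yields the two key equivalences: $\sigma(\hat{\bm{m}}_j)\in LR\iff\sum_i m_{j,i}\alpha_{\sigma(i)}=m_j$, and $\sigma(\bm{m}_j)\in LR_0\iff\sum_i m_{j,i}\alpha_{\sigma(i)}\in\mathbb{Q}$.

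For $\hat{\bm{G}}$: since $\sigma$ permutes coordinates $\mathbb{Q}$-linearly and $LR=\sum_{j}\mathbb{Q}\hat{\bm{m}}_j$, the condition $\sigma(LR)\subset LR$ defining $\hat{\bm{G}}$ is equivalent to $\sigma(\hat{\bm{m}}_j)\in LR$ for every $j$, which by the first equivalence is $\sum_i m_{j,i}\alpha_{\sigma(i)}=m_j$ for every $j$; this is the first displayed formula for $\hat{\bm{G}}$. For the second, the inclusion $\supseteq$ is immediate because each $\hat{\bm{m}}_j$ lies in $LR$, and for $\subseteq$ I would take any $(l_1,\dots,l_{n+1})\in LR$, write it as $\sum_j q_j\hat{\bm{m}}_j$ with $q_j\in\mathbb{Q}$ (the $\hat{\bm{m}}_j$ being a $\mathbb{Q}$-basis of $LR$), and compute $\sum_i l_i\alpha_{\sigma(i)}=\sum_j q_j\sum_i m_{j,i}\alpha_{\sigma(i)}=\sum_j q_j m_j=l_{n+1}$.

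The argument for $\bm{G}$ is parallel, with $LR$ replaced by $LR_0$ and $\hat{\bm{m}}_j$ by $\bm{m}_j$, which form a $\mathbb{Q}$-basis of $LR_0$ since the projection $LR\to LR_0$ is an isomorphism. The only new ingredient is the discrepancy between $\mathbb{Z}$ and $\mathbb{Q}$ in the statement: each sum $\sum_i m_{j,i}\alpha_{\sigma(i)}$ is a $\mathbb{Z}$-combination of algebraic integers, hence is itself an algebraic integer, so for these particular sums ``$\in\mathbb{Q}$'' and ``$\in\mathbb{Z}$'' coincide. Thus $\sigma\in\bm{G}\iff\sigma(\bm{m}_j)\in LR_0\ (\forall j)\iff\sum_i m_{j,i}\alpha_{\sigma(i)}\in\mathbb{Z}\ (\forall j)$, and the passage to arbitrary $(l_1,\dots,l_n)\in LR_0$ proceeds as before, now using only that a $\mathbb{Q}$-combination of integers is rational. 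I do not expect a genuine obstacle here; the only points requiring care are the bookkeeping of $\sigma$ versus $\sigma^{-1}$ in the reindexing step and remembering to invoke the algebraic-integer fact, so that the $\mathbb{Z}$-version of the description of $\bm{G}$, rather than merely the $\mathbb{Q}$-version, comes out.
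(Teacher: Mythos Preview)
Your proposal is correct and follows essentially the same approach as the paper: both reduce the defining condition $\sigma(LR)\subset LR$ (respectively $\sigma(LR_0)\subset LR_0$) to the basis vectors via the reindexing identity $\sum_i m_{j,\sigma^{-1}(i)}\alpha_i=\sum_i m_{j,i}\alpha_{\sigma(i)}$, and then pass to arbitrary elements of $LR$ (respectively $LR_0$) by $\mathbb{Q}$-linearity. Your explicit invocation of the algebraic-integer fact to pass from $\mathbb{Q}$ to $\mathbb{Z}$ in the first description of $\bm{G}$ makes a point the paper leaves implicit in its ``similarly,'' but the underlying argument is the same.
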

\proof{
Let $\sigma\in S_n$. 
Suppose the condition $\sum_i m_{j,i}\alpha_{\sigma(i)}=m_j  $, which  is clearly equivalent to
 $\sum_i m_{j,\sigma^{-1}(i)}\alpha_i=m_j  $, hence there are integers $a_{i,j}$ such that
$$
\sigma(\hat{\bm{m}}_j)=(m_{j,\sigma^{-1}(1)},\dots,m_{j,\sigma^{-1}(n)},m_j)=\sum_k a_{j,k}\hat{\bm{m}}_k,
$$
which means $\sigma\in \hat {\bm G}$.
If, conversely $\sigma$ is in $\hat {\bm G}$, then there are integers $a_{i,j}$ such that
$\sigma(\hat{\bm{m}_j}) = \sum_k a_{j,k}\hat{\bm{m}}_k$, i.e. 
$$
(m_{j,\sigma^{-1}(1)},\dots,m_{j,\sigma^{-1}(n)},m_j)=\sum_k
a_{j,k}(m_{k,1},\dots,m_{k,n},m_k).
$$
Hence, we see $\sum_i m_{j,i}\alpha_{\sigma(i)}=
\sum_i m_{j,\sigma^{-1}(i)}\alpha_i=\sum_{i,k}a_{j,k}m_{k,i}\alpha_i = \sum_{k}a_{j,k}m_k=m_j$.
Thus we have shown the first equality. 
The second equality follows from that $(m_{j,1},\dots,m_{j,n},m_n)$ $(j=1,\dots,t)$ are a basis of 
$LR\cap\mathbb{Z}^{n+1}$ over $\mathbb{Z}$.
The statement for  $G$ is proved similarly,
noting that the vectors $\bm{m}_1,\dots,\bm{m}_t$ are a basis of 
$LR_0\cap\mathbb{Z}^n=\{(l_1,\dots,l_n)\in\mathbb{Z}^n
\mid \sum l_i\alpha_i\in\mathbb{Z}\}$.
\qed
}
\begin{cor}\label{cor4.1}
If the polynomial $f(x)$ is irreducible, then  $\hat {\bm G}={\bm G}$ holds.
\end{cor}
\proof
{
Let a permutation $\sigma$ be in $\bm{G}$.
Take $(l_1,\dots,l_{n+1}) \in LR$; then we have only to show that
$l:=\sum_{i=1}^n l_{i}\alpha_{\sigma(i)}\in\mathbb{Q} $ is equal to $l_{n+1}$.
Denoting the trace from $\mathbb{Q}(f)$ to $\mathbb{Q}$ by $tr$, we have 
$tr(\alpha_1)=\dots=tr(\alpha_n)$ and
$[\mathbb{Q}(f):\mathbb{Q}]l  
=\sum_i l_{i}tr(\alpha_{\sigma(i)}) =\sum_i l_{i}tr(\alpha_{i})=tr(\sum_i l_{i}\alpha_{i})=tr(l_{n+1})=
[\mathbb{Q}(f):\mathbb{Q}]l_{n+1}$, hence $l=l_{n+1}$.
\qed
}

We remark that
the corollary is not necessarily true for a reducible polynomial (cf. {\bf{6.4.1}}, {\bf{6.4.2}}),
that is $l= l_{n+1}$ fails in the above proof.

\subsection{$Pr(f,\sigma)$}\label{subsect4.1}
To prove the next aim, we introduce  more notations:
\begin{align*}
\bm{G}_0&:=\{ \mu\in S_n\mid \alpha_{\mu(i)}=\tilde{\mu}(\alpha_i)\,\,(1\le{}^\forall i\le n)\text{ for }
{}^\exists \tilde{\mu}\in Gal(\mathbb{Q}(f)/\mathbb{Q}) \},
\\
M(f,\mu)&:=\{ p\in Spl(f) \mid   \alpha_i\equiv r_{\mu(i)}\bmod\mathfrak p\,\,\,(1\le{}^\forall i\le n) 
\text{ for }{}^\exists\mathfrak p\,|\,p  \},
\end{align*}
where $\mathfrak p$ denotes a prime ideal of $\mathbb{Q}(f)$ over $p$,
and then the prime ideal $\mathfrak p$ is unique if $p$ is relatively prime to 
$\prod_{\alpha_i\ne\alpha_j}(\alpha_i - \alpha_j)$, that is $p$ does not divide the integer 
$\prod_{\alpha_i\ne\alpha_j}(\alpha_i - \alpha_j)^2$.
$\bm{G}_0$ is a  subgroup of $\hat{\bm{G}}$ and is isomorphic to $Gal(\mathbb{Q}(f)/\mathbb{Q})$
if $f(x)$ has no multiple root.

It is clear that $Spl(f) = \cup_{\mu\in S_n} M(f,\mu)$, and the condition $Gal(\mathbb{Q}(f)/\mathbb{Q})= S_n$ implies $M(f,\mu)= Spl(f)$.
The notion $M(f,\mu)$ here is better than $M_\mu$ in \cite{K11}.

The set $M(f,\mu)$ depends on the numbering of roots $\alpha_i$'s.
Let $\mathbb{Q}(f) =\mathbb{Q}(\alpha)$ for an algebraic integer $\alpha$ with monic minimal 
  polynomial $F(x)$ and write  $\alpha_i=g_i(\alpha)$ $(i=1,\dots,n)$ for polynomials
$g_i(x)\in\mathbb{Q}[x]$.
Then we see that except finitely many primes $p$
\begin{align*}
M(f,\mu)&=\{
p\in Spl(f)  \mid g_i(\alpha)\equiv r_{\mu(i)}\bmod \mathfrak{p}\,\,\,(1\le{}^\forall i\le n) 
\text{ for }^\exists\mathfrak{p}\,|\,p 
\}
\\
&=\left\{ p\in Spl(f) \left|   
\begin{array}{l}
\text{there is an integer $r$ such that }F( r) \equiv 0 \bmod p 
\\
\text{and }g_i(r)\equiv r_{\mu(i)}\bmod p\,\,\,(1\le{}^\forall i\le n) 
\end{array}
\right.
 \right\}
\\
&=\left\{ p\in Spl(f) \left|   
\begin{array}{l}
\text{there is an integer $r$ such that }F( r) \equiv 0 \bmod p
\\
\text{and }
\{g_{\mu^{-1}(1)}(r)/p\}\le\dots\le\{g_{\mu^{-1}(n)}(r)/p\}
\end{array}\hspace{-2mm}
\right.
\right\},
\end{align*}
where $\{x\}$ means the decimal part of $x$, i.e. $0\le\{x\}<1$ and  $x-\{x\}\in\mathbb{Z}$.
We note that $\mathfrak{p}=(\alpha-r,p)$ is a prime ideal except finitely many primes $p$
 and the ideal $\mathfrak{p}$ and $r\bmod p$ above  are unique if the prime $p$
is sufficiently large.
For an integer $r$ satisfying $F(r)\equiv0\bmod p$ define a prime ideal $\mathfrak{p}$ by 
$(\alpha - r,p)$; then the equation $\alpha_i=g_i(\alpha)\equiv g_i(r)\bmod \mathfrak{p}$ implies
that there is a permutation $\eta\in S_n$ such that $g_i(r)\equiv r_{\eta(i)}\bmod p$
$(i=1,\dots,n)$, 
and it is easy to see that the coset $\eta \bm{G}_0$ is independent of the choice of the root $r$.
If an integer $r'$ has the same properties, then for the prime ideal $(\alpha-r',p)=\mathfrak{p}'=\sigma(\mathfrak{p})$ with some $\sigma\in Gal(\mathbb{Q}(f)/\mathbb{Q})$,
we have  $\alpha_i=g_i(\alpha)\equiv g_i(r')\equiv r_{\eta(i)}\bmod \mathfrak{p}'$,
hence $\sigma^{-1}(\alpha_i)\equiv r_{\eta(i)} \equiv \alpha_i\bmod \mathfrak{p}$.
Thus $\sigma$ is the identity on $\mathbb{Q}(f)=\mathbb{Q}(\alpha_1,\dots,\alpha_n)$,
hence we have $\mathfrak{p}=\mathfrak{p}'$, i.e. $r\equiv r'\bmod p$.

The formulation above leads us to a new question.

{\bf Problem}\label{prob1}
Let $f(x)$ be  an irreducible polynomial
and let $g_l(x),h_l(x)$, \newline $G_l(x_1,\dots,x_n),H_l(x_1,\dots,x_n)$ be polynomials in 
$\mathbb{Q}[x],\mathbb{Q}[x_1,\dots,x_n]$ $(1\le l\le m)$, respectively.
 What is the density of the following sets in  $Spl(f) $ ?
\begin{enumerate}
\item 
$$
\left\{ p\in Spl(f) \left|
\left\{\frac{g_{l}(r_i)}{p}\right\}\le\left\{\frac{h_l(r_i)}{p}\right\}\,(1\le{}^\forall l\le m)\text{ for }{}^\exists i \right.
\right\},
$$
\item
$$
\left\{
p\in Spl(f)\left|
\left\{\frac{G_l(r_1,\dots,r_n)}{p}\right\} \le 
\left\{\frac{H_{l}(r_1,\dots,r_n)}{p}\right\}
(1\le {}^\forall l\le m)\right.
\right\}.
$$
\end{enumerate}
Here the numbers $g_l(r_i), h_l(_ir),G_l(r_1,\dots,r_n)$  and $H_l(r_1,\dots,r_n)$ are supposed to be integers which correspond to elements in $\mathbb{Z}/p\mathbb{Z}$ for primes $p$ not
dividing denominators of $g_l(x),h_l(x),G_l(x_1,\dots,x_n)$ and $H_l(x_1,\dots,x_n)$, respectively.

Let the polynomial  $F(x)$ be the minimal polynomial of an algebraic integer $\alpha$ satisfying $\mathbb{Q}(f)=\mathbb{Q}(\alpha)$, and if we consider local roots of $F(x)$ instead of $f(x)$ in the problems above,
then it is equivalent to considering the problems for $F(x)$ from the beginning instead of $f(x)$ because of $Spl(f)=Spl(F)$.
%


These are discussed in \S \ref{sec8} again.
\begin{lem}\label{lem4.1}
Suppose $\#M(f,\sigma)=\infty$;
then for $\eta\in S_n$ 
the following three conditions are equivalent : \text{\em(i)} $M(f,\eta)=M(f,\sigma)$,
\text{\em(ii)} $\sigma^{-1}\eta\in\bm{G}_0$,  and
\text{\em(iii)} $\#( M(f,\eta) \cap M(f,\sigma))=\infty$.
\end{lem}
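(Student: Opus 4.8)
The implication (i)$\,\Rightarrow\,$(iii) is immediate, since $M_\eta=M_\sigma$ gives $M_\eta\cap M_\sigma=M_\sigma$, which is infinite by hypothesis. The plan is therefore to prove (iii)$\,\Rightarrow\,$(ii)$\,\Rightarrow\,$(i). Throughout I use that $\mathbb{Q}(f)/\mathbb{Q}$ is Galois, that each $\tau\in Gal(\mathbb{Q}(f)/\mathbb{Q})$ induces a permutation $\rho_\tau\in\bm{G}_0$ of the roots via $\tau(\alpha_i)=\alpha_{\rho_\tau(i)}$ (the roots being distinct since $f$ is irreducible, hence separable), and that $Gal(\mathbb{Q}(f)/\mathbb{Q})$ acts transitively on the prime ideals of $\mathbb{Q}(f)$ above a given rational prime.

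For (iii)$\,\Rightarrow\,$(ii): since $M_\eta\cap M_\sigma$ is infinite, I would choose a prime $p$ in it large enough that $f$ has no repeated root modulo $p$, so that $0\le r_1<\dots<r_n<p$ and the map $j\mapsto r_j\bmod p$ is injective on $\{1,\dots,n\}$. By definition there are prime ideals $\mathfrak{p}_1,\mathfrak{p}_2$ of $\mathbb{Q}(f)$ above $p$ with $\alpha_i\equiv r_{\eta(i)}\bmod\mathfrak{p}_1$ and $\alpha_i\equiv r_{\sigma(i)}\bmod\mathfrak{p}_2$ for all $i$. Pick $\tau\in Gal(\mathbb{Q}(f)/\mathbb{Q})$ with $\tau\mathfrak{p}_1=\mathfrak{p}_2$ and put $\rho:=\rho_\tau\in\bm{G}_0$. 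Applying $\tau$ to the first system of congruences (the $r_{\eta(i)}$ being rational integers, hence fixed by $\tau$) gives $\alpha_{\rho(i)}\equiv r_{\eta(i)}\bmod\mathfrak{p}_2$, while the second system, with $i$ replaced by $\rho(i)$, gives $\alpha_{\rho(i)}\equiv r_{\sigma(\rho(i))}\bmod\mathfrak{p}_2$. Subtracting, $r_{\eta(i)}\equiv r_{\sigma\rho(i)}\bmod p$ for every $i$, and the injectivity just noted forces $\eta(i)=\sigma\rho(i)$ for all $i$, i.e. $\sigma^{-1}\eta=\rho\in\bm{G}_0$.

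For (ii)$\,\Rightarrow\,$(i): write $\rho:=\sigma^{-1}\eta\in\bm{G}_0$ and fix $\tilde\rho\in Gal(\mathbb{Q}(f)/\mathbb{Q})$ with $\alpha_{\rho(i)}=\tilde\rho(\alpha_i)$ for all $i$. Let $p\in M_\sigma$ be arbitrary, with a prime $\mathfrak{p}\,|\,p$ of $\mathbb{Q}(f)$ satisfying $\alpha_j\equiv r_{\sigma(j)}\bmod\mathfrak{p}$ for all $j$, and set $\mathfrak{p}':=\tilde\rho^{-1}(\mathfrak{p})$, again a prime of $\mathbb{Q}(f)$ above $p$. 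Taking $j=\rho(i)$ gives $\tilde\rho(\alpha_i)=\alpha_{\rho(i)}\equiv r_{\sigma\rho(i)}\bmod\mathfrak{p}$, and applying $\tilde\rho^{-1}$, which fixes the rational integer $r_{\sigma\rho(i)}$, gives $\alpha_i\equiv r_{\sigma\rho(i)}=r_{\eta(i)}\bmod\mathfrak{p}'$ for all $i$; hence $p\in M_\eta$. This proves $M_\sigma\subseteq M_\eta$, and since $\bm{G}_0$ is a subgroup we also have $\eta^{-1}\sigma=\rho^{-1}\in\bm{G}_0$, so the same argument with the roles of $\sigma$ and $\eta$ exchanged yields $M_\eta\subseteq M_\sigma$; hence $M_\eta=M_\sigma$.

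The standard ingredients (transitivity of the Galois action on primes over $p$, well-definedness of $\rho_\tau$, and the fact that $\tilde\rho^{-1}(\mathfrak{p})$ is again a prime over $p$) cause no trouble. The delicate point is the choice in (iii)$\,\Rightarrow\,$(ii) of a prime $p\in M_\eta\cap M_\sigma$ large enough that the local roots are pairwise incongruent modulo $p$; this is precisely what upgrades a congruence among the $r_i$'s to the equality $\eta=\sigma\rho$ of permutations, and it is the only place where the full hypothesis $\#(M_\eta\cap M_\sigma)=\infty$, rather than merely $M_\eta\cap M_\sigma\neq\emptyset$, is used.
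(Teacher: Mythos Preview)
Your argument follows the same cycle (i)$\Rightarrow$(iii)$\Rightarrow$(ii)$\Rightarrow$(i) as the paper, and your (ii)$\Rightarrow$(i) step is essentially identical to the paper's chain of equivalences.

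One caveat in (iii)$\Rightarrow$(ii): you invoke irreducibility of $f$ to guarantee distinct roots, but the lemma (and the surrounding section) does not assume $f$ irreducible---reducible $f$, even $f$ with repeated roots, are in scope (cf.\ the remark just before the lemma that $\bm{G}_0\cong Gal(\mathbb{Q}(f)/\mathbb{Q})$ only ``if $f(x)$ has no multiple root''). If $f$ has a repeated root, then for \emph{every} prime $p$ some of the $r_j$ coincide, and your step ``injectivity \dots\ forces $\eta(i)=\sigma\rho(i)$'' breaks down. The paper sidesteps this by comparing the algebraic roots rather than the local ones: from the two congruence systems it obtains
\[
\tilde\mu(\alpha_i)\equiv r_{\eta(i)}\equiv \alpha_{\sigma^{-1}\eta(i)} \bmod \tilde\mu(\mathfrak p),
\]
and since two roots of $f$ that are congruent modulo a prime over a sufficiently large $p$ must be equal, this yields $\tilde\mu(\alpha_i)=\alpha_{\sigma^{-1}\eta(i)}$ for all $i$, which is precisely the statement $\sigma^{-1}\eta\in\bm{G}_0$. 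Your proof becomes correct in full generality if you replace the comparison of the $r_j$'s by this comparison of the $\alpha_j$'s; as written it is valid under the extra hypothesis that $f$ has no multiple root.
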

\proof{
Suppose the condition (iii), i.e. $ \alpha_i\equiv r_{\eta(i)}\bmod\mathfrak p$ and 
$ \alpha_i\equiv r_{\sigma(i)}\bmod\tilde{\mu}(\mathfrak p)$ for some $\tilde{\mu}\in Gal(\mathbb{Q}(f)/\mathbb{Q})$ for a sufficiently large $p\in Spl(f)$.
They imply  $\tilde\mu(\alpha_i)\equiv \tilde\mu(r_{\eta(i)}) \equiv
r_{\eta(i)}  \bmod \tilde\mu(\mathfrak{p})$
and $\alpha_{\sigma^{-1}\eta(i)}\equiv r_{\eta(i)}\bmod \tilde\mu(\mathfrak{p})$, hence $\tilde\mu(\alpha_i)\equiv\alpha_{\sigma^{-1}\eta(i)}\bmod \tilde\mu(\mathfrak{p})$, which imply $\tilde\mu(\alpha_i)=\alpha_{\sigma^{-1}\eta(i)}$.
Thus the definition implies $\sigma^{-1}\eta\in\bm{G}_0$, hence (ii).
Suppose (ii); then $\sigma=\eta\mu^{-1}$ for some $\mu\in \bm{G}_0$ with $\tilde{\mu}(\alpha_i)
=\alpha_{\mu(i)}$ for some $\tilde{\mu}\in Gal(\mathbb{Q}(f)/\mathbb{Q})$.
For a prime $p\in Spl(f)$, we see that $p\in M(f,\sigma)\leftrightarrow \alpha_i\equiv 
r_{\sigma(i)}\bmod\mathfrak{p}\leftrightarrow \alpha_i\equiv r_{\eta\mu^{-1}(i)}\bmod \mathfrak{p}
\leftrightarrow\alpha_{\mu(i)}\equiv r_{\eta(i)}\bmod\mathfrak{p}\leftrightarrow
\tilde{\mu}(\alpha_i)\equiv r_{\eta(i)}\bmod\mathfrak{p}\leftrightarrow\alpha_i\equiv r_{\eta(i)}\bmod \tilde{\mu}^{-1}(\mathfrak{p})
\leftrightarrow p\in M(f,\eta)$, hence we get (i).
The  condition (i) implies obviously (iii).
\qed
}

The following is fundamental.
\begin{prop}\label{prop4.2}
We have
\begin{equation}\label{eq40}
Spl(f,\sigma)=(\cup_{\mu} M(f,\mu))\cup T_\sigma
\end{equation}
where $\mu$ runs over  the set of permutations satisfying 
$\mu\in \sigma\hat {\bm G}$ with $\#M(f,{\mu})=\infty$,
and $T_\sigma$ is a finite set.
Suppose  $\#Spl(f,\sigma)=\infty$; then for $\nu\in S_n$ we see that
 $Spl(f,\sigma)\cap Spl(f,\nu)$ is an infinite set if and only if 
 $\sigma\hat{\bm{G}} =\nu\hat{\bm{G}} $, and then the difference 
between $Spl(f,\sigma)$ and $ Spl(f,\nu)$ is a finite set.
\end{prop}
\proof{
By  the identity $Spl(f,\sigma) = \cup_{\mu\in S_n}( Spl(f,\sigma)\cap M(f,\mu))$,
we have only to show that $\#(Spl(f,\sigma)\cap M(f,\mu))=\infty$ holds if and only if both 
$\mu\hat G = \sigma\hat G$ and $\#M(f,{\mu})=\infty$ hold, and then $M(f,\mu)\subset Spl(f,\sigma)$.
Suppose that $\#(Spl(f,\sigma)\cap M(f,{\mu}))=\infty$.
The property $\#M(f,\mu)=\infty$ is clear.
For $p\in Spl(f,\sigma)\cap M(f,{\mu})$, we have
$$
\sum_i m_{j,i}r_{\sigma(i)}\equiv m_j\bmod p ,\,\, r_i\equiv \alpha_{\mu^{-1}(i)}\bmod 
{}^\exists\mathfrak p,
$$
which implies $\sum_i m_{j,i}\alpha_{\mu^{-1}\sigma(i)}\equiv m_j\bmod  \mathfrak p$ for infinitely many primes in $p\in Spl(f,\sigma)\cap M(f,{\mu})$,
which implies $\sum_i m_{j,i}\alpha_{\mu^{-1}\sigma(i)}= m_j$,
hence we have $\mu^{-1}\sigma\in \hat {\bm G}$, i.e. $\mu\hat {\bm G} =\sigma \hat {\bm G}$.

\noindent
Conversely, suppose that $\mu\hat {\bm G}= \sigma \hat {\bm G}$ and $\#M(f,{\mu})=\infty$ hold;
then we have the equation $\sum_i m_{j,i}\alpha_{\mu^{-1}\sigma(i)}\linebreak[3]= m_j$.
Hence, for $p\in M(f,\mu)$, we see $\sum_i m_{j,i}r_{\sigma(i)}\equiv m_j \bmod \mathfrak p$,
that is $p\in Spl(f,\sigma)$ and so $M(f,\mu)\subset Spl(f,\sigma)$,
thus $\#(Spl(f,\sigma)\cap M(f,\mu))=\infty$.

\noindent
Hence the condition $\#(Spl(f,\sigma)\cap M(f,{\mu}))=\infty$ is equivalent to 
$\#M(f,{\mu})=\infty$
and $\mu\hat {\bm G} =\sigma \hat {\bm G}$.
And then, we have  $M(f,\mu)\subset Spl(f,\sigma)$ as above.
Therefore \eqref{eq40} has been proved.
If $Spl(f,\sigma)\cap Spl(f,\nu)$ is an infinite set, then there are  permutations $\mu,\mu'$ such that 
$\sigma \hat{\bm{G}}=\mu\hat{\bm{G}},\nu\hat{\bm{G}}=
 \mu'\hat{\bm{G}}$ and $\#(M(f,\mu) \cap M(f,\mu'))=\infty$, hence $\mu\bm{G}_0 = {\mu'}\bm{G}_0$
by Lemma \ref{lem4.1} and then $\sigma \hat{\bm{G}}=\nu\hat{\bm{G}}$.
Hence the difference between $Spl(f,\sigma)$ and $ Spl(f,\nu)$ is that between $T_\sigma$ and $T_\nu$.
If, conversely $\sigma \hat{\bm{G}}=\nu\hat{\bm{G}}$, then from \eqref{eq40} follows that    
the difference between $Spl(f,\sigma)$ and $ Spl(f,\nu)$ is finite.
This completes the proof.
\qed
}
\vspace{1mm}

Thus  wee see that
\begin{align}\nonumber
Spl(f) &=\cup_{\sigma\in S_n} Spl(f,\sigma) = \cup_{\sigma\in S_n/\hat{G}\atop \#Spl(f,\sigma)=\infty} Spl(f,\sigma)\cup
(\text{ a finite set})
\\  \label{eq40.5}
&= \cup_{\sigma\in S_n/\hat{G}\atop \#Spl(f,\sigma)=\infty} \cup_{\mu\in\sigma \hat{G}/G_0} M(f,\mu)\cup(\text{a finite set}).
\end{align}
It is likely that the density  of $M(f,\mu)$ in $Spl(f,\sigma)\,(=Spl(f,\mu))$  at \eqref{eq40} equal to  $1/[\hat{\bm{G}}: \bm{G}_0]$.

\begin{cor}\label{cor4.2}
If  a  prime $p\in Spl(f)$ is sufficiently large, then there are exactly $\#\hat {\bm G}$ permutations $\sigma$
satisfying $p\in Spl(f,\sigma)$, i.e. \eqref{eq6}.
\end{cor}
\proof{
Let $p\in Spl(f)$ be large so that $p\not\in T_\sigma$  and $p\not\in
M(f,\sigma)\cap M(f,\mu)$ if $\#(M(f,\sigma)\cap M(f,\mu))<\infty$ for $\sigma,\mu \in S_n$.
If $p\in Spl(f)$ is contained in $Spl(f,\sigma) $, then $p\in Spl(f,\sigma\nu) $ 
$({}^\forall\nu\in\hat{\bm{G}})$
is clear by Proposition \ref{prop4.2}.
If $p\in Spl(f)$ is contained in $Spl(f,\sigma) \cap Spl(f,\mu)$, hence
in $M(f,{\sigma'})\cap M(f,{\mu'})$ for ${}^\exists\sigma' \in \sigma\hat{\bm{G}}, 
{}^\exists\mu' \in \mu\hat{\bm{G}}$,
 then  $M(f,{\sigma'})\cap M(f,{\mu'})$ is an infinite set,
hence  we see ${\sigma'}^{-1}\mu' \in \bm{G}_0$ and then $\sigma^{-1}\mu\in\hat{\bm{G}}$.
Therefore $p\in Spl(f,\sigma)$ holds for a permutation $\sigma$ in the only one coset by
$\hat {\bm G}$.
\qed
}
\vspace{1mm}

\begin{cor}\label{cor4.3}
We have
\begin{equation}\label{eq41}
\sum_{\sigma\in S_n} Pr(f,\sigma)
=\#\hat{{\bm G}}.
\end{equation}
\end{cor}
\proof{
By Corollary \ref{cor4.2},
we see
\begin{align*}
\#\hat {\bm G}=\#\hat{{\bm G}}\lim_{X\to\infty}\frac{\#Spl_X(f)}{\#Spl_X(f)}
=\lim_{X\to\infty}\sum_{\sigma\in S_n}\frac{\#Spl_X(f,\sigma)}{\#Spl_X(f)}
=\sum_{\sigma\in S_n} Pr(f,\sigma).
\end{align*}
\qed
}
\begin{cor}
Let $\sigma\in S_n$.
If a sufficiently large prime $p\in Spl(f)$ satisfies $\sum_i m_{j,i}\alpha_{\sigma(i)} \equiv m_j\bmod
\frak{p}$  $(j=1,\dots,t)$ for some prime ideal $\frak{p}$ of $\mathbb{Q}(f)$ lying over $p$,
then $\sum_i m_{j,i}\alpha_{\sigma(i)} =m_j$ $(j=1,\dots,t)$  holds.
\end{cor}
\proof
{
Take a permutation  $\kappa\in S_n$ such that $\alpha_{\sigma(i)}\equiv r_{\kappa(i)}\bmod\frak{p}$ $(i=1,\dots,n)$.
Then we see $\sum_i m_{j,i}r_{\kappa(i)}\equiv m_j\bmod p$ $(j=1,\dots,t)$, i.e. $p\in Spl(f,\kappa)$,
hence $p\in M(f,{\kappa\eta})$ for some $\eta\in\hat{G}$ with $\#M(f,{\kappa\eta})=\infty$.
Therefore we have $\alpha_i\equiv r_{\kappa\eta(i)}\bmod \frak{p}'$ $(i=1,\dots,n)$ for some prime ideal $\frak{p}'$ lying over $p$.
For an automorphism $\iota\in Gal(\mathbb{Q}(f)/\mathbb{Q})$ satisfying $\iota(\frak{p}')=\frak{p} $,
we have $\iota(\alpha_i)\equiv r_{\kappa\eta(i)}\bmod \frak{p}$, hence $\iota(\alpha_{\eta^{-1}(i)})
\equiv r_{\kappa(i)}\equiv\alpha_{\sigma(i)}\bmod\frak{p}$, hence $\iota(\alpha_{\eta^{-1}(i)})=
\alpha_{\sigma(i)}$.
Since $\hat{G}$ is a group, we see $\eta^{-1}\in\hat{G}$, hence $\sum_i m_{j,i}\alpha_{\eta^{-1}(i)}=m_j$ 
$(j=1,\dots,t)$, which implies $\sum_i m_{j,i}\alpha_{\sigma(i)}=m_j$ 
$(j=1,\dots,t)$.

\qed
}

By keeping notations in the proof above, the proof shows : Starting from $\sum_i m_{j,i}r_{\kappa(i)}\equiv m_j\bmod p$ and taking $\sigma\in S_n$ satisfying 
$r_{\kappa(i)}\equiv \alpha_{\sigma(i)}\bmod\frak{p}$,  we have 
$\sum_i m_{j,i}\alpha_{\sigma(i)}= m_j$. 
Incidentally  the completion $\mathbb{Q}(f)_{\frak{p}}$ is isomorphic to $\mathbb{Q}_p$ and 
let $\hat{r}_i$ be roots of $f(x)=0$ in $\mathbb{Q}_p$ satisfying 
$\hat{r}_i\equiv r_i\bmod p\mathbb{Z}_p$.
Making use of  the isomorphism from 
$\mathbb{Q}(f)$ into $\mathbb{Q}(f)_{\frak{p}}\cong\mathbb{Q}_p$ and $\alpha_i\equiv r_{\kappa\sigma^{-1}}(i)\bmod\frak{p}$,
we see that the root $\alpha_i$ in $\mathbb{Q}(f)$ is mapped to the root 
$\hat{r}_{\kappa\sigma^{-1}(i)}$.
Thus, we see that for $l_1,\dots,l_{n+1}\in\mathbb{Q}$, the identity 
$\sum_{i=1}^n l_i\alpha_i=l_{n+1}$ is
equivalent to $\sum_{i=1}^n l_i\hat{r}_{\kappa\sigma^{-1}(i)}=l_{n+1}$,
and combining the above,  $\sum_i m_{j,i}r_{\kappa(i)}\equiv m_j\bmod p$ implies
$\sum_i m_{j,\sigma^{-1}(i)}\hat{r}_{\kappa\sigma^{-1}(i)}= m_j$, i.e.
$\sum_i m_{j,i}\hat{r}_{\kappa(i)}= m_j$ in $\mathbb{Q}_p$ .

\begin{prop}\label{prop4.3}
Let $\sigma,\nu$ be permutations, and suppose that $\nu\in\hat {\bm G}$.
Then we have, neglecting a finite set of primes
\begin{gather}\nonumber
Spl(f,\sigma)=Spl(f,\sigma\nu),\,
\\\label{eq42}
Spl(f,\sigma,\{k_j\})=Spl(f,\sigma\nu,\{k_j'\}),\,
\\\label{eq43}
Spl(f,\sigma,\{k_j\},L,\{R_j\})=Spl(f,\sigma\nu,\{k_j'\},L,\{R_j\}),
\end{gather}
where ${}^t(k_1',\dots,k_t'):=A_\nu\cdot{}^t(k_1,\dots,k_t)$ for the integral matrix $A_\nu=(a_{ij})\in GL_t(\mathbb{Z})$ given at \eqref{eq38}.
In particular,
we  have
\begin{gather*}
Pr(f,\sigma)=Pr(f,\sigma\nu),
\\
Pr_D(f,\sigma)=Pr_D(f,\sigma\nu),
\\
Pr(f,\sigma,\{k_j\})=Pr(f,\sigma\nu,\{k_j'\}),
\\
Pr(f,\sigma,\{k_j\},L,\{R_i\})=Pr(f,\sigma\nu,\{k_j'\},L,\{R_i\}).
\end{gather*} 
\end{prop}
\proof{
The first  equation  follows from Proposition \ref{prop4.2}.
Let $p$ be a prime in $Spl(f,\sigma,\{k_j\})$; then we see 
$\sum_i m_{j,i}r_{\sigma(i)}=m_j+k_jp$ and so $\sum_j a_{l,j}\sum_i m_{j,i}r_{\sigma(i)}
\linebreak[3]=\sum_j a_{l,j}m_j+\sum_j a_{l,j}k_jp$, that is  $\sum_i m_{l,\nu^{-1}(i)}r_{\sigma(i)}\linebreak[3]=m_l+k_l'p$,
which implies $p\in Spl(f,\sigma\nu,\{k_j'\})$, that is  $ Spl(f,\sigma,\{k_j\})$ is included in 
$ Spl(f,\sigma\nu,\{k'_j\})$.
Since $A_\nu^{-1}$ is also integral, we have the converse inclusion 
$ Spl(f,\sigma\nu,\{k_j'\})\subset Spl(f,\sigma,\{k_j\})$
similarly,  hence \eqref{eq42}, \eqref{eq43}.
\qed
}

\subsection{$\mathfrak{D}(f,\sigma)$}
By definition, we see  
\begin{align}\nonumber
&{\mathfrak{D}}(f,\sigma)
\\
=&\label{eq44}
\left\{
\bm{x}=(x_1\dots,x_n)\in [0,1]^n\left|
\begin{array}{l}
0\le x_1\le\dots\le x_n\le1,
\\
(\bm{m}_j,\sigma^{-1}(\bm{x}))\in\mathbb{Z}\text{ for }1\le{}^\forall j\le t
\end{array}
\right\}
\right.
.
\end{align}
Another  aim in this section is 
\begin{prop}\label{prop4.4}
Suppose that $vol({\mathfrak{D}}(f,\sigma))>0$;
then 
we have following equivalences.
\begin{align*}
&vol({\mathfrak{D}}(f,\sigma)\cap {\mathfrak{D}}(f,\mu))>0
\\
\iff
&
\langle\sigma(\bm{m}_1),\dots,\sigma(\bm{m}_t)\rangle_{\mathbb{Z}}=
\langle\mu(\bm{m}_1),\dots,\mu(\bm{m}_t)\rangle_{\mathbb{Z}}\,
\\
\iff
&\mu^{-1}\sigma\in {\bm G}
\\
\iff
&\mathfrak{D}(f,\sigma)=\mathfrak{D}(f,\mu).
\end{align*}
In particular, $ {\mathfrak{D}}(f,\sigma)= {\mathfrak{D}}(f,\sigma\nu)$ holds if
and only if  $\nu\in {\bm G}$.
\end{prop}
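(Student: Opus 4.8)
Write $(a),(b),(c),(d)$ for the four conditions in the statement, listed in order, so that $(a)$ is $vol(\mathfrak{D}(f,\sigma)\cap\mathfrak{D}(f,\mu))>0$, $(b)$ is $\langle\sigma(\bm{m}_1),\dots,\sigma(\bm{m}_t)\rangle_{\mathbb{Z}}=\langle\mu(\bm{m}_1),\dots,\mu(\bm{m}_t)\rangle_{\mathbb{Z}}$, $(c)$ is $\mu^{-1}\sigma\in\bm{G}$, and $(d)$ is $\mathfrak{D}(f,\sigma)=\mathfrak{D}(f,\mu)$. The plan is to recast everything in terms of the integral lattice $\Lambda_\sigma:=\langle\sigma(\bm{m}_1),\dots,\sigma(\bm{m}_t)\rangle_{\mathbb{Z}}$ and its real span $W_\sigma:=\langle\sigma(\bm{m}_1),\dots,\sigma(\bm{m}_t)\rangle_{\mathbb{R}}$, which has dimension $t$ since $\sigma$ is an invertible linear map and $\bm{m}_1,\dots,\bm{m}_t$ are linearly independent. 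Because permutations act orthogonally one has $\sum_i m_{j,i}x_{\sigma(i)}=(\sigma(\bm{m}_j),\bm{x})$, so by \eqref{eq41} $\mathfrak{D}(f,\sigma)=\Delta\cap\{\bm{x}\mid(\ell,\bm{x})\in\mathbb{Z}\ \forall\,\ell\in\Lambda_\sigma\}$, where $\Delta:=\{\bm{x}\in[0,1]^n\mid x_1\le\dots\le x_n\}$; equivalently $\mathfrak{D}(f,\sigma)$ is the union, over the finitely many $\bm{k}=(k_1,\dots,k_t)\in\mathbb{Z}^t$ with $|k_j|\le\sum_i|m_{j,i}|$, of the polytopes $P^\sigma_{\bm{k}}:=\{\bm{x}\in\Delta\mid(\sigma(\bm{m}_j),\bm{x})=k_j\ \forall\,j\}$, each contained in an affine subspace parallel to the $(n-t)$-dimensional subspace $W_\sigma^\perp$. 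I will establish $(a)\Rightarrow(b)$, $(b)\Leftrightarrow(c)$, $(b)\Rightarrow(d)$ and $(d)\Rightarrow(a)$, the hypothesis $vol(\mathfrak{D}(f,\sigma))>0$ entering only in the last implication.

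$(b)\Leftrightarrow(c)$ merely unwinds the definition of $\bm{G}$: applying the coordinate permutation $\mu$, an automorphism of $\mathbb{Z}^n$, to the lattice identity $\langle(\mu^{-1}\sigma)(\bm{m}_j)\rangle_{\mathbb{Z}}=\langle\bm{m}_j\rangle_{\mathbb{Z}}$ and using that $S_n$ acts on the left converts it into $\Lambda_\sigma=\Lambda_\mu$, reversibly. For $(b)\Rightarrow(d)$, the set $\{\bm{x}\mid(\ell,\bm{x})\in\mathbb{Z}\ \forall\,\ell\in\Lambda\}$ depends only on the lattice $\Lambda$, not on a chosen set of $\mathbb{Z}$-generators, so $\Lambda_\sigma=\Lambda_\mu$ gives $\mathfrak{D}(f,\sigma)=\mathfrak{D}(f,\mu)$ after intersecting with $\Delta$. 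Finally $(d)\Rightarrow(a)$ is immediate, since then $\mathfrak{D}(f,\sigma)\cap\mathfrak{D}(f,\mu)=\mathfrak{D}(f,\sigma)$ has positive volume by hypothesis.

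The crux is $(a)\Rightarrow(b)$, which I prove by contraposition at the level of real spans. Suppose $W_\sigma\ne W_\mu$; then $W_\sigma^\perp\ne W_\mu^\perp$, and since both have dimension $n-t$ the subspace $W_\sigma^\perp\cap W_\mu^\perp$ has dimension strictly less than $n-t$. Now $\mathfrak{D}(f,\sigma)\cap\mathfrak{D}(f,\mu)$ is contained in the countable union of the sets $F_\sigma\cap F_\mu$, where $F_\sigma$ ranges over the affine subspaces parallel to $W_\sigma^\perp$ cut out by $(\sigma(\bm{m}_j),\bm{x})=k_j$ and $F_\mu$ over those parallel to $W_\mu^\perp$; each $F_\sigma\cap F_\mu$ is empty or an affine subspace parallel to $W_\sigma^\perp\cap W_\mu^\perp$, hence $(n-t)$-null, and a countable union of $(n-t)$-null sets is $(n-t)$-null, so $vol(\mathfrak{D}(f,\sigma)\cap\mathfrak{D}(f,\mu))=0$, contradicting $(a)$. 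Hence $W_\sigma=W_\mu$. To upgrade this to the integral identity $(b)$ I use saturation: by the primitivity of the matrix with rows $\bm{m}_1,\dots,\bm{m}_t$ recorded in Section 1, these vectors extend to a $\mathbb{Z}$-basis of $\mathbb{Z}^n$, and applying the coordinate permutation $\sigma$ shows $\sigma(\bm{m}_1),\dots,\sigma(\bm{m}_t)$ do too; therefore $\Lambda_\sigma=W_\sigma\cap\mathbb{Z}^n$, and likewise $\Lambda_\mu=W_\mu\cap\mathbb{Z}^n$, so $W_\sigma=W_\mu$ yields $\Lambda_\sigma=\Lambda_\mu$, which is $(b)$.

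The ``in particular'' statement follows by taking $\mu:=\sigma\nu$: then $\mu^{-1}\sigma=\nu^{-1}$, so the equivalence $(c)\Leftrightarrow(d)$ just proved reads $\nu^{-1}\in\bm{G}$ --- equivalently $\nu\in\bm{G}$, since $\bm{G}$ is a group --- if and only if $\mathfrak{D}(f,\sigma)=\mathfrak{D}(f,\sigma\nu)$. The single step demanding genuine care is the measure-theoretic bookkeeping in $(a)\Rightarrow(b)$: showing that positive $(n-t)$-dimensional volume of the intersection really forces the two families of $(n-t)$-flats to be parallel, which rests only on the elementary fact that a countable union of affine subspaces of dimension less than $n-t$ has $(n-t)$-dimensional measure zero. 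The passage from $\mathbb{R}$-spans back to $\mathbb{Z}$-lattices, though essential, is routine given the primitivity remark, and all remaining implications are formal.
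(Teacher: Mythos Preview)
Your proof is correct and follows essentially the same route as the paper's: both express $\mathfrak{D}(f,\sigma)$ as a finite union of pieces of affine $(n-t)$-flats parallel to $\langle\sigma(\bm{m}_j)\rangle_{\mathbb{R}}^\perp$, argue that positive $(n-t)$-volume of the intersection forces these orthogonal complements (hence the real spans) to coincide, and then invoke primitivity of the matrix of $\bm{m}_j$'s to upgrade the real-span equality to the integral lattice equality $(b)$. Your write-up is somewhat more explicit than the paper's in spelling out the measure-zero argument for $(a)\Rightarrow(b)$ and the saturation step $\Lambda_\sigma=W_\sigma\cap\mathbb{Z}^n$, and you close the cycle of implications more carefully, but there is no substantive difference in method.
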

\proof{
For an integral vector $\bm{k}=(k_1,\dots,k_t) \in \mathbb{Z}^t$, we take a vector  $\bm{x}_{\bm{k}} \in \mathbb{R}^n$
such that $(\bm{m}_j,\bm{x}_{\bm{k}})=k_j$ $(j=1,\dots,t)$.
For $\bm{x}\in\mathbb{R}^n$, the condition $(\bm{m}_j,\sigma^{-1}(\bm{x}))\linebreak=k_j$  $(j=1,\dots,t)$ is equivalent  to
$\sigma^{-1}(\bm{x}) -\bm{x}_{\bm{k}}\in\langle\bm{m}_1,\dots,\bm{m}_t\rangle^\perp_{\mathbb{R}}$.
Therefore we have
\begin{equation}\label{eq45}
{\mathfrak{D}}(f,\sigma)=
\hat{\mathfrak{D}}_n\cap\{
\cup_{\bm{k}\in\mathbb{Z}^t}
\left[\sigma(\bm{x}_{\bm{k}})+\langle\sigma(\bm{m}_1),\dots,\sigma(\bm{m}_t)\rangle^{\perp}_{\mathbb{R}}\right]
\}.
\end{equation}
%
%
Suppose that $vol({\mathfrak{D}}(f,\sigma))>0$; 
if the property $vol({\mathfrak{D}}(f,\sigma)\cap {\mathfrak{D}}(f,\mu))>0$ holds,
then \eqref{eq45} implies $\langle\sigma(\bm{m}_1),\dots,\sigma(\bm{m}_t)\rangle^{\perp}_{\mathbb{R}}=
\langle\mu(\bm{m}_1),\dots,\mu(\bm{m}_t)\rangle^{\perp}_{\mathbb{R}}$, i.e.
\begin{equation*}
\langle\sigma(\bm{m}_1),\dots,\sigma(\bm{m}_t)\rangle_{\mathbb{R}}=
\langle\mu(\bm{m}_1),\dots,\mu(\bm{m}_t)\rangle_{\mathbb{R}}.
\end{equation*}
Since the matrix whose $j$th row is $\bm{m}_j$ is integral with every elementary divisor being $1$,
the above is equivalent to
\begin{equation*}
\langle\sigma(\bm{m}_1),\dots,\sigma(\bm{m}_t)\rangle_{\mathbb{Z}}=
\langle\mu(\bm{m}_1),\dots,\mu(\bm{m}_t)\rangle_{\mathbb{Z}},
\end{equation*}
hence $\mu^{-1}\sigma\in {\bm G}$ holds
and the above identity implies  $\bm{x}\in \mathfrak{D}(f,\sigma)\iff \bm{x}\in \mathfrak{D}(f,\mu)$
by \eqref{eq44}, hence $\mathfrak{D}(f,\sigma)=\mathfrak{D}(f,\mu)$.

\qed
}
\begin{cor}\label{cor4.4}
We have
\begin{equation}\label{eq46}
\sum_{\mu\in S_n}vol(\mathfrak{D}(f,\mu))=\#{\bm G}\cdot vol(\cup_{\mu\in S_n}\mathfrak{D}(f,\mu)).
\end{equation}
\end{cor}
\proof{
Put
$$
S':=\{\sigma\in S_n\mid vol(\mathfrak{D}(f,\sigma))>0\}.
$$
Then we have
\begin{align*}
&\sum_{\sigma\in S_n} vol(\mathfrak{D}(f,\sigma))
\\
=&\sum_{\sigma\in S'} vol(\mathfrak{D}(f,\sigma))
\\
=&
\sum_{\mu\in S'/{\bm G}}\hspace{1mm}\sum_{\sigma\in \mu {\bm G}}vol(\mathfrak{D}(f,\sigma))
\\
=&\,\#{\bm G}\sum_{\mu\in S'/{\bm G}}vol(\mathfrak{D}(f,\mu))
\\
\\=&\,\#{\bm G} \cdot vol(\cup_{\mu\in S'/{\bm G}}\mathfrak{D}(f,\mu))
\\
\\=&\,\#{\bm G}\cdot  vol(\cup_{\mu\in S_n}\mathfrak{D}(f,\mu)).
\end{align*}
\qed
}

In Proposition \ref{prop5.2}, we show that \eqref{eq46} is equal to $\sqrt{\det((\bm{m}_i,\bm{m}_j))}$
under some conditions.

\begin{lem}\label{lem4.2}
Suppose that $f(x)$ has no rational root and $\alpha_i-\alpha_j\notin \mathbb{Q}$ for any distinct $i,j$ and  $\#Spl(f,\sigma,\{k_j\})=\infty$.
For a prime $p\in Spl(f,\sigma,\{k_j\})$ with finite exceptions,
we have
$$
\sum_i m_{j,i}(r_{\sigma(i)}-tr(\alpha_i)/d)/p=k_j\,\,\,( j=1,\dots,t)
$$
and
$$
0<(r_{1}-tr(\alpha_{\sigma^{-1}(1)})/d)/p<\dots<(r_{n}-tr(\alpha_{\sigma^{-1}(n)})/d)/p<1,
$$
where  $tr$ denotes the trace from $\mathbb{Q}(f)$ to $\mathbb{Q}$, and $d :=[\mathbb{Q}(f):\mathbb{Q}]$.
\end{lem}
\proof
{
Since the set
$$
Spl(f,\sigma,\{k_j\})=\{p\in Spl(f)\mid\sum_i m_{j,i}r_{\sigma(i)}=m_j+k_jp\,(1\le j\le t)\}
$$ 
is an infinite set,
the original equation  $\sum_i m_{j,i}\alpha_i = m_j$ imply  $\sum_i m_{j,i}tr(\alpha_i) = dm_j$,
hence for every $ p\in Spl(f,\sigma,\{k_j\})$, we have
$$
\sum_i m_{j,i}(r_{\sigma(i)}-tr(\alpha_i)/d)=k_jp,
$$
which is the first statement.
Next, let us show the second, i.e.
$$
0<(r_{1}-tr(\alpha_{\sigma^{-1}(1)})/d)<\dots<(r_{n}-tr(\alpha_{\sigma^{-1}(n)})/d)<p.
$$
To prove it, it is enough to show that for  $X:=\sum_i |tr(\alpha_i)|/d$, 
all of conditions $r_1>X, p-r_n>X,r_{i+1}-r_i>X$ $(i=1,\dots,n-1)$ hold for a sufficiently large prime $p$ 
in $Spl(f,\sigma,\{k_j\})$.
If the inequality $(0\le)\,r_1\le X$ occurs for infinitely many primes, then there is an integer $r\in[0,X]$ such that $f(r)\equiv0\bmod p$ for infinitely many $p$, which means $f(r)=0$.
If the inequality $(0<)\,p-r_n\le X$ occurs for infinitely many primes, 
then there is an integer $r$ such that 
$r=p-r_n\le X$, hence $f(-r)\equiv0\bmod p$ for infinitely many $p$, which means $f(-r)=0$.
If $r_{i+1}-r_i\le X$ occurs for infinitely many primes, then the equality $r_{i+1}-r_i=r(\le X)$ occurs
for some integer $r$ for infinitely many primes $p$,
that is polynomials $f(x),f(x+r) \bmod p$ have a common root $r_i$ for infinitely many primes, 
which implies that $f(x),f(x+r)$ are not relatively prime, and they are divisible by some irreducible  polynomial $h(x)$, 
hence $f(x)$ is divisible by $h(x),h(x-r)$, so for a root $\alpha$ of $h(x)$, $\alpha,\alpha+r$ are roots of
$f(x)$.
This contradicts the assumption.
\qed
}
\begin{prop}\label{prop4.5}
Suppose that $f(x)$ has no rational root and $\alpha_i-\alpha_j\notin \mathbb{Q}$ for any distinct $i,j$. 
Then we see that the condition $\#Spl(f,\sigma,\{k_j\})=\infty$ implies $vol(\mathfrak{D}(f,\sigma,\{k_j\}))>0$.
\end{prop}
\proof
{
We note that
$\mathfrak{D}(f,\sigma,\{k_j\})$ is the intersection of $(n-t)$-dimensional set $ \{\bm{x}\in\mathbb{R}^n \mid \sum_i m_{j,i}x_{\sigma(i)}=k_j\,({}^\forall j)\}$ and $\{\bm{x}\mid 0\le  x_1\le\dots\le x_n\le1\}$.
And its
$(n-t)$-dimensional open subset $\{\bm{x}\in\mathbb{R}^n \mid \sum_i m_{j,i}x_{\sigma(i)}
=k_j\,({}^\forall j)\} \cap\{\bm{x}\mid 0< x_1<\dots< x_n<1\}$ is not empty,
since it contains $$((r_{1}-tr(\alpha_{\sigma^{-1}(1)})/d)/p,\dots,(r_{n}-tr(\alpha_{\sigma^{-1}(n)})/d)/p)$$  by Lemma \ref{lem4.2}.
Therefore we have $vol(\mathfrak{D}(f,\sigma,\{k_j\}))>0$.
\qed
}

%
%
Lastly, let us give the following 
\begin{prop}\label{prop4.6}
We see that
$$
vol(\hat{\mathfrak D}_n)=\frac{\sqrt{n}}{n!}
$$
and, for an integer $k$ $(1\le k \le n-1)$,
$$
vol(\{{\bm x}\in \hat{\mathfrak D}_n\mid \sum x_i=k\})/vol(\hat{\mathfrak D}_n)=E_n(k).
$$
\end{prop}
\proof{
We note that $vol$ is the $(n-1)$-dimensional volume here.
Let $\theta$ be the angle between two hyperplanes defined by $x_n=0$ and $\sum x_i=0$,
that is  the angle between vectors $(0,\dots,0,1)$ and $(1,\dots,1)$, hence $\cos\theta=1/\sqrt{n}$.
Since a permutation acts on $\mathbb{R}^n$ and on the set $\{\bm{x}\in\mathbb{R}^n\mid \sum x_i=k\}$
as an orthogonal transformation and
 the dimension of the set defined by $\sum x_i\in\mathbb{Z}$ and $x_i=x_{i+1}$ is $n-2$, 
$vol(\hat{\mathfrak D}_n)$ is equal to the volume of the set
$$
\{\bm{x}\mid 0<x_1<\dots<x_n<1,\sum x_i\in\mathbb{Z}\},
$$
hence
we see that
\begin{align*}
vol(\hat{\mathfrak D}_n)
=\frac{1}{n!}vol(\{(x_1,\dots,x_n)\in[0,1)^n \mid \sum x_i\in\mathbb{Z}\})
\end{align*}
and the projection $(x_1,\dots,x_n)\mapsto(x_1,\dots,x_{n-1})$ from 
$\{(x_1,\dots,x_n)\in[0,1)^n \mid \sum x_i\in\mathbb{Z}\}$ to
$[0,1)^{n-1}$   is bijective by $x_1+\dots+x_{n-1}+x_n=\lceil x_1+\dots+x_{n-1}\rceil$,
we have $n!vol(\hat{\mathfrak D}_n)\cos\theta=vol([0,1)^{n-1})=1.$
Next,   we have
\begin{align*}
&\,vol(\{{\bm x}\in \hat{\mathfrak D}_n\mid \sum x_i=k\})/vol(\hat{\mathfrak D}_n)
\\
=&\,
n!\cos\theta\cdot vol(\{{\bm x}\in\hat{\mathfrak D}_n\mid \sum x_i=k\})
\\
=&\,
\cos\theta\cdot vol(\{{\bm x}\in[0,1)^n\mid \sum x_i=k\})
\\
=&\cos\theta\cdot vol(\{{\bm x}\in[0,1)^n\mid \left\lceil\sum_{i=1}^{n-1} x_i\right\rceil=k\})
\\
=&\,
vol(\{\bm x\in[0,1)^{n-1}\mid\left\lceil\sum_{i=1}^{n-1} x_i\right\rceil=k\})
\\
=&\,
E_n(k).
\end{align*}
\qed
}

\noindent
{\bf Remark }
The set $\hat{\frak{D}}_n$ is  in a union of parallel transformations of the subspace  
$S:=\{\bm{x}\in\mathbb{R}^n\mid \sum_{i=1}^n x_i=0\}$.
Write $\bm{e}=(1,\dots,1),\bm{f}=(0,\dots,0,\sqrt{n})$ and let $\tau$ be the symmetry defined by
$\bm{x}\mapsto\bm{x}-\frac{2(\bm{e}-\bm{f},\bm{x})}{(\bm{e}-\bm{f},\bm{e}-\bm{f})}(\bm{e}-\bm{f})$; then by $S=\bm{e}^\perp=\tau(\bm{f})^\perp$, 
we see that
$\tau(\bm{e}_i)
=
\bm{e}_i -\frac{1}{n-\sqrt{n}}(\bm{e}-\bm{f}) $ $(i=1,\dots,n-1)$ are
orthnormal basis of $S$, where $ \bm{e}_1=(1,0,\dots,0),\bm{e}_2=(0,1,0,\dots,0)$ and so on.
Another basis of $S$ is $\bm{u}_i:=\frac{1}{\sqrt{(n-i)(n-i+1)}}(0,\dots,0,n-i,-1,\dots,-1)$,
where the number $n-i$ is on the $i$th entry.

Let us give a geometric remark here.
For linearly independent vectors $\bm{u}_1,\dots,\linebreak[3]\bm{u}_T\in\mathbb{R}^n$ $(1\le T<n)$,
write $$S=\{\sum_{i=1}^T x_i\bm{u}_i \mid x_i\in\mathbb{R}\},$$
and we add vectors $\bm{u}_{T+1},\dots,\bm{u}_n$  so that the matrix $U=\left(\begin{array}{cc}U_1^{(T)}&U_2\\U_3&U_4\end{array}\right)$ with $i$th row being $\bm{u}_i$
is regular.
Moreover, let vectors $\bm{f}_i\in \mathbb{R}^n$ $(i=1,\dots,T)$ be orthonormal basis of $S$ and define the matrix $C=(c_{ij})$
by $\bm{u}_i=\sum_{j=1}^Tc_{ij}\bm{f}_j$.

\noindent
Defining the projection $pr$ from $\mathbb{R}^n$ to $V:=\{(x_1,\dots,x_T,0,\dots,0)\mid x_i\in\mathbb{R}\}$
by
$$
pr(x_1,\dots,x_n)=(x_1,\dots,x_T,0,\dots,0),
$$
let us show that there is a constant $c_1$ such that
${vol(pr(Y))}=c_1{ vol(Y)}$ for any  measurable set $Y$ on $S$ under the condition $\det U_1\ne0$.

First, let us find the condition to  $pr(S)=V$. 
Suppose that  $\bm{y}=(y_1,\dots,y_n)\in\mathbb{R}^n$ and  
$\bm{x}=(x_1, \dots,x_T)\in\mathbb{R}^T$ satisfy the relation  $\bm{y}=\sum_{i=1}^T x_i\bm{u}_i$,
which is equivalent to
$$
(y_1,\dots,y_T)=(x_1,\dots,x_T)U_1,\,\,(y_{T+1},\dots,y_n)=(x_1,\dots,x_T)U_2
$$
by $\bm{y}=(y_1,\dots,y_n)=(x_1,\dots,x_T,0,\dots,0)U$. 
Hence  the conditions $pr(S)=V$ and  $\det U_1\ne0$ are equivalent, and then the mapping $pr$ is 
the isomorphism from $S$ to $V$ and
we see 
\begin{gather*}
pr^{-1}(y_1,\dots,y_T,0,\dots,0)=((y_1,\dots,y_T),(y_1,\dots,y_T)U_1^{-1}U_2)
=\sum_{i=1}^T x_i\bm{u}_i,
\\
\frac{\partial(y_1,\dots,y_T)}{\partial(x_1,\dots,x_T)}=\det U_1.
\end{gather*}
Write 
 $Y:=\{\sum_{i=1}^T x_i\bm{u}_i\mid (x_1,\dots,x_T)\in X\}$ for  a measurable set $X$ in $\mathbb{R}^T$.
Then it is easy to see
$$
Y=\{\sum_{j=1}^T y_j\bm{f}_j\mid (y_1,\dots,y_T):=(x_1,\dots,x_T)C,(x_1,\dots,x_T)\in X\},
$$
hence $vol(Y)=vol(\{(x_1,\dots,x_T)C\mid(x_1,\dots,x_T)\in X\}$ by our definition and it is equal to
$|\det C|vol(X)$.
Therefore we see 
\begin{align*}
vol(pr(Y))&=\int_{(y_1,\dots,y_T)\in pr(Y) }1dy_1\dots dy_T
\\
&=
|\det U_1|\int_{(x_1,\dots,x_T)\in X}1dx_1\dots dx_T
\\
&=|\det U_1|vol(X)=|\det U_1||\det C|^{-1}vol(Y).
\end{align*}
Thus the constant  $c_1=|\det U_1||\det C|^{-1}$ which is independent of $Y$
satisfies $vol(pr(Y))=c_1vol(Y)$.

In case of $Y=\{\sum_{i=1}^T x_i\bm{u}_i\mid 0\le x_i\le 1 \,(i=1,\dots,T)\}$, we see $vol(Y)=\sqrt{\det((\bm{u}_i,\bm{u}_j))}=|\det{C}|$ and $vol(pr(Y))=|\det{U_1}|$.

In case of  $Y=\{\sum_{i=1}^T x_i\bm{f}_i\mid 0\le x_i\le 1 (i=1,\dots,T)\}$, we see
$vol(Y)=1$ and $vol(pr(Y)) = $ the absolute value of the determinant of the left $(T,T)$-submatrix of  $\left(\begin{array}{c}\bm{f}_1\\\vdots\\\bm{f}_T\end{array}\right)$.

As an example, let us see the subspaces $S =\{\bm{x}\in\mathbb{R}^n \mid \sum_{i=1}^n x_i=0\}$,
and $V=\{\bm{x}\in \mathbb{R}^n\mid x_n=0\}$.
For $S$, we take the orthonormal basis $\bm{u}_i=\bm{f}_i:=\frac{1}{\sqrt{(n-i)(n-i+1)}}(0,\dots,0,n-i,-1,\dots,-1)$ 
$(i=1,\dots,n-1)$ given in the remark after Proposition \ref{prop4.6}.
Then, writing  the $(i,j)$-entry of the matrix $U_1$ by $u_{i,j}$, we see that $u_{i,j}=0$ if $i>j$ and 
$u_{i,i}=\sqrt{\frac{n-i}{n-i+1}}$, hence $c_1=\det U_1 =\frac{1}{\sqrt{n}}$.

\section{One-dimensional distribution}\label{sec5}
In this section, we discuss	 the one-dimensional 
equi-distribution of  $r_i/p$ $(1\le i \le n, p \in Spl(f))$ for local roots $r_i$, 
that is whether, for $0\le a<1$
$$
\lim_{X\to\infty}\frac{\sum_{p\in Spl_X(f)}\#\{i\mid r_i/p\le a , 1\le i \le n\}}
{n\cdot\#Spl_X(f)}=a
$$  
is true or not.
We note that if $f(x)$ has no rational integral root, then the number of primes $p\in Spl(f)$ satisfying $r_1<X$ or
$r_n>p-X$ is finite for any number $X$, and
if $f(x)$ has a rational integral root $r_0$, then $r=r_0$ or $r=p+r_0$ is a local root according to
$r_0\ge0$ or $r_0<0$ if $p\in Spl(f)$ is large, hence $r/p$ tends to $0$ or $1$
and   the one-dimensional equi-distribution is false.
So, we exclude such polynomials.
In {\bf5.1}, we treat the case that  a polynomial $f(x)$ 
has no non-trivial linear relation among roots, and in {\bf5.2}  the case of  a  decomposable and 
irreducible polynomial of degree 4.
In {\bf{5.3}}, for a more general polynomial, e.g. an irreducible polynomial of degree larger than $1$,
we show that Conjectures \ref{conj1}, \ref{conj2} imply the  one-dimensional equi-distribution of roots $r_i/p$ not based on hard calculation.

Supposing $f(0)\ne0$, we see easily that for a rational number $a$, there are only finitely many primes 
$p\in Spl(f)$ such that $a-r/p=O(\frac{1}{ph(p)})$ where $r$ is a local root and $h(x)$ is any function satisfying $h(x)\to\infty$ $(x\to\infty)$.

\subsection{Case without non-trivial linear relation among roots}
The aim in this subsection is to prove the following two theorems.
\begin{thm}\label{thm5.1}
Let $f(x)$ be a  polynomial  of degree $n\,(>1)$,
which has  no non-trivial linear relation  among roots.
Then the conjecture \eqref{eq11} implies for $1\le i \le n$, $0\le a <1$
\begin{align*}
&(n-1)!\lim_{X\to\infty}\frac{
\#\{p\in Spl_X(f)\mid r_i/p<a\}}
{\#Spl_X(f)}
\\
=&\sum_{0\le h \le n\atop 1\le l \le n-1}
\sum_{k =i}^n(-1)^{h+k+n}{n\choose k}\sum_{m=1}^{n-1}{k \choose n-h-m+l}
{n-k \choose m-l}M(l-ha)^{n-1},
\end{align*}
where the binomial coefficient $A\choose B$ is supposed to vanish  unless $0\le B \le A$,
and $M(x):=\max(x,0).$
\end{thm}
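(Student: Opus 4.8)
The plan is to reduce the statement to a pure volume computation via Conjecture 2 (equation \eqref{eq9}), and then to carry out that volume computation by inclusion–exclusion. Since $f(x)$ has no non-trivial linear relation among roots, we have $t=1$, $Pr(f,\sigma)=1$ for every $\sigma$, and $\mathfrak{D}(f,\sigma)=\hat{\mathfrak{D}}_n$. Take $\sigma=id$ and apply \eqref{eq9} with $D=D_i(a):=\{\bm{x}\in[0,1)^n\mid x_i<a\}$ (more precisely with a closed set $D=\overline{D^\circ}$ approximating it, the boundary having measure zero). This gives
\[
\lim_{X\to\infty}\frac{\#\{p\in Spl_X(f)\mid r_i/p<a\}}{\#Spl_X(f)}
=\frac{vol\bigl(D_i(a)\cap\hat{\mathfrak{D}}_n\bigr)}{vol(\hat{\mathfrak{D}}_n)}.
\]
Here $vol$ denotes $(n-1)$-dimensional volume on the sheets $\sum x_j=l$ inside $\hat{\mathfrak D}_n$. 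As in the proof of Proposition \ref{prop4.5}, one passes to the projection onto the first $n-1$ coordinates: $vol(\hat{\mathfrak D}_n)=1/((n-1)!\cos\theta)$ and, more generally, for any measurable $E\subset\hat{\mathfrak D}_n$ one has $\cos\theta\cdot vol(E)=vol_{n}(\pi^{-1}(\ )\cap\ \cdots)$ — concretely, the ratio on the right-hand side above equals $(n-1)!$ times the $(n-1)$-dimensional Lebesgue measure of $\{\bm{x}\in[0,1)^{n-1}: x_i<a\}$ intersected appropriately with the decomposition of the cube into the slabs $l-1\le\sum_{j=1}^{n-1}x_j<l$, $l=1,\dots,n-1$. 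So the target reduces to computing
\[
(n-1)!\sum_{l=1}^{n-1} vol\Bigl(\{\bm{x}\in[0,1)^{n-1}:\ \lceil\textstyle\sum_{j=1}^{n-1}x_j\rceil=l,\ (\text{the }i\text{-th order statistic of }(x_1,\dots,x_{n-1},\ast))<a\}\Bigr),
\]
but one must be careful: in $\hat{\mathfrak D}_n$ the coordinates are \emph{ordered}, $x_1\le\cdots\le x_n$, so "$x_i<a$" refers to the $i$-th smallest of the $n$ coordinates. The clean way is to \emph{not} order: the map from $[0,1)^n$ (unordered) to $\hat{\mathfrak D}_n$ is $n!$-to-one off a null set, and "$r_i/p<a$ for the $i$-th smallest local root" corresponds to "at least $n-i+1$ of the unordered coordinates are $\ge a$", equivalently "at most $i-1$ are $<a$"; wait — with the ordering $x_1\le\dots\le x_n$, $x_i<a$ means the $i$-th smallest is $<a$, i.e. \emph{at least} $i$ coordinates are $<a$. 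Thus
\[
\frac{vol(D_i(a)\cap\hat{\mathfrak D}_n)}{vol(\hat{\mathfrak D}_n)}
=\sum_{k=i}^{n}\Pr_U\bigl(\text{exactly }k\text{ of }u_1,\dots,u_n<a\ \big|\ \textstyle\sum u_j\in\mathbb{Z}\bigr),
\]
where $U=(u_1,\dots,u_n)$ is uniform on $[0,1)^n$; the conditioning on $\sum u_j\in\mathbb Z$ just means we restrict to the union of the hyperplane-slices and normalize by $vol(\hat{\mathfrak D}_n)$, so the right-hand side is $(n-1)!\sum_{k\ge i}$ of the $(n-1)$-volume of $\{U:\#\{j:u_j<a\}=k,\ \sum u_j\in\mathbb Z\}$ suitably projected.

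Now comes the combinatorial core. Fix $k$ and the slab $\sum_{j=1}^{n-1}u_j\in[l-1,l)$. Split the coordinates: say $k'$ of the first $n-1$ coordinates are $<a$ and the last coordinate $u_n$ is or isn't $<a$, so $k=k'$ or $k=k'+1$. Choose which coordinates are small: this produces the factors $\binom{n-k}{m-l}\binom{k}{\,\cdot\,}$-type binomials after a change of variables. The standard trick is: the volume of $\{\bm x\in[0,1)^{N}:\ \sum x_j\le t\}$ is $\sum_{h\ge0}(-1)^h\binom{N}{h}M(t-h)^N/N!$ (inclusion–exclusion over "which coordinates exceed $1$" when we drop the box constraint), and the volume of the slab $\{l-1<\sum x_j<l\}\cap[0,1)^N$ is the difference at $t=l$ and $t=l-1$. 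One applies this separately to the small coordinates (each in $[0,a)$, rescale by $a$) and the large coordinates (each in $[a,1)$, shift by $a$), glue, sum over the split point $l$ and the count $h$ of inclusion–exclusion terms, and collect. Each rescaling of a small block by $a$ produces the $M(l-ha)^{n-1}$ with the $h$-dependence; the binomials $\binom{n}{k}$, $\binom{k}{n-h-m+l}$, $\binom{n-k}{m-l}$ come from (i) choosing the $k$ small coordinates among $n$, (ii) the inclusion–exclusion/slab bookkeeping for the small block, (iii) the same for the large block, with $m$ an auxiliary summation index tracking the integer part split between the two blocks; the sign $(-1)^{h+k+n}$ packages the inclusion–exclusion signs together with the alternating sum coming from "exactly $k$" $=\sum_{k'\ge k}(-1)^{k'-k}\binom{k'}{k}\cdot(\text{at least }k')$. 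Finally one multiplies by $(n-1)!$, which cancels the $1/(n-1)!$ from the volume normalizations, yielding the stated closed form.

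The main obstacle is purely the bookkeeping in this last paragraph: getting every binomial coefficient, the role of the free index $m$, and the combined sign $(-1)^{h+k+n}$ to match the stated formula exactly, including the convention that $\binom{A}{B}=0$ outside $0\le B\le A$ (which silently truncates all the sums to finite ranges and makes the "$1\le l\le n-1$, $0\le h\le n$, $1\le m\le n-1$" ranges harmless over-counting). There is no conceptual difficulty beyond Conjecture 2 and the elementary volume-of-a-simplex-slab formula; the work is to organize the double inclusion–exclusion (over the box constraints $u_j\le1$ in each of the two blocks) and the ceiling/slab decomposition so that the indices line up. I would first verify the identity for $n=2$ and $n=3$ by hand against the known one-dimensional distributions (\cite{DFI},\cite{T} for the quadratic case) to fix the conventions, then write the general computation.
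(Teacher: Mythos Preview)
Your plan is correct and matches the paper's proof: reduce via Conjecture~2 to $vol(D_{i,a}\cap\hat{\mathfrak D}_n)/vol(\hat{\mathfrak D}_n)$, rewrite this as $\sum_{k=i}^n\binom{n}{k}V(k)$ where $V(k)$ is the (normalized) volume of the set with exactly $k$ unordered coordinates in $[0,a)$ and the rest in $(a,1)$ (the paper's Lemma~5.3), then compute $V(k)$ by treating the small block $[0,a)^k$ and the large block $(a,1)^{n-k}$ separately and iterating the Feller simplex formula (Lemmas~5.4--5.8, culminating in Proposition~5.1). One small correction to your heuristic: the factor $(-1)^k$ in the overall sign $(-1)^{h+k+n}$ does \emph{not} arise from an ``exactly $=$ alternating sum over at-least'' inversion as you wrote --- the outer sum $\sum_{k=i}^n$ is a plain, non-alternating decomposition --- but from the inclusion--exclusion on the large block (the $(-1)^{j+h}$ in Lemma~5.7 with $j=n-1-k$, which after reindexing yields $(-1)^{n+k}$); this is exactly the bookkeeping you flagged as the remaining obstacle, and your proposed $n=2,3$ checks will straighten it out.
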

When $i=1$, a simpler formula is given in Proposition \ref{prop5.1}.

\noindent
{\bf Example}
Denoting $\lim_{X\to\infty}\frac{\#\{p\in Spl_X(f)\mid r_i/p<a\}}{\#Spl_X(f)}$ by $D(a,i;n)$,
we see
\begin{align*}
D(a,1;2)&=
\begin{cases}
2a&  \hspace{8.5mm} \text{ if } 0<a\le 1/2,
\\
1 &   \hspace{8.5mm}  \text{ if }  1/2< a \le 1,
\end{cases}
\\
D(a,2;2)&=
\begin{cases}
0&\text{ if } 0<a\le 1/2,
\\
-1+2a &\text{ if }  1/2< a \le 1,
\end{cases}
\\
D(a,1;3)&=
\begin{cases}
-3a^2+3a&\text{ if } 0<a\le 1/3,
\\
3a^2/2+1/2 &\text{ if }  1/3< a \le 1/2,
\\
-9a^2/2+6a-1&\text{ if }  1/2< a \le 2/3,
\\
1&\text{ if }  2/3< a \le 1,
\end{cases}
\\
D(a,2;3)&=
\begin{cases}
3a^2&\hspace{3.5mm}\text{ if } 0<a\le 1/3,
\\
-6a^2+6a-1 &\hspace{3.5mm}\text{ if }  1/3< a \le 1/2,
\\
6a^2-6a+2&\hspace{3.5mm}\text{ if }  1/2< a \le 2/3,
\\
-3a^2+6a-2&\hspace{3.5mm}\text{ if }  2/3< a \le 1,
\end{cases}
\\
D(a,3;3)&=
\begin{cases}
0&\hspace{-0.7mm}   \text{ if } 0<a\le 1/3,
\\
9a^2/2-3a+1/2 &\hspace{-0.7mm}   \text{ if }  1/3< a \le 1/2,
\\
-3a^2/2+3a-1&\hspace{-0.7mm}   \text{ if }  1/2< a \le 2/3,
\\
3a^2-3a+1&\hspace{-0.7mm}   \text{ if }  2/3< a \le 1.
\end{cases}
\end{align*}

\noindent
Using Theorem \ref{thm5.1}, we show the one-dimensional equi-distribution of $r_i/p$.
\begin{thm}\label{thm5.2}
Let $f(x)$ be a  polynomial of degree $n\,(>1)$ which has no non-trivial linear relation   among roots;
then \eqref{eq11} implies 
\begin{equation}\label{eq47}
\lim_{X\to\infty} \frac{\sum_{p\in Spl_X(f)}\#\{i\mid r_i/p\le a,1\le i \le n\}}{n\cdot\#Spl_X(f)}
=a
\end{equation}
for a real number $a\in[0,1)$.
\end{thm}
In case of $n=2$, the equation \eqref{eq47} is proved by \cite{DFI} and \cite{T}.
\vspace{2mm}

\noindent
The rest of this subsection is devoted to their proof and 
let us start from  the following easy lemma. 
\begin{lem}\label{lem5.1}
For $t,a\in\mathbb{R}$, we have
$$
\int_a^1M(t-w)^mdw =\frac{1}{m+1}\{M(t-a)^{m+1}-M(t-1)^{m+1}\}.
$$
\end{lem}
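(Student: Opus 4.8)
The idea is to produce an explicit antiderivative of the integrand and then invoke the fundamental theorem of calculus; here $m$ is a positive integer. I would set
$$
\Phi(w):=-\frac{1}{m+1}\,M(t-w)^{m+1}\qquad(w\in\mathbb{R}),
$$
and claim that $\Phi$ is continuously differentiable on $\mathbb{R}$ with $\Phi'(w)=M(t-w)^{m}$. Granting this, the fundamental theorem of calculus immediately yields
$$
\int_{a}^{1}M(t-w)^{m}\,dw=\Phi(1)-\Phi(a)=\frac{1}{m+1}\bigl\{M(t-a)^{m+1}-M(t-1)^{m+1}\bigr\},
$$
which is exactly the asserted identity, and this holds for arbitrary real $t$ and $a$ (whether $a\le1$ or $a>1$).

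To justify the claim I would split according to the sign of $t-w$. For $w<t$ one has $M(t-w)^{m+1}=(t-w)^{m+1}$, so $\Phi'(w)=(t-w)^{m}=M(t-w)^{m}$; for $w>t$ the function $\Phi$ vanishes identically, so $\Phi'(w)=0=M(t-w)^{m}$. At the corner $w=t$ the one-sided difference quotients both tend to $0$: from the right $\Phi$ is already $0$ near $t$, and from the left, writing $w=t-\delta$ with $\delta\downarrow0$, the quotient equals $\tfrac{1}{m+1}\delta^{m}\to0$, where the hypothesis $m\ge1$ is used. Hence $\Phi'(t)=0=M(0)^{m}$, and since $w\mapsto M(t-w)^{m}$ is continuous everywhere, $\Phi\in C^{1}(\mathbb{R})$ with $\Phi'=M(t-\cdot)^{m}$ as claimed.

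The computation carries essentially no obstacle; the only point demanding a line of care is the differentiability of $\Phi$ at the corner $w=t$, which is precisely where $m\ge1$ enters (the identity indeed fails for $m=0$). Alternatively one could avoid the corner discussion by a short case analysis on the position of $t$ relative to the interval $[a,1]$ and integrating the monomial $(t-w)^{m}$ over the subinterval on which $t-w\ge0$, but the antiderivative argument is uniform and avoids the bookkeeping.
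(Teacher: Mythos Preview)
Your proof is correct and is essentially the paper's argument: the paper performs the substitution $W=t-w$ and then uses that $\int_{-\infty}^{x}\max(W,0)^{m}\,dW=\tfrac{1}{m+1}M(x)^{m+1}$, which is precisely your antiderivative $\Phi$ viewed after the change of variable. Your version has the slight advantage of making the $C^{1}$ verification at the corner $w=t$ (and hence the role of $m\ge1$) explicit.
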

\proof{
The left-hand side is equal to
\begin{align*}
&\int_a^1\max(t-w,0)^mdw
\\
=&\int_{t-a}^{t-1}\max(W,0)^m(-dW)
\\
=&-\int_{-\infty}^{t-1}\max(W,0)^mdW+\int_{-\infty}^{t-a}\max(W,0)^mdW
\\
=&
-\frac{1}{m+1}M(t-1)^{m+1} +\frac{1}{m+1}M(t-a)^{m+1}.
\end{align*}
\qed
}

\noindent
For the sake of convenience, we recall the proof of  the following \cite{Fe}.
\begin{lem}\label{F}
For a natural number $k$, the volume $U_k(x)$ $(x\in\mathbb{R})$ of a subset of the unit cube $[0,1)^k$ defined by
$\{(x_1,\dots,x_k)\in[0,1)^k\mid x_1+\dots +x_k\le x\}$ is given by
\begin{equation}\label{eq48}
U_k(x)=\frac{1}{k!}\sum_{i=0}^k(-1)^i{k\choose i}M(x-i)^k.
\end{equation}
\end{lem}
\proof{
In case of $k=1$, $U_1(x)=M(x)-M(x-1)$  is easy, hence  the lemma is  true.
Suppose that the lemma is true for $k\ge1$;
then we see that
\begin{align*}
&vol(\{(x_1,\dots,x_{k+1})\in[0,1)^{k+1}\mid x_1+\dots +x_{k+1}\le x\})
\\=\,&
\int_{0}^1 U_k(x-x_{k+1})dx_{k+1}
\\=\,&
\int_0^1\frac{1}{k!}\sum_{i=0}^k(-1)^i{k\choose i}M(x-y-i)^kdy
\\=\,&
\frac{1}{k!}\sum_{i=0}^k(-1)^i{k\choose i}
\frac{1}{k+1}\{M(x-i)^{k+1}-M(x-i-1)^{k+1}\}
\\=\,&
\frac{1}{(k+1)!}\left\{\sum_{i=0}^k(-1)^i{k\choose i}M(x-i)^{k+1}
-\sum_{i=0}^k(-1)^i{k\choose i}M(x-i-1)^{k+1}\right\}
\\=\,&
\frac{1}{(k+1)!}\left\{\sum_{i=0}^k(-1)^i{k\choose i}M(x-i)^{k+1}
-\sum_{i=1}^{k+1}(-1)^{i-1}{k\choose {i-1}}M(x-i)^{k+1}\right\}
\\=\,&
\frac{1}{(k+1)!}\sum_{i=0}^{k+1}(-1)^i{{k+1}\choose i}M(x-i)^{k+1},
\end{align*}
which completes the induction. 
\qed
}
\begin{cor}\label{cor5.1}
Let $n\,(\ge1)$ be  an integer.
Then we have, 
for   a polynomial $P(x)=c_nx^n+\dots+c_0$,
we have
\begin{equation}\label{eq49}
\sum_{k=0}^n (-1)^kP(k){n\choose k}=c_n(-1)^n\,n!.
\end{equation} 
\end{cor}
\proof{
The case of $n=1$ is obvious.
The equation \eqref{eq49} means
\begin{equation*}
\sum_{k=0}^n (-1)^k k^j{n\choose k}=
\begin{cases}
0&\text{ if }j<n,
\\
(-1)^nn!&\text{ if } j = n.
\end{cases}
\end{equation*}
The equation \eqref{eq48} implies that  
$(U_n(x)=)\frac{1}{n!}\sum_{k=0}^n(-1)^k{n\choose k}M(x-k)^n=1$ if $x>n$, 
hence we have the identity $\sum_{k=0}^n(-1)^k{n\choose k}(x-k)^n=n!$.
We find that  the coefficient  of $x^{n-j}$   of the  identity    
is $\sum_{k=0}^n(-1)^k{n\choose k}{n\choose j}(-k)^j$. 
If, hence $j<n$, then we have $\sum_{k=0}^n(-1)^k{n\choose k}k^j=0$,
i.e. \eqref{eq49} for a polynomial $P$ with $\deg P<n$.
If $j=n$, then $\sum_{k=0}^n(-1)^k{n\choose k}(-k)^n=n!$
which implies \eqref{eq49} for $P(x)=x^n$.
\qed
}
\begin{cor}\label{cor5.2}
 For  natural numbers   $k,n$  with $1\le k \le n$ we have
\begin{equation}\label{eq50}
U_n(k)-U_n(k-1)=\frac{1}{n!}\sum_{i=0}^{k-1} (-1)^i{n+1\choose i}(k-i)^n
=\frac{A(n,k)}{n!},
\end{equation}
where $A(n,k)$ denotes the Eulerian number in the introduction.
\end{cor}
\proof{
The first equality follows from
\begin{align*}
&U_n(k)-U_n(k-1)
\\=\,&
\frac{1}{n!}\left\{\sum_{i=0}^n(-1)^i{n\choose i}M(k-i)^n 
-\sum_{i=0}^n(-1)^i{n\choose i}M(k-1-i)^n\right\}
\\=\,&
\frac{1}{n!}\left\{k^n+\sum_{i=0}^{n-1}(-1)^{i+1}{n\choose {i+1}}M(k-i-1)^n 
-\sum_{i=0}^{n-1}(-1)^i{n\choose {i}}M(k-1-i)^n\right\}
\\=\,&
\frac{1}{n!}\left\{k^n+\sum_{i=0}^{n-1}(-1)^{i+1}{{n+1}\choose{ i+1}}M(k-1-i)^n \right\} 
\\=\,&
\frac{1}{n!}\left\{k^n+\sum_{i=1}^{n}(-1)^{i}{{n+1}\choose{ i}}M(k-i)^n \right\} 
\\=\,&
\frac{1}{n!}\sum_{i=0}^{k-1}(-1)^{i}{{n+1}\choose{ i}}(k-i)^n  .
\end{align*}
Putting
$B(n,k):=n!(U_n(k)-U_n(k-1)   )$, we have only to confirm
$$
B(1,1)=1,\,B(n,k)=(n-k+1)B(n-1,k-1)+kB(n-1,k).
$$
The equation $B(1,1)=1$ is clear, and we see that 
\begin{align*}
&(n-k+1)B(n-1,k-1)+kB(n-1,k)
\\=\,&
(n-k+1)\sum_{i=0}^{k-2}(-1)^{i}{{n}\choose{ i}}(k-1-i)^{n-1} 
+k\sum_{i=0}^{k-1}(-1)^{i}{{n}\choose{ i}}(k-i)^{n-1} 
\\=\,&
(n-k+1)\sum_{i=1}^{k-1}(-1)^{i-1}{{n}\choose{ i-1}}(k-i)^{n-1}
+k\sum_{i=0}^{k-1}(-1)^{i}{{n}\choose{ i}}(k-i)^{n-1} 
\\=\,&
\sum_{i=0}^{k-1}(-1)^i(k-i)^{n-1}\left\{(k-n-1){n\choose{i-1}}+k{n\choose i}\right\}
\\=\,&
\sum_{i=0}^{k-1}(-1)^i(k-i)^{n}{n+1\choose{i}}
\\=\,&
B(n,k).
\end{align*}
\qed
}

Let $f(x) $ be a polynomial of degree $n\,(>1)$ without non-trivial linear relation among roots.
Given number $a\in[0,1]$,
putting 
\begin{equation}\label{eq51}
D_{i,a}:=\{ (x_1,\dots,x_n)\in[0,1)^n\mid x_i\le a \},
\end{equation}
 we see that Conjecture \ref{conj2} implies
\begin{align}\nonumber
&\lim_{X\to\infty} \frac{\sum_{p\in Spl_X(f)}\#\{i\mid r_i/p\le a,1\le i \le n\}}{n\cdot\#Spl_X(f)}
\\\nonumber
=&\lim_{X\to\infty} \frac{\sum_{p\in Spl_X(f)}
\#\{i\mid(r_1/p,\dots,r_n/p)\in D_{i,a}\}}
{n\cdot\#Spl_X(f)}
\\\nonumber
=&\sum_{i=1}^n Pr_{D_{i,a}}(f)/n
\\ \label{eq52}
=&\sum_{i=1}^n\displaystyle\frac{ vol(  D_{i,a}\cap{\hat{\mathfrak{D}}_n})} 
{n\cdot vol(\hat{\mathfrak{D}}_n)},
\end{align}
hence to prove  Theorem \ref{thm5.2},
 we have only to show that  the above value is equal to $a$.
Hereafter, a real number $a$ with $0\le a <1$ is fixed.
\begin{lem}\label{lem5.3}
For an integer $k$ with $1\le k \le n$, let
\begin{equation}\label{eq53}
V(k):=vol(\{\bm{x}\in[0,1)^n \mid   
x_1,\dots,x_k\le a<x_{k+1},\dots,x_n,
\sum_{j=1}^nx_j\in\mathbb{Z}
\})\cdot\cos\theta,
\end{equation}
for  the angle $\theta$ of two hyper-planes defined by $x_j=0$ and by $x_1+\dots+x_n=0$
in $\mathbb R^n$. 
Then we have
\begin{equation}
\frac{ vol(  D_{i,a}\cap{\hat{\mathfrak{D}}_n})}{vol( \hat{\mathfrak{D}}_n)}=
\sum_{k=i}^n{n\choose k}V(k).
\end{equation}
\end{lem}
\proof{
It is easy to see
\begin{align*}
&vol(  D_{i,a}\cap{\hat{\mathfrak{D}}_n}) 
\\
=&\sum_{ k=i}^n vol\{\bm{x}\mid 0\le x_1\le\dots\le x_k \le a <x_{k+1}\le \dots\le x_n<1, 
\sum x_j\in\mathbb{Z}\} 
\\
=&
\sum_{ k=i}^n\frac{1}{k!(n-k)!}vol\{\bm{x}\mid 0\le x_1,\dots, x_k \le a <x_{k+1},\dots, x_n<1, 
\sum x_j\in\mathbb{Z}\} 
\\
=&\,
\frac{1}{n!}\sum_{ k=i}^n{n\choose k}vol\{
\bm{x}\mid
 0\le x_1,\dots, x_k \le a <x_{k+1},\dots, x_n<1, 
\sum_j x_j\in\mathbb{Z}
\}
\\
=&\, vol(\hat{\mathfrak{D}}_n)\sum_{k=i}^n{n\choose k}V(k),
\end{align*}
using $vol(\hat{\mathfrak{D}}_n)=\frac{1}{n!\cos\theta}$.
\qed
}

\begin{lem}\label{lem5.4}
For $k=n$, we have
\begin{equation}\label{eq55}
V(n) =\frac{1}{(n-1)!}\sum_{0\le i \le n,\atop1\le k\le n-1}(-1)^i{n\choose i}M(k-ia)^{n-1}.
\end{equation}
\end{lem}
\proof{
It is easy to see that 
\begin{align*}
V(n)
=&\,vol(\{\bm{x}\in \mathbb{R}^n\mid0\le x_1,\dots,x_n\le a,\sum_{i=1}^nx_i\in\mathbb{Z}\})\cdot\cos\theta
\\
=&\sum_{k=1}^{n-1}vol(\{\bm{x}\in\mathbb{R}^{n}\mid 0\le x_1,\dots,x_n\le a,\sum_{i=1}^nx_i=k\})
\cdot\cos\theta
\\
=&\sum_{k=1}^{n-1}vol(\{\bm{x}\in\mathbb{R}^{n-1}\mid 0\le x_1,\dots,x_{n-1}\le a,0\le k-\sum_{i=1}^{n-1}x_i\le a\})
\\
=&\sum_{k=1}^{n-1}vol(\{\bm{x}\in\mathbb{R}^{n-1}\mid 0\le x_1,\dots,x_{n-1}\le a,\sum_{i=1}^{n-1}x_i\le k\})
\\
&\hspace{1cm}-\sum_{k=1}^{n-1}vol(\{\bm{x}\in\mathbb{R}^{n-1}\mid 0\le x_1,\dots,x_{n-1}\le a,\sum_{i=1}^{n-1}x_i\le k-a\}).
\end{align*} 
The volume of the set $\{\bm{x}\in\mathbb{R}^{n-1}\mid 0\le x_1,\dots,x_{n-1}\le a,\sum_{i=1}^{n-1}x_i\le K\}$ is equal to
\begin{align*}
&a^{n-1}vol(\{\bm{x}\in\mathbb{R}^{n-1}\mid 0\le t_1,\dots,t_{n-1}\le 1,\sum_{i=1}^{n-1}t_i\le K/a\})
\\
=\,&a^{n-1}U_{n-1}(K/a)
\\
=\,&
\frac{a^{n-1}}{(n-1)!}\sum_{i=0}^{n-1}(-1)^i{n-1\choose i}M(K/a-i)^{n-1}
\\
=\,&
\frac{1}{(n-1)!}\sum_{i=0}^{n-1}(-1)^i{n-1\choose i}M(K-ia)^{n-1}.
\end{align*}
Therefore we have
\begin{align*}
V(n)
=&\frac{1}{(n-1)!}\sum_{k=1}^{n-1}\sum_{i=0}^{n-1}(-1)^i{n-1\choose i}\{M(k-ia)^{n-1}-M(k-(i+1)a)^{n-1}
\}
\\
=&\frac{1}{(n-1)!}\sum_{k=1}^{n-1}\sum_{i=0}^{n-1}(-1)^i{n-1\choose i}M(k-ia)^{n-1}
\\
+&\frac{1}{(n-1)!}\sum_{k=1}^{n-1}\sum_{i=1}^{n}(-1)^i{n-1\choose i-1}M(k-ia)^{n-1}
\\
=&\frac{1}{(n-1)!}\sum_{k=1}^{n-1}\sum_{i=0}^{n}(-1)^i{n\choose i}M(k-ia)^{n-1}.
\end{align*}
\qed}
\begin{lem}\label{lem5.5}
In case of  $1 \le k \le n-1$, we have
\begin{equation}\label{eq56}
V(k)=\sum_{m=1}^{n-1}(U_{k,n-1}(m-a)-U_{k,n-1}(m-1)),
\end{equation}
where
\begin{equation}\label{eq57}
U_{k,r}(t):=vol\{\bm{x}\in[0,1)^r \mid x_1,\dots,x_k\le a < x_{k+1},\dots,x_r,\sum_{j=1}^r x_j<t\}.
\end{equation}
\end{lem}
\proof{
We see that
\begin{align*}
&V(k)
\\
=\,&\sum_{m=1}^{n-1}vol\{\bm{x}\in[0,1)^n \mid    
x_1,\dots,x_k\le a<x_{k+1},\dots,x_n,
\sum_{j=1}^nx_j=m
\}\cdot\cos\theta
\\
=\,&
\sum_{m=1}^{n-1}vol\{\bm{x}\in[0,1)^{n-1} \mid    
\begin{array}{c}
x_1,\dots,x_k\le a<x_{k+1},\dots,x_{n-1},
\\
a<m-\sum_{j=1}^{n-1}x_j<1
\end{array}
\}
\\
=\,&\sum_{m=1}^{n-1}(U_{k,n-1}(m-a)-U_{k,n-1}(m-1)).
\end{align*} 
\qed
}
\begin{lem}\label{lem5.6}
For integers $r,k$ with $1\le k \le r$, we have
\begin{equation*}
U_{k,r+1}(t)=\int_a^1U_{k,r}(t-w)dw.
\end{equation*}
\end{lem}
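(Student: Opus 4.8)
The plan is to peel off the last coordinate in the region defining $U_{k,r+1}(t)$ and apply Fubini's theorem, reducing the $(r+1)$-dimensional volume to an integral of $U_{k,r}$.

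Concretely, I would write a point of $[0,1)^{r+1}$ as $(\bm{x},w)$ with $\bm{x}=(x_1,\dots,x_r)\in[0,1)^r$ and $w=x_{r+1}\in[0,1)$. Since the hypothesis is $k\le r$, the index $r+1$ is one of the ``large'' indices in the defining inequalities \eqref{eq54}, so the constraints cutting out the region of volume $U_{k,r+1}(t)$ read: $a<w<1$; $x_1,\dots,x_k\le a<x_{k+1},\dots,x_r$; and $\sum_{j=1}^{r}x_j<t-w$. The first step is then to observe that for each fixed $w\in(a,1)$ the $\bm{x}$-slice of this region is exactly the set appearing in \eqref{eq54} with $t$ replaced by $t-w$, so its $r$-dimensional volume equals $U_{k,r}(t-w)$, while for $w\notin(a,1)$ the slice is empty.

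The second step is Fubini's theorem applied to the indicator function of the $(r+1)$-dimensional region, which yields
\[
U_{k,r+1}(t)=\int_{0}^{1}vol\big(\{\bm{x}\in[0,1)^r:\ (\bm{x},w)\text{ lies in the region}\}\big)\,dw=\int_{a}^{1}U_{k,r}(t-w)\,dw,
\]
the claimed identity. Here $U_{k,r}(\,\cdot\,)$ is defined for all real arguments, so there is no issue when $t-w<0$ or is large.

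There is no genuine obstacle; the only thing requiring care is the bookkeeping of which coordinates are $\le a$ and which are $>a$. The assumption $k\le r$ is precisely what forces $x_{r+1}\in(a,1)$, and this is what makes the outer integration range $(a,1)$ and makes the remaining constraints match the definition of $U_{k,r}$ exactly; were $x_{r+1}$ instead constrained to be $\le a$, one would integrate over $(0,a)$ and obtain a different recursion.
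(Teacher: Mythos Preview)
Your proof is correct and is essentially the same as the paper's: both peel off the coordinate $x_{r+1}$, use $k\le r$ to see that this coordinate ranges over $(a,1)$, and recognize the slice at fixed $x_{r+1}=w$ as the region defining $U_{k,r}(t-w)$, then apply Fubini. The paper states this in one displayed integral without further comment; your version simply spells out the same reasoning in more detail.
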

\proof{
This follows from the equation
\begin{align*}
U_{k,r+1}(t)=\int_{x_{r+1}=a}^1 (\int_{D}dx_1,\dots dx_r)dx_{r+1}
\end{align*}
where the domain $D$ is given by the conditions
$$ 0\le x_1,\dots,x_k\le a<x_{k+1},\dots,x_r,\,\,\sum_{j=1}^r x_j< t-x_{r+1}.$$
\qed
}
\begin{lem}\label{lem5.7}
For integers $j,k$ with $j\ge0,k\ge1$, $U_{k,k+j}(t)$ is equal to
\begin{equation}\label{eq58}
\frac{1}{(k+j)!}\sum_{i=0}^k(-1)^i{k\choose i}
\sum_{h=0}^j (-1)^{j+h}{j\choose h} M(t+h-j-(i+h)a)^{k+j}.
\end{equation}
\end{lem}
\proof{
Suppose that $j=0$; then $U_{k,k}(t)$ equals
\begin{align*}
&vol\{\bm{x}\in[0,1)^k \mid x_1,\dots,x_k\le a ,\sum_{j=1}^k x_j<t\}
\\
=\,&
a^kU_k(t/a)
\\
=\,&
\frac{1}{k!}\sum_{i=0}^k (-1)^i{k\choose i}M(t-ia)^k.
\end{align*}
Second, suppose that the equation \eqref{eq58} is true; then we see that
$U_{k,k+j+1}(t)$ equals
\begin{align*}
&
\int_a^1U_{k,k+j}(t-w)dw
\\
=&\frac{1}{(k+j)!}\sum_{i=0}^k(-1)^i{k\choose i}
\sum_{h=0}^j (-1)^{j+h}{j\choose h} \int_a^1M(t+h-j-(i+h)a  -w)^{k+j}dw
\\
=&\frac{1}{(k+j)!}\sum_{i=0}^k(-1)^i{k\choose i}
\sum_{h=0}^j (-1)^{j+h}{j\choose h}\times
\\
&\hspace{2cm}
\frac{1}{k+j+1}\Bigl\{M( t+h-j-(i+h+1)a)^{k+j+1} 
\\
&\hspace{5cm}
-  M( t+h-j-1-(i+h)a)^{k+j+1} \Bigr\}
\\
=&\frac{1}{(k+j+1)!}\sum_{i=0}^k(-1)^i{k\choose i}\times
\\
&\left\{\sum_{h=1}^{j+1}(-1)^{j+h+1}{j\choose h-1}M(t+h-j-1-(i+h)a)^{k+j+1}\right.
\\
&\left.
\hspace{3cm}-\sum_{h=0}^{j}(-1)^{j+h}{j\choose h}M(t+h-j-1-(i+h)a)^{k+j+1}
\right\}
\\
=&\frac{1}{(k+j+1)!}\sum_{i=0}^k(-1)^i{k\choose i}\times
\\
&
\left\{\sum_{h=1}^{j}(-1)^{j+h+1}\left({j\choose h-1}+{j\choose h}\right)M(t+h-j-1-(i+h)a)^{k+j+1}
\right.
\\
&  \hspace{2cm}
+M(t-(i+j+1)a)^{k+j+1} - (-1)^j  M(t-j-1-ia)^{k+j+1}\biggr\} 
\\
=&\frac{1}{(k+j+1)!}\sum_{i=0}^k(-1)^i{k\choose i}\times
\\
&\left\{\sum_{h=1}^{j}(-1)^{j+h+1}{j+1\choose h}M(t+h-j-1-(i+h)a)^{k+j+1}
\right.
\\
&
\hspace{2cm}+M(t-(i+j+1)a)^{k+j+1} - (-1)^j  M(t-j-1-ia)^{k+j+1}\biggr\} ,
\end{align*}
which completes the induction.
\qed
}
\begin{lem}\label{lem5.8}
For $1\le k \le n-1$, we have
\begin{align}\nonumber
&(n-1)!
V(k)
\\ \nonumber
=&
\sum_{0\le h\le n\atop1\le l\le n-1}
(-1)^{n+k+h}  \sum_{m=1}^{n-1}    {k\choose n-h-m+l}{n-k \choose m-l}M(l-ha)^{n-1}.
\end{align}
\end{lem}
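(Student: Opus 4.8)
The plan is to feed Lemma~\ref{lem5.7} (with $j=n-1-k$, so that $k+j=n-1$ and $j\ge0$ since $k\le n-1$) into Lemma~\ref{lem5.5} and then reorganise the resulting triple sum. Expanding $U_{k,n-1}(t)$ by Lemma~\ref{lem5.7} and putting $t=m-a$ and $t=m-1$ as Lemma~\ref{lem5.5} requires, one obtains
$$
(n-1)!\,V(k)=\sum_{m=1}^{n-1}\sum_{i=0}^{k}\sum_{h=0}^{n-1-k}(-1)^{i+n-1-k+h}\binom{k}{i}\binom{n-1-k}{h}\bigl[\,M_1-M_2\,\bigr],
$$
where $M_1=M(m+h-n+1+k-(i+h+1)a)^{n-1}$ comes from $t=m-a$ and $M_2=M(m+h-n+k-(i+h)a)^{n-1}$ from $t=m-1$. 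First I would check these two affine arguments carefully; everything after that is pure bookkeeping on the two halves of the bracket.

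Next I would reindex each half. In the $M_1$-part, replace $i$ by $h':=i+h+1$ and set $l:=m+h-n+1+k$, keeping $h$ as the third variable; then $i=h'-h-1$, $m=l-h+n-1-k$, the sign collapses to $(-1)^{n+k+h'}$, and $1\le m\le n-1$ becomes $l-k\le h\le l+n-2-k$. In the $M_2$-part, replace $i$ by $h':=i+h$ and set $l:=m+h-n+k$; then $i=h'-h$, $m=l-h+n-k$, the sign (absorbing the leading minus of $M_1-M_2$) is again $(-1)^{n+k+h'}$, and $1\le m\le n-1$ becomes $l-k+1\le h\le l+n-k-1$. In both halves the contributions with $l\le0$ vanish, since then $l-h'a\le0$ and $M(\cdot)^{n-1}=0$, so one may restrict to $1\le l\le n-1$; likewise $0\le h'\le n$ covers all $h'$ that occur, and binomial coefficients outside their natural ranges are read as $0$.

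The decisive step is to align the two inner $h$-sums. After the substitution $h\mapsto h-1$ in the $M_1$-part, its inner sum runs over exactly the interval $l-k+1\le h\le l+n-k-1$ that already governs the $M_2$-part, so the coefficient of $M(l-h'a)^{n-1}$ in $(n-1)!\,V(k)$ is
\begin{align*}
&(-1)^{n+k+h'}\sum_{h=l-k+1}^{l+n-k-1}\binom{k}{h'-h}\Bigl(\binom{n-1-k}{h-1}+\binom{n-1-k}{h}\Bigr)\\
&\qquad=(-1)^{n+k+h'}\sum_{h=l-k+1}^{l+n-k-1}\binom{k}{h'-h}\binom{n-k}{h},
\end{align*}
by Pascal's rule. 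Finally, substituting $h=n-k-s$ and using $\binom{n-k}{h}=\binom{n-k}{s}$ and $\binom{k}{h'-h}=\binom{k}{n-h'-s}$ turns the last sum into $\sum_{s=1-l}^{n-1-l}\binom{k}{n-h'-s}\binom{n-k}{s}$, and writing $s=m-l$ yields $\sum_{m=1}^{n-1}\binom{k}{n-h'-m+l}\binom{n-k}{m-l}$. Summing over $0\le h'\le n$ and $1\le l\le n-1$ and renaming $h'$ to $h$ gives the claimed identity.

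The main obstacle is not conceptual but clerical: one has to be meticulous about the two affine arguments of $M$, about which index is retained through each reindexing, and, above all, about the summation ranges inherited from $1\le m\le n-1$, because the whole argument hinges on the fact that these two ranges become identical after the shift $h\mapsto h-1$, which is precisely what lets Pascal's rule be applied termwise with no surviving boundary corrections. It is also worth recording explicitly at the outset that out-of-range binomials are zero and that terms with nonpositive constant part drop out, so that all sums may be handled as finite formal expressions and compared term by term.
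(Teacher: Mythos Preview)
Your proof is correct and follows essentially the same strategy as the paper: substitute Lemma~\ref{lem5.7} with $j=n-1-k$ into Lemma~\ref{lem5.5}, reindex so that both halves carry the common factor $M(l-h'a)^{n-1}$, and combine via Pascal's rule. The only difference is bookkeeping: the paper keeps $m$ as the outer variable and reindexes $(i,h)\to(r,s)$ with $r=l$, $s=h'$, so that the two halves immediately differ only by $\binom{n-1-k}{m-r}$ versus $\binom{n-1-k}{m-r-1}$ and Pascal gives $\binom{n-k}{m-r}$ directly in the final form---no shift in $h$ and no back-substitution $h=n-k-s$, $s=m-l$ are needed. Your route reaches the same destination with one extra reindexing at the end.
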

\proof{
For $1\le k , m\le n-1$,  Lemma \ref{lem5.7} implies
\begin{align*}
&(n-1)!\{U_{k,n-1}(m-a)-U_{k,n-1}(m-1)\}
\\
=&
\sum_{i,h\in\mathbb Z}(-1)^i{k\choose i}(-1)^{n-1-k+h}{n-1-k\choose h} 
\\& \hspace{1cm}\times \{
M(m-a+h-(n-1-k)-(i+h)a)^{n-1}
\\
&\hspace{1.5cm}-
M(m-1+h-(n-1-k)-(i+h)a)^{n-1}\},
\end{align*}
noting the supposition that the binomial coefficient $A\choose B$ vanishes unless $0\le B\le A$.
Continuing the calculation, it is 
\begin{align*}
&\sum_{r,s\in\mathbb Z}(-1)^{n+k+s}{k\choose n+r-s-m}
\!\!\left\{\!{n-1-k\choose m-r}\!+\!{n-1-k\choose m-r-1}\right\} 
\!M(r-sa)^{n-1}
\\
&=\!\!\sum_{r,s\in\mathbb{Z}}(-1)^{n+k+s}{k\choose n+r-s-m}
{n-k\choose m-r}M(r-sa)^{n-1},
\end{align*}
hence we have
\begin{align*}
&(n-1)!
V(k)
\\ 
=&
\sum_{r,s\in\mathbb{Z}}
(-1)^{n+k+s}  \sum_{m=1}^{n-1}    {k\choose n-s-m+r}{n-k \choose m-r}M(r-sa)^{n-1}
\\ 
=&
\sum_{0\le s\le n\atop1\le r\le n-1}
(-1)^{n+k+s}  \sum_{m=1}^{n-1}    {k\choose n-s-m+r}{n-k \choose m-r}M(r-sa)^{n-1}
\end{align*}
where the restrictions on $r,s$ follow from conditions $1\le k,m\le n-1,0\le n+r-s-m\le k,0\le m-r\le n-k$.
Lemma \ref{lem5.5} completes the proof.
\qed
}
\begin{lem}\label{lem5.9}
Let $m,n$ be  integers satisfying $0\le m\le n-1$.
Then we have
\begin{equation}\label{eq59}
\sum_{k=0}^m (-1)^k{n\choose k}=(-1)^m{n-1\choose m}.
\end{equation} 
\end{lem}
This is well-known and we omit the proof.
\begin{prop}\label{prop5.1}
For an integer $i$ with $1\le i \le n$ and a real number $a\in[0,1)$, we have
\begin{align*}
&(n-1)!\,vol(D_{i,a}\cap\hat{\mathfrak D}_n)/vol({\hat{\mathfrak D}}_n)
\\
=
&\sum_{0\le h \le n\atop 1\le l \le n-1}
\left\{
\sum_{k =i}^n(-1)^{h+k+n}{n\choose k}
\left[{n\choose h}-\sum_{h\le q\le \max(l,h-1)}{k\choose q-h}{n-k\choose n-q}\right]
\right\}
\\
&\hspace{8cm}\times
M(l-ha)^{n-1}.
\end{align*}
In particular, we have for $i=1$
\begin{equation}\label{eq60}
vol(D_{1,a}\cap\hat{\mathfrak D}_n)/vol({\hat{\mathfrak D}}_n)
=\sum_{0\le h \le n,\atop1\le l\le n-1}C_1(l,h)M(l-ha)^{n-1},
\end{equation}
where
$$
C_1(l,h)=\frac{1}{(n-1)!}
\begin{cases}
(-1)^{n+h+1}{n\choose h}&\text{if }h\ge l+1,
\\
0&\text{if }1\le h \le l,
\\
(-1)^{n+l+1}{n-1\choose l}&\text{if } h=0.
\end{cases}
$$
\end{prop}
\proof{
By Lemmas \ref{lem5.3}, \ref{lem5.4}, \ref{lem5.8}, we have
\begin{align*}
&(n-1)!\,vol(D_{i,a}\cap\hat{\mathfrak D}_n)/vol({\hat{\mathfrak D}}_n)
\\
=&(n-1)!
\sum_{k=i}^n{n\choose k}V(k)
\\
=&(n-1)!\,V(n)
+(n-1)!\sum_{k=i}^{n-1}{n\choose k}V(k)
\\
=&
\sum_{0\le h\le n\atop1\le l\le n-1}(-1)^h{n\choose h}M(l-ha)^{n-1}
\\
+&
\sum_{k=i}^{n-1}{n\choose k}
\sum_{0\le h\le n\atop1\le l\le n-1}(-1)^{n+k+h}   \sum_{m=1}^{n-1}   {k\choose n-h-m+l}{n-k \choose m-l}M(l-ha)^{n-1}
\\
=&
\sum_{k=i}^{n}{n\choose k}
\sum_{0\le h\le n\atop1\le l\le n-1}(-1)^{n+k+h}  \sum_{m=1}^{n-1} {k\choose n-h-m+l}{n-k \choose m-l}M(l-ha)^{n-1},
\end{align*}
since the binomial coefficient ${0\choose m-l}$ vanishes unless $m=l$.
The partial sum $ \sum_{m=1}^{n-1} {k\choose n-h-m+l}{n-k \choose m-l}$ is equal to
\begin{align}\nonumber
&\sum_{1-l\le q\le n-1-l}{k\choose n-h-q}{n-k\choose q}
\\\nonumber
=\,&\sum_{0\le q\le \min(n-1-l,n-h)}{k\choose n-h-q}{n-k\choose q}
\\\nonumber
=\,&{n\choose n-h}-\sum_{\min(n-1-l,n-h)+1\le q \le n-h}{k\choose n-h-q}{n-k\choose q}
\\\label{eq61}
=\,&{n\choose h}-\sum_{h\le q\le \max(l,h-1)}{k\choose q-h}{n-k\choose n-q},
\end{align}
where we use for the second equality, the formula $\sum_{r=0}^p{n\choose r}{m\choose p-r}={n+m\choose p}$ if $p\le n+m$.

Next, let us assume  $i=1$ to show \eqref{eq60}.
Putting
\begin{align*}
T(l,h) := &\sum_{k =1}^n(-1)^{h+k+n}{n\choose k}
\left({n\choose h}-\sum_{h\le q\le \max(l,h-1)}{k\choose q-h}{n-k\choose n-q}\right)
\\
=&-(-1)^{h+n}{n\choose h}- \sum_{k =1}^n(-1)^{h+k+n}{n\choose k}\sum_{q=h}^{\max(l,h-1)}{k\choose q-h}{n-k\choose n-q},
\end{align*}
we have only to prove $T(l,h)=(n-1)!\,C_1(l,h)$.
It is obviously true if $h\ge l+1$,
since the partial sum on $q$ is empty. 
In case of $h=0$,
we see that $T(l,0)$ is equal to 
\begin{align*}
&-(-1)^{n}- \sum_{k =1}^n(-1)^{k+n}{n\choose k}\sum_{q=0}^{l}{k\choose q}{n-k\choose n-q}
\\
=&
-(-1)^{n}- \sum_{k =1}^n(-1)^{k+n}{n\choose k}\sum_{q=0}^{l}\delta_{k,q}
\\
=&
-(-1)^{n}- \sum_{k =1}^l (-1)^{k+n}{n\choose k}
\\
=&
- \sum_{k =0}^l (-1)^{k+n}{n\choose k}
\\
=&-(-1)^{n+l}{n-1\choose l},
\end{align*}
by Lemma \ref{lem5.9}.
Lastly assume that $1\le h \le l$.
The sum $T(l,h)+  (-1)^{h+n}{n\choose h}$  is equal to
\begin{align*}
&- \sum_{k =1}^n(-1)^{h+k+n}{n\choose k}\sum_{q=h}^l {k\choose q-h}{n-k\choose n-q}
\\
=\,&- \sum_{q =h}^l (-1)^{h+n}{n\choose h} {n-h\choose q-h}\sum_{k=1}^n  (-1)^k{h\choose q-k}
\\
=\,&- \sum_{q =h}^l (-1)^{h+n}{n\choose h} {n-h\choose q-h}(-1)^q\sum_{K=0}^{q-1}  (-1)^K{h\choose K}
\\
=\,&- \sum_{q =h}^l (-1)^{h+n}{n\choose h} {n-h\choose q-h}(-1)^q(-1)^{q-1}{h-1\choose q-1}
\\
=\,& \sum_{q =h}^l (-1)^{h+n}{n\choose h} {n-h\choose q-h}{h-1\choose q-1}
\\
=\,& (-1)^{h+n}{n\choose h} ,
\end{align*}
which implies $T(l,h)=0$.
\qed
}

The proposition gives Theorem \ref{thm5.1} by \eqref{eq61},
and we see that   \eqref{eq52} is the sum of $C(l,h)M(l-ha)^{n-1}$ over integers $l,h$ satisfying
\begin{equation}\label{eq62}
1\le l \le n-1, 0\le h \le n,
\end{equation}
where
\begin{align}\nonumber
C(l,h):=&\frac{1}{n!}
\sum_{1\le i \le k \le n}(-1)^{h+k+n}{n\choose k}
\!\!\left( \!{n\choose h}-\sum_{h\le q\le \max(l,h-1)}{k\choose q-h}{n-k\choose n-q}\!\right)
\\ \nonumber
=&\frac{1}{n!}\sum_{0\le k \le n}(-1)^{h+k+n}\,k\,{n\choose k}
\!\!\left(\!{n\choose h}-\sum_{h\le q\le \max(l,h-1)}{k\choose q-h}{n-k\choose n-q}\!\right)
\\ \label{eq63}
=&\frac{-1}{n!}
\sum_{0\le k \le n}(-1)^{h+k+n}\,k\,{n\choose k}
\sum_{h\le q\le \max(l,h-1)}{k\choose q-h}{n-k\choose n-q},
\end{align}
using Corollary \ref{cor5.1} with $P(x)=x$ and $n>1$.
To prove \eqref{eq52} being to equal to $a$,
we will show
\begin{equation}\label{eq64}
C(l,h)=
\begin{cases}
\displaystyle\frac{-(-1)^{n-l}}{(n-1)!}{n-2 \choose l-1} &\text{ if } h=0,
\\[4mm]
\displaystyle\frac{(-1)^{n-l}}{(n-1)!}{n-2 \choose l-1} &\text{ if } h=1,
\\[4mm]
0&\text{ if }h\ge2.
\end{cases}
\end{equation}
Under the equations \eqref{eq64}, \eqref{eq52} being $a$, i.e. Theorem \ref{thm5.1} is proved as follows:
The expression \eqref{eq52} is equal to
\begin{align*}
&\sum_{l=1}^{n-1} \frac{-(-1)^{n-l}}{(n-1)!}{n-2\choose l-1}M(l)^{n-1}
+
\sum_{l=1}^{n-1} \frac{(-1)^{n-l}}{(n-1)!}{n-2\choose l-1}M(l-a)^{n-1}
\\
=&
\sum_{l=1}^{n-1} \frac{-(-1)^{n-l}}{(n-1)!}{n-2\choose l-1}(l^{n-1}-(l-a)^{n-1})
\\
=&
\sum_{l=0}^{n-2} \frac{(-1)^{n+l}}{(n-1)!}{n-2\choose l}((n-1)al^{n-2}+O(l^{n-3}))
\\
=&\hspace{1mm} a,
\end{align*}
by Corollary \ref{cor5.1}.

Let us show \eqref{eq64}. Suppose $h=0$; 
 we see that
\begin{align*}
C(l,0)
=&
\frac{-1}{n!}
\sum_{0\le k \le n}(-1)^{k+n}\,k\,{n\choose k}
\sum_{0\le q\le l}{k\choose q}{n-k\choose n-q}
\\
=&
\frac{-1}{n!}
\sum_{0\le k \le n}(-1)^{k+n}\,k\,{n\choose k}
\sum_{0\le q\le l}\delta_{k,q}
\\
=&
\frac{-1}{n!}
\sum_{0\le k \le l}(-1)^{k+n}\,k\,{n\choose k}
\\
=&
\frac{-1}{(n-1)!}
\sum_{0\le k \le l}(-1)^{k+n}{n-1\choose k-1}
\\
=&
\frac{-1}{(n-1)!}
\sum_{0\le k \le l-1}(-1)^{k+n+1}{n-1\choose k}
\\
=&
\frac{(-1)^{n+l+1}}{(n-1)!}
{n-2\choose l-1},
\end{align*}
which is \eqref{eq64}.

Second we see that   
\begin{align*}
C(l,1)=
&\frac{-1}{n!}
\sum_{0\le k \le n}(-1)^{1+k+n}\,k\,{n\choose k}
\sum_{1\le q\le l}{k\choose q-1}{n-k\choose n-q}.
\end{align*}
Unless $q-1\le k$ and $n-q\le n-k$, binomial coefficients vanish,
hence we may assume that $q=k$ or $q=k+1$, and we see
\begin{align*}
C(l,1)
=&
\frac{-1}{n!}
\sum_{0\le k \le n}(-1)^{1+k+n}\,k\,{n\choose k}
\sum_{1\le q\le l}(k\delta_{q,k}+(n-k)\delta_{q,k+1})
\\
=&\frac{-1}{n!}
\sum_{0\le k \le l}(-1)^{1+k+n}\,k^2\,{n\choose k}
+\frac{-1}{n!}
\sum_{0\le k \le l-1}(-1)^{1+k+n}\,k(n-k){n\choose k}
\\
=&\frac{(-1)^n}{(n-1)!}
\sum_{0\le k \le l-1}(-1)^{k}k{n\choose k}
+\frac{1}{n!}(-1)^{l+n} l^2{n \choose  l}
\\
=&\frac{(-1)^n n}{(n-1)!}
\sum_{0\le k \le l-1}(-1)^{k}{n-1\choose k-1}
+\frac{1}{n!}(-1)^{l+n} l^2{n \choose  l}
\\
=&\frac{(-1)^{n+1} n}{(n-1)!}
(-1)^{l-2}{n-2\choose l-2}
+\frac{1}{n!}(-1)^{l+n} l^2{n \choose  l}
\\
=&\frac{(-1)^{n+l}}{(n-1)!}{n-2 \choose l-1}.
\end{align*}
Finally, assume that $h\ge2$; hence $1\le l \le n-1,2\le h \le n$ are supposed.
By \eqref{eq63}, we have
\begin{align*}
&-n!C(l,h)
\\
=&\sum_{ h\le 	q \le\max(l,h-1)}(-1)^{h+n}{n\choose h}{n-h\choose n-q}
\sum_{0\le k \le  n}(-1)^kk{h\choose q-k}
\\
=&\sum_{ h\le 	q \le\max(l,h-1)}(-1)^{h+n}{n\choose h}{n-h\choose n-q}
\sum_{0\le K \le  q}(-1)^{q+K}(q-K){h\choose K}
\\
=&\,0,
\end{align*}
since
\begin{align*}
\sum_{0\le K \le  q}(-1)^{q+K}(q-K){h\choose K}
=(-1)^q\sum_{0\le K \le  h}(-1)^{K}(q-K){h\choose K}=0,
\end{align*}
using Corollary \ref{cor5.1} to $P(x)=q-x$ with $h\ge2$.
Thus we have completed the proof of Theorem \ref{thm5.2}.
\subsection{Case of degree $4$ with non-trivial linear relation among roots}
Even if an irreducible polynomial has non-trivial linear relations among roots, the one-dimensional equi-distribution of  $r_i/p$ for a local root is likely.
The following is an example.
\begin{thm}\label{thm5.3}
For an irreducible polynomial $f(x)$ of degree $4$ with non-trivial linear relation among roots,
Conjecture \ref{conj2} implies  the equi-distribution of $r_i/p$ $(1\le i \le 4, p\in Spl(f))$. 
\end{thm}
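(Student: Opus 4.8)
The plan is to combine the structure theory of Section~3 with Conjecture~2. By Proposition~\ref{prop3.2} the polynomial $f$ is decomposable, so $f(x)=(x^2+ax)^2+b(x^2+ax)+c$ for suitable $a,b,c$ (necessarily integers, since $\alpha_1+\alpha_2=-a=\alpha_3+\alpha_4$ are rational algebraic integers once the roots are numbered so that $\alpha_1,\alpha_2$ and $\alpha_3,\alpha_4$ are the two quadratic ``blocks''). By Proposition~\ref{prop3.3} the lattice $LR\cap\mathbb Z^5$ has $\mathbb Z$-basis $\hat{\bm m}_1=(1,1,0,0,-a)$, $\hat{\bm m}_2=(0,0,1,1,-a)$; hence $t=2$, $\bm m_1=(1,1,0,0)$, $\bm m_2=(0,0,1,1)$. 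Since $f$ is irreducible, $\hat{\bm G}=\bm G$, and this group is the stabiliser in $S_4$ of the pair-partition $\{\{1,2\},\{3,4\}\}$, of order $8$ and index $3$; its three cosets in $S_4$ are in bijection with the three partitions of $\{1,2,3,4\}$ into two pairs, and by Propositions~\ref{prop4.2} and \ref{prop4.4} both $Spl(f,\sigma)$ (up to a finite set) and $\mathfrak D(f,\sigma)$ depend only on the coset $\sigma\bm G$.

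Next I would analyse the three cosets. The congruences defining $Spl(f,\sigma)$ read $r_i+r_j\equiv -a\bmod p$ for the two pairs $\{i,j\}$ attached to the partition, and the global ordering \eqref{eq5} forces $r_i+r_j=-a+k_\bullet p$ with each $k_\bullet$ in a short list. I would discard $k_\bullet=0$ (and, when $a>0$, $k_\bullet=2$) because they make some local root, or its negative modulo $p$, bounded, hence yield a rational root of $f$ for infinitely many $p$, contradicting irreducibility; and the case $k_1+k_2\le0$ or $\ge4$ is excluded for all but finitely many $p$ by the remark preceding Conjecture~1, since $f$ is not a product of linear forms. Thus, up to finitely many primes, both pairs have $k_\bullet=1$, i.e.\ $r_i+r_j=p-a$. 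For the partitions $\{\{1,2\},\{3,4\}\}$ and $\{\{1,3\},\{2,4\}\}$ this forces, via $r_1\le r_2\le r_3\le r_4$, the equalities $r_1=r_2=r_3=r_4$ resp.\ $r_1=r_2,\ r_3=r_4$, i.e.\ $p\mid\operatorname{disc}(f)$; hence $Spl(f,\sigma)$ is \emph{finite} for those $\sigma$. For the nested partition $\{\{1,4\},\{2,3\}\}$ the relation $r_1+r_4=p-a=r_2+r_3$ imposes no coincidence, and one gets $\mathfrak D(f,\sigma)=U_0:=\{(x_1,x_2,x_3,x_4):0\le x_1\le x_2\le x_3\le x_4\le1,\ x_1+x_4=1,\ x_2+x_3=1\}$, a two-dimensional polytope parametrised by $(s,u)\mapsto(s,u,1-u,1-s)$ over $\{0\le s\le u\le\frac12\}$ with orthogonal tangent vectors of squared length $2$, so the area element is $2\,ds\,du$ and $vol(U_0)=2\cdot\frac18=\frac14$. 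Consequently $Spl(f)=Spl(f,\sigma^*)$ up to a finite set for $\sigma^*$ in this good coset, and $Pr(f,\sigma^*)=1$.

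Now Conjecture~2 applied to $\sigma^*$ with $D=D_{i,a}$ from \eqref{eq48} (which is $\overline{D^\circ}$ for $0<a<1$) gives, for each $i$,
$$\lim_{X\to\infty}\frac{\#\{p\in Spl_X(f,\sigma^*):(r_1/p,\dots,r_4/p)\in D_{i,a}\}}{\#Spl_X(f,\sigma^*)}=\frac{vol(D_{i,a}\cap U_0)}{vol(U_0)}.$$
Summing over $i=1,\dots,4$, dividing by $4$, and using $Spl(f)=Spl(f,\sigma^*)$ up to finitely many primes, the left-hand side of \eqref{eq44} equals $\frac1{4\,vol(U_0)}\sum_{i=1}^4 vol(D_{i,a}\cap U_0)$. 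So the theorem reduces to the elementary identity $\sum_{i=1}^4 vol(D_{i,a}\cap U_0)=a$. Using the parametrisation above and $\#\{i:x_i\le a\}=[s\le a]+[u\le a]+[u\ge1-a]+[s\ge1-a]$, I would evaluate $\int_{0\le s\le u\le1/2}\#\{i:x_i\le a\}\,2\,ds\,du$ by splitting into $a\le\frac12$ and $a\ge\frac12$; both sub-cases give exactly $a$, and the endpoints $a=0$ (both sides $0$) and $a\to1$ are covered by the same formulas.

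The main obstacle is the coset analysis in the second paragraph, and specifically the argument that $Spl(f,\sigma)$ is finite for the two ``wrong'' pair-partitions: it is precisely the asymmetry between the nested pairing $\{1,4\},\{2,3\}$, which imposes a genuine codimension-one condition on the ordered tuple $(r_1,\dots,r_4)$, and the other two pairings, which together with $r_1\le\cdots\le r_4$ collapse the tuple onto a diagonal, that singles out the good coset and pins down $Pr(f,\sigma^*)=1$ without invoking Conjecture~1. Everything after that is Conjecture~2 plus a short area computation; one should also take routine care with the various degenerate loci ($r_i+r_j=0$ or $2p$, or equal local roots), all of which contribute only finitely many primes.
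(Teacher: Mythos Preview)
Your proof is correct and follows essentially the same route as the paper's. The only cosmetic difference is the numbering of roots: the paper chooses $\alpha_1,\alpha_4$ (resp.\ $\alpha_2,\alpha_3$) to be a quadratic block, so that $\hat{\bm m}_1=(1,0,0,1,-a)$, $\hat{\bm m}_2=(0,1,1,0,-a)$ and $\sigma=\mathrm{id}$ already lies in the ``good'' coset; you number the blocks as $\{1,2\}$ and $\{3,4\}$ and then identify the good coset as the one giving the nested pairing $\{\{1,4\},\{2,3\}\}$ of local roots. Your justification that the other two cosets give finite $Spl(f,\sigma)$ (via the bounded-root argument for $k_\bullet\ne 1$ and the forced coincidences $r_1=r_2=r_3=r_4$ resp.\ $r_1=r_2,\ r_3=r_4$) is in fact more explicit than the paper, which states this step without detail. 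The final area computation is the same up to whether one carries the area element $2$ or works with the projection to the $(x_1,x_2)$-plane.
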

\proof{
Let an irreducible polynomial $f(x)$ of degree $4$ with non-trivial linear relation among roots be
$f(x) = (x^2+ax)^2+b(x^2+ax)+c$ by Proposition \ref{prop3.2}; 
then by defining numbers $\beta_i,\alpha_{i,j} $ by
$$
x^2+bx+c=(x-\beta_1)(x-\beta_2),\, x^2+ax-\beta_i=(x-\alpha_{i,1})(x-\alpha_{i,2})
$$
as Proposition \ref{prop3.3}, a basis of linear relations among roots is $\alpha_{i,1}+\alpha_{i,2}=-a$
$(i=1,2)$.  Letting $\alpha_1:=\alpha_{1,1},\alpha_2:=\alpha_{2,1},\alpha_3:=\alpha_{2,2},\alpha_4:=\alpha_{1,2}$, we have $\alpha_1+\alpha_4=\alpha_2+\alpha_3=-a$,
hence $t=2$ and $\hat{\bm{m}}_1=(1,0,0,1,-a),\hat{\bm{m}}_2=(0,1,1,0,-a)$. 
For local solutions $r_i$ for a large prime $p\in Spl(f)$, induced linear relations in \eqref{eq7} are
$r_1+r_4=r_2+r_3=-a+p$, i.e.  $k_1=k_2=1$, and $Spl_X(f,\sigma)\ne\emptyset$ if and only if 
$\{\sigma(1),\sigma(4)\}=\{1,4\}$ or $\{2,3\}$.
Thus, we see $\#\hat{G}=8,[\mathbb{Q}(f):\mathbb{Q}]=4,8$.
For such $\sigma,k_j$, we have $Spl_X(f)=Spl_X(f,\sigma)$ with possibly finitely many exceptional
primes.
Only for such permutations, $vol(\mathfrak{D}(f,\sigma))>0$ and $\mathfrak{D}(f,\sigma)=\mathfrak{D}(f,id)$ are easy,
hence we have, by \eqref{eq11}
\begin{equation}\label{eq66}
Pr_D(f,id) = \frac{vol(D\cap\mathfrak{D}(f,id))}{vol(\mathfrak D(f,id))}.
\end{equation} 
Hence we have only to show
\begin{equation}\label{eq67}
\sum_{i=1}^4\frac{1}{4}\frac{vol(D_i)}{vol(\mathfrak{D}(f,id))}=A\quad(0\le A<1),
\end{equation}
by putting $D_i:=\{(x_1,\dots,x_4)\mid x_i\le A\}\cap \,\mathfrak{D}(f,id)$.
By $\mathfrak{D}(f,id)=\{(x_1,\dots,x_4)\mid 0\le x_1\le\dots\le x_4<1, x_1+x_4=1,x_2+x_3=1\}$
we have
\begin{align*}
\mathfrak{D}(f,id)&=\{(x_1,x_2,1-x_2,1-x_1)\mid 0\le x_1\le x_2<1/2\},
\\
D_1&=\{(x_1,x_2,1-x_2,1-x_1)\mid 0\le x_1\le x_2<1/2,x_1\le A\},
\\
D_2&=\{(x_1,x_2,1-x_2,1-x_1)\mid 0\le x_1\le x_2<\min(1/2, A)\},
\\
D_3&=\{(x_1,x_2,1-x_2,1-x_1)\mid 0\le x_1\le x_2<1/2,1-x_2\le A\},
\\
D_4&=\{(x_1,x_2,1-x_2,1-x_1)\mid 0\le x_1\le x_2<1/2,1-x_1\le A\},
\end{align*}
and projecting them on the $(x_1,x_2)$-plane, we see
\begin{align*}
vol(pr(\mathfrak{D}(f,id)))&=1/8,
\\
vol(pr(D_1))&=
\begin{cases}
A/2-A^2/2&\text{if }A\le1/2,
\\
1/8&\text{if }A\ge1/2,
\end{cases}
\\
vol(pr(D_2))&=
\begin{cases}
A^2/2&\text{if }A\le1/2,
\\
1/8&\text{if }A\ge1/2,
\end{cases}
\\
vol(pr(D_3))&=
\begin{cases}
0&\text{if }A\le1/2,
\\
(A-1/2)/2-(A-1/2)^2/2&\text{if }A\ge1/2,
\end{cases}
\\
vol(pr(D_4))&=
\begin{cases}
0&\text{if }A\le1/2,
\\
(A-1/2)^2/2&\text{if }A\ge1/2.
\end{cases}
\end{align*}
Thus we see that  \eqref{eq67} is true.
\qed
}

\subsection{More general case}\label{subsec5.3}

In this subsection, assume Conjectures \ref{conj1}, \ref{conj2} and that 
$f(x)$ has no rational root and $\alpha_i-\alpha_j\not\in\mathbb{Q}$ if $i\ne j$, and
the conditions $Pr(f,\sigma)>0,vol(\mathfrak{D}(f,\sigma))>0$
are equivalent.
Using Corollary \ref{cor4.2}, we have a similar expression to \eqref{eq52}
with the definition \eqref{eq51} of $D_{i,a}$ for
a more general polynomial $f(x)$:
\begin{align*}
&\lim_{X\to\infty}\frac{\sum_{p\in Spl_X(f)}\#\{i\mid r_i/p\le a, 1\le i \le n\}}
{n\#Spl_X(f)}
\\=&\,
\lim_{X\to\infty}\frac{\sum_{p\in Spl_X(f)}\#\{i\mid (r_1/p,\dots,r_n/p)\in D_{i,a}\}}
{n\#Spl_X(f)}
\\=&\,
\lim_{X\to\infty}\sum_{i=1}^n
\frac{\#\{p\in Spl_X(f)\mid (r_1/p,\dots,r_n/p)\in D_{i,a}\}}
{n\cdot\#Spl_X(f)}
\\
=&\,
\lim_{X\to\infty}\sum_{i=1}^n
\frac{\sum_{\sigma\in S_n}\#\{p\in Spl_X(f,\sigma)\mid (r_1/p,\dots,r_n/p)\in D_{i,a}\}}
     {n\cdot\#\hat{{\bm G}}\cdot\#Spl_X(f)}
\\
=&\,
\lim_{X\to\infty}\sum_{i=1}^n
\frac{\sum_{\sigma\in S'_n}\#\{p\in Spl_X(f,\sigma)\mid (r_1/p,\dots,r_n/p)\in D_{i,a}\}}
     {\#Spl_X(f,\sigma)}\times
\\     
&\hspace{5cm}     \frac{\#Spl_X(f,\sigma)}
{n\cdot\#\hat{{\bm G}}\cdot\#Spl_X(f)}
\\
\intertext{where  $S_n'$ is a subset of $S_n$ consisting of permutations satisfying  $Pr(f,\sigma)>0$}
=&\,
\frac{1}{n\#\hat{{\bm G}}}\sum_{i=1}^n\sum_{\sigma\in S'_n}
Pr_{D_{i,a}}(f,\sigma)Pr(f,\sigma)
\\
=&\,
\frac{1}{nc\#{\hat{\bm G}} }
\sum_{i=1}^n\sum_{\sigma\in S'_n}
Pr_{D_{i,a}}(f,\sigma)
{{vol}({\mathfrak{D}}(f,\sigma))}\quad\text{ (by Conjecture \ref{conj1})}
\\\nonumber
=&\,
\frac{1}{nc\#{\hat{\bm G}}}\sum_{i=1}^n\sum_{\sigma\in S'_n}
{{vol}(D_{i,a} \cap {\mathfrak{D}}(f,\sigma))}\quad\text{ (by Conjecture \ref{conj2})}.
\end{align*}
We note that the dimension of the set
\begin{align*}
&D_{i,0} \cap {\mathfrak{D}}(f,\sigma)
\\
=&\{(x_1,\dots,x_n)\mid0\le x_1\le\dots\le x_n<1,x_i=0,\sum_{k=1}^n m_{j,k}x_{\sigma(k)}\in\mathbb{Z}
\,({}^\forall j)\}
\end{align*}
is less than $n-t$, 
since  the condition $x_i=0$ is independent of conditions $\sum_{k=1}^n m_{j,k}x_{\sigma(k)}\in\mathbb{Z}\,({}^\forall j)$ by the assumption that $f(x)$ has no rational root.
Therefore the above is equal to $0$ at $a=0$, and to $1$ at $a=1$ by
$$
c = \frac{\sum_{Pr(f,\sigma)>0} vol(\mathfrak{D}(f,\sigma))}{\sum Pr(f,\sigma)}
=\sum_{Pr(f,\sigma)>0} vol(\mathfrak{D}(f,\sigma))/\#\hat{\bm{G}}.
$$
Hence,
once we have shown that it is a linear form in $a$,  it is equal to $a$ on $[0,1)$,  that is the distribution of $r_i/p$ is uniform under the above conditions. 
Till here, we do not use the assumption that $vol(\mathfrak{D}(f,\sigma))>0$ implies 
$Pr(f,\sigma)>0$ and the difference between the  sum restricted on $S_n'$ above and the full sum
\begin{equation}\label{eq68}
\sum_{i=1}^n\sum_{\sigma\in S_n}
{{vol}(D_{i,a} \cap {\mathfrak{D}}(f,\sigma))}
\end{equation}
is
\begin{equation}\label{eq69}
\sum_{i=1}^n\sum_{Pr(f,\sigma)=0,vol(\mathfrak{D}(f,\sigma))>0}
{{vol}(D_{i,a} \cap {\mathfrak{D}}(f,\sigma))}.
\end{equation}

From now on, the aim  is to show that \eqref{eq68}
is a linear form in $a$,
which means that  the distribution of $r_i/p$ is
uniform.
If, moreover
\eqref{eq65}
is also a linear form in $a$, then the distribution of $r_i/p$ is
uniform 
without the assumption that ${vol}(D_{i,a} \cap {\mathfrak{D}}(f,\sigma))>0$ implies $Pr(f,\sigma)>0$.
(cf. {\bf {6.4.2}}).

Now we assume that a polynomial $f(x)$ has no rational roots, and
put
\begin{align*}
  \beta_i&:=\alpha_i - tr_{\mathbb{Q}(f)/\mathbb{Q}}(\alpha_i)/[\mathbb{Q}(f):\mathbb{Q}]\quad(\ne0),
  \\
LR_0&=\{(l_1,\dots,l_n)\in\mathbb{Q}^n \mid\sum_{i=1}^nl_i\alpha_i\in\,\mathbb{Q}\}
  \\
  &\hspace{1mm}=\{(l_1,\dots,l_n)\in\mathbb{Q}^n \mid\sum_{i=1}^nl_i\beta_i=0\},
  \\
  \mathfrak{D}_f&:=\{\bm{x}\in\mathbb{R}^n\mid (\bm{x},\bm{l})\in\mathbb{Z}
  \text{ for }{}^\forall \bm{l}\in LR_0\cap \mathbb{Z}^n\}.
\end{align*}
Recall that vectors $\bm{m}_1,\dots,\bm{m}_t$ are a basis of $LR_0\cap\mathbb{Z}^n$,
hence $ \mathfrak{D}_f$ is $G$-stable.
\begin{lem}\label{lem5.10}
Suppose that $\alpha_i - \alpha_j\not\in\mathbb{Q}$ for any distinct $i,j$, moreover.
  We have
  \begin{equation}
\sum_{i=1}^n\sum_{\sigma\in S_n}
{{vol}(D_{i,a} \cap {\mathfrak{D}}(f,\sigma))}=
\sum_{i=1}^n{vol}(\{\bm{x}\in[0,1)^n\mid x_i\le a\} \cap {\mathfrak{D}}_f),
  \end{equation}
in particular, 
   \begin{equation}\label{eq71}
\sum_{\sigma\in S_n}
{{vol}( {\mathfrak{D}}(f,\sigma))}=
{vol}([0,1)^n \cap {\mathfrak{D}}_f). 
  \end{equation} 
\end{lem}  
\proof
{
The equations   
  \begin{align*}
    &D_{i,a}\cap\mathfrak{D}(f,\sigma)
    \\
    &=\{\bm{x}\in[0,1)^n\mid x_1\le\dots\le x_n,x_i\le a,(\bm{l},\sigma^{-1}(\bm{x}))\in\mathbb{Z}\text{ for }{}^\forall\bm{l}\in LR_0\cap\mathbb{Z}^n \}
      \\
      &=\{\bm{x}\in[0,1)^n\mid x_1\le\dots\le x_n,x_i\le a,\sigma^{-1}(\bm{x})\in\mathfrak{D}_f\}
        \\
       & =\sigma(\{\bm{y}\in[0,1)^n\mid y_{\sigma^{-1}(1)}\le\dots\le y_{\sigma^{-1}(n)},  y_{\sigma^{-1}(i)}\le a\}\cap \mathfrak{D}_f  )
    \end{align*}
imply
\begin{align*}
&vol(D_{i,a}\cap\mathfrak{D}(f,\sigma))
  \\
  =\,&vol(\{\bm{y}\in\mathbb{R}^n\mid 0<y_{\sigma^{-1}(1)}<\dots< y_{\sigma^{-1}(n)}<1,
  y_{\sigma^{-1}(i)}\le a\}\cap \mathfrak{D}_f  ).
\end{align*}
Here, we note that $\mathfrak{D}_f$ is defined by $t$ linearly independent vectors
$\bm{m}_j$ and the volume $vol$  is $(n-t)$-dimensional one, hence it is not necessary to care
an additional equation $x_i = x_j$ for $i\ne j$ by the assumption that the vector space $LR_0$ spanned by
$\bm{m}_j$ does not contain vectors $(0,\dots,0,1,0,\dots,0,-1,0,\dots,0)$.
Let us define a mapping $\phi$ from
$$
X(i):=\{(\bm{x},i)\mid \bm{x}\in(0,1)^n,x_i\le a,x_j\ne x_l\text{ if }j\ne l\}\quad(1\le i\le n)
$$
to the union of
$$
Y(\sigma,k):=\{(\bm{y},\sigma,k)\mid\bm{y}\in(0,1)^n,
0<y_{\sigma(1)}<\dots< y_{\sigma(n)}<1,
  y_{\sigma(k)}\le a\}
$$
  by $\phi((\bm{x},i))=(\bm{x},\sigma,k)$, where $\sigma,k$ are defined by
  $$
x_{\sigma(1)}<\dots<x_{\sigma(n)},\,
  k=\sigma^{-1}(i),
  $$
  hence $\phi(X(i))=\cup_{\sigma,k\text{:}\sigma(k)=i}Y(\sigma,k)$.
  The mapping is clearly bijective, 
  and if $(\bm{y},\sigma,k)\in Y(\sigma,k)$ and $(\bm{y},\sigma',k')\in Y(\sigma',k')$ with $\sigma(k)=\sigma'(k')$ occur, then $\sigma=\sigma',k=k'$ hold,  
  hence, defining $pr$ by  $pr(\bm{y},\sigma,k)=\bm{y}$, we have
  $pr(Y(\sigma,k))\cap pr(Y(\sigma',k'))=\emptyset$ if $\sigma(k)=\sigma'(k')$ and either $\sigma\ne\sigma'$ or $k\ne k'$, and so
  \begin{align*}
    &\sum_{i=1}^n \sum_\sigma vol(D_{i,a}\cap\mathfrak{D}(f,\sigma))
    \\ =\,
    &\sum_{i=1}^n \sum_\sigma vol(pr(Y(\sigma^{-1},i))\cap\mathfrak{D}_f)
    \\ =\,
    &\sum_{k=1}^n \sum_\sigma vol(pr(Y(\sigma,k))\cap\mathfrak{D}_f)
    \\    
    =\,&\sum_i\sum_{k,\sigma\text{:}\sigma(k)=i} vol(pr(Y(\sigma,k))\cap\mathfrak{D}_f  )
    \\
    =\,&\sum_i vol(\cup_{ k,\sigma\text{:}\sigma(k)=i }pr(Y(\sigma,k))\cap \mathfrak{D}_f  )
     \\
    =\,&\sum_i vol(pr(\cup_{ k,\sigma\text{:}\sigma(k)=i }Y(\sigma,k))\cap \mathfrak{D}_f  )
     \\
    =\,&\,\sum_i vol(pr(X(i))\cap\mathfrak{D}_f)
    \\
    =\,&\,\sum_i vol(\{\bm{x}\in[0,1)^n \mid x_i\le a\}\cap\mathfrak{D}_f).
    \end{align*}
\qed
    }

We note that 
$$
\sigma^{-1}(\mathfrak{D}(f,\sigma))=
\{\bm{y}\in[0,1)^n\mid y_{\sigma^{-1}(1)}\le \dots\le y_{\sigma^{-1}(n)}\}\cap\mathfrak{D}_f,
$$    
and putting
\begin{align*}
{\mathfrak{D}}(f,\sigma)^\circ&:=
\left\{
(x_1\dots,x_n)\in (0,1)^n\left|
\begin{array}{l}
0< x_1<\dots< x_n<1,
\\[2pt]
\sum_{i=1}^n m_{j,i}\,x_{\sigma(i)}\in\mathbb{Z}\:\:(1\le{}^\forall j\le t)
\end{array}
\right\}
\right.,
  \\
  \mathfrak{D}_f^\circ&:=\{\bm{x}\in\mathbb{R}^n\mid 
  x_i\ne x_j\text{ if } i\ne j,(\bm{x},\bm{l})\in\mathbb{Z}
  \text{ for }{}^\forall \bm{l}\in LR_0\cap \mathbb{Z}^n\},
\end{align*}
it is easy to see that the mapping $\phi$ from the disjoint union $\sqcup_\sigma{\mathfrak{D}}(f,\sigma)^\circ$
to $(0,1)^n\cap\mathfrak{D}_f^\circ$ defined by $\phi(\bm{x}) = (x_{\sigma(1)},\dots,x_{\sigma(n)})$
for $\bm{x} \in{\mathfrak{D}}(f,\sigma)^\circ$ is bijective. (cf. Proposition \ref{prop4.4})

To show the equi-distribution, we have only to see that each factor
    ${vol}(\{\bm{x}\in[0,1)^n\mid x_i\le a\} \cap {\mathfrak{D}}_f)$ is a linear form in $a$.

\begin{lem}\label{lem5.11}
  Suppose that $\beta_1,\dots,\beta_r$ are linearly independent over $\mathbb{Q}$ and
  $$
(\beta_{r+1},\dots,\beta_n)=(\beta_1,\dots,\beta_r)T\quad\text{ for }{}^\exists T\in M_{r,n-r}(\mathbb{Q}).
  $$
  Then we have
  $r=n-t$,
      \begin{align*}
        LR_0
        &= \left\{(l_1,\dots,l_n)\in\mathbb{Q}^n \mid
         \left(
  \begin{array}{c}
    {l}_1\\
    \vdots\\
    {l}_r
  \end{array}
  \right)
  =- T
  \left(
  \begin{array}{c}
    {l}_{r+1}\\
    \vdots\\
    {l}_n
  \end{array}
  \right)
\right\}
        \end{align*}

  and
  $$
  \left(
  \begin{array}{c}
    \bm{m}_1\\
    \vdots\\
    \bm{m}_t
  \end{array}
  \right)
  =(S{}^tT,-S)
  \quad\text{ for }{}^\exists S\in GL_{n-r}(\mathbb{Q}).$$
\end{lem}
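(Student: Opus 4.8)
The plan is to run through three short linear-algebra steps, using only the description $LR_0=\{(l_1,\dots,l_n)\in\mathbb{Q}^n\mid\sum_i l_i\beta_i=0\}$ recorded just above the statement, together with the fact---established in the Introduction from $\dim_{\mathbb Q}LR_0=\dim_{\mathbb Q}LR=t$ and rank--nullity applied to the surjection $\mathbf l\mapsto\sum_i l_i\beta_i$ onto $\langle\beta_1,\dots,\beta_n\rangle_{\mathbb Q}$---that $\dim_{\mathbb Q}\langle\beta_1,\dots,\beta_n\rangle_{\mathbb Q}=n-t$.

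First I would settle $r=n-t$. By hypothesis $\beta_1,\dots,\beta_r$ are $\mathbb Q$-linearly independent, and the relation $(\beta_{r+1},\dots,\beta_n)=(\beta_1,\dots,\beta_r)T$ puts $\beta_{r+1},\dots,\beta_n$ in $\langle\beta_1,\dots,\beta_r\rangle_{\mathbb Q}$; hence $\{\beta_1,\dots,\beta_r\}$ is a basis of $\langle\beta_1,\dots,\beta_n\rangle_{\mathbb Q}$ and $r=\dim_{\mathbb Q}\langle\beta_1,\dots,\beta_n\rangle_{\mathbb Q}=n-t$. Next I would read off $LR_0$: writing $\beta_{r+j}=\sum_{i=1}^r T_{ij}\beta_i$ and substituting, $\sum_{i=1}^n l_i\beta_i=\sum_{i=1}^r\bigl(l_i+\sum_{j=1}^{n-r}T_{ij}l_{r+j}\bigr)\beta_i$, which by linear independence of $\beta_1,\dots,\beta_r$ vanishes precisely when $l_i+\sum_j T_{ij}l_{r+j}=0$ for every $i\le r$, i.e.\ when ${}^t(l_1,\dots,l_r)=-T\,{}^t(l_{r+1},\dots,l_n)$. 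This is the claimed form of $LR_0$, and it simultaneously shows that the projection $\pi$ onto the last $n-r$ coordinates is a linear isomorphism $LR_0\to\mathbb{Q}^{n-r}$ with inverse $\mathbf w\mapsto(-\,{}^t\mathbf w\,{}^tT,\ {}^t\mathbf w)$.

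Finally I would build the basis matrix. Under $\pi^{-1}$ the vector $(-\,{}^t\mathbf w\,{}^tT,\ {}^t\mathbf w)$ lies in $\mathbb Z^n$ exactly when $\mathbf w\in\Lambda:=\{\mathbf w\in\mathbb{Z}^{n-r}\mid T\mathbf w\in\mathbb{Z}^r\}$, and $\Lambda$ contains $N\mathbb Z^{n-r}$ for any common denominator $N$ of the entries of $T$, hence is a full-rank sublattice of $\mathbb Z^{n-r}$; thus $\pi$ restricts to an isomorphism $LR_0\cap\mathbb Z^n\xrightarrow{\ \sim\ }\Lambda$. Picking a $\mathbb Z$-basis $\mathbf w_1,\dots,\mathbf w_{n-r}$ of $\Lambda$ and putting $\bm m_j:=(-\,{}^t\mathbf w_j\,{}^tT,\ {}^t\mathbf w_j)$ gives a $\mathbb Z$-basis of $LR_0\cap\mathbb Z^n$; stacking these rows yields $(-W\,{}^tT,\ W)$, where $W$ is the invertible matrix whose $j$-th row is ${}^t\mathbf w_j$, so with $S:=-W\in GL_{n-r}(\mathbb Q)$ the matrix is $(S\,{}^tT,-S)$. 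Since any two $\mathbb Z$-bases of $LR_0\cap\mathbb Z^n$ differ by left multiplication by some $U\in GL_{n-r}(\mathbb Z)$, which merely replaces $S$ by $US\in GL_{n-r}(\mathbb Q)$, the assertion holds for the basis $\bm m_1,\dots,\bm m_t$ fixed at the outset.

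The argument is routine; the only points needing care are the bookkeeping of transposes and signs when passing from the column relation $\mathbf u=-T\mathbf w$ to the row vectors $\bm m_j$, and invoking the Introduction's count $\dim_{\mathbb Q}\langle\beta_i\rangle=n-t$ rather than re-deriving it. There is no genuine obstacle here---the lemma is purely preparatory for the computations that follow.
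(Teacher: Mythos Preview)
Your argument is correct and follows essentially the same linear-algebra route as the paper. The only difference is in the last step: rather than constructing a $\mathbb{Z}$-basis of $LR_0\cap\mathbb{Z}^n$ via the auxiliary lattice $\Lambda$, the paper simply observes that the rows of $({}^tT,-1_{n-r})$ already form a $\mathbb{Q}$-basis of $LR_0$ (they lie in $LR_0$ by construction and have rank $n-r=t$), so a change-of-basis matrix $S\in GL_{n-r}(\mathbb{Q})$ to the fixed basis $\bm{m}_1,\dots,\bm{m}_t$ exists immediately---your lattice detour is unnecessary since the conclusion only requires $S\in GL_{n-r}(\mathbb{Q})$, not $GL_{n-r}(\mathbb{Z})$.
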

\proof
    {
      By
      \begin{align*}
\sum_{i=1}^n x_i\beta_i 
  &= (\beta_1,\dots,\beta_r)\left\{ \left(
  \begin{array}{c}
    {x}_1\\
    \vdots\\
    {x}_r
  \end{array}
  \right)
  + T
  \left(
  \begin{array}{c}
    {x}_{r+1}\\
    \vdots\\
    {x}_n
  \end{array}
  \right)
  \right\}\quad (x_i\in\mathbb{R}),
      \end{align*}
      we see
      \begin{align*}
        LR_0 &= \{(l_1,\dots,l_n)\in\mathbb{Q}^n \mid \sum_{i=1}^nl_i\beta_i=0\}
        \\
        &= \left\{(l_1,\dots,l_n)\in\mathbb{Q}^n \mid
         \left(
  \begin{array}{c}
    {l}_1\\
    \vdots\\
    {l}_r
  \end{array}
  \right)
  =- T
  \left(
  \begin{array}{c}
    {l}_{r+1}\\
    \vdots\\
    {l}_n
  \end{array}
  \right)
\right\},
        \end{align*}
      hence $t = \dim{LR_0}=n-r$.
      On the other hand, the definition of $T$ implies
      $$
({}^t T, -1_{n-r})       \left(
  \begin{array}{c}
    {\beta}_1\\
    \vdots\\
    {\beta}_n
  \end{array}
  \right)=0^{(n-r,1)},
      $$
  hence row vectors of the matrix $({}^t T, -1_{n-r}) $ span $LR_0$ by $
  \text{rank}({}^t T, -1_{n-r})=\dim LR_0 $ 
   and so
  there is a matrix $S\in GL_{n-r}(\mathbb{Q})$ such that
  $$
   \left(
  \begin{array}{c}
    \bm{m}_1\\
    \vdots\\
    \bm{m}_t
  \end{array}
  \right)
=S({}^t T, -1_{n-r})=(S{}^t T, -S).
  $$
      \qed
    }

\begin{lem}\label{lem5.12}
Supposing the assumption on $\beta_i$ in Lemma \ref{lem5.11},
we have
\begin{align*}
  \mathfrak{D}_f
  &=
  \left\{
\bm{x}\in\mathbb{R}^n \mid
(S{}^t T, -S){}^t\bm{x}\in M_{t,1}(\mathbb{Z})
\right\}
\\
&=
\left\{
\bm{x}\in\mathbb{R}^n \mid
(x_{r+1},\dots,x_n)=(x_1,\dots,x_r)T+\bm{k}{}^tS^{-1}\,({}^\exists\bm{k}\in M_{1,t}(\mathbb{Z}))
\right\}.
\end{align*}
For $(x_1,\dots,x_r)\in[0,1)^r$, there are exactly $|\det(S)|$ vectors $(x_{r+1},\dots,x_n)\in
[0,1)^{n-r}$ such that $(x_1,\dots,x_n)\in\mathfrak{D}_f$.
\end{lem}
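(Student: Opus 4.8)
The plan is to extract both displayed descriptions of $\mathfrak{D}_f$ directly from Lemma \ref{lem5.11}, and then to deduce the counting statement from an elementary index computation for lattices in $\mathbb{R}^t$. For the first equality, I would use that $\bm{m}_1,\dots,\bm{m}_t$ is a $\mathbb{Z}$-basis of $LR_0\cap\mathbb{Z}^n$, so a vector $\bm{x}\in\mathbb{R}^n$ lies in $\mathfrak{D}_f$ exactly when $(\bm{x},\bm{m}_j)\in\mathbb{Z}$ for $j=1,\dots,t$. Stacking these $t$ inner products into a column and inserting the identity $\left(\begin{smallmatrix}\bm{m}_1\\\vdots\\\bm{m}_t\end{smallmatrix}\right)=(S{}^tT,-S)$ from Lemma \ref{lem5.11} gives $\mathfrak{D}_f=\{\bm{x}\mid (S{}^tT,-S){}^t\bm{x}\in M_{t,1}(\mathbb{Z})\}$ at once.

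For the second equality, I would split $\bm{x}=(x_1,\dots,x_n)$ as $\bm{x}'=(x_1,\dots,x_r)$, $\bm{x}''=(x_{r+1},\dots,x_n)$ and write $(S{}^tT,-S){}^t\bm{x}=S\bigl({}^tT\,{}^t\bm{x}'-{}^t\bm{x}''\bigr)$. The key observation is that $S$, being the negative of the last $t=n-r$ columns of the integral matrix whose rows are the $\bm{m}_j$, has integer entries, so $S\bm{w}\in\mathbb{Z}^t$ is equivalent to $\bm{w}\in S^{-1}\mathbb{Z}^t$ — \emph{not} to $\bm{w}\in\mathbb{Z}^t$, since $S$ need not be unimodular. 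Hence $\bm{x}\in\mathfrak{D}_f$ iff ${}^t\bm{x}''\in{}^tT\,{}^t\bm{x}'+S^{-1}\mathbb{Z}^t$, which upon transposing and using $\mathbb{Z}^t=-\mathbb{Z}^t$ becomes $\bm{x}''=\bm{x}'T+\bm{k}\,{}^tS^{-1}$ for some $\bm{k}\in M_{1,t}(\mathbb{Z})$, as claimed.

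For the final count, fix $\bm{x}'\in[0,1)^r$. By the description just obtained, the admissible $\bm{x}''$ are precisely the points of $\bigl(\bm{x}'T+\Lambda\bigr)\cap[0,1)^t$, where $\Lambda:=\mathbb{Z}^t\,{}^tS^{-1}$; since ${}^tS^{-1}$ is invertible, distinct $\bm{k}$ produce distinct $\bm{x}''$, so this also counts the admissible vectors $\bm{k}$. Because $S$ is integral we have $\mathbb{Z}^t\subseteq\Lambda$, with index $[\Lambda:\mathbb{Z}^t]=\operatorname{covol}(\mathbb{Z}^t)/\operatorname{covol}(\Lambda)=1/|\det{}^tS^{-1}|=|\det S|$ (an integer, as $S$ is a nonsingular integer matrix). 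Now $[0,1)^t-\bm{x}'T$ is a half-open set of representatives for $\mathbb{R}^t/\mathbb{Z}^t$, hence meets every $\mathbb{Z}^t$-coset in exactly one point; decomposing $\Lambda$ into its $|\det S|$ cosets modulo $\mathbb{Z}^t$, each contributes exactly one point of $\Lambda$ to $[0,1)^t-\bm{x}'T$, so $\#\bigl(\Lambda\cap([0,1)^t-\bm{x}'T)\bigr)=|\det S|$, which is the assertion.

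I do not expect a genuine obstacle: this is bookkeeping in linear algebra over $\mathbb{Z}$. The one place requiring care — and where a careless argument would go wrong — is tracking the transposes and, above all, remembering that $S$ is integral but not in general unimodular, so that the relevant lattice $\Lambda=\mathbb{Z}^t\,{}^tS^{-1}$ strictly contains $\mathbb{Z}^t$ with index $|\det S|$ rather than coinciding with it; that index is exactly the source of the factor $|\det S|$ in the statement.
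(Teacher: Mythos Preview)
Your argument is correct and follows essentially the same route as the paper: both obtain the first description from the fact that the $\bm{m}_j$ form a $\mathbb{Z}$-basis of $LR_0\cap\mathbb{Z}^n$ together with the identity from Lemma~\ref{lem5.11}, then unwind the block product and invert $S$ to get the second description, and finally count using the inclusion $\mathbb{Z}^t\subset\mathbb{Z}^t\,{}^tS^{-1}$ with index $|\det S|$. Your treatment of the last step via cosets of $\mathbb{Z}^t$ inside $\Lambda$ meeting a translated half-open unit cube is a bit more explicit than the paper's one-line ``the number of such integral vectors is $|\det S|$ since $S$ is integral'', but the content is the same.
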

\proof
    {
We see that
\begin{align*}
   \mathfrak{D}_f &=
   \left\{\bm{x}\in\mathbb{R}^n \mid
   \left(
   \begin{array}{c}
     \bm{m}_1\\
     \vdots\\
     \bm{m}_t
   \end{array}
   \right)   {}^t  \bm{x}\in M_{t,1}(\mathbb{Z})
   \right\}
\\
&=
\left\{
\bm{x}\in\mathbb{R}^n \mid
(S\,{}^t T, -S){}^t\bm{x}\in M_{t,1}(\mathbb{Z})
\right\}
\\
&=
\left\{
\bm{x}\in\mathbb{R}^n \mid
((x_{r+1},\dots,x_n)-(x_1,\dots,x_r)T){}^tS\in M_{1,t}(\mathbb{Z})
\right\}
\\
&=
\left\{
\bm{x}\in\mathbb{R}^n \mid
(x_{r+1},\dots,x_n)=(x_1,\dots,x_r)T+\bm{k}{}^tS^{-1}\,({}^\exists\bm{k}\in M_{1,t}(\mathbb{Z}))
\right\}.
\end{align*}
Since $S\in GL_{n-r}(\mathbb{Q})$ is an integral regular matrix,
the inclusion $\mathbb{Z}^{n-r}\,{}^t\!S^{-1}\supset \mathbb{Z}^{n-r}$ is clear.
Hence, for  $(x_1,\dots,x_r)\in\mathbb{R}^r$, there is a vector $(x_{r+1},\dots,x_n):=(x_1,\dots,x_r)T +\bm{k}{}^tS^{-1}\in
[0,1)^{n-r}$ for some integral vector $\bm{k}$.
The number of such integral vectors is $|\det(S)|$ since $S$ is integral.
\qed
    }

    Therefore we see that, assuming that $\beta_1,\dots,\beta_r$ is linearly independent over $\mathbb{Q}$ and $1\le i\le r$ as above
\begin{align*}
  & \{\bm{x}\in[0,1)^n\mid x_i\le a\}\cap\mathfrak{D}_f  =
    \\&
\left\{
\bm{x}\in[0,1)^n \mid x_i\le a,
(x_{r+1},\dots,x_n)=(x_1,\dots,x_r)T+\bm{k}{}^tS^{-1}\,({}^\exists\bm{k}\in M_{1,t}(\mathbb{Z}))
\right\}, 
\end{align*}
which is included in the union of sets parallel to the subspace
$$
\left\{
\bm{x}\in \mathbb{R}^n \mid 
(x_{r+1},\dots,x_n)=(x_1,\dots,x_r)T
\right\},
$$
and the projection to  $\{\bm{x}\in[0,1)^r\mid x_i\le a\}$ is $|\det(S)|$-fold on
 by the lemma above.
Hence  the volume of  $\{\bm{x}\in[0,1)^n\mid x_i\le a\}\cap\mathfrak{D}_f $ as an $r\,(=n-t)$-dimensional set  is proportional to $a$.
In general, for given $i$, we have only to take a subset $j_1,\dots,j_r$ such that
$\beta_{j_1},\dots,\beta_{j_r}$ are linearly independent over $\mathbb{Q}$ and $j_k=i$ for some $k$. 
Thus,
we have proved
\begin{thm}\label{thm5.4}
  If a polynomial $f(x)$ has no rational root,
 $\alpha_i - \alpha_j\not\in\mathbb{Q}$ holds for any distinct $i,j$, 
  and  the conditions $Pr(f,\sigma)>0$ and ${vol}({\mathfrak{D}}(f,\sigma))>0$ are equivalent,
  then
  Conjectures \ref{conj1}, \ref{conj2} imply the equi-distribution of $r_i/p$  for local roots $r_i$ of the polynomial. 
  \end{thm}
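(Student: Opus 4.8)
The plan is to assemble the reductions already carried out in \S5.3. First I would recall the computation opening this subsection: applying Corollary \ref{cor4.1} to count the $\#\hat{\bm G}$ permutations $\sigma$ with $p\in Spl(f,\sigma)$, then Conjecture 1 to substitute $Pr(f,\sigma)=vol(\mathfrak D(f,\sigma))/c$, and finally Conjecture 2 to substitute $Pr_{D_{i,a}}(f,\sigma)\,vol(\mathfrak D(f,\sigma))=vol(D_{i,a}\cap\mathfrak D(f,\sigma))$, that computation shows the equi-distribution average $\lim_{X\to\infty}\bigl(n\,\#Spl_X(f)\bigr)^{-1}\sum_{p\in Spl_X(f)}\#\{i\mid r_i/p\le a\}$ equals $\bigl(nc\,\#\hat{\bm G}\bigr)^{-1}\sum_{i=1}^n\sum_{\sigma\in S_n'}vol(D_{i,a}\cap\mathfrak D(f,\sigma))$, where $S_n'=\{\sigma\in S_n\mid Pr(f,\sigma)>0\}$. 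The hypothesis that $Pr(f,\sigma)>0$ is equivalent to $vol(\mathfrak D(f,\sigma))>0$ then lets me replace the inner sum over $S_n'$ by the full sum over $S_n$: the added terms form exactly \eqref{eq65}, and each vanishes since $vol(\mathfrak D(f,\sigma))=0$ forces $vol(D_{i,a}\cap\mathfrak D(f,\sigma))=0$. Next, invoking the second hypothesis $\alpha_i-\alpha_j\notin\mathbb Q$ for distinct $i,j$ together with the unnumbered Lemma preceding Lemma \ref{lem5.11}, the sum \eqref{eq64} rewrites as $\sum_{i=1}^n vol(\{\bm x\in[0,1)^n\mid x_i\le a\}\cap\mathfrak D_f)$, the volume being the $(n-t)$-dimensional one. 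Thus it suffices to show that each summand is a linear function of $a$ on $[0,1)$ and to compute its value at $a=0$ and in the limit $a\to1$.

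For the linearity, fix $i$ and choose a maximal $\mathbb Q$-linearly independent subset $\beta_{j_1},\dots,\beta_{j_r}$ of $\beta_1,\dots,\beta_n$ with $i\in\{j_1,\dots,j_r\}$; Lemma \ref{lem5.11} gives $r=n-t$, and Lemma \ref{lem5.12} shows that $\mathfrak D_f\cap[0,1)^n$ projects onto $[0,1)^r$ in the coordinates $x_{j_1},\dots,x_{j_r}$ with exactly $|\det S|$ sheets over every base point, each sheet being an affine translate of the common graph $(x_{j_{r+1}},\dots,x_{j_n})=(x_{j_1},\dots,x_{j_r})T$. Intersecting with the slab $x_i\le a$ cuts out, in the base, precisely $\{(x_{j_1},\dots,x_{j_r})\in[0,1)^r\mid x_i\le a\}$, a region of $r$-volume equal to $a$; since the projection restricted to each sheet is an affine isomorphism onto the base with one and the same metric magnification, the $(n-t)$-volume of the slice equals $\kappa_i a$ for a constant $\kappa_i>0$ not depending on $a$. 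Summing over $i$, the average is a linear form $\ell(a)$ in $a$ on $[0,1)$.

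It remains to evaluate $\ell$ at the endpoints. At $a=0$ the set $\{x_i=0\}\cap\mathfrak D_f$ has dimension strictly below $n-t$, because $f(x)$ having no rational root means $(0,\dots,0,1,0,\dots,0)\notin LR_0$, i.e. the equation $x_i=0$ is independent of the defining relations of $\mathfrak D_f$; hence every summand, and so $\ell(0)$, is $0$. As $a\to1^-$ each slab fills out $[0,1)^n$, so \eqref{eq64} tends to $n\sum_\sigma vol(\mathfrak D(f,\sigma))=n\,vol([0,1)^n\cap\mathfrak D_f)$ by \eqref{eq67}; combined with $c=\sum_{Pr(f,\sigma)>0}vol(\mathfrak D(f,\sigma))/\#\hat{\bm G}$ — which follows from Conjecture 1 and \eqref{eq38} under the equivalence hypothesis — this gives $\lim_{a\to1}\ell(a)=1$. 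A linear function on $[0,1)$ with $\ell(0)=0$ and limit $1$ at $a=1$ is $\ell(a)=a$, which is exactly the one-dimensional equi-distribution of $r_i/p$ asserted.

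The one place I expect to spend genuine effort rather than bookkeeping is the sheeted-fibration step of the second paragraph: one must check that the sheet multiplicity $|\det S|$ and the metric distortion of the graph $(x_{j_{r+1}},\dots,x_{j_n})=(x_{j_1},\dots,x_{j_r})T$ combine into a single constant that is independent of $a$ and coherent across the different choices of independent subset indexed by $i$, so that the pieces $\kappa_i a$ genuinely add up to a linear form. Everything else is a direct consequence of Corollary \ref{cor4.1}, Conjectures 1 and 2, and Lemmas \ref{lem5.11} and \ref{lem5.12}; once the fibration bookkeeping is in place the theorem follows cleanly.
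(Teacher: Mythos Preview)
Your proposal is correct and follows essentially the same approach as the paper: the computation opening \S5.3 via Corollary \ref{cor4.1} and Conjectures 1, 2; replacement of $S_n'$ by $S_n$ under the equivalence hypothesis; the unnumbered lemma to pass to $\sum_i vol(\{x_i\le a\}\cap\mathfrak D_f)$; and Lemmas \ref{lem5.11}, \ref{lem5.12} for the $|\det S|$-sheeted projection giving linearity in $a$. Your worry about the fibration bookkeeping is exactly the technical heart of the paper's argument too, and is handled just as you outline.
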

So, the equi-distribution of $r_i/p$ for local roots $r_i$ is likely for an irreducible polynomial $f(x)$ 
of $\deg f>1$.
Although the polynomial $f(x)=(x^2-2)((x-1)^2-2)$ does not satisfy the assumption of the theorem,  the equi-distribution of local roots $r_i/p$ is true (cf. {\bf{6.4.2}}).

Put, for an algebraic number field $F$ containing $\mathbb{Q}(f)$ 
$$
Spl(f,F):=\{p\in Spl(f) \mid p\text{ is fully splitting at }F\}.
$$
If a subsequence of $r_i/p$ for local roots $r_i$ of $f$ on $p\in Spl(f,F)$ distributes uniformly for every 
irreducible  polynomial $f$ of $\deg f>1$ and every number field $F$, then the one-dimensional distribution
of $r_i/p$ for any reducible polynomial without rational root is true.

Let us give  more remarks.
\begin{prop}\label{prop5.2}
Suppose that 
$f(x)$ is not a product of linear forms and
$\alpha_i - \alpha_j\not\in\mathbb{Q}$ for any distinct $i,j$;
 then we have
\begin{align*}
&\#\bm{G}\cdot vol(\cup_{\sigma\in S_n}\mathfrak{D}(f,\sigma))=
\sum_{\sigma\in S_n}vol(\mathfrak{D}(f,\sigma))
\\
=\,
&vol([0,1)^n\cap \mathfrak{D}_f)=\sqrt{\det((\bm{m}_i,\bm{m}_j))_{1\le i,j\le t}}.
\end{align*}
\end{prop}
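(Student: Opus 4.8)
The plan is to prove the four quantities equal by reading the displayed chain from left to right. The first equality, $\#\bm{G}\cdot vol(\cup_{\sigma\in S_n}\mathfrak{D}(f,\sigma))=\sum_{\sigma\in S_n}vol(\mathfrak{D}(f,\sigma))$, is exactly Corollary~\ref{cor4.3}; the second, $\sum_{\sigma\in S_n}vol(\mathfrak{D}(f,\sigma))=vol([0,1)^n\cap\mathfrak{D}_f)$, is \eqref{eq67}, and this is the only step that consumes the hypothesis $\alpha_i-\alpha_j\notin\mathbb{Q}$. So the whole proposition reduces to the identity
\[
vol([0,1)^n\cap\mathfrak{D}_f)=\sqrt{\det((\bm{m}_i,\bm{m}_j)_{1\le i,j\le t})},
\]
which is lattice-theoretic in nature: the right-hand side is the covolume of the lattice $\langle\bm{m}_1,\dots,\bm{m}_t\rangle_{\mathbb{Z}}$ inside the $t$-dimensional space it spans, and the left-hand side should be the same covolume seen ``through'' the dual lattice plus the orthogonal complement.

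Both sides of the reduced identity are unchanged under a permutation of the coordinates $x_1,\dots,x_n$, since a coordinate permutation is an isometry, hence preserves the $(n-t)$-dimensional volume of $[0,1)^n\cap\mathfrak{D}_f$ as well as the Gram matrix $(\bm{m}_i,\bm{m}_j)$. So I may renumber the roots and assume, as in Lemma~\ref{lem5.11}, that $\beta_1,\dots,\beta_r$ are linearly independent over $\mathbb{Q}$, where $r:=n-t$ and $1\le r\le n-1$ (the inequality $r\ge1$ is where ``$f$ is not a product of linear forms'' enters, since it forces $t<n$). Lemma~\ref{lem5.11} then supplies $T\in M_{r,t}(\mathbb{Q})$ and a matrix $S\in GL_t(\mathbb{Q})$ such that the $t\times n$ matrix $M$ with rows $\bm{m}_1,\dots,\bm{m}_t$ equals $(S\,{}^tT,\,-S)$; here $S$ is integral because $-S$ consists of the last $t$ columns of the integer matrix $M$. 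Lemma~\ref{lem5.12} identifies $[0,1)^n\cap\mathfrak{D}_f$ with the disjoint union, over $\bm{k}\in M_{1,t}(\mathbb{Z})$, of the affine patches $G_{\bm{k}}:=\{(\bm{y},\,\bm{y}T+\bm{k}\,{}^tS^{-1})\mid \bm{y}\in[0,1)^r,\ \bm{y}T+\bm{k}\,{}^tS^{-1}\in[0,1)^{n-r}\}$, and moreover tells me that for each fixed $\bm{y}\in[0,1)^r$ exactly $|\det S|$ of the vectors $\bm{k}$ are admissible.

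Now I would compute the two sides and check they agree. On the right, a direct computation gives $(\bm{m}_i,\bm{m}_j)_{i,j}=M\,{}^tM=S\,(\,{}^tT\,T+1_t\,)\,{}^tS$, so $\det((\bm{m}_i,\bm{m}_j))=(\det S)^2\,\det(1_t+{}^tT\,T)=(\det S)^2\,\det(1_r+T\,{}^tT)$ by Sylvester's determinant identity $\det(1_t+AB)=\det(1_r+BA)$. On the left, the affine map $\bm{y}\mapsto(\bm{y},\,\bm{y}T+c)$ has constant $r$-dimensional volume element $\sqrt{\det(1_r+T\,{}^tT)}$ (the Gram determinant of the columns of its Jacobian, which is the natural factor in the orthonormal-frame convention used to define $vol$), hence $vol(G_{\bm{k}})=\sqrt{\det(1_r+T\,{}^tT)}\cdot vol\bigl(\{\bm{y}\in[0,1)^r\mid \bm{y}T+\bm{k}\,{}^tS^{-1}\in[0,1)^{n-r}\}\bigr)$. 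Summing over $\bm{k}$ and interchanging the (nonnegative) sum with the integral, $vol([0,1)^n\cap\mathfrak{D}_f)=\sqrt{\det(1_r+T\,{}^tT)}\int_{[0,1)^r}\#\{\bm{k}\in M_{1,t}(\mathbb{Z})\mid\bm{y}T+\bm{k}\,{}^tS^{-1}\in[0,1)^{n-r}\}\,d\bm{y}=|\det S|\,\sqrt{\det(1_r+T\,{}^tT)}$ by the last sentence of Lemma~\ref{lem5.12}. Thus both sides equal $|\det S|\sqrt{\det(1_r+T\,{}^tT)}$, which finishes the proof.

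There is no genuinely hard step; the work is bookkeeping. The place to be careful is the volume computation on the left: one must correctly evaluate the $r$-dimensional volume element of the affine parametrization $\bm{y}\mapsto(\bm{y},\bm{y}T+c)$ and then use Sylvester's identity to recognize it as the factor $\sqrt{\det(1_t+{}^tT\,T)}$ that shows up in the Gram determinant on the right. It is also worth double-checking, as above, that the hypotheses are used only where claimed — $\alpha_i-\alpha_j\notin\mathbb{Q}$ exclusively for \eqref{eq67}, and ``$f$ not a product of linear forms'' only to guarantee $r\ge1$, i.e. that $vol$ is an honest $(n-t)$-dimensional volume rather than a point mass.
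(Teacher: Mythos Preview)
Your proof is correct, and the reduction to the last identity is exactly as in the paper (Corollary~\ref{cor4.3} and \eqref{eq67}). For that last identity, however, you and the paper take genuinely different routes.

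The paper exploits the primitivity of the matrix with rows $\bm{m}_1,\dots,\bm{m}_t$ to complete it to a unimodular $M\in GL_n(\mathbb{Z})$, reads off the dual rows $\bm{m}'_i$ of ${}^tM^{-1}$, and observes that both $[0,1)^n\cap\mathfrak{D}_f$ and $\{\sum_{i>t}x_i\bm{m}'_i\mid x_i\in[0,1)\}$ are fundamental domains for the $\mathbb{Z}^n$-translation action on $\mathfrak{D}_f$; this gives $vol([0,1)^n\cap\mathfrak{D}_f)=\sqrt{\det((\bm{m}'_i,\bm{m}'_j))_{t<i,j\le n}}$, and a Schur-complement computation on $M\,{}^tM$ then identifies that with $\sqrt{\det((\bm{m}_i,\bm{m}_j))_{1\le i,j\le t}}$. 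Your approach instead recycles Lemmas~\ref{lem5.11}--\ref{lem5.12}: write the $\bm{m}_j$-matrix as $(S\,{}^tT,-S)$, compute the $(n-t)$-dimensional volume element of the graph parametrization $\bm{y}\mapsto(\bm{y},\bm{y}T+c)$ as $\sqrt{\det(1_r+T\,{}^tT)}$, multiply by $|\det S|$ via Lemma~\ref{lem5.12}, and match with $\sqrt{\det((\bm{m}_i,\bm{m}_j))}=|\det S|\sqrt{\det(1_t+{}^tTT)}$ through Sylvester's identity. The paper's argument is more intrinsically lattice-theoretic (two fundamental domains, then block algebra) and does not need to renumber coordinates; yours is more hands-on but pleasantly self-contained within the machinery already built in \S5.3, and the appearance of Sylvester's identity makes transparent why the $t$-dimensional Gram determinant equals the $(n-t)$-dimensional one.
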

\proof{
The first two equalities  are nothing but \eqref{eq46} and \eqref{eq71}.
The matrix whose rows are $\bm{m}_1,\dots,\bm{m}_t$ is primitive,
hence there are integral vectors $\bm{m}_{t+1},\dots,\bm{m}_n$ such that 
the matrix $M$ with the $i$th row being $\bm{m}_i$ is unimodular, that is $M$ is an integral matrix with determinant $1$.
Denote the $i$-th row of the matrix ${}^t\!M^{-1}$ by $\bm{m}'_i$, hence $(\bm{m}'_i,\bm{m}_j) =\delta_{i,j}$.
We see $\bm{x}+\bm{m}\in\mathfrak{D}_f$ for
 $\bm{x}\in\mathfrak{D}_f$ and $\bm{m}\in\mathbb{Z}^n$, hence $\mathbb{Z}^n$ acts on $\mathfrak{D}_f$ 
as a volume-preserving  discontinuous group.
The set $[0,1)^n\cap\mathfrak{D}_f$ is a fundamental domain 
( $=$ a realization of $\mathfrak{D}_f/\mathbb{Z}^n$).
Another is $\{\sum_{i=t+1}^nx_i\bm{m}'_i\mid {}^\forall x_i\in[0,1)\}$,
since  for a vector $\bm{x}= \sum x_i\bm{m}'_i $ in
$$
\mathfrak{D}_f=\{\sum_{i=1}^n y_i\bm{m}'_i\mid y_1,\dots,y_t\in\mathbb{Z},y_{t+1},\dots,y_n\in\mathbb{R}\},
$$
the condition $\bm{x}\in\mathbb{Z}^n$ holds if and only if $x_{t+1},\dots,x_n\in\mathbb{Z}$ holds.
Hence we have $vol([0,1)^n\cap\mathfrak{D}_f)=vol(\{\sum_{i=t+1}^n y_i\bm{m}'_i\mid 0\le 
{}^\forall y_i<1\})=\sqrt{\det((\bm{m}'_,\bm{m}'_j))_{t<i,j\le n}}$.
Decompose the matrix $M{}^tM$ as
$$
M{}^tM=\begin{pmatrix}A^{(t)}&B\\{}^tB&C\end{pmatrix}
=\begin{pmatrix}1_t& 0\\{}^t(A^{-1}B)&1_{n-t}\end{pmatrix}
\begin{pmatrix}A&0\\0&C-{}^tBA^{-1}B\end{pmatrix}
\begin{pmatrix}1_t &A^{-1}B\\0&1_{n-t}\end{pmatrix},
$$
which implies $\det A\cdot\det(C-{}^tBA^{-1}B)=1$.
Here we note that $M{}^t\!M$ is positive definite, and
$A=((\bm{m}_i,\bm{m}_j))_{1\le i,j\le t}$.
Similarly, the right lower $(n-t)$-square matrix of ${}^t\!M^{-1}{}^t({}^t\!M^{-1})=(M{}^t\!M)^{-1}$
is $((\bm{m}'_i,\bm{m}'_j))_{t+1\le i,j\le n}$.
By the above decomposition, we see that $(M{}^t\!M)^{-1}$ is
\begin{gather*}
\begin{pmatrix}1_t& -A^{-1}B)\\0&1_{n-t}\end{pmatrix}
\begin{pmatrix}A^{-1}&0\\0&(C-{}^tBA^{-1}B)^{-1}\end{pmatrix}
\begin{pmatrix}1_t &0\\-{}^t(A^{-1}B)&1_{n-t}\end{pmatrix}
\\
=\begin{pmatrix}
\text{\,*}&\text{\,*}\\ \text{*}&(C-{}^tBA^{-1}B)^{-1}
\end{pmatrix},
\end{gather*}
hence we see that $\det((\bm{m}'_i,\bm{m}'_j))_{t+1\le i,j\le n}= \det(C-{}^tBA^{-1}B)^{-1}=\det A
=\det((\bm{m}_i,\bm{m}_j))_{1\le i,j\le t}$,
which completes the proof.
\qed
}

The equation above is a  little bit curious, because $vol$ is, by definition
 the $(n-t)$-dimensional volume and
$\sqrt{\det((\bm{m}_i,\bm{m}_j))}$ is the volume of the $t$-dimensional  paralleltope spanned by $\bm{m}_1,\dots,
\bm{m}_t$.
\begin{prop}\label{prop5.3}
Suppose that 
$f(x)$ is not a product of linear forms,
$\alpha_i - \alpha_j\not\in\mathbb{Q}$ for any distinct $i,j$
and
$vol(\mathfrak{D}(f,\sigma))=cPr(f,\sigma)$ for every $\sigma\in S_n$ 
;
then we have
$$
c=\sqrt{\det((\bm{m}_i,\bm{m}_j))_{1\le i,j\le t}}/\#\hat{\bm{G}}.
$$
\end{prop}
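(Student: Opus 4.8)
The plan is to obtain $c$ by a single summation over $S_n$, feeding the hypothesised identity into two results already proved in the excerpt: Corollary \ref{cor4.2} on the side of the densities $Pr(f,\sigma)$, and Proposition \ref{prop5.2} on the side of the volumes $vol(\mathfrak{D}(f,\sigma))$. No new computation is needed; the statement is essentially a corollary of those two.

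First I would sum the assumed relation $vol(\mathfrak{D}(f,\sigma))=cPr(f,\sigma)$ over all $\sigma\in S_n$. This is legitimate term by term, since the hypothesis is stated for every $\sigma$, including those with $Pr(f,\sigma)=0$, where it merely asserts $vol(\mathfrak{D}(f,\sigma))=0$. We obtain
\[
\sum_{\sigma\in S_n}vol(\mathfrak{D}(f,\sigma))=c\sum_{\sigma\in S_n}Pr(f,\sigma).
\]
By Corollary \ref{cor4.2}, equation \eqref{eq38}, the right-hand side equals $c\cdot\#\hat{\bm G}$. On the left-hand side, the hypotheses of the present proposition --- that $f(x)$ is not a product of linear forms and that $\alpha_i-\alpha_j\notin\mathbb{Q}$ for every pair of distinct indices --- are exactly the hypotheses of Proposition \ref{prop5.2}, whence $\sum_{\sigma\in S_n}vol(\mathfrak{D}(f,\sigma))=\sqrt{\det((\bm{m}_i,\bm{m}_j))}$. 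Equating the two evaluations gives $\sqrt{\det((\bm{m}_i,\bm{m}_j))}=c\,\#\hat{\bm G}$, and dividing by $\#\hat{\bm G}\neq0$ yields $c=\sqrt{\det((\bm{m}_i,\bm{m}_j))}/\#\hat{\bm G}$.

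There is no real obstacle: the only point to check is that both cited results are applicable, which is immediate --- Corollary \ref{cor4.2} is unconditional (granted the standing assumption that the limits defining $Pr(f,\sigma)$ exist), and the hypotheses of Proposition \ref{prop5.2} coincide verbatim with ours. If one wishes to be careful, one may add that $\sqrt{\det((\bm{m}_i,\bm{m}_j))}>0$, since $\bm{m}_1,\dots,\bm{m}_t$ form a basis of $LR_0\cap\mathbb{Z}^n$ and are in particular linearly independent, so their Gram determinant is positive; this is consistent with $c>0$.
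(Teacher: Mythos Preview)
Your proof is correct and follows exactly the same approach as the paper: sum the hypothesis over $S_n$, apply Proposition \ref{prop5.2} to evaluate the volume side as $\sqrt{\det((\bm{m}_i,\bm{m}_j))}$, and apply Corollary \ref{cor4.2} to evaluate the density side as $\#\hat{\bm G}$.
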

\proof
{
 We see easily
\begin{align*}
&\sqrt{\det((\bm{m}_i,\bm{m}_j))}
\\
=&\,\sum_{\sigma}vol(\mathfrak{D}(f,\sigma))
\\
=&\,c\sum_{\sigma}Pr(f,\sigma)
\\
=&\,c\#\hat{\bm{G}}\quad\text{ by }\eqref{eq41},
\end{align*}
hence we have
$$
c=\sqrt{\det((\bm{m}_i,\bm{m}_j))}/\#\hat{\bm{G}}.
$$
\qed
}

\noindent 
{\bf Example }
 The case that the polynomial $f(x)$ has no non-trivial linear relation 
 among roots :
It is easy to see $Pr(f,\sigma)=1$ and $vol(\frak{D}(f,\sigma))=vol(\hat{\frak{D}}_n)=
\frac{\sqrt{n}}{n!}$ by Proposition \ref{prop4.6}, hence $c =\frac{\sqrt{n}}{n!}$.
On the other hand, the base of $LR_0\cap\mathbb{Z}^n$ is $(1,\dots,1)$ and $\hat{G}=S_n$
is clear, hence the identity in Proposition \ref{prop5.3} is checked.

The case that  the irreducible polynomial $f(x)$ is of degree $4$ and has non-trivial linear relations
among roots : 
Suppose that  $f(x)$, roots $\alpha_i$, the basis $\bm{m}_i$ $(i=1,2)$ are those in the proof of Theorem \ref{thm5.3};
then, only for permutations $\sigma$ satisfying $\{\sigma(1),\sigma(4)\}=\{1,4\}$ or $\{2,3\}$,
$Pr(f,\sigma)>0, vol(\frak{D}(f,\sigma))>0$ hold, and then $Pr(f,\sigma)=1$ holds and
$\frak{D}(f,\sigma)$ is equal to
\begin{align*}
&\{(x_1,x_2,1-x_2,1-x_1)\mid 0\le x_1\le x_2<\frac{1}{2}\}
\\
 =&\{(0,0,1,1)+y_1(\frac{1}{\sqrt{2}},0,0,-\frac{1}{\sqrt{2}})
+y_2(0,\frac{1}{\sqrt{2}},-\frac{1}{\sqrt{2}},0) \mid 0\le y_1\le y_2<\frac{1}{\sqrt{2}}\}, 
\end{align*}
hence $vol(\frak{D}(f,\sigma))=1/4$.
On the other hand, it is easy to see $\det((\bm{m}_i,\bm{m}_j))=4,\#\hat{G}=8$,
hence the identity on $c$ is checked.
\section{The Weyl criterion}\label{sec6}
The equation \eqref{eq13} in Conjecture \ref{conj3} is equivalent to
\begin{align}\nonumber
&\lim_{X\to\infty}\frac{\sum_{p\in Spl_X(f,\sigma,\{k_j\})}F(r_1/p,\dots,r_n/p)} {\#Spl_X(f,\sigma,\{k_j\})} 
\\\label{eq72}
=&
\frac{1}{vol(\mathfrak{D}(f,\sigma,\{k_j\}))}\int_{\mathfrak{D}(f,\sigma,\{k_j\})}F(\bm{x})d\bm{x},
\end{align}
where $F=\chi_D$ is the characteristic function of the set $D$ there.
This is a kind of uniform distribution.
Here we remark that for a prime $p\in Spl_X(f,\sigma,\{k_j\})$, the point $(\frac{r_1}{p},\dots,\frac{r_n}{p})$ is not  on $\mathfrak{D}(f,\sigma,\{k_j\})$ unless $m_1=\dots=m_t=0$ by the condition 
$\sum_i m_{j,i}r_{\sigma(i)}=m_j+k_jp$,
hence the domain of the definition of the function $F$ should be  paid attention to.
As the original case, we want to approximate the characteristic function by manageable functions.
The author does not know what they are,
although trigonometric functions are expected.

Let us recall the original Weyl criterion for a uniformly distributed sequence modulo $1$ 
(see \cite{K-N}):
The Weyl criterion states that  a sequence $\{x_n\}$, $x_n \in[0,1)$, $n=1,2,\dots$ is uniformly distributed modulo $1$ if and only if the 
following equation
\begin{equation}
\lim_{N\to\infty}\frac{1}{N}\sum_{n=1}^N F(x_n) = 
\int_0^1 F(x)dx 
\tag{W}
\end{equation}
holds for $F(x)= \exp(2\pi i hx)$ $({}^\forall h\in\mathbb{Z})$. 
Usually,  the right-hand side is replaced by `` $0$ for $h\ne0$ '',
omitting the trivial case $h=0$.
The proof proceeds as follows:
\begin{enumerate}
\item
The sequence $\{x_n\}$, $x_n \in[0,1)$, $n=1,2,\dots$ is uniformly distributed modulo $1$ if and only if the 
 equation (W) holds for the characteristic function of every interval in $[0,1]$,
 compactifing the interval $[0,1)$ to $[0,1]$  with the identification of endpoints $0,1$.
\item
Approximating a continuous function $F(x)$ $(F(0)=F(1))$ on $[0,1]$ by step functions from above and below
by the supremum norm,
one proves that the equation (W) is true for step functions if and only if it is true for
continuous functions .
\item
Approximating a continuous function $F(x)$ $(F(0)=F(1))$ on $[0,1]$ by trigonometric functions
$ \exp(2\pi i hx)$ $( h\in\mathbb{Z})$, one shows that (W) is true for  continuous functions $F(x)$
if and only if (W) is true for  trigonometric functions $ \exp(2\pi i hx)$ $( {}^\forall h\in\mathbb{Z})$.
\end{enumerate}
To apply the idea to our case, the following   Stone-Weierstrass theorem may be helpful.
\begin{thm}\label{SW}
Let $T$ be a compact set and  let $C(T)$ 
be  the algebra of $\mathbb{R}$-valued continuous functions on $T$.
Suppose that $\mathfrak{A}$ is a subalgebra of  $C(T)$  such that 
(i) $\mathfrak{A}$  contains every constant function, 
 (ii)   for distinct points $x,y\in T$, there is a function $g\in\mathfrak{A}$ such that $g(x)\ne g(y)$.
Then a continuous function $f$ on $T$  is
approximated by functions in $\mathfrak{A}$ by the supremum norm $||f||:=\sup_{x\in T} |f(x)|$.
\end{thm}
The  proof  in \cite{BD} is simple.
Just to make sure, we give the following variation.
\begin{cor}
Let ${T}$ be a compact set and let $E_{\gamma}\,(\gamma\in \Gamma)$ be mutually disjoint
 subsets of  ${T}$, and let $C({T})$ 
be  the algebra of $\mathbb{R}$-valued continuous functions on ${T}$.
Suppose that $\mathfrak{A}$ is a subalgebra of  $C({T})$  such that 
(i) $\mathfrak{A}$
 contains every constant function, 
 (ii)  every function in $\mathfrak{A}$  takes the same value on each $E_{\gamma}$, 
and (iii) for distinct points $x,y\in {T}$, 
there is a function $g\in\mathfrak{A}$ such that $g(x)\ne g(y)$
if there is no $\gamma\in\Gamma$ such that $x, y\in E_{\gamma}$.
Then a continuous function $f$ on ${T}$ such that $f(x)$ takes the same value on each $E_{\gamma}$ is
approximated by functions in $\mathfrak{A}$ by the supremum norm $||f||:=\sup_{x\in {T}} |f(x)|$.
\end{cor}
The proof is easy. We have only to apply the Stone-Weierstrass theorem to the quotient space 
of $T$ by the equivalence shrinking $E_\gamma$ to one point for each $\gamma\in\Gamma$.
%
%
%
%
%
%
\subsection{The Weyl criterion}
Hereafter we  suppose that the polynomial $f(x)$ has no rational root and is not of the form 
$g(x)g(x-c)h(x)$ $(g(x),h(x)\in\mathbb{Z}[x],c\in \mathbb{Z})$, and
write simply
$$
\mathfrak{D}:=\mathfrak{D}(f,\sigma,\{k_j\})
 =\{(x_1,\dots,x_n)\in\Delta\mid\sum_{l=1}^n m_{i,l}x_{\sigma(l)}=k_i\:(1\le{}^\forall i\le t)\}.
$$

First, 
we supplement the bases $\bm{m}_1,\dots,\bm{m}_t$ of linear relations among roots by
integral vectors $\bm{m}_{t+1},\dots,\bm{m}_n$ so that the $(n,n)$-matrix $M=(m_{i,j})$ 
with the $i$-th row $ \bm{m}_{i}$ is in $SL_n(\mathbb{Z})$
and $\sigma(M):=(m_{i,\sigma^{-1}(j)})$,
that is its $i$-th row  is $\sigma(\bm{m}_i)$.
Then,  writing
$$
\mathcal{S}:=\{\bm{x} \in\mathbb{R}^n\mid (\sigma(\bm{m}_i),\bm{x})=k_i\:(1\le{}^\forall i\le t)\},
$$
we have
\begin{align*}
\mathfrak{D}=\Delta\cap {\mathcal{S}},
\end{align*}
and
via the transformation  $\bm{x}\mapsto\bm{y}:=\bm{x}{}^t\!(\sigma(M))$ we see that,
noting $(\sigma(\bm{m}_i),\bm{x})=(\sigma(\bm{m}_i)\sigma(M)^{-1},\bm{x}{}^t\!(\sigma(M)))=y_i$
\begin{align*}
{\mathcal{S}}\,{}^t\!(\sigma(M))&=\{(y_1,\dots,y_n)\in\mathbb{R}^n\mid y_i=k_i\,(1\le{}^\forall i\le t)\}.
\end{align*}
The dimension of the set $\mathfrak{D}$ is at most $n-t$, which is the dimension of the 
supporting space $\mathcal{S}$.

Noting that, via
$$
(x_1,\dots,x_n)=(u_1,\dots,u_n)
\left(
\begin{array}{cccc}
1&1&\dots&1
\\
0&1&\dots&1
\\
\vdots&\vdots&\ddots&\vdots
\\
0&0&\dots&1
\end{array}
\right),
$$
\begin{align*}
\Delta=\left\{
\bm{x}\mid0\le x_1\le\dots\le x_n\le1\}=\{\sum u_i\bm{w}_i\mid {}^\forall u_i\ge0,
\sum u_i\le 1
\right\},
\end{align*}
with $\bm{w}_1:=(1,\dots,1),
\bm{w}_2:=(0,1,\dots ,1),\dots,\bm{w}_n:=(0,\dots,0,1)$, 
the set $\Delta$
 is  the convex hull with vertexes $\bm{w}_1,\dots,\bm{w}_n$ and the origin $\bm{w}_{n+1}:=\bm{O}$,
that is
$$
\Delta=\left\{
\sum_{i=1}^{n+1} u_i\bm{w}_i\mid {}^\forall u_i\ge0,
\sum_{i=1}^{n+1} u_i= 1
\right\},
$$
hence we see that $\mathfrak{D} \,(= \mathfrak{D}(f,\sigma,\{k_j\})) $ is also a convex hull, so a finite union of simplexes.

Denote the $i$-th row of the matrix ${}^t\!(\sigma(M))^{-1}$ by $\bm{m}'_i$, 
that is
\begin{equation}\label{eq73}
\begin{pmatrix}
\bm{m}'_1\\\vdots\\\bm{m}'_n    
\end{pmatrix}:={}^t\hspace{-0.3mm}\sigma(M)^{-1},
\quad i.e.
\quad(\bm{m}'_j,\sigma(\bm{m}_i))=\delta_{i,j},
\end{equation}
and put
$$
\bm{c}:=\sum_{i=1}^tm_i\bm{m}'_i.
$$ 
\begin{lem}\label{lem6.1}
 Write $\bm{r}:=(r_1,\dots,r_n)$ for local roots $r_i$ for $p\in Spl(f,\sigma,\{k_j\})$; 
 then we have
\begin{gather*}
 {\mathcal{S}}{}^t\!(\sigma(M))=\{(k_1,\dots,k_t,y_{t+1},\dots,y_n) \mid y_{t+1},\dots,y_n\in\mathbb{R}\},
 \\
 \bm{c}{}^t\!(\sigma(M))=(m_1,\dots,m_t,0,\dots,0),
 \\
 (\bm{r}-\bm{c})/p \in \mathcal{S},
 \end{gather*}
and 
$$(\bm{r}-\bm{c})/p\in\mathfrak{D}(=\Delta\cap \mathcal{S})
$$ 
holds except a finite number of primes under the assumption at the beginning of this subsection
 on the polynomial $f$.
 \end{lem}
 \proof
 {
The first is already given above,
the second  identity follows from \eqref{eq73} and the definition of $\bm{c}$, and the third does from $(\bm{c},\sigma(\bm{m}_j))=m_j$ and
 $(\bm{r},\sigma(\bm{m}_j))=m_j+k_jp$ $(j=1,\dots,t)$ by definition of the local root vector $\bm{r}$.
 To see the last inclusion, we have only to show that
 the number of primes $p$ satisfying $(\bm{r}-\bm{c})/p\not\in\Delta$
is finite.
If $(\bm{r}-\bm{c})/p\not\in\Delta$ happens  for infinitely many primes, then one of conditions
$0\le r_1< c_1$, $0\le r_{i+1}-r_i < c_{i+1}-c_i$ or $0<p-r_n<-c_n$ occurs for infinitely many primes,
hence one of $r_1=c$, $r_{i+1}-r_i=c$ or $r_n -p = c$   for some constant $c$ 
occurs for infinitely many primes.
This means $f(c)\equiv0\bmod p$, $f(r_i)\equiv f(r_i+c)\bmod p$, $f(c)\equiv0\bmod p$, respectively.
The first or the third condition implies easily that the polynomial $f(x)$ has a rational root $c$.
The second implies that  the polynomial $f$ is  of the form $g(x)g(x-c)h(x)$. 
Because, if the identity $r_{i+1}-r_i=c$ holds for infinitely many primes, then $f(r_i+c)\equiv
f(r_i)\equiv0\bmod p$ holds for infinitely many primes, which implies  $(f(x),f(x+c))\ne1$ 
(resp. $(f(x),f'(x))\ne1$) for $c\ne0$ (resp.  $c=0$),
Hence, in the case of $c\ne0$, there is a decomposition $f(x)=g(x)h(x),f(x+c)=g(x)k(x)$ 
for an irreducible polynomial $g(x)$,
i.e. $f(x)$ is divisible by $g(x)g(x-c)$. If $c=0$, then $f(x)$ is of the form $g(x)^2h(x)$.   
 
 \qed
 }

Suppose  the following equality
\begin{align}\nonumber	
&\#\{p \in Spl_X(f,\sigma,\{k_j\})\mid \bm{r}/p \in D\}
\\ \label{eq74}
=&\#\{p \in Spl_X(f,\sigma,\{k_j\})\mid( \bm{r}-\bm{c})/p \in D\}
+o(\# Spl_X(f,\sigma,\{k_j\})),
\end{align}
which implies
\begin{align}\nonumber	
&\lim_{X\to\infty}\frac{\#\{p \in Spl_X(f,\sigma,\{k_j\})\mid \bm{r}/p \in D\}}
{\# Spl_X(f,\sigma,\{k_j\})}
\\ \nonumber
=&\lim_{X\to\infty}\frac{\#\{p \in Spl_X(f,\sigma,\{k_j\})\mid( \bm{r}-\bm{c})/p \in 
D\}}
{\# Spl_X(f,\sigma,\{k_j\})}.
\end{align}

Thus, under the assumption \eqref{eq74}  the equation \eqref{eq13} in Conjecture \ref{conj3} is equivalent
by Lemma \ref{lem6.1} to
\begin{align}\label{eq75}
\lim_{X\to\infty}\frac{\#\{Spl_X(f,\sigma,\{k_j\}) \mid (\bm{r}-\bm{c})/p\in D\cap \mathfrak{D}\}} 
{    \#Spl_X(f,\sigma,\{k_j\})    }
=
\frac{vol(D\cap \mathfrak{D})}{vol(\mathfrak{D})},
\end{align}
and the equation for the Weyl  criterion   is
 \begin{align}\nonumber
&\lim_{X\to\infty}\frac{1} {\#Spl_X(f,\sigma,\{k_j\})} \sum_{p\in Spl_X(f,\sigma,\{k_j\})}F((\bm{r}-\bm{c})/p)
\\ \label{eq76}
=&
\frac{1}{vol(\mathfrak{D})}\int_{\mathfrak{D}}F(\bm{x})d\bm{x},
\end{align}
for a continuous function $F$ on $\mathfrak{D}$.

Last, let us give a sufficient condition to the supposition  \eqref{eq74}, i.e.
\begin{align*}
&\#\left\{ p\in Spl_X(f,\sigma,\{k_j\}) 
\left|
\begin{array}{l}
\bm{r}/p \in D\text{ and }   (\bm{r}-\bm{c})/p \not\in D,\text{ or }   
\\
 \bm{r}/p \not\in D \text{ and }   (\bm{r}-\bm{c})/p \in D
 \end{array}
\right.
\right\}
\\
=&\,o(\# Spl_X(f,\sigma,\{k_j\})).
\end{align*}
\begin{prop}\label{prop6.1}
Let the set $D$ be  of the form $D= \{\bm{x}\mid \sum_{j=1}^n d_{i,j}x_j\le d_i\,( i=1,\dots,l)\}$
with all $d_{i,j},d_i$ being rational.
If $\sum_{j=1}^n d_{i,\sigma(j)} \alpha_j\not\in\mathbb{Q}$
holds for $1\le{}^\forall i\le  l$ and any permutation $\sigma$,
then the equation  \eqref{eq74} is true for the set $D$.
\end{prop}
\proof{
Let $d$ be a positive integer such that all $dd_{i,j},dd_i$ are integers. 
Let us show the contradiction under the supposition that there are infinitely many primes $p\in Spl(f)$ such that $\bm{r}/p\in D$ and $(\bm{r}-\bm{c})/p\not\in D$; then we have $\sum_jd_{i,j}r_j\le pd_i$ for all $i$ and $\sum_jd_{k,j}(r_j-c_j)>pd_k$ for some $k$, hence
$\sum_j dd_{k,j}c_j< \sum_j dd_{k,j}r_j -pdd_k \le0$,
where $ \sum_j dd_{k,j}c_j$ is a constant independent of $p$ and $ \sum_j dd_{k,j}r_j - pdd_k$ is an integer.
Therefore  there are infinitely many primes $p\in Spl(f)$ such that $ \sum_j dd_{k,j}r_j -pdd_k =\kappa$ for some integer $\kappa$ 
satisfying $\sum_j dd_{k,j}c_j<\kappa \le0$.
Taking a prime ideal $\frak{p}$ of $\mathbb{Q}(f)$ over $p$, we see that
there are infinitely many prime ideals $\frak{p}$ and a permutation $\sigma$
such that $\sum_jdd_{k,j}\sigma(\alpha_j)\equiv \kappa\bmod \frak{p}$, which implies
 $\sum_jdd_{k,j}\sigma(\alpha_j)= \kappa$, i.e. $\sum_j dd_{k,\sigma^{-1}(j)}\alpha_j= \kappa$, which implies the contradiction $(d_{k,\sigma^{-1}(1)},\dots,d_{k,\sigma^{-1}(n)})\in LR_0$.
The case of  $\bm{r}/p \not\in D$  and $   (\bm{r}-\bm{c})/p \in D$ is similarly proved.
\qed
}
\begin{cor}\label{cor6.2new}
Suppose that the polynomial $f(x)$ has no rational root.
Then \eqref{eq74} is true for the set of the form $D= \{\bm{x}\mid a_i\le x_i\le b_i\,(^\forall i)\}$
with all $a_i,b_i$ being rational.
\end{cor}
\proof{
Vectors $(d_{i,1},\dots,d_{i,n})$ in Proposition \ref{prop6.1} are $(0,\dots,0,\pm1,0,\dots,0)$. Such a vector is in $LR_0$ if and only if $f(x)$ has a rational root.
\qed
}

How about the case of  $d_{i,j}$ being irrational or more general boundaries?

\subsection{}
Let us give vertexes  of $\mathfrak{D}=\mathfrak{D}(f,\sigma,\{k_j\})$ explicitly in the special case. 
Since $\Delta$ is a  convex hull,  the set $\mathfrak{D}=\Delta\cap\mathcal{S}$ is also a convex hull, hence a finite union of simplexes.

First of all, we note that a point $\bm{x}\in\mathfrak{D}$ is not a vertex if and only if  there is a 
non-zero 
vector $\bm{v}=(v_1,\dots,v_n)$ 
such that  $\bm{x}+\eta\bm{v}\in\mathfrak{D}$ holds
for any $\eta$ in a short open interval containing $0$.
Hence a point $\bm{x}\in\mathfrak{D}$ is not a vertex
 if and only if 
there is a non-zero vector $\bm{v}$ satisfying that
\begin{itemize}
\item
$\sum_im_{j,\sigma^{-1}(i)}v_i=0$ for ${}^\forall j=1,\dots,t$, and
\item
$v_i=0,v_i=v_{i+1},v_i=0$ hold according to $x_i=0,x_i=x_{i+1},x_i=1$, respectively.
\end{itemize} 

Another approach is as follows: Since the set $\Delta$ is the convex hull of $\bm{w}_1:=(1,\dots,1),
\bm{w}_2:=(0,1,\dots ,1),\dots,\bm{w}_n:=(0,\dots,0,1)$, and $\bm{w}_{n+1}=(0,\dots,0)$, vertexes of $\mathfrak{D}=\Delta\cap S$
are the set of the intersection of the line segment $\overline{\bm{w}_i\bm{w}_j}$ connecting $\bm{w}_i$ and $\bm{w}_j$ and $(n-t)$-dimensional plane $S$.
Hence, for $1\le i <j\le n+1$, a vector $\bm{v}=u\bm{w}_i+(1-u)\bm{w}_j$ $(0\le{}^\exists u\le 1)$
 on $\overline{\bm{w}_i\bm{w}_j}$ is on $\mathcal{S}$ if and only if $\bm{v}{}^t\sigma(M)=(k_1,\dots,k_t,*,\dots,*)$.
Note that
\begin{align*}
u\bm{w}_i+(1-u)\bm{w}_j=
\left\{
\begin{array}{ll}
(\underbrace{0,\dots,0}_{i-1},\underbrace{u,\dots,u}_{n-i+1})     & \text{ if }j=n+1,
\\
(\underbrace{0,\dots,0}_{i-1},\underbrace{u,\dots,u}_{j-i},\underbrace{1,\dots,1}_{n-j+1})     & \text{ if }j\le n.
\end{array}
\right.
\end{align*}

\noindent
{\bf{Example 1}}
Suppose that the polynomial $f(x)$ has no non-trivial linear relations among roots, i.e. $t=1$, and 
\begin{align*}
\Delta_k:= \mathfrak{D}(f,\sigma,k)
=\{\bm{x}\in\Delta\mid \sum_{i=1}^n x_i =k\}
\quad(1\le k \le n-1).
\end{align*}
The vertexes of $\Delta_k$ are following $k(n-k)+1$ vectors:
\begin{align*}
P(p,q,r):=(\underbrace{0,\dots,0}_{p},\underbrace{\epsilon,\dots,\epsilon}_{q},
\underbrace{1,\dots,1}_{r})
\end{align*}
with non-negative integers $p,q,r$ satisfying $p+q+r=n$ and
$$
\left\{
\begin{array}{ll}
p=n-k,q=0&\text{ if } r=k,
\\
k-r+1\le q\le n-r,
\epsilon=\frac{k-r}{q}&\text{ if }0\le r \le k-1.
\end{array}
\right.
$$
Because, if $\bm{x}\in\Delta_k$ satisfies three strict inequalities $x_{i}<x_{i+1}=\dots=x_{i+a}<x_{i+a+1}
=\dots=x_{i+a+b}<x_{i+a+b+1}$, then for any  $\eta$ sufficiently close to $0$,
the vector $\bm{x}(\eta)$, replacing $x_{i+1}=\dots=x_{i+a},
x_{i+a+1}=\dots=x_{i+a+b}$ by $x_{i+a}+\eta/a,x_{i+a+1}-\eta/b$, respectively is still in 
$\Delta_k$,
that is a short segment containing $\bm{x}$ is in $\Delta_k$.

$\Delta_k$ is a simplex if and only if $k=1,n-1$, and
vertexes of $\Delta_1$ are 
$$
(\underbrace{0,\dots,0}_{n-q},\underbrace{\frac{1}{q},\dots,\frac{1}{q}}_{q})\quad(1\le q\le n),
$$
and those of $\Delta_{n-1}$ are 
$$(0,1,\dots,1),\,
(\underbrace{\frac{q-1}{q},\dots,\frac{q-1}{q}}_{q},\underbrace{1,\dots,1}_{n-q})\quad (2\le q\le n).$$

\noindent
{\bf{Example 2}}
Suppose that the polynomial $f(x)$ is of degree $4$ and has a non-trivial linear relations among roots; Then  we may assume that their basis are $\alpha_1+\alpha_4,\alpha_2+\alpha_3\in\mathbb{Z}$ by Proposition \ref{prop3.3},
i.e.  $\bm{m}_1=(1,0,0,1),\bm{m}_2=(0,1,1,0)$.
Hence we may take $\bm{m}_3:=(0,0,1,0),\bm{m}_4:=(0,0,0,1)$; then we have
$$
\mathfrak{D}(f,id,{k_1,k_2})=
\{
(x_1,\dots,x_4)\mid 0\le x_1\le \dots\le x_4,x_1+x_4=k_1,x_2+x_3=k_2
\}.
$$
It is easy to see that $\dim\mathfrak{D}=2\Leftrightarrow k_1=k_2=1$, and then the vertexes are $(0,0,1,1),(0,\frac{1}{2},\frac{1}{2},1),(\frac{1}{2},\frac{1}{2},\frac{1}{2},\frac{1}{2})$.

\subsection{}
Let us see the definition of the integral on the set $\mathfrak{D}$  explicitly.
Let us recall definitions :
\begin{align*}
\Delta:&=\{\bm{x}=(x_1,\dots,x_n)\in\mathbb{R}^n\mid0\le x_1\le\dots\le x_n\le1\},
  \\
\mathcal{S}:&=\{\bm{x} \in\mathbb{R}^n\mid (\sigma(\bm{m}_j),\bm{x})=k_j\,(j=1,\dots,t)\},
 \\
 \mathfrak{D}:= \mathfrak{D}(f,\sigma,\{k_j\}):&=\{(x_1,\dots,x_n)\in\Delta\mid\sum_{i=1}^n m_{j,i}x_{\sigma(i)}=k_j\:(1\le{}^\forall j\le t)\}
\\
&=\Delta \cap \mathcal{S},
\end{align*}
and 
write
\begin{align*}
\mathcal{S}_0:&=\{\bm{x} \in\mathbb{R}^n\mid (\sigma(\bm{m}_j),\bm{x})=0\,(1\le{}^\forall j\le t)\}
\\
&=\mathbb{R}[\bm{m}'_{t+1},\dots,\bm{m}'_n]
\quad\text{by \eqref{eq73}},
\end{align*}
hence we have 
\begin{equation}\label{eq77}
\mathfrak{D}\subset\mathcal{S}=\bm{x}_0+\mathcal{S}_0=\bm{x}_0+\mathbb{R}[\bm{m}'_{t+1},\dots,\bm{m}'_n]
\end{equation} 
for $\bm{x}_0\in\mathcal{S}\setminus\mathcal{S}_0$. 
Thus, by $(\sigma(\bm{m}_j),\bm{x})=
(\sigma(\bm{m}_j)\sigma(M)^{-1},\bm{x}\,{}^t\sigma(M))$ we see
\begin{align*}
\mathcal{S}_0{}^t\hspace{-0.3mm}\sigma(M)& =\{\bm{x}\in\mathbb{R}^n\mid x_1=\dots=x_t=0\},
\\
\mathcal{S}\,{}^t\hspace{-0.3mm}\sigma(M) &=\{\bm{x}\in\mathbb{R}^n\mid x_j=k_j\,\,(1\le {}^\forall j \le t)\}
,
\end{align*}
hence
\begin{equation}\label{eq78}
\mathfrak{D}\,{}^t\sigma(M)
\subset
\bm{x}_0{}^t\sigma(M)+\{\bm{x}\in\mathbb{R}^n \mid x_1=\dots=x_t=0\}.
\end{equation}

Next, let the vectors $\bm{f}_i\in\mathbb{R}^n$ $(i=t+1,\dots,n)$ be  an orthonormal basis $\bm{f}_i$ of $\mathcal{S}_0$,
hence there is a matrix $C^{(n-t,n-t)}$ such that
\begin{gather*}
\begin{pmatrix}
\bm{m}'_{t+1}\\\vdots\\\bm{m}'_n    
\end{pmatrix}
=C
\begin{pmatrix}
 \bm{f}_{t+1}\\\vdots\\\bm{f}_n  
\end{pmatrix},
\end{gather*}
therefore
$$
((\bm{m}'_i,\bm{m}'_j))_{t+1\le i,j\le n}
=C\,{}^tC.
$$
Let us see  $$|\det C|=\sqrt{\det M_1}.$$
By \eqref{eq73},  we see that the above matrix is the submatrix of
$$
((\bm{m}'_i,\bm{m}'_j))_{1\le i,j\le n}
={}^t\sigma(M)^{-1}\sigma(M)^{-1}=
(\sigma(M){}^t\sigma(M))^{-1},
$$
and writing
\begin{align*}
\tilde{M}:&=\sigma(M)\,\,{}^t(\sigma(M))
=M\,\,{}^tM
=
\left(
\begin{array}{cc}
    M_1^{(t)} & M_2^{(t,n-t)}
    \\
    {}^tM_2&M_4
    \end{array}
\right)
\\
&=
\left(
\begin{array}{cc}
    M_1^{(t)} & 0^{(t,n-t)}
    \\
    0&M_4-M_1^{-1}[M_2]
\end{array}
\right)
\left[\left(
\begin{array}{cc}
1_t &M_1^{-1}M_2
    \\
    0^{(n-t,t)}&1_{n-t}
\end{array}
\right)\right],   
\end{align*}
where $A[B]$ means ${}^tBAB$,
we have
\begin{align*}
\tilde{M}^{-1}&=
\left(
\begin{array}{cc}
    M_1^{-1} & 0
    \\
    0&(M_4-M_1^{-1}[M_2])^{-1}
\end{array}
\right)
\left[\left(
\begin{array}{cc}
1_t &0
    \\
  -{}^t(M_1^{-1}M_2)&1_{n-t}
\end{array}
\right)\right]
\\
&=
\left(
\begin{array}{cc}
    * & *
    \\
    *&(M_4-M_1^{-1}[M_2])^{-1}
\end{array}
\right),
\end{align*}
which implies
$C\,{}^tC=(M_4-M_1^{-1}[M_2])^{-1}$,
hence $|\det C|=\sqrt{\det M_1}$.

Next,
for the vector
\begin{align}\nonumber
 \bm{x}_0&:=(k_1,\dots,k_t)\begin{pmatrix}
     \bm{m}'_1\\\vdots\\\bm{m}'_t
 \end{pmatrix}   
 +
 (k_{1},\dots,k_t)M_1^{-1}M_2\begin{pmatrix}
     \bm{m}'_{t+1}\\\vdots\\\bm{m}'_n
 \end{pmatrix} 
\\\nonumber
\,&=(k_1,\dots,k_t)(
(1_t,0^{(t,n-t)})
+(0^{(t,t)},M_1^{-1}M_2))
){}^t\sigma(M)^{-1}
 \\\nonumber
 &=(k_1,\dots,k_t)M_1^{-1}(M_1,M_2)\,\,{}^t\sigma(M)^{-1}
 \\\nonumber
 &=(k_1,\dots,k_t)M_1^{-1}(1_t,0^{(t,n-t)})\tilde{M}\,\,{}^t\sigma(M)^{-1}
 \\ \label{eq79}
 &=(k_1,\dots,k_t)(M_1^{-1},0^{(t,n-t)})\sigma(M) 
 ,
\end{align}
let us see that it satisfies conditions
$$
(\bm{x}_0,\mathcal{S}_0)=0\text{ and  }
(\bm{x}_0,\sigma(\bm{m}_j))=k_j\quad(j=1,\dots,t).
$$
The second property follows from the first line of the definition of $\bm{x}_0$.
Let us see $(x_0,\mathcal{S}_0)=0$,  which is equivalent to
$$
\bm{x}_0(   {}^t  \bm{m}'_{t+1},\dots,{}^t\bm{m}'_n)=0^{(1,n-t)}, 
$$
which follows from 
\begin{align*}
&\bm{x}_0(   {}^t  \bm{m}'_{t+1},\dots,{}^t\bm{m}'_n)
\\
=\,
&(k_1,\dots,k_t)(M_1^{-1},0^{(t,n-t)})\sigma(M)
(   {}^t  \bm{m}'_{t+1},\dots,{}^t\bm{m}'_n)
\\
=\,&
(k_1,\dots,k_t)(M_1^{-1},0^{(t,n-t)})
\begin{pmatrix}
0^{(t,n-t)} \\ 1_{n-t}    
\end{pmatrix}
\\
=\,\,&0^{(1,n-t)},
\end{align*}

Let vectors $\{\bm{u}_i\}$ be vertexes of $\mathfrak{D}$, hence $\mathfrak{D} =\{\sum c_i\bm{u}_i\mid c_i\ge0,\sum c_i=1\}$, and
in view of \eqref{eq77} we define the bijective mapping $\psi$ from the set
$$
X:=\{(x_{t+1},\dots,x_n)\in\mathbb{R}^{n-t}\mid  \bm{x}_0+\sum_{j=t+1}^n x_j\bm{m}'_j\in\mathfrak{D}\}
$$
 to $\mathfrak{D}$  by
$$
\psi:\bm{x}=(x_{t+1},\dots,x_n)\mapsto\bm{u}=\bm{x}_0+\sum_{j=t+1}^n x_j\bm{m}'_j.
$$
Then we see that for $\bm{x} \in  X$
\begin{align}\label{eq80}
\psi(\bm{x})=\bm{u}
\Leftrightarrow
(\bm{u} - \bm{x}_0)\,{}^t\sigma(M)=(0^{(1,t)},\bm{x}),
\end{align}
and the vector $\bm{x}_i\in X$ satisfying $\psi(\bm{x}_i)=\bm{u}_i$ is the vertex of $X$. 
Writing 
\begin{align*}
\bm{k}:&=\bm{x}_0{}^t\sigma(M)=(k_1,\dots,k_t)(1_t,M_1^{-1}M_2)\quad \text{(by \eqref{eq79})},
\end{align*}
we have, for a vector $\bm{u}\in\mathfrak{D}$
$$
\bm{u}\,{}^t\sigma(M) - \bm{k} 
=(\bm{u} - \bm{x}_0)\,{}^t\sigma(M)
=(\underbrace{0,\dots,0}_t,*,\dots,*).
$$

\noindent
Thus for  a function  $F$ on  $\mathfrak{D}=\Delta\cap\mathcal{S}$,  we see 
\begin{align*}
&\int_{\mathfrak{D} }F(\bm{x})d\bm{x}
\\
=\,&\int_{\bm{x}_0+\sum_{j=t+1}^n y_j\bm{f}_j\in\mathfrak{D} }F(\bm{x}_0+\sum_{j=t+1}^n y_j\bm{f}_j)dy_{t+1}\dots dy_n
\\
=\,&
\int F(\bm{x}_0+(y_{t+1},\dots,y_n)C^{-1}
\begin{pmatrix}
\bm{m}'_{t+1}\\\vdots\\\bm{m}'_n    
\end{pmatrix})dy_{t+1}\dots dy_n
\\
=\,&\sqrt{\det(M_1)}
\int_{(x_{t+1},\dots,x_n)\in X} F(\bm{x}_0+\sum_{j=t+1}^n x_j\bm{m}'_j)dx_{t+1}\dots dx_n,
\end{align*}
where  $\bm{m}'_j$ denotes $j$th row of the matrix ${}^t\sigma(M)^{-1}$ and $M_1=((\bm{m}_i,\bm{m}_j))_{1\le i,j\le t}$ as before.

\noindent
Since the set $X$ is the convex hull, it is divided to the union of simplexes with vertexes being ones of $X$.

Next, let us take a trigonometric function $e((\bm{x},\bm{a}))$ for $\bm{a}\in\mathbb{R}^n$ as $F(\bm{x})$ in the above,
where
$$
e(x):=\exp(2\pi ix),
$$
then we see
\begin{align*}
&e((\bm{x}_0+\sum_{j=t+1}^nx_j\bm{m}'_j,\bm{a}))
\\
=\,&e((\bm{x}_0,\bm{a}))e(\sum_{j=t+1}^nx_j(\bm{m}'_j,\bm{a}))
\\
=\,&e((\bm{x}_0,\bm{a}))e((\tilde{x},\tilde{a})),
\end{align*}
where $\tilde{x}:=(x_{t+1},\dots,x_n),\tilde{a}=((\bm{m}'_{t+1},\bm{a}),\dots,(\bm{m}'_n,\bm{a}))$.
Thus we see
\begin{align} \nonumber
&\int_{\mathfrak{D}} e((\bm{x},\bm{a}))d\bm{x}
\\ \label{eq81}
=\,&\sqrt{\det(M_1)}\,\,e((\bm{x}_0,\bm{a}))\int_{\tilde{x}\in X} e((\tilde{x},\tilde{a}))dx_{t+1}\dots dx_n.
\end{align}
The integral is calculated in the next subsection.

\subsection{Calculation of the integral}
We want to evaluate the integral \eqref{eq81}.
So, in the following, it may be supposed that $m=n-t$ and 
\begin{align*}
(\#)\left\{
\begin{array}{ll}
\text{    
$\mathcal{D}$ is $X$, which is not a simplex in general,}
\\
\text{ and the vector $\bm{a}$ is $\tilde{a}=((\bm{m}'_{t+1},\bm{a}),\dots,(\bm{m}'_n,\bm{a}))$.
}
\end{array}
\right.
\end{align*}

Let 
\begin{align*}
\mathcal{D}:&=\left\{\sum_{i=1}^m x_i(\bm{v}_i-\bm{v}_{m+1}) + \bm{v}_{m+1}\mid {}^\forall x_i\ge0,
\sum_{i=1}^m x_i\le1\right\}
\\
&=\left\{
\sum_{i=1}^{m+1}y_i\bm{v}_i\mid {}^\forall y_i\ge0,\sum_{i=1}^{m+1} y_i =1
\right\}
\end{align*}
be an $m$-dimensional  simplex,
where  $m+1$ vectors $\bm{v}_1,\dots,\bm{v}_{m+1}$ are in $\mathbb{R}^m$ and $m$ vectors 
$\bm{v}_1-\bm{v}_{m+1},\dots,\bm{v}_m-\bm{v}_{m+1}$ are linearly independent.
Then, writing  $V:=\left(\begin{array}{c}\bm{v}_1-\bm{v}_{m+1}\\\vdots\\\bm{v}_m-\bm{v}_{m+1}\end{array}\right)$,
 we have
\begin{equation}\label{eq82}
\int_{(y_1,\dots,y_m)\in\mathcal{D}} 1\,dy_1\dots dy_m
=\frac{|\det(V)|}{m!},    
\end{equation}
because
putting $(y_1,\dots,y_m)=(x_1,\dots,x_m)V+\bm{v}_{m+1}$, we see that the integral is
equal to
\begin{align*}
&|\det(V)|\int_{x_i\ge0,\sum x_i\le1}dx_1\dots dx_m
\\
\intertext{
transforming from $x_i$ to $t_i:=x_1+\dots+x_t$
\,$(0\le t_1\le\dots\le t_m\le1)$
}
=&|\det(V)|vol(\Delta)
\\
=&\frac{|\det(V)|}{m!} 
\sum_{\sigma\in S_m}vol(\sigma(\Delta))
\\
=&\frac{|\det(V)|}{m!}
\end{align*}
and  writing $\bm{a}:=(a_1,\dots,a_m)$ 
\begin{align}\nonumber
&\int_{\mathcal{D}} e(\sum_{i=1}^m a_iy_i)dy_1\dots dy_m
\\ \label{eq83}
=\,&e((\bm{a},\bm{v}_{m+1}))|\det(V)|\int_{x_i\ge0\atop \sum x_i\le1}e((x_1,\dots,x_m)V\,{}^t\!\bm{a})dx_1\dots dx_m.
\end{align}
To calculate \eqref{eq83}, we have only to put $\alpha_i:=(\bm{v}_i,\bm{a})$ in the following lemma.
It is clear that
$$
\det(V)=\det
\left(
\begin{array}{ll}
\bm{v}_1&1
\\
\vdots&\vdots
\\
\bm{v}_m&1
\\
\bm{v}_{m+1}&1
\end{array}
\right).
$$


\begin{lem}\label{lem6.2}
Let $m$ be a positive integer.
For mutually distinct real numbers $\alpha_1,\dots,\alpha_{m+1}$, write $A_i:=\alpha_i-\alpha_{m+1}$ $(i=1,\dots,m)$, we have
\begin{align*}
&\int_{x_i\ge0,\sum x_i\le1} e(\sum_{i=1}^m A_i x_i)dx_1\dots dx_m
\\
=\,&\frac{1}{(2\pi i)^m}\left\{\sum_{i=1}^{m}
\frac{e(A_i)}{A_i\prod_{j=1\atop j\ne i}^{m}(A_i-A_j)}+\frac{1}{\prod_{j=1}^m(-A_j)}\right\},
\end{align*}
which is equal to
\begin{align*}
&\frac{1}{(2\pi i)^m}\sum_{i=1}^{m+1}
\frac{e(\alpha_i-\alpha_{m+1})}{\prod_{j=1\atop j\ne i}^{m+1}(\alpha_i-\alpha_j)}
\\
=&
\frac{(-1)^{m}e(-\alpha_{m+1})}{(2\pi i)^m}
\frac{
\left|
\begin{array}{cccc}
e(\alpha_1)&e(\alpha_2)&\dots&e(\alpha_{m+1})   \\
1         &    1       &\dots& 1                      \\
\alpha_1   &    \alpha_2  & \dots &\alpha_{m+1}     \\
\vdots&\vdots&\vdots&\vdots   \\
\alpha_1^{m-1}   &    \alpha_2^{m-1}  & \dots &\alpha_{m+1}^{m-1}  
\end{array}
\right|}
{
\left|
\begin{array}{cccc}
1         &    1       &\dots& 1                      \\
\alpha_1   &    \alpha_2  & \dots &\alpha_{m+1}     \\
\vdots&\vdots&\vdots&\vdots   \\
\alpha_1^{m}   &    \alpha_2^{m}  & \dots &\alpha_{m+1}^{m}  
\end{array}
\right|}
\end{align*}
and  the integral vanishes if all $A_i$ are integers.
Moreover, we have
\begin{equation*}
\sum_{i=1}^{m}
\frac{1}{A_i\prod_{j=1\atop j\ne i}^{m}(A_i-A_j)}+\frac{1}{\prod_{j=1}^m(-A_j)}=0,
\end{equation*}
i.e.
\begin{equation*}
\sum_{i=1}^{m+1}\frac{1}{\prod_{j=1\atop j\ne i}^{m+1}(\alpha_i-\alpha_j)}=0,
\end{equation*}
and for a polynomial $g(x) =c_{m-1}x^{m-1}+c_{m-2}x^{m-2}+\dots$
\begin{equation}\label{eq84}
\sum_{i=1}^{m}\frac{g(A_i)}{\prod_{j=1\atop j\ne i}^{m}(A_i-A_j)}=c_{m-1}.
\end{equation}
\end{lem}
\proof
{
Denote the integral by $S(A_1,\dots,A_m)$.
We use  the induction on $m$. 
It is easy to see that $S(A_1) = \frac{e(A_1)-1}{2\pi iA_1}$
and  $S(A_1,\dots,A_m)$ is equal to
\begin{align*} 
&\int\limits_{x_1,\dots,x_{m-1}\ge0,\atop\sum_{i=1}^{m-1}x_i\le1}
\left(\int\limits_{0\le x_m\le1-x_1-\dots-x_{m-1}}e(\sum A_ix_i)dx_m\right)
dx_1\dots dx_{m-1}
\\
=\,
&\int\limits_{x_1,\dots,x_{m-1}\ge0,\atop\sum_{i=1}^{m-1}x_i\le1}
\frac{e(\sum_{i=1}^{m-1}A_ix_i)}{2\pi i A_m}
\left(
e(A_m(1-\sum_{i=1}^{m-1}x_i))-1
\right)dx_1\dots dx_{m-1}
\\
=\,&
\frac{e(A_m)}{2\pi iA_m}S(A_1-A_m,\dots,A_{m-1}-A_m)-
\frac{S(A_1,\dots,A_{m-1})}{2\pi iA_m}.
\end{align*}
The assumption of the induction completes easily the proof of the first equality.
W.r.t. the transform to $\alpha_i$, we have only to use the Vandermonde determinant:
$$
\det\left(
\begin{array}{cccc}
1         &    1       &\dots& 1                      \\
\alpha_1   &    \alpha_2  & \dots &\alpha_{m}     \\
\vdots&\vdots&\vdots&\vdots   \\
\alpha_1^{m-1}   &    \alpha_2^{m-1}  & \dots &\alpha_{m}^{m-1}  
\end{array}
\right)
=(-1)^{m(m-1)/2}\prod_{i<j}(\alpha_i-\alpha_j).
$$
For the second identities, replace $A_i$ by $\epsilon A_i$;
then we have 
\begin{align*}
&\int_{x_i\ge0,\sum x_i\le1} e(\sum_{i=1}^m \epsilon A_i x_i)dx_1\dots dx_m
\\
=\,&\frac{1}{(2\pi i\epsilon )^m}\left\{\sum_{i=1}^{m}
\frac{e(\epsilon A_i)}{A_i\prod_{j=1\atop j\ne i}^{m}(A_i-A_j)}+\frac{1}{\prod_{j=1}^m(-A_j)}\right\}
\\
=\,&\frac{1}{(2\pi i\epsilon )^m}\left\{
\sum_{i=1}^{m}
\frac{1}{A_i\prod_{j=1\atop j\ne i}^{m}(A_i-A_j)}+\frac{1}{\prod_{j=1}^m(-A_j)}\right.
\\
&+\left.\sum_{k=1}^\infty\frac{(2\pi i\epsilon)^k}{k!}
\sum_{i=1}^{m}
\frac{ A_i^{k-1}}{\prod_{j=1\atop j\ne i}^{m}(A_i-A_j)}
\right\}
\\
&
\to\int_{x_i\ge0,\sum x_i\le1} 1dx_1\dots dx_m
=\frac{1}{m!}\quad(\epsilon\to0).
\end{align*}
Therefore the coefficient of $\epsilon^{k-m}$ vanishes if $k-m<0$ and is $\frac{1}{m!}$ if $k-m=0$.

\qed
}

\begin{cor}
Let $\mathcal{D}=\{\sum_{i=1}^py_i\bm{v}_i\mid {}^\forall y_i\ge0,\sum_{i=1}^py_i=1\}$ be the convex hull of vectors $\bm{v}_i\in\mathbb{R}^m$. Then for $\bm{a}\in\mathbb{R}^m$, we have 
$$
\int_{\bm{y}\in \mathcal{D}} e((\bm{y},\bm{a}))dy_1\dots dy_m=0
$$
if all $(\bm{v}_i,\bm{a})$ are mutually distinct and differences
$(\bm{v}_i,\bm{a})-(\bm{v}_j,\bm{a})$ are integers.
\end{cor}
\proof{
We have only to divide the convex hull $\mathcal{D}$ to the union of simplexes with vertexes being $\bm{v}_i$.
\qed
}

Let us apply the corollary to the case $(\#)$; then the set of $\bm{v}_i$ is the set of the vertex $\bm{x}_i$ of $X$, and $\bm{a}$ is 
$\tilde{\bm{a}}=((\bm{m}'_{t+1},\bm{a}),\dots,(\bm{m}'_n,\bm{a}))$.
Hence the sufficient condition to the vanishment of the integral \eqref{eq81} is equivalent
to
\begin{quote}
all $(\bm{x}_i,\tilde{\bm{a}})$ are mutually distinct  and  
all $(\bm{x}_i -\bm{x}_j,\tilde{\bm{a}})$ are integers.
\end{quote}
Do functions $e((\bm{x},\bm{a}))$ for vectors $\bm{a}$ satisfying the above conditions
approximate the continuous functions on $\mathfrak{D}(f,\sigma,\{k_i\})$ ?

We note that for $\bm{u}=\psi(\bm{x})$ $(\bm{x}=(x_{t+1},\dots,x_n)\in X)$ and $\bm{a}=\sum_{i=1}^n c_i\sigma(\bm{m}_i)
\linebreak[3]\in\mathbb{R}^n$ $(c_i=(\bm{a},\bm{m}'_i))$
\begin{align*}
(\bm{u},\bm{a})&=(\bm{x}_0+\sum_{j=t+1}^n x_j\bm{m}'_j, \sum_{i=1}^n c_i\sigma(\bm{m}_i))
\\
&=(\bm{x}_0,\bm{a})
+\sum_{j=t+1}^nx_jc_j
\\
&=(\bm{x}_0,\bm{a})+(\bm{x},\tilde{\bm{a}}),
\end{align*}
where $\tilde{\bm{a}}=((\bm{m}'_{t+1},\bm{a}),\dots,(\bm{m}'_n,\bm{a}))$ as before.

\vspace{3mm}

\noindent
{\bf Example 1}
Suppose that the polynomial $f(x)$ has no non-trivial linear relations among roots, i.e. $t=1$ and moreover $k=1$.
Then we  can take $M$ as
\begin{gather*}
\sigma(M)=
\begin{pmatrix}
1&1&1&\dots&1
\\
0&1&0&\dots&0
\\
\vdots&\vdots&\ddots&\vdots&\vdots
\\
\vdots&\vdots&\ddots&1&0
\\0&\dots&\dots&0&1
\end{pmatrix},
\\
\begin{pmatrix}
\bm{m}'_1\\ \vdots \\ \bm{m}'_n    
\end{pmatrix}    
={}^t\sigma(M)^{-1}=
\begin{pmatrix}
1&0&0&\dots&0
\\
-1&1&0&\dots&0
\\
\vdots&\vdots&\ddots&\vdots&\vdots
\\
\vdots&\vdots&\ddots&1&0
\\-1&\dots&\dots&0&1
\end{pmatrix}.
\end{gather*}
Hence we see
\begin{align*}
\tilde{M}=\sigma(M)\,{}^t\sigma(M)=
\left(
\begin{array}{cc}
    M_1^{(1)} & M_2^{(1,n-1)}
    \\
    {}^tM_2&M_4
    \end{array}
\right)
\end{align*}
with
$$
M_1=n,M_2=(1,\dots,1),M_4=1_{n-1},
$$
therefore $\bm{x}_0(:=(k_1,\dots,k_t)(M_1^{-1},0^{(t,n-t)})\sigma(M)) =(\frac{1}{n},\dots,\frac{1}{n})$.
Then vertexes of $\mathfrak{D}$ are 
$$
\bm{u}_q:=(\underbrace{0,\dots,0}_{n-q},\underbrace{\frac{1}{q},\dots,\frac{1}{q}}_{q})\quad(1\le q\le n),
$$
and through the equation 
$$
\bm{u}=\bm{x}_0+\sum_{i=2}^n x_{i}\bm{m}'_i
$$
by writing
$$
\bm{u}_q=\bm{x}_0+\sum_{i=2}^n x_{q,i}\bm{m}'_i
=\bm{x}_0+(-\sum_{i=2}^n x_{q,i},x_{q,2},\dots,x_{q,n}),
$$
the vertex $\bm{x}_q$ of $X$ corresponding to $\bm{u}_q$
is $(x_{q,2},\dots,x_{q,n})$.
Hence, we see
\begin{align*}
\bm{u}_q-\bm{x}_0&=
(\underbrace{0,\dots,0}_{n-q},\underbrace{\frac{1}{q},\dots,\frac{1}{q}}_{q})
-
(\frac{1}{n},\dots,\frac{1}{n})
\\
&=
(-\sum_{i=2}^n x_{q,i},x_{q,2},\dots,x_{q,n})
\end{align*}
and
\begin{align*}
V^{(n-1,n-1)}:&=\begin{pmatrix}
\bm{x}_{n-1}-\bm{x}_n\\\vdots\\\bm{x}_1-\bm{x}_n    
\end{pmatrix}
\\
&
=
\begin{pmatrix}
    \frac{1}{n-1} &\dots&\dots&\frac{1}{n-1}
   \\
    0&\frac{1}{n-2}\dots&\dots&\frac{1}{n-2}
    \\
    \vdots&\vdots&\ddots&\vdots
    \\
    0&\dots&0&1
\end{pmatrix}
-
\begin{pmatrix}
    \frac{1}{n}&\dots&\frac{1}{n}
    \\
    \vdots&\vdots&\vdots
    \\
    \frac{1}{n}&\dots&\frac{1}{n}    
\end{pmatrix},
\end{align*}

whose inverse matrix $V^{-1}$ is
$$
\begin{pmatrix}
 2(n-1) &  -(n-2)&    0& \dots &\dots&\dots& 0
 \\
 n-1    &  n-2 &-(n-3)&0&\dots&\dots& 0
  \\
 \vdots&  0    &n-3 & -(n-4)&0&\dots &0
 \\
  \vdots& \vdots& \vdots& \vdots& \vdots& \vdots&\vdots
 \\
\vdots &0&\dots&\dots& 0&2&-1
 \\
 n-1&0&\dots&\dots&\dots&0&1
\end{pmatrix}.
$$
Thus a vector $\tilde{\bm{a}}$ satisfying $(\bm{x}_i-\bm{x}_j,\tilde{\bm{a}})\in\mathbb{Z}$ is a linear combination of column vectors of $V^{-1}$
over $\mathbb{Z}$.
Hence, writing 
$\bm{w}_{n-1}:=(n-1)\cdot(2,1,\dots,1)$, $\bm{w}_{n-2}:=(n-2)\cdot(-1,1,0,\dots,0)$,
$\bm{w}_{n-3}:=(n-3)\cdot(0,-1,1,0,\dots,0)$, $\dots$, $\bm{w}_2:=2\cdot(0,\dots,0,-1,1,0)$, $\bm{w}_1:=1\cdot(0,\dots,0,-1,1)$ over $\mathbb{Z}$,
we have the equivalence
$$
(\bm{x}_i-\bm{x}_j,\tilde{\bm{a}})\in\mathbb{Z}\text{ for }({}^\forall i,j)
\Leftrightarrow
\tilde{\bm{a}}=\sum_{i=1}^{n-1}c_i\bm{w}_i \text{ for }c_i\in\mathbb{Z}.
$$

\noindent
{\bfseries{Example\,2}} Suppose that the polynomial $f(x)$ is of degree $4$ and has a non-trivial linear relations among roots; Then  we may assume that their basis are $\alpha_1+\alpha_4,\alpha_2+\alpha_3\in\mathbb{Z}$ by Proposition \ref{prop3.3},
i.e.  $\bm{m}_1=(1,0,0,1),\bm{m}_2=(0,1,1,0)$.
Hence we may take $\bm{m}_3:=(0,0,1,0),\bm{m}_4:=(0,0,0,1)$; then we have
$$
M^{-1}=\begin{pmatrix}
1&0&0&-1\\
0&1&-1&0\\
0&0&1&0\\
0&0&0&1
\end{pmatrix}
=({}^t\bm{m}'_1,\dots,{}^t\bm{m}'_4).
$$
Thus, we see that
\begin{align*}
\mathcal{S}&=\{ \bm{x}\in\mathbb{R}^4\mid x_1+x_4=k_1,x_2+x_3=k_2\} ,
\\
\bm{x}_0&=k_1\bm{m}'_1+k_2\bm{m}'_2=(\frac{k_1}{2},\frac{k_2}{2},\frac{k_2}{2},\frac{k_1}{2}),
\\
\mathcal{S}_0&=\mathbb{R}[\bm{m}'_3,\bm{m}'_4]
=\{\bm{x}\in\mathbb{R}^4\mid x_1+x_4=x_2+x_3=0\}.
\end{align*}
The dimension of $\mathfrak{D}=2$ is $2$ if and only if $k_1 = k_2 = 1,$ and then the vertexes are $\bm{u}_1:=(0,0,1,1),\bm{u}_2:=(0,\frac{1}{2},\frac{1}{2},1),\bm{u}_3:=(\frac{1}{2},\frac{1}{2},\frac{1}{2},\frac{1}{2})$.
With respect to the mapping $\psi$,
we see $\psi((x_{3},x_4))=(\frac{1}{2},\frac{1}{2},\frac{1}{2},\frac{1}{2})+(-x_4,-x_3,x_3,x_4)$,
hence
$\bm{u}_1=\psi((\frac{1}{2},\frac{1}{2})),
\bm{u}_2=\psi((0,\frac{1}{2})),\bm{u}_3=\psi((0,0))$.
Thus the vertexes of $X$ are 
$$
\bm{x}_1=(\frac{1}{2},\frac{1}{2}),\,
\bm{x}_2=(0,\frac{1}{2}),\,\bm{x}_3=(0,0).
$$
It is clear that 
$$
(\bm{x}_i - \bm{x}_j,\tilde{\bm{a}})\in\mathbb{Z} \text{  for }
{}^\forall i,j\Leftrightarrow 
\tilde{\bm{a}}\in 2\mathbb{Z}.
$$
For $\bm{a}=(a_1,\dots,a_4)$, we see that
$\tilde{\bm{a}}=((\bm{m}'_{3},\bm{a}),(\bm{m}'_4,\bm{a}))=(-a_2+a_3,-a_1+a_4) =(\tilde{a}_1,\tilde{a}_2)$, say, 
$(\bm{x}_1,\tilde{\bm{a}})=\frac{1}{2}(\tilde{a}_1+\tilde{a}_2),
(\bm{x}_2,\tilde{\bm{a}})=\frac{1}{2}\tilde{a}_2,
(\bm{x}_3,\tilde{\bm{a}})=0$, and they are mutually distinct if and only if 
$$
\tilde{a}_1\ne 0,\,\tilde{a}_2\ne 0\text{ and }
\tilde{a}_1+\tilde{a}_2\ne 0.
$$
Is any continuous function on $X$ approximated by $e((\bm{x},\tilde{\bm{a}}))$ satisfying
the above two conditions?
\begin{prop}\label{prop6.4}
Let $\alpha_1,\dots\alpha_{m+1}\in\mathbb{R}$ and let $a_1,\dots,a_u$ be their distinct numbers,
i.e. $\{\alpha_i\mid1\le i\le{m+1}\}=\{ a_i\mid 1\le i \le u \}$
and $C_J:=\{j\mid \alpha_j=a_J\,(1\le j\le m+1)\}$.
Then we have
\begin{align*}
&\int_{x_i\ge0,\sum x_i\le1} e(\sum_{i=1}^m(\alpha_i-\alpha_{m+1})x_i)dx_1\dots dx_m
\\
=&\frac{1}{(2\pi i )^m}\sum_{J=1}^u \frac{e(a_J-\alpha_{m+1})}{\prod\limits_{ l\notin C_J}
(a_J-\alpha_l)}
\Bigl\{
\sum_{n_0,n_d\ge0\,(d\not\in C_J),\atop n_0+\sum\limits_{d\not\in C_J}n_d=\#C_J-1}
\frac
{
(2\pi i)^{n_0}
}
{
\prod_{d\not\in C_J}{(\alpha_d-a_J)^{n_d}}
}
\Bigr\},
\end{align*} 
where the indexes $l,d$ run over $\{1,\dots,m+1\}\setminus C_J$.
\end{prop}
\proof
{
Let $\delta_i$ be mutually distinct numbers and put $\beta_i := \alpha_i+\epsilon\delta_i$;
then we see that $\beta_i-\beta_j=\alpha_i-\alpha_j+(\delta_i-\delta_j)\epsilon$ is not $0$ 
if $i\ne j$ and the non-zero number $\epsilon$ is sufficiently close to $0$.
By Lemma \ref{lem6.2}, we have
\begin{align*}
&\int_{x_i\ge0,\sum x_i\le1} e(\sum_{i=1}^m(\alpha_i-\alpha_{m+1}))dx_1\dots dx_m
\\=&\lim_{\epsilon\to0}\int_{x_i\ge0,\sum x_i\le1} e(\sum_{i=1}^m(\beta_i-\beta_{m+1}))dx_1\dots dx_m
\\
&=\frac{(-1)^m}{(2\pi i)^m}\lim_{\epsilon\to0}\frac{N(\epsilon)}{D(\epsilon)},
\end{align*} 
where
$$
N(\epsilon)=\left|
\begin{array}{cccc}
e(\beta_1-\beta_{m+1})&e(\beta_2-\beta_{m+1})&\dots&e(\beta_{m+1}-\beta_{m+1})   \\
1         &    1       &\dots& 1                      \\
\beta_1   &    \beta_2  & \dots &\beta_{m+1}     \\
\vdots&\vdots&\vdots&\vdots   \\
\beta_1^{m-1}   &    \beta_2^{m-1}  & \dots &\beta_{m+1}^{m-1}  
\end{array}
\right|
$$
and
$$
D(\epsilon)=
\left|
\begin{array}{cccc}
1         &    1       &\dots& 1                      \\
\beta_1   &    \beta_2  & \dots &\beta_{m+1}     \\
\vdots&\vdots&\vdots&\vdots   \\
\beta_1^{m}   &    \beta_2^{m}  & \dots &\beta_{m+1}^{m}  
\end{array}
\right|.
$$
Hereafter the notations $\dot{\sum}_{k,l},\dot{\prod}_{k,l}$ mean the sum, the product on $k,l$ under the condition $1\le k < l \le m+1$. 
First, we see  that
\begin{align*}
D(\epsilon)&=(-1)^{(m+1)m/2}\underset{k,l}{\dot{\prod}}(\beta_k-\beta_l)
\\
&=(-1)^{(m+1)m/2}\underset{k,l,\alpha_k\ne\alpha_l}{\dot{\prod}}\{(\alpha_k-\alpha_l)
(1+\frac{\delta_k-\delta_l}{\alpha_k-\alpha_l}\epsilon)\}
\underset{k,l,\alpha_k=\alpha_l}{\dot{\prod}}\{(\delta_k-\delta_l)\epsilon\}
\\
&=\Bigl\{
(-1)^{(m+1)m/2}\underset{k,l,\alpha_k\ne\alpha_l}{\dot{\prod}}(\alpha_k-\alpha_l)\cdot
\underset{k,l,\alpha_k=\alpha_l}{\dot{\prod}}(\delta_k-\delta_l)
\Bigr\}
\epsilon^\kappa+O(\epsilon^{\kappa+1}),
\end{align*}
where $\kappa:=\#\{(k,l)\mid1\le k<l\le m+1,\alpha_k=\alpha_l\}$ and 
$O(\epsilon^{\kappa+1})$ means a power series whose coefficient of $\epsilon^{q}$ is 
$0$ if $q<\kappa+1$.

Next, we see that
\begin{align*}
N(\epsilon)&=\sum_{j=1}^{m+1}(-1)^{j-1}e(\beta_j-\beta_{m+1})\cdot(-1)^{m(m-1)/2}
\underset{k,l\ne j}{\dot{\prod}}(\beta_k-\beta_l)
\\
=&
(-1)^{m(m-1)/2}\sum_{j=1}^{m+1}(-1)^{j-1}e(\alpha_j-\alpha_{m+1})\times
\\
&\underset{k,l\ne j,\alpha_k\ne\alpha_l}{\dot{\prod}}(\alpha_k-\alpha_l)\cdot
\underset{k,l\ne j,\alpha_k=\alpha_l}{\dot{\prod}}(\delta_k-\delta_l)
\times
\\
&
e((\delta_j-\delta_{m+1})\epsilon)
\cdot
\underset{k,l\ne j,\alpha_k\ne\alpha_l}{\dot{\prod}}(1+\frac{\delta_k-\delta_l}{\alpha_k-\alpha_l}\epsilon)
\cdot
\epsilon^{\kappa_j}
\end{align*}
where $\kappa_j:=\#\{(k,l)\mid1\le k<l\le m+1,k,l\ne j,\alpha_k=\alpha_l\}$.
Here we note that $\kappa_j$ depends only on $C_J$ containing $j$  by the equation 
$\kappa-\kappa_j=\#\{(k,l)\mid 1\le k <l\le m+1,\alpha_k=\alpha_l,k\text{ or }l=j\}
=\#C_J-1$.
By
\begin{align*}
&(-1)^{j-1}\underset{k,l\ne j,\alpha_k\ne\alpha_l}{\dot{\prod}}(\alpha_k-\alpha_l)\cdot
\underset{k,l\ne j,\alpha_k=\alpha_l}{\dot{\prod}}(\delta_k-\delta_l)
\\
=&\frac{\underset{k,l,\alpha_k\ne\alpha_l}{\dot{\prod}}(\alpha_k-\alpha_l)\cdot
\underset{k,l,\alpha_k=\alpha_l}{\dot{\prod}}(\delta_k-\delta_l)
}
{
\underset{l,\alpha_l\ne\alpha_j}{{\prod}}(\alpha_j-\alpha_l)\cdot
\underset{l\ne j,\alpha_l=\alpha_j}{{\prod}}(\delta_j-\delta_l)
},
\end{align*}
we have
\begin{align*}
&N(\epsilon)
\\
=& (-1)^{m(m-1)/2}\underset{k,l,\alpha_k\ne\alpha_l}{\dot{\prod}}(\alpha_k-\alpha_l)\cdot
\underset{k,l,\alpha_k=\alpha_l}{\dot{\prod}}(\delta_k-\delta_l)\times
\\
&
\sum_{J=1}^u e(a_J-\alpha_{m+1}) \sum_{j\in C_J}
\frac{1}
{
\underset{l,\alpha_l\ne\alpha_j}{{\prod}}(\alpha_j-\alpha_l)\cdot
\underset{l\ne j,\alpha_l=\alpha_j}{{\prod}}(\delta_j-\delta_l)
}\times
\\
&
e((\delta_j-\delta_{m+1})\epsilon)
\underset{k,l\ne j,\alpha_k\ne\alpha_l}{\dot{\prod}}(1+\frac{\delta_k-\delta_l}{\alpha_k-\alpha_l}\epsilon)
\cdot
\epsilon^{\kappa_j}
\\
=&(-1)^{m(m-1)/2}\underset{k,l,\alpha_k\ne\alpha_l}{\dot{\prod}}(\alpha_k-\alpha_l)\cdot
\underset{k,l,\alpha_k=\alpha_l}{\dot{\prod}}(\delta_k-\delta_l)\times
\\
&\sum_{J=1}^u \frac{e(a_J-\alpha_{m+1})}{\underset{l\not\in C_J}{{\prod}}(a_J-\alpha_l)}
\sum_{j\in C_J}
\frac{
e((\delta_j-\delta_{m+1})\epsilon)
}
{
\underset{l\in C_J,l\ne j}{{\prod}}(\delta_j-\delta_l)
}
\underset{k,l\ne j,\alpha_k\ne\alpha_l}{\dot{\prod}}(1+\frac{\delta_k-\delta_l}{\alpha_k-\alpha_l}\epsilon)
\cdot
\epsilon^{\kappa_j}.
\end{align*}
Here,  by the equality
\begin{align*}
&\{(k,l) \mid k,l\ne j,\alpha_k\ne\alpha_l\}
\\
=&\{(k,l) \mid k,l\not\in C_J,\alpha_k\ne\alpha_l\}
\\
&\cup\{(k,l) \mid k\not\in C_J,l\in C_J,l\ne j\}
\\
&\cup\{(k,l) \mid k\in C_J,k\ne j,l\not\in C_J\},
\end{align*}
the product $\underset{k,l\ne j,\alpha_k\ne\alpha_l}{\dot{\prod}}(1+\frac{\delta_k-\delta_l}{\alpha_k-\alpha_l}\epsilon)$ is equal to
\begin{align*}
&
\underset{k,l\not\in C_J,\alpha_k\ne\alpha_l}{\dot{\prod}}(1+\frac{\delta_k-\delta_l}{\alpha_k-\alpha_l}\epsilon)
\underset{k\not\in C_J,l\in C_J,l\ne j}{\dot{\prod}}(1+\frac{\delta_k-\delta_l}{\alpha_k-\alpha_l}\epsilon)
\times
\\
&\underset{k\in C_J,k\ne j,l\not\in C_J}{\dot{\prod}}(1+\frac{\delta_k-\delta_l}{\alpha_k-\alpha_l}\epsilon)
\\
=&
\underset{k,l\not\in C_J,\alpha_k\ne\alpha_l}{\dot{\prod}}(1+\frac{\delta_k-\delta_l}{\alpha_k-\alpha_l}\epsilon)
\underset{1\le c,d\le m+1, \atop c\in C_J,c\ne j,d\not\in C_J}{{\prod}}(1+\frac{\delta_c-\delta_d}{\alpha_c-\alpha_d}\epsilon)
\\
=&
\underset{k,l\not\in C_J,\alpha_k\ne\alpha_l}{\dot{\prod}}(1+\frac{\delta_k-\delta_l}{\alpha_k-\alpha_l}\epsilon)
\frac{
\underset{1\le c,d\le m+1, \atop c\in C_J,d\not\in C_J}{{\prod}}(1+\frac{\delta_c-\delta_d}{\alpha_c-\alpha_d}\epsilon)
}
{
\underset{d\not\in C_J}{{\prod}}(1+\frac{\delta_j-\delta_d}{\alpha_j-\alpha_d}\epsilon)
}
\end{align*}
hence
\begin{align*}
N(\epsilon)=& (-1)^{m(m-1)/2}\underset{k,l,\alpha_k\ne\alpha_l}{\dot{\prod}}(\alpha_k-\alpha_l)\cdot
\underset{k,l,\alpha_k=\alpha_l}{\dot{\prod}}(\delta_k-\delta_l)\times
\\
&
\sum_{J=1}^u 
\frac{e(a_J-\alpha_{m+1})}{\underset{l\not\in C_J}{{\prod}}(a_J-\alpha_l)}
\underset{k,l\not\in C_J,\alpha_k\ne\alpha_l}{\dot{\prod}}(1+\frac{\delta_k-\delta_l}{\alpha_k-\alpha_l}\epsilon)
\underset{1\le c,d\le m+1, \atop c\in C_J,d\not\in C_J}{{\prod}}(1+\frac{\delta_c-\delta_d}{\alpha_c-\alpha_d}\epsilon)
\times
\\
&\epsilon^{\kappa-\#C_J+1}
\sum_{j\in C_J}
\Bigl\{
\frac{
e((\delta_j-\delta_{m+1})\epsilon)
}
{
\underset{l\in C_J,l\ne j}{{\prod}}(\delta_j-\delta_l)
}
\frac{1}
{
\underset{1\le d\le m+1,\atop d\not\in C_J}{{\prod}}(1+\frac{\delta_j-\delta_d}{a_J-\alpha_d}\epsilon)
}
\Bigr\}.
\end{align*}
The  partial sum on $j\in C_J$  is
\begin{align*}
&\sum_{j\in C_J}
\frac{
e((\delta_j-\delta_{m+1})\epsilon)
}
{
\underset{l\in C_J,l\ne j}{{\prod}}(\delta_j-\delta_l)
}
\frac{1}
{
\underset{1\le d\le m+1,\atop d\not\in C_J}{{\prod}}(1+\frac{\delta_j-\delta_d}{a_J-\alpha_d}\epsilon)
}
\\
=&
\sum_{j\in C_J}
\frac{
e((\delta_j-\delta_{m+1})\epsilon)
}
{
\underset{l\in C_J,l\ne j}{{\prod}}(\delta_j-\delta_l)
}
\underset{1\le d\le m+1,\atop d\not\in C_J}{\prod} 
\Bigl\{
\sum_{q\ge0}(-\frac{\delta_j-\delta_d}{a_J-\alpha_d}\epsilon)^q
\Bigr\}
\\
=&
\underset{ n_0,n_d\ge0 \atop(1\le d\le m+1,d\not\in C_J)}{\sum}
\Bigl\{
\sum_{j\in C_J}
\frac{ (2\pi i(\delta_j-\delta_{m+1}))^{n_0}\underset{d\not\in C_J}{\prod}
\Bigl (\frac{\delta_j-\delta_d}{\alpha_d-a_J}\Bigr)^{n_d}}{ \underset{l\in C_J,l\ne j}{{\prod}}(\delta_j-\delta_l) }
\Bigr\}
 \epsilon^{n_0+\sum_{d\not\in C_J}n_d},
\end{align*}
applying \eqref{eq84} to the sum on $j\in C_J$ with $\delta_j-\delta_{m+1}$ for $A_i$ there,
which is equal to 
\begin{align*}
&\underset{n_0,n_d\ge0, d\not\in C_J ,\atop n_0+\sum_{d\not\in C_J}n_d =\#C_J-1}{\sum}
\frac{
 (2\pi i)^{n_0}
 }
 {
\underset{d\not\in C_J}{\prod}
{(\alpha_d-a_J)^{n_d}}
}\epsilon^{\#C_J-1} +O(\epsilon^{\#C_J}).
\end{align*}
Thus we have
\begin{align*}
N(\epsilon)=& (-1)^{m(m-1)/2}\underset{k,l,\alpha_k\ne\alpha_l}{\dot{\prod}}(\alpha_k-\alpha_l)\cdot
\underset{k,l,\alpha_k=\alpha_l}{\dot{\prod}}(\delta_k-\delta_l)\times
\\
&
\sum_{J=1}^u 
\frac{e(a_J-\alpha_{m+1})}{\underset{l\not\in C_J}{{\prod}}(a_J-\alpha_l)}
\Bigl\{
\underset{n_0,n_d\ge0, d\not\in C_J ,\atop n_0+\sum_{d\not\in C_J}n_d =\#C_J-1}{\sum}
\frac{
 (2\pi i)^{n_0}
 }
 {
\underset{d\not\in C_J}{\prod}
{(\alpha_d-a_J)^{n_d}}
}
\Bigr\}\epsilon^{\kappa}+
\\
&O(\epsilon^{\kappa+1}),
\end{align*}
which  completes the proof.
\qed}

\noindent
{\bf Example } In case of $m=2,\alpha_1\ne0,\alpha_2=\alpha_3=0$, the integral is
$\frac{1}{(2\pi i)^2}\{\frac{e(\alpha_1)-1}{\alpha_1^2} - \frac{2\pi i}{\alpha_1}\}$.

\section{Numerical data}\label{sec7}
In this section, the integer $X_m=X_m(f)$ denotes the  smallest prime number $p\in Spl(f)$ with $p>10^m$.
\subsection{Case without non-trivial linear relation among roots}
Let $f(x)$ be a  polynomial of degree $n$ without non-trivial linear relation among roots.
First  we check \eqref{eq14}, i.e.  \eqref{eq11} for a special domain $D_i=\{(x_1,\dots,x_n)\in[0,1)^n\mid x_i\le a\}$
 at $a=k/(10n)$.
 ``diff'' in the table means the maximum of the absolute value of the difference between $1$ 
and  the left-hand side divided by the right-hand side in \eqref{eq14} at
$a=k/(10n)$ $(k=1,\dots,10n)$, $i=1,\dots,n$ and $X=X_m$ for $m=5,\dots,11$.
``Diff'' means the  maximum of the absolute value of the difference
$$
\frac{\#Spl_{X_m}(f,\sigma,k,L,\{R_i\})}{\#Spl_{X_m}(f,\sigma,k)}\times\#\frak{R}(f,\sigma,k,L) -1
$$
for $\{R_i\}\in\frak{R}(f,\sigma,k,L) $.
\vspace{2mm}

\noindent
(1) $f(x)=x^2+1$
$$
\begin{array}{|c|c|c|c|c|c|c|c|c|}
\hline
m&5&6&7&8&9&10&11 \\
\hline
\text{diff}&0.04473& 0.01189&  0.00623&0.00120 & 0.00056& 0.00008& 0.00007
\\
\hline
\end{array}
$$
\begin{equation*}k=1,L=2\,:\,
\begin{array}{|c|c|c|c|c|c|c|}
\hline
m&5&6&7&8&9 \\
\hline
\text{Diff}&0.00501& 0.00336&  0.0004&0.00035 & 0.00027
\\
\hline
\end{array}
\end{equation*}
\begin{equation*}k=1,L=3\,:\,
\begin{array}{|c|c|c|c|c|c|c|}
\hline
m&5&6&7&8&9 \\
\hline
\text{Diff}&0.04724&0.01633& 0.00344&0.00143&0.00044
\\
\hline
\end{array}
\end{equation*}
\begin{equation*}k=1,L=4\,:\,
\begin{array}{|c|c|c|c|c|c|c|}
\hline
m&5&6&7&8&9 \\
\hline
\text{Diff}& 0.01505 &  0.00888&  0.00325 &   0.00111 &0.00054
\\
\hline
\end{array}
\end{equation*}
\begin{equation*}k=1,L=5\,:\,
\begin{array}{|c|c|c|c|c|c|c|}
\hline
m&5&6&7&8&9 \\
\hline
\text{Diff}& 0.08444& 0.03359 &  0.02004 & 0.00412  &  0.00230 
\\
\hline
\end{array}
\end{equation*}
\begin{equation*}k=1,L=6\,:\,
\begin{array}{|c|c|c|c|c|c|c|}
\hline
m&5&6&7&8&9 \\
\hline
\text{Diff}&0.10619  &  0.02705 &  0.00743 & 0.00263  &0.00148
\\
\hline
\end{array}
\end{equation*}
\begin{equation*}k=1,L=7\,:\,
\begin{array}{|c|c|c|c|c|c|c|}
\hline
m&5&6&7&8&9 \\
\hline
\text{Diff}&  0.15008&   0.05978&  0.02224 &   0.00725&0.00312
\\
\hline
\end{array}
\end{equation*}
\begin{equation*}k=1,L=8\,:\,
\begin{array}{|c|c|c|c|c|c|c|}
\hline
m&5&6&7&8&9 \\
\hline
\text{Diff}&  0.05351 &0.02266  &    0.01289&  0.00355  &0.00091
\\
\hline
\end{array}
\end{equation*}

\noindent
(2) $f(x)= x^3+2$
$$
\begin{array}{|c|c|c|c|c|c|c|c|c|}
\hline
m&5&6&7&8&9&10&11 \\
\hline
\text{diff}&  0.66667 &0.19435  &  0.07787 &  0.00455 &   0.00511&   0.00116& 0.00079
\\
\hline
\end{array}
$$
\begin{equation*}k=1,L=2\,:\,
\begin{array}{|c|c|c|c|c|c|c|}
\hline
m&5&6&7&8&9 \\
\hline
\text{Diff}&  0.15325&   0.02518   & 0.01088  &0.00153   &0.00085
\\
\hline
\end{array}
\end{equation*}

\begin{equation*}k=1,L=3\,:\,
\begin{array}{|c|c|c|c|c|c|c|}
\hline
m&5&6&7&8&9 \\
\hline
\text{Diff}& 0.12338 &  0.03931&  0.01912 & 0.00822  & 0.00174
\\
\hline
\end{array}
\end{equation*}

\begin{equation*}k=1,L=4\,:\,
\begin{array}{|c|c|c|c|c|c|c|}
\hline
m&5&6&7&8&9 \\
\hline
\text{Diff}&0.45974  & 0.17936 &   0.05044 &  0.01699 & 0.00745
\\
\hline
\end{array}
\end{equation*}

\begin{equation*}k=1,L=5\,:\,
\begin{array}{|c|c|c|c|c|c|c|}
\hline
m&5&6&7&8&9 \\
\hline
\text{Diff}& 0.94805 & 0.39742 & 0.12869  &     0.04340     &0.01448
\\
\hline
\end{array}
\end{equation*}

\begin{equation*}k=1,L=6\,:\,
\begin{array}{|c|c|c|c|c|c|c|}
\hline
m&5&6&7&8&9 \\
\hline
\text{Diff}&0.48571  &  0.15541 &   0.05472 &  0.01866 &0.00413
\\
\hline
\end{array}
\end{equation*}

\begin{equation*}k=1,L=7\,:\,
\begin{array}{|c|c|c|c|c|c|c|}
\hline
m&5&6&7&8&9 \\
\hline
\text{Diff}&  2.4364 &  0.67045 &   0.22705 & 0.06783  &0.02385
\\
\hline
\end{array}
\end{equation*}

\begin{equation*}k=1,L=8\,:\,
\begin{array}{|c|c|c|c|c|c|c|}
\hline
m&5&6&7&8&9 \\
\hline
\text{Diff}& 1.6597&  0.57248& 0.19170  &  0.06838 &0.02304
\\
\hline
\end{array}
\end{equation*}

\begin{equation*}k=2,L=2\,:\,
\begin{array}{|c|c|c|c|c|c|c|}
\hline
m&5&6&7&8&9 \\
\hline
\text{Diff}& 0.05941 & 0.03219 &  0.01761 &  0.00594 &0.00103
\\
\hline
\end{array}
\end{equation*}

\begin{equation*}k=2,L=3\,:\,
\begin{array}{|c|c|c|c|c|c|c|}
\hline
m&5&6&7&8&9 \\
\hline
\text{Diff}& 0.13527 & 0.03679 &  0.02912  & 0.00816  &0.00259
\\
\hline
\end{array}
\end{equation*}

\begin{equation*}k=2,L=4\,:\,
\begin{array}{|c|c|c|c|c|c|c|}
\hline
m&5&6&7&8&9 \\
\hline
\text{Diff}& 0.27434 & 0.08249 & 0.03130  &  0.01076 &0.00376
\\
\hline
\end{array}
\end{equation*}

\begin{equation*}k=2,L=5\,:\,
\begin{array}{|c|c|c|c|c|c|c|}
\hline
m&5&6&7&8&9 \\
\hline
\text{Diff}& 0.89633 &  0.39528 &  0.11930  &0.03852   &0.01388
\\
\hline
\end{array}
\end{equation*}

\begin{equation*}k=2,L=6\,:\,
\begin{array}{|c|c|c|c|c|c|c|}
\hline
m&5&6&7&8&9 \\
\hline
\text{Diff}& 0.50190 &  0.12052 & 0.06497  &  0.02453  &0.00598
\\
\hline
\end{array}
\end{equation*}

\begin{equation*}k=2,L=7\,:\,
\begin{array}{|c|c|c|c|c|c|c|}
\hline
m&5&6&7&8&9 \\
\hline
\text{Diff}&  1.9735 &  0.59430&  0.18924 & 0.07223  &0.02245
\\
\hline
\end{array}
\end{equation*}

\begin{equation*}k=2,L=8\,:\,
\begin{array}{|c|c|c|c|c|c|c|}
\hline
m&5&6&7&8&9 \\
\hline
\text{Diff}& 1.4273 &  0.33456&   0.10510 &  0.04694 &0.01919
\\
\hline
\end{array}
\end{equation*}
Hereafter, we give cases $(1\le k\le \deg f -1, 2\le L \le8)$ when $\text{Diff}>0.1$ at  $m=9$.
Comparing with $\#\mathfrak{R}(f,\sigma,k,L)(\ge L^{n-1})$, $m=9$ seems to be small.

\noindent
(3) $f(x)= x^4+x^3+x^2+x+1$
$$
\begin{array}{|c|c|c|c|c|c|c|c|c|}
\hline
m&5&6&7&8&9&10&11 \\
\hline
\text{diff}& 1.5126  &  1.7526 & 0.51711 &  0.24419 &0.0699  & 0.01081 &  0.00454
\\
\hline
\end{array}
$$

\begin{equation*}k=1,L=7\,:\,
\begin{array}{|c|c|c|c|c|c|c|}
\hline
m&5&6&7&8&9 \\
\hline
\text{Diff}& 15.915 &  3.4587  &     1.1594 &    0.30438 & 0.11569
\\
\hline
\end{array}
\end{equation*}

\begin{equation*}k=3,L=7\,:\,
\begin{array}{|c|c|c|c|c|c|c|}
\hline
m&5&6&7&8&9 \\
\hline
\text{Diff}&  14.170 & 4.1020  & 1.2984  &  0.31371  &0.12101
\\
\hline
\end{array}
\end{equation*}

\begin{equation*}k=3,L=8\,:\,
\begin{array}{|c|c|c|c|c|c|c|}
\hline
m&5&6&7&8&9 \\
\hline
\text{Diff}&14.096  & 3.4425 &  1.5085 & 0.3744  & 0.12044
\\
\hline
\end{array}
\end{equation*}

\noindent
(4) $f(x)= x^5+2$
$$
\begin{array}{|c|c|c|c|c|c|c|c|c|}
\hline
m&5&6&7&8&9&10&11 \\
\hline
\text{diff}&   32.259&  18.965 &  1.3509 & 1 &   0.50828&  0.20874 & 0.01738
\\
\hline
\end{array}
$$

\begin{equation*}k=4,L=4\,:\,
\begin{array}{|c|c|c|c|c|c|c|}
\hline
m&5&6&7&8&9 \\
\hline
\text{Diff}& 8.4815 & 6.0621 &   1.3874&   0.40524 &0.14844
\\
\hline
\end{array}
\end{equation*}

\begin{equation*}k=4,L=5\,:\,
\begin{array}{|c|c|c|c|c|c|c|}
\hline
m&5&6&7&8&9 \\
\hline
\text{Diff}& 45.296  & 11.931  &  2.1385  &  0.81505 & 0.27803
\\
\hline
\end{array}
\end{equation*}

\begin{equation*}k=2,L=6\,:\,
\begin{array}{|c|c|c|c|c|c|c|}
\hline
m&5&6&7&8&9 \\
\hline
\text{Diff}&   24.288&    7.6065 &   1.7536 &   0.51365&    0.17542
\\
\hline
\end{array}
\end{equation*}

\begin{equation*}k=3,L=6\,:\,
\begin{array}{|c|c|c|c|c|c|c|}
\hline
m&5&6&7&8&9 \\
\hline
\text{Diff}&  12.091 &   3.2938&  1.1890  &   0.32440& 0.11074
\\
\hline
\end{array}
\end{equation*}

\begin{equation*}k=4,L=6\,:\,
\begin{array}{|c|c|c|c|c|c|c|}
\hline
m&5&6&7&8&9 \\
\hline
\text{Diff}& 95.000 &  34.752 &  6.4376  & 1.7959  &0.51436
\\
\hline
\end{array}
\end{equation*}

\begin{equation*}k=2,L=7\,:\,
\begin{array}{|c|c|c|c|c|c|c|}
\hline
m&5&6&7&8&9 \\
\hline
\text{Diff}& 139.55 & 22.917 &  4.7391 & 1.5129   & 0.43525
\\
\hline
\end{array}
\end{equation*}

\begin{equation*}k=3,L=7\,:\,
\begin{array}{|c|c|c|c|c|c|c|}
\hline
m&5&6&7&8&9 \\
\hline
\text{Diff}& 71.758  & 22.864 & 5.5512   &1.3991   & 0.47073
\\
\hline
\end{array}
\end{equation*}

\begin{equation*}k=4,L=7\,:\,
\begin{array}{|c|c|c|c|c|c|c|}
\hline
m&5&6&7&8&9 \\
\hline
\text{Diff}& 532.56  &98.352  & 30.003  & 6.1719  & 1.7150
\\
\hline
\end{array}
\end{equation*}

\begin{equation*}k=1,L=8\,:\,
\begin{array}{|c|c|c|c|c|c|c|}
\hline
m&5&6&7&8&9 \\
\hline
\text{Diff}& 681.67  &   106.08&   34.438 &  5.9500 & 1.7811
\\
\hline
\end{array}
\end{equation*}

\begin{equation*}k=2,L=8\,:\,
\begin{array}{|c|c|c|c|c|c|c|}
\hline
m&5&6&7&8&9 \\
\hline
\text{Diff}& 78.922 &17.134  &   3.3514 &  1.1124 & 0.35088
\\
\hline
\end{array}
\end{equation*}

\begin{equation*}k=3,L=8\,:\,
\begin{array}{|c|c|c|c|c|c|c|}
\hline
m&5&6&7&8&9 \\
\hline
\text{Diff}& 164.49  &  26.141& 5.3863  &1.4805   &0.47993
\\
\hline
\end{array}
\end{equation*}

\begin{equation*}k=4,L=8\,:\,
\begin{array}{|c|c|c|c|c|c|c|}
\hline
m&5&6&7&8&9 \\
\hline
\text{Diff}&150.70  & 55.497 &  10.753 &  2.3986  &0.85270
\\
\hline
\end{array}
\end{equation*}
For the following, $m=9$ is  too small to get $\text{Diff}<0.1$.

\noindent
(5) $f(x)= x^6+x^5+x^4+x^3+x^2+x+1$
$$
\begin{array}{|c|c|c|c|c|c|c|c|c|}
\hline
m&5&6&7&8&9&10&11 \\
\hline
\text{diff}& 70.571 &  12.099 &15.044  &  6.3971 &1.0950  &  1&  1.6436
\\
\hline
\end{array}
$$

\noindent
(6) $f(x)= x^7+2$
$$
\begin{array}{|c|c|c|c|c|c|c|c|c|}
\hline
m&5&6&7&8&9&10&11 \\
\hline
\text{diff}& 1 &   7.2083&  24.018 &  4.1811 &  1&  1& 1
\\
\hline
\end{array}
$$
\subsection{Case of degree $6$ with non-trivial linear relation among roots}
Let $f(x) =x^6+14x^4+49x^2+7$, which is the fourth polynomial in  Example 3.1.
Then taking roots there, we see that $t=4$ and the basis of linear relations 
\begin{align*}
&\hat{\bm{m}_1} =(1,0,0,1,0,0,0),
\\
&\hat{\bm{m}_2} =(0,1,0,0,1,0,0),
\\
&\hat{\bm{m}_3} =(0,0,1,0,0,1,0),
\\
&\hat{\bm{m}_4} =(1,0,1,0,1,0,0).
\end{align*}
Let us determine permutations $\sigma$ satisfying $\dim\frak{D}(f,\sigma)=2$.
To do it, suppose that $0<y_1<\dots<y_6<1,y_1+y_6=y_2+y_5=y_3+y_4=1$ and 
the corresponding equation to $\hat{\bm{m}_4} $
$$
\left\{\begin{array}{l}y_1 \\y_6 \end{array}\right.+
\left\{\begin{array}{l}y_2 \\y_5 \end{array}\right.+
\left\{\begin{array}{l}y_3 \\y_4 \end{array}\right. \in\mathbb{Z}.
$$
Then possible combinations above are $y_1+y_2+y_3=1$ or $y_1+y_2+y_4=1$.
Identifying a permutation $\sigma\in S_6$ with the image $[\sigma(1),\dots,\sigma(6)]$,
the set of  representatives of cosets of $\sigma$ with $\dim\frak{D}(f,\sigma)=2$ 
by the group $\hat {\bm G}$  is $\sigma_1=[1, 2,4, 6, 5,3],\sigma_2=[1, 2, 3, 6, 5, 4]$,
supposing that $\sigma(1)=1,\sigma(4)=6,\sigma(2)=2,\sigma(5)=5,\{\sigma(3),\sigma(6)\}=\{3,4\}$
by $\{\{\sigma(1),\sigma(4)\}$, $\{\sigma(2),\sigma(5)\}$, $\{\sigma(3),\sigma(6)\}\}=$ $\{\{1,6\}$,
$\{2,5\}$, $\{3,4\}$$\}$.
\begin{align*}
&\frak{D}(f,\sigma_1)
\\
=&\left\{(x_1,x_2,x_3,x_4,x_5,x_6)\left|
\begin{array}{l}
0\le x_1\le x_2,x_1+x_2\le1/2,\\
x_3=x_1+x_2, x_4=1-(x_1+x_2),\\
x_5=1-x_2,  x_6=1-x_1
\end{array}
\right.
\right\}
\\
=&\{x_1\bm{u}_1+x_2\bm{u}_2+(0,0,0,1,1,1)\mid 0\le x_1\le x_2,x_1+x_2\le1/2\}
\\
&\text{ where }\bm{u}_1:=(1,0,1,-1,0,-1),\bm{u}_2:=(0,1,1,-1,-1,0),
\\
&\frak{D}(f,\sigma_2)
\\
=&\left\{(x_1,x_2,x_3,x_4,x_5,x_6)\left|
\begin{array}{l}
0\le x_1\le x_2,x_1+x_2\ge1/2, x_1+2x_2\le1,
\\
x_3=1-x_1-x_2,x_4=x_1+x_2,
\\
x_5=1-x_2,  x_6=1-x_1
\end{array}
\right.
\right\}
\\
=&\left\{x_1\bm{v}_1+x_2\bm{v}_2+(0,0,1,0,1,1)\left|
\begin{array}{l}
0\le x_1\le x_2,x_1+x_2\ge1/2,
\\ x_1+2x_2\le1
\end{array}
\right.\right\}
\\
&\text{ where }\bm{v}_1:=(1,0,-1,1,0,-1),\bm{v}_2:= (0,1,-1,1,-1,0) .
\end{align*}
Since the length of $\bm{u}_i,\bm{v}_i$ $(i=1,2)$ is $2$ and the angle of $\bm{u}_1,\bm{u}_2$
(resp.   $\bm{v}_1,\bm{v}_2$) is $\pi/3$ and
the volumes of their projection to $(x_1,x_2)$-space are $1/16,1/48$,
we see $vol(\frak{D}(f,\sigma_1))=3vol(\frak{D}(f,\sigma_2))$.
Therefore Conjecture \ref{conj1} is numerically confirmed as follows :
In the following,
``diff$_1$'' (resp. ``diff$_2$'') means the difference  $\#Spl_{X_m}(f,\sigma_1)/\#Spl_{X_m}(f) - 3/4$
(resp. $\#Spl_{X_m}(f,\sigma_2)/\#Spl_{X_m}(f) - 1/4$).
$$
\begin{array}{|c|c|c|c|c|c|c|}
\hline
m&5&6&7&8 &9&10
\\
\hline
\text{diff}_1&0.00094&0.00436&0.00190&-0.00009&-0.00015&-0.00004
\\
\hline
\text{diff}_2&-0.00094&-0.00436&-0.00190&0.00009&0.00015&0.00004
\\
\hline
\end{array}
$$
Let us check Conjecture \ref{conj4}.
We assume that $\sigma=\sigma_1$ or $\sigma_2$ above and $k_1=\dots=k_4=1$
because of $\#Spl_\infty(f,\sigma,\{k_j\})<\infty$ otherwise.
Denoting $\#\frak{R}(f,\sigma,\{k_j\},L)$ by $R$, and by ``er'' the maximum
of
$$|\#Spl_{X_m}(f,\sigma,\{k_j\},L,\{R_i\})/\#Spl_{X_m}(f,\sigma,\{k_i\})\times R-1|
$$ 
with  $\{R_i\}\in\frak{R}(f,\sigma,\{k_j\},L)$,
 we see

\noindent
$\sigma=\sigma_1, L=2,(R=4 )$
$$
\begin{array}{|c|c|c|c|c|c|}
\hline
m&5&6&7&8 &9
\\
\hline
er&0.09106& 0.01897& 0.00979& 0.00303&0.0005
\\
\hline
\end{array}
$$

\noindent
$\sigma=\sigma_1, L=3,(R= 144)$
$$
\begin{array}{|c|c|c|c|c|c|}
\hline
m&5&6&7&8 &9
\\
\hline
er&0.15789   &  0.08493  & 0.04991  &    0.01013 &   0.00353
\\
\hline
\end{array}
$$

\noindent
$\sigma=\sigma_1, L=4,(R= 256)$
$$
\begin{array}{|c|c|c|c|c|c|}
\hline
m&5&6&7&8 &9
\\
\hline
er& 0.28321  &  0.10076  &   0.03344  & 0.02033  &  0.00492
\\
\hline
\end{array}
$$

\noindent
$\sigma=\sigma_1, L=5,(R=6400 )$
$$
\begin{array}{|c|c|c|c|c|c|}
\hline
m&5&6&7&8 &9
\\
\hline
er& 0.75439  &  0.24810  & 0.09133   &  0.03212  &    0.01131
\\
\hline
\end{array}
$$

\noindent
$\sigma=\sigma_1, L=6,(R= 576)$
$$
\begin{array}{|c|c|c|c|c|c|}
\hline
m&5&6&7&8 &9
\\
\hline
er& 0.57895 &    0.20365  &   0.09654 &    0.03440 &   0.00863
\\
\hline
\end{array}
$$

\noindent
$\sigma=\sigma_1, L=7,(R=63504 )$
$$
\begin{array}{|c|c|c|c|c|c|}
\hline
m&5&6&7&8 &9
\\
\hline
er&   0.51462 &  0.06361  &   0.21441 &  0.02060  &  0.00569
\\
\hline
\end{array}
$$

\noindent
$\sigma=\sigma_2, L=2,(R=4 )$
$$
\begin{array}{|c|c|c|c|c|c|}
\hline
m&5&6&7&8 &9
\\
\hline
er&  0.12343   & 0.02835    &   0.00950  &  0.00731  &  0.00230
\\
\hline
\end{array}
$$

\noindent
$\sigma=\sigma_2, L=3,(R= 144)$
$$
\begin{array}{|c|c|c|c|c|c|}
\hline
m&5&6&7&8 &9
\\
\hline
er&  0.40554  &   0.13867 &  0.05190  &     0.02128&  0.00587
\\
\hline
\end{array}
$$

\noindent
$\sigma=\sigma_2, L=4,(R=256 )$
$$
\begin{array}{|c|c|c|c|c|c|}
\hline
m&5&6&7&8 &9
\\
\hline
er&0.53149   &    0.24213  &  0.05839  &   0.03716   &  0.00668
\\
\hline
\end{array}
$$

\noindent
$\sigma=\sigma_2, L=5,(R= 6400)$
$$
\begin{array}{|c|c|c|c|c|c|}
\hline
m&5&6&7&8 &9
\\
\hline
er&  1.2670  & 0.55812  & 0.15834    &  0.05164  & 0.01688
\\
\hline
\end{array}
$$

\noindent
$\sigma=\sigma_2, L=6,(R= 576)$
$$
\begin{array}{|c|c|c|c|c|c|}
\hline
m&5&6&7&8 &9
\\
\hline
er&   1.5390  &  0.45840   &    0.14348  &  0.04519   &     0.01725
\\
\hline
\end{array}
$$

\noindent
$\sigma=\sigma_2, L=7,(R=63504 )$
$$
\begin{array}{|c|c|c|c|c|c|}
\hline
m&5&6&7&8 &9
\\
\hline
er&   0.60453  &    0.29791   & 0.10483    &    0.03857& 0.01230
\\
\hline
\end{array}
$$

\subsection{Case of degree $4$ with non-trivial linear relation among roots}
Let us consider the polynomial $f(x)=x^4+1$ in this subsection.
First  we check \eqref{eq11} numerically for $D_i=\{(x_1,\dots,x_4)\mid x_i\le A\}$.
To do it, as in the subsection 4.2 we have only to check
$$\displaystyle
\frac{\#\{p\in {Spl}_X(f) \mid (r_1/p,\dots,r_n/p)\in D_i\}}
{\#{Spl}_X(f)}
\to
\displaystyle\frac{ {vol}(  {D_i}\cap{\mathfrak{D}}(f,id))}
{{vol}({\mathfrak{D}}(f,id))}
$$
and the right-hand side is given by $8vol(pr(D_i\cap\frak{D}(f,id))$ as in that subsection.

 ``diff'' in the table means the maximum of the absolute value of the difference between $1$ 
and  the left-hand side divided by the right-hand side at
$A=k/40$ $(k=1,\dots,40)$, $i=1,\dots,4$ and $X=X_m$ for $m=5,\dots,11$. 
$$
\begin{array}{|c|c|c|c|c|c|c|c|c|}
\hline
m&5&6&7&8&9&10&11 \\
\hline
\text{diff}& 0.18691 &  0.16125& 0.04833 & 0.02029& 0.00323& 0.00303& 0.00058
\\
\hline
\end{array}
$$
Next, let us check Conjecture \ref{conj4}.
As in the subsection 4.2 again, we have only to consider the case of $\{\sigma(1),\sigma(4)\}=\{1,4\}$
or $\{2,3\}$ and $k_1=k_2=1$.
Then
\begin{align*}
&\mathfrak{R}(f,\sigma,\{k_j\},L)=\\
&
\left\{\{R_i\}\in[0,L-1]^n\left|
\begin{array}{l}
 R_1+R_4\equiv R_2+R_3\bmod L
\text{ with }(R_1+R_4,L)=1,
\\
\text{[[}R_1+R_4\text{]]}=\text{[[}1\text{]] }
\text{ on }\mathbb{Q}(\zeta_8)\cap\mathbb{Q}(\zeta_L)
\end{array}
\right.
\right\}.
\end{align*}
Therefore, we see
\begin{equation*}
\#\mathfrak{R}(f,\sigma,\{k_j\},L)=L^2\varphi(L)/[\mathbb{Q}(\zeta_8)\cap\mathbb{Q}(\zeta_L):\mathbb{Q}].
\end{equation*}
The following are tables of the maximum of
$$
\left|\frac{Spl_{X_m}(f,\sigma,\{k_j\},L,\{R_i\})}{   Spl_{X_m}(f,\sigma,\{k_j\})  }\times
\#\frak{R}(f,\sigma,\{k_j\},L)
-1\right|
$$
at $\{R_i\} \in \frak{R}(f,\sigma,\{k_j\},L)$ for $m=5,\dots,10$ and $L=2,\dots,10$.
\vspace{2mm}

\noindent
$L=2$
$$
\begin{array}{|c|c|c|c|c|c|c|c|}
\hline
m&5&6&7&8&9&10 \\
\hline
\text{diff}&   0.05073 &  0.00721  &   0.00718 &  0.00172  &  0.00014  & 0.00014  
\\
\hline
\end{array}
$$

\noindent
$L=3$
$$
\begin{array}{|c|c|c|c|c|c|c|c|}
\hline
m&5&6&7&8&9&10\\
\hline
\text{diff}&   0.22264 &   0.07574 &  0.01518  &   0.00460  &   0.00273 &  0.00057 
\\
\hline
\end{array}
$$

\noindent
$L=4$
$$
\begin{array}{|c|c|c|c|c|c|c|c|}
\hline
m&5&6&7&8&9&10\\
\hline
\text{diff}  & 0.16730  &  0.04659 &  0.01596  &  0.00635   &  0.0025  & 0.00073  
\\
\hline
\end{array}
$$

\noindent
$L=5$
$$
\begin{array}{|c|c|c|c|c|c|c|c|}
\hline
m&5&6&7&8&9&10 \\
\hline
\text{diff}&  0.59329 &   0.18683 &   0.05858 &   0.02349 &   0.00701 &  0.00258 
\\
\hline
\end{array}
$$

\noindent
$L=6$
$$
\begin{array}{|c|c|c|c|c|c|c|c|}
\hline
m&5&6&7&8&9&10 \\
\hline
\text{diff}  &  0.50943 & 0.17148  & 0.05412  &  0.01757  &   0.00785 &  0.00181 
\\
\hline
\end{array}
$$

\noindent
$L=7$
$$
\begin{array}{|c|c|c|c|c|c|c|c|}
\hline
m&5&6&7&8&9&10 \\
\hline
\text{diff}&  1.2189   &  0.39856  &  0.12142 &   0.04167  &  0.01347  &  0.00463 
\\
\hline
\end{array}
$$

\noindent
$L=8$
$$
\begin{array}{|c|c|c|c|c|c|c|c|}
\hline
m&5&6&7&8&9&10 \\
\hline
\text{diff}  & 0.35597  & 0.13578  &  0.07187 &  0.01575 & 0.00523   & 0.00177
\\
\hline
\end{array}
$$

\noindent
$L=9$
$$
\begin{array}{|c|c|c|c|c|c|c|c|}
\hline
m&5&6&7&8&9&10 \\
\hline
\text{diff}&   1.4453 &  0.49133 &  0.17417 &  0.05943  &  0.01853  &  0.00706 
\\
\hline
\end{array}
$$

\noindent
$L=10$
$$
\begin{array}{|c|c|c|c|c|c|c|c|}
\hline
m&5&6&7&8&9&10 \\
\hline
\text{diff}  & 1.5157  & 0.55475  &   0.14687 &  0.05529  &  0.01544  &   0.00612
\\
\hline
\end{array}
$$
\subsection{Reducible polynomial}\label{subsec6.4}
\subsubsection{}
Let us consider the polynomial $f(x) =(x^2+x+1)(x^2+2x+2)$ with roots $\alpha_i$ such that
 $\alpha_i^2+ \alpha_i+1 =0 $ for $i=2,3$ and $\alpha_i^2+ 2\alpha_i+2 =0 $ for $i=1,4$. 
The field $\mathbb{Q}(f)$ is $\mathbb{Q}(\sqrt{-3},\sqrt{-1})$.
Vectors $\hat{\bm{m}}_i$ be
$$
\hat{\bm{m}}_1 = (1,0,0,1,-2),\hat{\bm{m}}_2 = (0,1,1,0,-1)\quad(t=2).
$$
Then we see that 
\begin{align*}
\bm{G} = \{&[1, 2, 3, 4], [1, 3, 2, 4], [2, 1, 4, 3], [2, 4, 1, 3], [3, 1, 4, 2], 
\\ 
&[3, 4, 1, 2], [4, 2, 3, 1], [4, 3, 2, 1]\},
\\
\hat{\bm{G}} = \{&[1, 2, 3, 4], [1, 3, 2, 4], [4, 2, 3, 1], [4, 3, 2, 1]\}
\end{align*}
and
$$
\mathfrak{D}(f,\sigma)=
\begin{cases}
\{(x_1,x_2,1-x_2,1-x_1)\mid 0\le x_1\le x_2 \le 1/2\} &\text{ if } \sigma \in\bm{G},
\\
\begin{cases}
\{(x,x,1-x,1-x)\mid 0\le x \le 1/2\}
\\
\text{ or }
\\
\{(1/2,1/2,1/2,1/2)\}
\end{cases}
&\text{ otherwise},
\end{cases}
$$
hence $\dim\mathfrak{D}(f,\sigma)=2\,(=n-t)$ if and only if $ \sigma \in\bm{G}$.
For $X=10^8$, we find
\begin{align*}
\frac{\#Spl_X(f,\sigma)}{\#Spl_X(f)}= 
\begin{cases}
1/2\,(=1/[\bm{G}:\hat{\bm{G}}])&\text{ if } \sigma\in {\bm{G}},
\\
0&\text{ otherwise}.
\end{cases}
\end{align*}
Three conditions $\sigma\in \bm{G}$, $Pr(f,\sigma)>0$ and $vol(\mathfrak{D}(f,\sigma) )>0$ are equivalent.

For the one-dimensional uniformity, we see that, with $X=10^8$
$$
\max_{k=0}^9|\frac{\sum_i\#\{p\in Spl_X(f)\mid k/10\le r_i/p<(k+1)/10\}}{4\#Spl_X(f)}-\frac{1}{10}|<0.000079.
$$

With respect to Conjecture \ref{conj2}, let $D$ be $\{(x_1,x_2,x_3,x_4)\mid x_1<1/3\}$.
The absolute value of the difference between
$$
\frac{\#\{p\in {Spl}_X(f,\sigma) \mid (r_1/p,\dots,r_n/p)\in D\}}
{\#{Spl}_X(f,\sigma)}
\text{ and }
\frac{ {vol}(  {D}\cap{\mathfrak{D}}(f,\sigma))}
{{vol}({\mathfrak{D}}(f,\sigma))}(=8/9)
$$
for $X=10^8$ and $\sigma\in \bm{G}$
is $0.0003$.
With respect to Conjecture \ref{conj4}, we see $k_1=k_2=1$ and $\bm{G}/\hat{\bm{G}}=
\{[1, 2, 3, 4], [2, 1, 4, 3]\}$.
In the following table,  the maximum of errors 
$$
\#\mathfrak{R}(f,\sigma,\{k_j\},L)\times|
(Pr(f,\sigma,\{k_j\},L,\{R_i\})-1/\#\mathfrak{R}(f,\sigma,\{k_j\},L)|
$$ over $\{R_i\}\in \mathfrak{R}(f,\sigma,\{k_j\},L)$
is given for $\sigma\in\bm{G}/\hat{\bm{G}}$, and $\#\mathfrak{R}$ is
$\#\mathfrak{R}(f,\sigma,\{k_j\},L)$,
which is independent of $\sigma$.
$$
\begin{array}{|r|c|c|r|}
\hline
L&[1, 2, 3, 4]&[2, 1, 4, 3]&\#\mathfrak{R}
\\
\hline
2&0.00123&0.00264&4
\\
\hline
3&0.00557&0.00354&9
\\
\hline
4&0.00885&0.00911&16
\\
\hline
5&0.02824&0.02858&100
\\
\hline
6&0.01451&0.01545&36
\\
\hline
7& 0.06906&0.05916&294
\\
\hline
8& 0.04457&0.03671&128
\\
\hline
9&0.06127&0.05443&243
\\
\hline
10&0.08031&0.08409&400
\\
\hline
11&0.13550&0.13511&1210
\\
\hline
12&0.04323&0.04952&144
\\
\hline
\end{array}
$$

\subsubsection{}\label{counterEx}
Let us consider the polynomial $f(x)=(x^2-2)((x-1)^2-2)$  with roots $\alpha_1=\sqrt{2},
\alpha_2=1+\sqrt{2},\alpha_3=-\sqrt{2},\alpha_4=1-\sqrt{2}$.
Then the matrix $(m_{j,i})$ and $m_i$ $(t=3)$ are
$$
\left(
\begin{array}{cccc}
1&0&0&1\\
1&-1&0&0\\
1&0&1&0\\
\end{array}
\right),
\quad
\left(
\begin{array}{c}
1\\
-1\\
0\\
\end{array}
\right),
$$
and $\bm{G}\!=\!\{ [1, 2, 3, 4], [1, 2, 4, 3], [2, 1, 3, 4],[2, 1, 4, 3], [3, 4, 1, 2], [3, 4, 2, 1],[4, 3, 1, 2]$, 
\linebreak
$[4, 3, 2, 1]\}$
and $\hat{\bm{G}}=\{[1, 2, 3, 4], [3, 4, 1, 2]\}$,
where a permutation $\sigma$ is identified with the vector $[\sigma(1),\dots,\sigma(4)]$ of images.
Then we see that
$$
\mathfrak{D}(f,\sigma)=
\begin{cases}
\{(x,x,1-x,1-x)\mid 0\le x \le 1/2\} &\text{ if } \sigma \in\bm{G},
\\
\{(1/2,1/2,1/2,1/2)\}&\text{ otherwise},
\end{cases}
$$
so two conditions $\dim\mathfrak{D}(f,\sigma)=1\,(=n-t)$ and $\sigma\in G$ are equivalent.
We find
\begin{align*}
Pr(f,\sigma)=
\begin{cases}
1&\text{ if } \sigma\in \hat{\bm{G}},
\\
0&\text{ otherwise},
\end{cases}
\end{align*}
since $p\in Spl(f,\sigma)$ means $r_{\sigma(1)}+r_{\sigma(4)}=p+1,r_{\sigma(1)}-r_{\sigma(2)}=-1,
r_{\sigma(1)}+r_{\sigma(3)}=p$ for a sufficiently large $p$, which implies $\sigma\in\hat{\bm{G}}$.
This  is an example such that $Pr(f,\sigma)=0$ but $vol(\mathfrak{D}(f,\sigma))>0$ for $\sigma \in \bm{G}\setminus \hat{\bm{G}}$
by $\#\bm{G}=8,\#\hat{\bm{G}}=2$, and   this supports Conjecture \ref{conj1}. 
We see easily that for $\sigma\in \bm{G}$
\begin{align*}
vol(D_{1,a}\cap\mathfrak{D}(f,\sigma))=vol(D_{2,a}\cap\mathfrak{D}(f,\sigma))&=
\begin{cases}
2a&\text{ if }a\le1/2,
\\
1&\text{ if }1/2<a<1,
\end{cases}
\\
vol(D_{3,a}\cap\mathfrak{D}(f,\sigma))=vol(D_{4,a}\cap\mathfrak{D}(f,\sigma))&=
\begin{cases}
0&\text{ if }a\le1/2,
\\
2a-1&\text{ if }1/2<a<1.
\end{cases}
\end{align*}
For the one-dimensional uniformity, we see that, with $X=10^8$
$$
\max_{k=0}^9|\frac{\sum_i\#\{p\in Spl_X(f)\mid k/10\le r_i/p<(k+1)/10\}}{4\#Spl_X(f)}-\frac{1}{10}|< 0.00017.
$$
The one-dimensional uniformity follows surely from \cite{DFI}, since   $f(x)$ is a product of
quadratic polynomials which define the same quadratic field $\mathbb{Q}(\sqrt{2})$.

With respect to Conjecture \ref{conj2}, let $D$ be $\{(x_1,x_2,x_3,x_4)\mid x_1<1/3\}$.
The absolute value of the difference between
$$
\frac{\#\{p\in {Spl}_X(f,\sigma) \mid (r_1/p,\dots,r_n/p)\in D\}}
{\#{Spl}_X(f,\sigma)}
\text{ and }
\frac{ {vol}(  {D}\cap{\mathfrak{D}}(f,\sigma))}
{{vol}({\mathfrak{D}}(f,\sigma))}(=\frac{2}{3})
$$
for $X=10^8$ and $\sigma=id$
is $0.00032$.
To check Conjecture \ref{conj4}, we see that $k_1=1,k_2=0,k_3=1$, $\sigma\in\hat{\bm{G}}$ hold if $Pr(f,\sigma,\{k_j\},L,\{R_i\})>0$, and for them the maximum $er$ of errors 
$$
\#\mathfrak{R}(f,\sigma,\{k_j\},L)\times|
(Pr(f,\sigma,\{k_j\},L,\{R_i\})-1/\#\mathfrak{R}(f,\sigma,\{k_j\},L)|
$$ over $\{R_i\}\in \mathfrak{R}(f,\sigma,\{k_j\},L)$
is as follows :
$$
\begin{array}{|c|c|c|c|c|c|c|c|c|}
\hline
L&2&3&4&5&6&7&8
\\
\hline
\#\mathfrak{R}(f,\sigma,\{k_j\},L)&2&6&8&20&12&42&16
\\
\hline
er&0.0002&0.001&0.001&0.006&0.002&0.008&0.003
\\
\hline
\end{array}
$$

\subsubsection{}
Let us consider the polynomial $f(x)=(x^2+1)(x^2-2)(x^4-2x^2+9)$, whose roots are
$\pm\sqrt{2},\pm\sqrt{-1},\pm\sqrt{2}\pm\sqrt{-1}$ and we number them as 
$\alpha_1 =  \sqrt{-1}$, $\alpha_2 = \sqrt{2} - \sqrt{-1}$, $\alpha_3 = \sqrt{2}$, 
$\alpha_4 = -\sqrt{2} -\sqrt{-1}$, $\alpha_5 = \sqrt{2} + \sqrt{-1}$, 
$\alpha_6 = -\sqrt{2}$, $\alpha_7 = -\sqrt{2} + \sqrt{-1}$, $\alpha_8 = -\sqrt{-1}$. 
The rows of the following matrix $M$ gives $\bm{m}_j$ $(j=1,\dots,6)$ with $m_1=\dots=m_6=0$, i.e.
$M\cdot{}^t(\alpha_1,\alpha_2,\alpha_3,\alpha_4,\alpha_5,\alpha_6,\alpha_7,\alpha_8)=0$ $(t=6)$:
$$
M=\begin{pmatrix}
1&0&0&0&0&0&0&1\\
0&1&0&0&0&0&1&0\\
0&0&1&0&0&1&0&0\\
0&0&0&1&1&0&0&0\\
1&1&-1&0&0&0&0&0&\\
1&0&1&0&-1&0&0&0&
\end{pmatrix}.
$$
The group $\bm{G}$ is, by Proposition \ref{prop4.1} 
\begin{align*}
&\bm{G}=\hat{\bm{G}}:=\{\sigma\mid M\cdot{}^t(\alpha_{\sigma(1)},\dots,\alpha_{\sigma(8)}) = 0\} 
\\
=\{&[1,2,3,4,5,6,7,8],[1,4,6,2,7,3,5,8],[3,4,8,7,2,1,5,6],[3,7,1,4,5,8,2,6],
 \\
 &[6,2,8,5,4,1,7,3],[6,5,1,2,7,8,4,3],[8,5,3,7,2,6,4,1],[8,7,6,5,4,3,2,1]\},
\end{align*}
where a permutation $\sigma$ is identified with the vector $[\sigma(1),\dots,\sigma(8)]$
of its images. 
The set of $\sigma$ satisfying  \eqref{eq6} for  some $p$ less than $ 10^8$ consists of $\sigma_iG$ 
for
\begin{align*}
&\sigma_1 = [1,2,3,5,4,6,7,8],
\sigma_2 = [1,2,4,3,6,5,7,8],
\sigma_3 = [1,4,6,2,7,3,5,8],\\
&\sigma_4 = [2,1,3,4,5,6,8,7],
\sigma_5 = [2,1,3,5,4,6,8,7],
\sigma_6 = [2,1,4,3,6,5,8,7],\\
&\sigma_7 = [3,1,4,2,7,5,8,6].
\end{align*}
The corresponding domain $\mathfrak{D}(f,\sigma_i)=\{\bm{x}=C+x_1\bm{v}_1+x_2\bm{v}_2\mid S\}$
are given by the following:
Vectors $C,\bm{v}_i$ are
$$
\begin{array}{|c|c|c|c|}
\hline
 i&C&\bm{v}_1&\bm{v}_2
 \\ \hline
 1&(0,0,0,0,1,1,1,1)&(1,0,1,2,-2,-1,0,-1)&(0,1,1,1,-1,-1,-1,0)
 \\\hline
 2&(0,0,1,0,1,0,1,1)&(1,0,-2,1,-1,2,0,-1)&(0,1,-1,1,-1,1,-1,0)
 \\\hline
3&(0,0,0,1,0,1,1,1)&(1,0,1,-2,2,-1,0,-1)&(0,1,1,-1,1,-1,-1,0)
 \\\hline
4&(0,0,0,1,0,1,1,0)&(1,0,1,-1,1,-1,0,-1)&(0,1,1,-2,2,-1,-1,0)
 \\\hline
5&(0,0,0,0,1,1,1,0)&(1,0,1,1,-1,-1,0,-1)&(0,1,1,2,-2,-1,-1,0)
\\\hline
6&(0,0,1,0,1,0,1,1)&(1,0,-1,1,-1,1,0,-1)&(0,1,-2,1,-1,2,-1,0)
\\\hline
\end{array}
$$
and for $i=7$, 
\begin{gather*}C=(0,0,1/2,1/2,1/2,1/2,1,1), \bm{v}_1=(1,0,-1/2,1/2,-1/2,1/2,0,-1),
\\
\bm{v}_2=(0,1,-1/2,-1/2,1/2,1/2,-1,0).
\end{gather*}

\noindent
The inequalities $S$ on $x_1,x_2$ are
\begin{align*}
(i=1)\quad &0\le x_1\le x_2\le1,4x_1+2x_2\le1,
\\
(i=2)\quad &0\le x_1\le x_2\le1,2x_1+2x_2\le1,3x_1+2x_2\ge1,
\\
(i=3)\quad &0\le x_1\le x_2\le1,3x_1+2x_2\le1,4x_1+2x_2\ge1,
\\
(i=4)\quad &0\le x_1\le x_2\le1, 2x_1+3x_2\le1,2x_1+4x_2\ge1,
\\
(i=5)\quad &0\le x_1\le x_2\le1,2x_1+4x_2\le1,
\\
(i=6)\quad &0\le x_1\le x_2\le1,x_1+3x_2\le1,2x_1+3x_2\ge1,2x_1+2x_2\le1,
\\
(i=7)\quad &0\le x_1\le x_2\le1,x_1+3x_2\le1.
\end{align*}

\noindent
The volumes  $[vol(\mathfrak{D}(f,\sigma_1)),\dots,vol(\mathfrak{D}(f,\sigma_7))]$
are $[1/4,3/40,1/20,3/40,1/8,\linebreak1/20,1/8]$ with $1/4+\dots+1/8=3/4$.
Let us give a brief proof on $\sigma_1$.
Since $(\bm{v}_1,\bm{v}_1)=12, (\bm{v}_1,\bm{v}_2)=6, (\bm{v}_2,\bm{v}_2)=6$,
we define the orthonormal  vectors $\bm{u}_i$ by $\bm{v}_1=2\sqrt{3}\bm{u}_1,
\bm{v}_2=\sqrt{3}(\bm{u}_1+\bm{u}_2)$.
Writing $x_1\bm{v}_1+x_2\bm{v}_2 = y_1\bm{u}_1+y_2\bm{u}_2$, we see $y_1=2\sqrt{3}x_1+\sqrt{3},y_2=\sqrt{3}x_2$ with Jacobian $6$.
The volume of the set of $(x_1,x_2)$ defined by 
 the condition $S$, i.e. $0\le x_1\le x_2\le1,4x_1+2x_2\le1$ is $1/24$, hence $vol(\mathfrak{D}(f,\sigma_1))= 6\cdot1/24=1/4$.
 
 \noindent
For $X=10^8$, the following is  a table of $\#Spl_X(f,\sigma_i)/\#Spl_X(f)$ on the first line and  
 $vol(\mathfrak{D}(f,\sigma_i))/\sum_i vol(\mathfrak{D}(f,\sigma_i))$ on the second   
 in the order of $i=1,\dots,7$, respectively
 $$
 \begin{array}{|c|c|c|c|c|c|c|}
 \hline
 0.33320&0.10003& 0.06653& 0.09999&0.16713& 0.06678& 0.16633
 \\\hline
0.33333&0.1&0.06666&0.1&0.16667& 0.06666&0.16667
\\
\hline
 \end{array}.
$$ 
Exact values on the second line  are 
$[1/3, 1/10, 1/15, 1/10, 1/6,1/15, 1/6]$ which are equal to $(3/4)^{-1}[1/4,3/40,1/20,3/40,1/8,1/20,1/8]$.
This supports Conjecture \ref{conj1}.

\noindent
For $p\le 10^8$, integers $k_j$ in \eqref{eq7}  are $k_1=\dots=k_4=1,k_5=k_6=0$ for all $\sigma_i$,
and for such integers $k_j$,  $D:=\{(x_1,\dots,x_8)\mid|\sum_i m_{j,i}x_{\sigma(i)}-k_j|<\epsilon \,(1\le{}^\forall j\le6)\}$
$(\epsilon>0)$ contains $\mathfrak{D}(f,\sigma)$ for $\sigma=\sigma_1,\dots,\sigma_7$, hence Conjectures \ref{conj2} is true for such domains.

For the one-dimensional uniformity, we see that, with $X=10^8$
$$
\max_{k=0}^9|\frac{\sum_i\#\{p\in Spl_X(f)\mid k/10\le r_i/p<(k+1)/10\}}{8\#Spl_X(f)}-\frac{1}{10}|< 0.000036.
$$

\noindent
For $L= 2$ and $\{k_j\}$ above, $\mathfrak{R}(f,\sigma_i,\{k_j\},2)$ with  $\#\mathfrak{R}(f,\sigma_i,\{k_j\},2)=4$ $(i=1,\dots,7)$ is in order
\begin{align*}
&\{[0, 0, 0, 0, 1, 1, 1, 1], [0, 1, 1, 1, 0, 0, 0, 1], [1, 0, 1, 0, 1, 0, 1, 0], [1, 1, 0, 1, 0, 1, 0, 0]\},
\\
&\{[0, 0, 1, 0, 1, 0, 1, 1], [0, 1, 0, 1, 0, 1, 0, 1], [1, 0, 1, 1, 0, 0, 1, 0], [1, 1, 0, 0, 1, 1, 0, 0]\},
\\
&\{[0, 0, 0, 1, 0, 1, 1, 1], [0, 1, 1, 0, 1, 0, 0, 1], [1, 0, 1, 1, 0, 0, 1, 0], [1, 1, 0, 0, 1, 1, 0, 0]\},
\\
&\{[0, 0, 0, 1, 0, 1, 1, 1], [0, 1, 1, 1, 0, 0, 0, 1], [1, 0, 1, 0, 1, 0, 1, 0], [1, 1, 0, 0, 1, 1, 0, 0]\},
\\
&\{[0, 0, 0, 0, 1, 1, 1, 1], [0, 1, 1, 0, 1, 0, 0, 1], [1, 0, 1, 1, 0, 0, 1, 0], [1, 1, 0, 1, 0, 1, 0, 0]\},
\\
&\{[0, 0, 1, 0, 1, 0, 1, 1], [0, 1, 1, 1, 0, 0, 0, 1], [1, 0, 0, 1, 0, 1, 1, 0], [1, 1, 0, 0, 1, 1, 0, 0]\},
\\
&\{[0, 1, 0, 0, 1, 1, 0, 1], [0, 1, 1, 1, 0, 0, 0, 1], [1, 0, 0, 1, 0, 1, 1, 0], [1, 0, 1, 0, 1, 0, 1, 0]\},
\end{align*}
where $\{R_i\}$ is written as $[R_1,\dots,R_8]$.

\noindent
For $X=10^8$, $\#Spl_X(f,\sigma_i,\{k_j\},2,\{R_i\})/\#Spl_X(f,\sigma_i,\{k_j\})$ is, 
corresponding to the above table
\begin{align*}
(i=1)&\,\,0.25030, 0.24960, 0.24953, 0.25056, \\
(i=2)&\,\,0.25118, 0.24711, 0.24956, 0.25216, \\
(i=3)&\,\,0.25013, 0.25103, 0.24831, 0.25053, \\
(i=4)&\,\,0.24914, 0.25041, 0.25020, 0.25025, \\
(i=5)&\,\,0.24941, 0.24956, 0.25081, 0.25023, \\
(i=6)&\,\,0.24950, 0.25072, 0.24924, 0.25054, \\
(i=7)&\,\,0.25039, 0.24860, 0.25156, 0.24946.
\end{align*}
The top left value $0.25030$ corresponds to $i=1$ and $\{R_i\}=[R_1,\dots,R_8]=[0, 0, 0, 0, 1, 1, 1, 1]$,
and so on. 
	All values are almost $1/4=0.25$ and this matches Conjecture \ref{conj4}.
%
%
%
\section{$M(f,\mu)$ and the decimal part $\left\{\frac{g(r)}{p}\right\}$ again}\label{sec8}

Let us give observations on $M(f,\mu)$ defined  in the subsection \ref{subsect4.1} and
problems derived from them.
Let us recall  definitions and properties
\begin{gather*}
\hat{{\bm G}}=\{\sigma\in S_n\mid\sum_i m_{j,i}\alpha_{\sigma(i)}=m_j  \,({}^\forall j)\},
\\
\bm{G}_0=\{ \mu\in S_n\mid \alpha_{\mu(i)}=\tilde{\mu}(\alpha_i)\,\,(1\le{}^\forall i\le n)\text{ for }
{}^\exists \tilde{\mu}\in Gal(\mathbb{Q}(f)/\mathbb{Q}) \},
\\
Spl_X(f,\sigma):= 
\{ p\in Spl_X(f)\mid\sum_{i=1}^nm_{j,i}r_{\sigma(i)}\equiv m_j\bmod p\,(1\le{}^\forall j\le t)\},
\\
M(f,\mu)=\{ p\in Spl(f) \mid   \alpha_i\equiv r_{\mu(i)}\bmod\mathfrak p\,\,\,(1\le{}^\forall i\le n) 
\text{ for }{}^\exists\mathfrak p\,|\,p  \},
\\
Spl(f,\sigma)\fallingdotseq Spl(f,\mu)\Leftrightarrow \sigma\hat{G}=\mu\hat{G},
\\
M(f,\sigma)\fallingdotseq M(f,\mu) \Leftrightarrow\sigma G_0= \mu G_0,
\\
Spl(f,\sigma)\fallingdotseq\cup_{\mu\in\sigma \hat{G}}M(f,\mu),
\end{gather*}
where $A\fallingdotseq B$ means that sets $A\setminus B,B\setminus A$ 
are finite.
We note that in case of $Gal(\mathbb{Q}(f)/\mathbb{Q})\cong S_n$, 
$\hat{G}=G_0= S_n$ follows, hence the set $M(f,\mu)$ is equal to $Spl(f)$ for every permutation $\mu$.

$\bullet$
Let us give an example  of  $M(f,\mu)\,(\ne Spl(f))$.
Let $f(x)=x^3-3x+1$ which is abelian, and let roots be $\alpha_1=\alpha, \alpha_2=-\alpha^2 -\alpha + 2, \alpha_3=\alpha^2 - 2$. Then we see that
$\hat{\bm{G}}=S_3$ and  $\bm{G}_0 =\{[1,2,3],[3,1,2],[2,3,1]\}$ where a permutation 
$\sigma$ is identified with $[\sigma(1),\sigma(2),\sigma(3)]$.
Thus we see that $Spl(f)=Spl(f,\sigma)=\cup M(f,\sigma)=M(f,id)\cup M(f,(2,3))$ for
the transposition  $(2,3)\not\in \bm{G}_0$.
Since for a prime $p\in M(f,id)$, we have $r_2\equiv\alpha_2=-\alpha^2-\alpha+2
\equiv-r_1^2-r_1+2\mod\frak{p}$,
primes $p\in Spl(f)$  are separated as follows: 

For $\mu=id.$,  $M(f,\mu)=\{p\in Spl(f)\mid r_2\equiv-r_1^2-r_1+2\bmod p \}
=\{37, 73, 89, 181,  233, 251, 269, 397, 449, 467, 521, 541, 557,  593,
613, 631, 683, 809,\linebreak 811, 919, \dots\}$.

For $\mu=(2,3)$,  $M(f,\mu)=\{p\in Spl(f)\mid r_2\equiv r_1^2-2\bmod p \}
=\{17, 19, 53, 71, \linebreak107, 109, 127, 163, 179, 197, 199, 271, 307,  359,
379, 431, 433, 487, 503, 523, 577, \linebreak647, 701, 719, 739, 757, 773, 827, 829, 863,   881,
883,\dots\}$.

\noindent
Their densities in $ Spl(f)$ seem to be $1/2=1/[\hat{\bm{G}}:\bm{G}_0]$.
The density of $M(f,\mu)$
in $ Spl(f,\mu)$  is conjectured  to be $\frac{[\mathbb{Q}(f):\mathbb{Q}]}{\#\hat{G}}$ for every permutation $\mu$ satisfying $\#M(f,\mu)=\infty$  just before Corollary \ref{cor4.2}.

$\bullet$ We introduce the equivalence relation $\sim$ among polynomials as follows :
For  monic integral polynomials $f(x),g(x)$ of degree $n$, 
\begin{center}
$f(x)\sim g(x)$ if and only if  $g(x) = \delta^n f(\delta x+m)$ 
\end{center}
with $\delta =
\pm1$ and $m\in\mathbb{Z}$ holds.   
Denote the roots of $f(x)$ by $\alpha_i$  $(i= 1,\dots,n)$ and assume that
any root $\alpha_i$ is not a rational integer.
Then $\beta_i:=\delta(\alpha_i-m)$ are roots of $g(x)$, and for local roots $r_i$ of $f(x)$ at
$p\in Spl(f)=Spl(g)$ $r'_i:= \delta(r_i-m)$ are roots of $g(x)\equiv 0 \bmod p$,
and $0<r_1-m\le\dots\le r_n-m<p$  if $p$ is sufficiently large.
Hence writing $R_i:=r'_i$ in the case of $\delta=1$, otherwise $R_i := r'_{n-i+1}$,
we see  that   $R_i$ $(i=1,\dots,n)$  are local roots of 
$g(x)$ if $p$ is sufficiently large.  
Then we see that neglecting small primes
\begin{align*}
M(g,\mu)&=\{p\in Spl(g)\mid \beta_i\equiv R_{\mu(i)}\bmod \mathfrak{p}\,(1\le {}^\forall i\le n)
\text{ for }
{}^\exists\mathfrak{p}\mid p\}
\\
&=
\left\{
\begin{array}{ccc}
M(f,\mu) & \text{if}&\delta=1,
\\
M(f,{\mu'}) &\text{if}&\delta=-1,
\end{array}
\right.
\end{align*}
where $\mu'(i):=n+1-\mu(i)$.
Thus the set of $M(f,\mu)$ $(\mu\in S_n)$  depends on    the equivalence class of polynomials.
In case of $f(x)=x^3-3x+1, x^4 + 2x^3 + 3x^2 - 3x + 1, x^4 + 3x^2 - 2x + 2, x^4 - x^3 + 2x^2 + x + 1,
x^4+x^3+x^2+x+1, (x^2)^2 + x^2 - 1, (x^2)^2 + 1, x^4 + 4x^3 + 2x^2 - 4x - 1=(x^2+2x)^2-2(x^2+2x)- 1$ which exhaust all types (with respect to $Gal(\mathbb{Q}(f)/\mathbb{Q})$ and the existence of 
non-trivial  linear relations among roots) of
polynomials of degree 3,4 with $[\hat{\bm{G}}:\bm{G}_0]>1$,
it is likely that   sets $M(f,\mu)$ $(\mu\in S_n)$ with $\#M(f,\mu)=\infty$ characterize the above 
equivalence class among polynomials $g(x)$ under assumptions that $\mathbb{Q}(g)=\mathbb{Q}(f)$,  
$\deg g=\deg f$ and $f,g$ have the same  linear relations among roots.
When does the set of $M(f,\mu)$ $(\mu\in S_n)$ characterize the equivalence class of polynomials above ?

$\bullet$ About the computer experiment on the problem on p.39 : 
Let us consider the density of primes $p\in Spl(f)$ satisfying 
\begin{equation*}
\left\{\frac{g_1(r)}{p}\right\}<\left\{\frac{g_2(r)}{p}\right\} \text{ for some root } r  \text{ of } 
f(x)\equiv 0 \bmod p,
\end{equation*}
where the notation $\{\,\,\}$ denotes the decimal part as usual.

In case of $\deg g_1=\deg g_2 = 1$, Proposition \ref{prop7.8} below says that
the above condition is equivalent to, writing $g_i(x)=a_ix+b_i$ $(i=1,2)$ 
$$
(r_1/p,\dots, r_n/p)\in\{(x_1,\dots,x_n)\mid \{a_1x_i\} < \{a_2x_i\}({}^\exists i)\},
$$
and Conjecture 2 is applicable  (cf. Subsection \ref{sec8.1}).
Let us consider the case of $\deg g_1>1 $ or $\deg g_2 > 1$,
for which Conjecture 2 is helpless.

Let $f(x)$ be an irreducible polynomial of degree $n$  and 
a polynomial $F(x)$ of degree $m$ the minimal polynomial of an algebraic integer $\beta$ satisfying $\mathbb{Q}(f)=\mathbb{Q}(\beta)$.
Let $\alpha_1,\dots,\alpha_n$ be roots of $f(x)$ and write
$$
\alpha_i = c_i(\beta)\quad(c_i(x)\in\mathbb{Q}[x], \deg c_i < m).
$$
\begin{lem}\label{lem7.2}
Let a  prime $p\in Spl(f)$ be sufficiently large and $r$  a root of $F(x)\equiv0\bmod p$.
Then all roots of  $f(x)\equiv0\bmod p$ are given by 
$c_i(r)\bmod p$
$(i=1,\dots,n)$.
\end{lem}
\proof{
If a prime $p\in Spl(f)$ is sufficiently large, then $\frak{p}:=(\beta-r,p)$ is a prime ideal of $\mathbb{Q}(f)=\mathbb{Q}(\beta)$ over $p$, hence we see that $f(c_i(r))\equiv f(c_i(\beta))\bmod\mathfrak{p}=f(\alpha_i) =0$, i.e. $f(c_i(r))\equiv 0\bmod p$.
If $c_i(r)\equiv c_j(r)\bmod  p$ holds, then we have $\alpha_i=c_i(\beta) \equiv c_i(r)\bmod\mathfrak{p}\equiv c_j(r)\equiv \alpha_j$,
therefore the prime ideal $\mathfrak{p}$ divides $\alpha_i-\alpha_j$.
The number of such prime ideals is finite.

\qed
}

 Let us consider  the density of the complement set, that is
 the problem to look for  the density of primes $p\in Spl(f)$ satisfying 
$$
\left\{\frac{g_1(r)}{p}\right\}>\left\{\frac{g_2(r)}{p}\right\} \text{ for every root } r  \text{ of } 
f(x)\equiv 0 \bmod p
$$
 for not necessarily monic   polynomials $g_1(x),g_2(x)$ over $\mathbb{Z}$.
By Lemma \ref{lem7.2} the above inequality is equivalent to
$$
\left\{\frac{g_1(c_{i}(r) )}{p}\right\}>\left\{\frac{g_2(c_{i}(r))}{p}\right\}\quad (i = 1,\dots n)
$$
 for some root  $r$ of $F(x)\equiv 0 \bmod p$ except finitely many primes.
Defining rational numbers $g_{l,i,k} $ by
$$
g_l(c_i(\beta))=\sum_{k=0}^{m-1}g_{l,i,k}\beta^k\quad(l=1,2),
$$
we expect that under the assumption that $g_{l,i,k}\in\mathbb{Z}$
for all $l,i,k$,

the density of primes $p\in Spl(f)$ satisfying 
$$
\left\{\frac{g_1(r)}{p}\right\}>\left\{\frac{g_2(r)}{p}\right\} \text{ for every root } r  \text{ of } f(x)\equiv 0 \bmod p
$$ 
is equal to the volume of  the set
$$
\mathcal{D}=\left\{\bm{x}\in[0,1)^{m-1}\left|\left\{\sum_{k=1}^{m-1}g_{1,i,k}x_k\right\}> \left\{\sum_{k=1}^{m-1}g_{2,i,k}x_k\right\}
(1\le{}^\forall i\le n)\right.\right\}.
$$ 
Here the reason why the term for $k=0$ is omitted is due to  Proposition \ref{prop7.8}
below.
The computer experiment supports the above  in the case 
that $f(x)$ is the Galois polynomial $ x^3 -3x+1$ with roots $\beta:=\alpha,-\alpha^2-\alpha+2,\alpha^2-2$, which implies that all coefficients $g_{l,i,k}$ are integral.

The above leads us to   the following conjecture :
\begin{quote}
for an irreducible
(not necessarily Galois)
 polynomial $f(x)$
the sequence of vectors 
 $$
\left (\left\{\frac{r}{p}\right\}, \dots,\left\{\frac{r^{n-1}}{p}\right\}\right)
 $$
 is uniformly distributed in $[0,1)^{n-1}$,
   where $r$ runs over $n$ roots of $f(x)\bmod p$ with $p\in Spl(f)$.
\end{quote}
In case that some coefficient $g_{l,i,k}$ is not integral, is the density still given by the volume of a certain figure?
For example, for $f(x)=x^3-4x^2-9x+4$, 
which gives  the subfield of degree $3$ in $\mathbb{Q}(1^{\frac{1}{43}})$, conjugates
of  a root $\alpha$ are $c_1(\alpha):=\alpha,c_2(\alpha):=-\alpha^2/2+3\alpha/2+5,c_3(\alpha):=\alpha^2-5\alpha/2-1$. Hence, for $g(x) \equiv x^2-x\bmod(2\mathbb{Z}x+2\mathbb{Z}x^2)$, polynomials $g(c_i(x))$ are integral, but for $g(x)\equiv x,x^2\bmod(2\mathbb{Z}x+2\mathbb{Z}x^2)$,
they are not necessarily integral. 
For polynomials in the second group, the density is not equal to the volume of $\mathcal{D}$ experimentally. 
There is no algebraic integer $\beta\in\mathbb{Q}(f)$ such that $\mathbb{Z}[\beta]$ contains all roots $c_i(\alpha)$ of $f(x)$.

The above conjecture implies, applying to the domain $\{\bm{x}\in[0,1)^{n-1}\mid a\le x_k\le b\} $
  the uniform distribution of the sequence $\left\{\frac{r^k}{p}\right\}$ $(p\in Spl(f))$ 
where $r$ runs $n$ distinct roots of $f(x)\equiv0\bmod p$ and $k$ is a fixed integer
with $1\le k \le n-1$.
We can not replace $Spl(f)$ by smaller sets $M(f,\sigma)$.

$\bullet$
In this paragraph, set $f(x) = x^4+1$ whose 
roots are $\alpha_1 = \frac{1+\sqrt{-1}}{\sqrt{2}}, \alpha_2 = \frac{1-\sqrt{-1}}{\sqrt{2}}=-\alpha_1^3, 
\alpha_3 = -\frac{1-\sqrt{-1}}{\sqrt{2}}=\alpha_1^3, \alpha_4 = -\frac{1+\sqrt{-1}}{\sqrt{2}}=-\alpha_1$,
where linear relations among roots over rationals are spanned by $\alpha_1 + \alpha_4=
\alpha_2 + \alpha_3=0$.
It is easy to see that $\#\hat{\bm{G}}=8,\#\bm{G}_0=4$ and the transposition  $(3,4)$ is  in $\hat{\bm{G}}\setminus\bm{G}_0$,
and $\mathfrak{D}(f,\sigma)=\{(x_1,\dots,x_4) \mid 0\le x_1\le\dots\le x_4\le1,x_1+x_4=x_2+x_3=1\}$
except finite points for $\sigma\in\hat{\bm{G}}$,
which is identified to $\{(x_1,x_2) \mid 0\le x_1\le x_2\le\frac{1}{2}\}$ and $\{(x_3,x_4) \mid
\frac{1}{2}\le x_3\le x_4\le1\}$ by projections $(x_1,\dots,x_4)\mapsto (x_1,x_2),(x_3,x_4)$, respectively.

For at most quadratic polynomials $g_1(x),g_2(x)\in\mathbb{Z}[x]$, let us consider
\begin{align*}
M_i(X):=\{p\in Spl_X(f) \mid \{ \scalebox{0.99}{$\frac{g_1(r_i)}{p} $} \}
<  \{\scalebox{0.99}{$\frac{g_2(r_i)}{p}$}\} \}\quad (i =1,\dots,4),
\end{align*}
where $r_i$'s are local roots of the polynomial $f(x)$ for the prime $p$ 
and $\{a\}$ denotes the decimal part of $a$, and write $M(X):=(M_1(X),\dots,M_4(X))$
and $\#M(X):=(\#M_1(X),\dots,\#M_4(X))$ simply.
\noindent
For  non-zero integers $m,n$, the set $I_{m,n}$ of $x\in(0,1)$ satisfying $\{mx\} < \{nx\}$ is a union of intervals.
Hence, for linear polynomials $g_1(x)= mx,g_2(x)=nx$ we see
\begin{align*}
M_i(X)&=\left\{p\in Spl_X(f)\mid\left\{g_1(r_i)/p\right\}<\left\{g_2(r_i)/p\right\}\right\}
\\
&=\left\{p\in Spl_X(f)\mid r_i/p\in I_{m,n}\right\},
\end{align*}
and the density $\lim_{X\to\infty}\frac{\#M_i(X)}{\#Spl_X(f)}$ is described by the domain of
$\{\bm{x}\in(0,1)^4\mid x_i\in I_{m,n}\}$ by Conjecture.
For example,  for polynomials $g_1(x)=nx,g_2(x)=2nx$ $(n\ne0,n\in\mathbb{Z})$, 
the computer experiment suggests 
$$
\frac{\#M(X)}{\#Spl_X(f)}\to
\left\{
\begin{array}{ll}
 (\frac{n + 1}{2n}, \frac{n + 1}{2n}, \frac{n -1}{2n},\frac{n -1}{2n})&\text{ if } 2\nmid n,
\\[2mm]
 (\frac{n + 1}{2n}, \frac{n -1}{2n}, \frac{n + 1}{2n}, \frac{n -1}{2n})&\text{ if } 2\mid n,
\end{array}
\right.
$$
which is equal to 
$$
\frac{1}{\frac{1}{8}}(vol(D_1),\dots,vol(D_4)),
$$
where
\begin{align*}
D_i = \{(x_1,x_2)\mid 0\le x_1 \le x_2 \le \frac{1}{2},\{g_1(x_i)\}<\{g_2(x_i)\}\}\quad(i=1,2),
\\
D_i = \{(x_3,x_4)\mid \frac{1}{2} \le x_3 \le x_4 \le 1,\{g_1(x_i)\}<\{g_2(x_i)\}\}\quad(i=3,4),
\end{align*}
and the denominator $\frac{1}{8}$ is the volume of the whole spaces $\{(x_1,x_2) \mid 
0\le x_1\le x_2\le \frac{1}{2}\}$, $\{(x_3,x_4)\mid \frac{1}{2}\le x_3 \le x_4 \le 1\}$,
which are identified with $\{(x_1,\dots,x_4)\mid 0 \le  x_1\le \dots\le  x_4\le 1,x_1+x_4=x_2+x_3=1\}$ as above.
These match the experiment above quite well and Conjecture \ref{conj2}.

\noindent
Other cases, i.e. cases of $\deg(g_1(x))$ or $\deg(g_2(x))=2$ are under the assumption that $\deg(g_1(x)) \le 2, \deg(g_2(x)) \le 2, g_1(0)
\linebreak[3]=g_2(0)=0$,
 (i)  $\deg(g_1(x)) = 2$  and $g_2(x) = nx\,(n\ne0)$, 
(ii) $\deg(g_1(x))=\deg(g_2(x)) = 2$ and $g_1(x)-g_2(x)=nx,(n\ne0)$, or
(iii)  $\deg(g_1(x))=\deg(g_2(x)) \linebreak[3]= \deg(g_1(x)-g_2(x))=2$,
and in case of (i),(ii) the density looks like
$$
\left\{
\begin{array}{ll}
(\frac{3n-2}{6n},\frac{3n-1}{6n},\frac{3n+1}{6n},\frac{3n+2}{6n})&\text{ if }2\nmid n,
\\[2mm]
(\frac{3n-2}{6n},\frac{3n+2}{6n},\frac{3n-2}{6n},\frac{3n+2}{6n}  )&\text{ if }2\mid n,
\end{array}
\right.
$$
and in case of (iii) it looks like $(\frac{1}{2},\frac{1}{2},\frac{1}{2},\frac{1}{2})$.
The author does not know how to elucidate this, in particular even the reason of 
$$
\lim_{X\to\infty}\frac{\#M_1(X) + \#M_4(X)}{\#Spl_X(f)}=\lim_{X\to\infty}\frac{\#M_2(X) + \#M_3(X)}{\#Spl_X(f)}=1.
$$

How about the case of polynomials of degree $3$ ?
Since we have $r_3\equiv r_1^3 \bmod p$ for $p \in M(f,id)$ as in  Example 2 below,
the inequality $\{\frac{g_1(r_1)}{p}\}<\{\frac{g_2(r_1)}{p}\}$ is equivalent to
$\{\frac{r_3}{p}\}<\{\frac{r_1^2}{p}\}$ for $g_1(x) = x^3,g_2(x)=x^2$, for example.
The density of such primes in $M(f,id.)$ looks like $\frac{1}{3}$.

It may be appropriate to start by $Spl(f,\sigma)$ or $M(f,\sigma)$ instead of $Spl(f)$ for a  general polynomial.

\vspace{2mm}
$\bullet$ Next, let us consider polynomials in many variables :
Write
\begin{align*}
M_X(f,\mu)&:=\{p\in M(f,\mu) \mid p<X\}
\\
&=\{ p\in Spl(f) \mid p<X,   \alpha_i\equiv r_{\mu(i)}\bmod\mathfrak p\,\,\,(1\le{}^\forall i\le n) \}
\end{align*}
Let $g(\bm{x}),h(\bm{x})\in\mathbb{Z}[x_1,\dots,x_n]$.
For  a permutation $\mu$,
what is the density
\begin{align*}
&D_f(g,h;\mu)
\\
:=
&\lim_{X\to\infty}
\frac
{
\#\left\{p \in M_X(f,\mu) \left|\left\{\frac{ g(r_{\mu(1)},\dots,r_{\mu(n)})}{p}\right\}<
\left\{\frac{ h(r_{\mu(1)},\dots,r_{\mu(n)})}{p}\right\}\right.\right\}
}
{
\#M_X(f,\mu)
}\,?
\end{align*}

First, let us give a remark. Let a polynomial $g(x_1,\dots,x_n)\in \mathbb{Q}[x_1,\dots,x_n]$ satisfy $g(r_1,\dots,r_n)\equiv 0\bmod p$ for infinitely many primes $p\in M(f,\mu)$.
Taking an element $\alpha\in\mathbb{Q}(f)$ such that $\mathbb{Q}(\alpha)=\mathbb{Q}(f)$ and write
$\alpha_i=h_i(\alpha)$ for a polynomial $h_i(x)\in \mathbb{Q}[x]$.
Then we have $g(r_1,\dots,r_n)\equiv g(\alpha_{\mu^{-1}(1)},\dots,\alpha_{\mu^{-1}(n)})=g(h_{\mu^{-1}(1)}(\alpha)
,\dots, h_{\mu^{-1}(n)}(\alpha))\bmod \mathfrak{p}$ for infinitely many primes $p\in M(f,\mu)$.
Therefore we see $g(h_{\mu^{-1}(1)}(\alpha)\dots, h_{\mu^{-1}(n)}(\alpha))=0$, that is the polynomial \newline
 $g(h_{\mu^{-1}(1)}(x),\dots, h_{\mu^{-1}(n)}(x))$ is divisible by the minimal polynomial of $\alpha$.

Let us take a polynomial  $g_1(\bm{x}) \in \mathbb{Q}[x_1,\dots,x_n]$ such that $ g_1(\alpha_1,\dots,\alpha_n)
=g(\alpha_1,\dots,\alpha_n)$.
Thus, we see that $g(r_{\mu(1)},\dots,r_{\mu(n)})\equiv g_1
(r_{\mu(1)},\dots,r_{\mu(n)}) \bmod p$ for a prime $p\in M(f,\mu)$.
In general, $g(r_1,\dots,r_n)\equiv g_1(r_1,\dots,r_n) \bmod p$ $({}^\forall p\in Spl(f))$
is not  necessarily true.
If we may suppose that  $g_1(\bm{x})$ and similarly $h_1(\bm{x})$ are linear with integer coefficients, 
then we have only to refer to Conjecture $2$ if $Spl(f,\mu)=M(f,\mu)$.
A sufficient condition to $g_1(\bm{x})$ being linear is that  $\mathbb{Q}(f)=\mathbb{Q}(\alpha_1)$ 
and $t = 1$, because they imply that
$[\mathbb{Q}(f):\mathbb{Q}]= n$ and $\dim\mathbb{Q}[\alpha_1,\dots,\alpha_n,1]=n$,
that is $\mathbb{Q}(f)=\mathbb{Q}[\alpha_1,\dots,\alpha_n,1]$.
Otherwise, what happens? Is the density the volume of some figure?
We give below two examples $f(x) = x^3+2$, which is non-Galois, and $f(x) = x^4+ 1$ in the case of $t=2>1$.

\noindent
{\bf Example 1} 
Let $f(x) = x^3 +a_2x^2 +a_1x+a_0\in\mathbb{Q}[x]$ be an irreducible polynomial with
roots $\alpha_1,\alpha_2,\alpha_3$ and suppose that  the extension $\mathbb{Q}(\alpha_1)/\mathbb{Q}$ is not a Galois extension.
Then for a polynomial $g(\bm{x})\in\mathbb{Q}[x_1,x_2,x_3]$, there are rational numbers $c_i$ such that $g(\alpha_1,\alpha_2,\alpha_3)=c_0 + c_1\alpha_1 + c_2\alpha_1^2+
c_3\alpha_2 + c_4\alpha_1\alpha_2 + c_5\alpha_1^2\alpha_2$. 
Note that the product $\alpha_1\alpha_2$  replaced by $\alpha_2^2$ by  the 
equation $\alpha_1\alpha_2 =-a_1-a_2(\alpha_1+\alpha_2)-\alpha_1^2-\alpha_2^2$.
As remarked at the beginning of this section, we have $M(f,\mu)=Spl(f)$ for every permutation $\mu$,
in particular $Spl(f)=M(f,id)$.

Let us make computer experiments with respect to the  density $D_f(g,h):=D_f(g,h;id)$
above for  $f(x)=x^3+2$.
We  suppose that polynomials $g(\bm{x}),h(\bm{x})$ are of the following ``reduced''  type
\begin{gather*}
g(\bm{x})=g_1x_1+g_2x_2+g_3x_1^2+g_4x_2^2+ g_5x_1^2x_2,
\\
h(\bm{x})=h_1x_1+h_2x_2+h_3x_1^2+h_4x_2^2+ h_5x_1^2x_2
\end{gather*}
with rational  integer coefficients $g_i,h_i$ without constant term by Proposition \ref{prop7.8}.
Denote its homogeneous part of degree $i$ by $g_i(\bm{x}),h_i(\bm{x})$.
For  polynomials $g(\bm{x}),h(\bm{x})$ with $g(\bm{x})\ne0,h(\bm{x})\ne0$ and $g(\bm{x})\ne h(\bm{x})$ as above, let us give observations.
If both of $g,h$ are linear, i.e.
$g(\bm{x})=g_1(\bm{x}),h(\bm{x})=h_1(\bm{x})$,
then we have only to refer to Conjecture 2, and so
we suppose that $g(\bm{x})$ or $h(\bm{x})$ is not linear.

\begin{enumerate}
\item    
Lemma \ref{lem7.4} implies
$$
D_f(g,h)+D_f(h,g)=1.
$$
\item
{The case of $g(\bm{x}) = g_1(\bm{x})$,  $h_2(\bm{x}) \ne0$ and $g_5 = h_5 = 0$.} 
\begin{equation*}
    D_f(g,h)= \frac{1}{2}+\left\{
    \begin{array}{cl}
    \frac{1}{4g_1}    &\text{ if }g_2=0 \text{ or }g_1=g_2, \\[1mm]
     -\frac{1}{4g_1}    &\text{ if }g_2=2g_1\text{ or }g_1+ g_2=0,\\[1mm]
    0 &\text{ otherwise},
    \end{array}
    \right.
\end{equation*}
which depends only on $g(\bm{x})$.
\item{The case of $g_2(\bm{x})h_2(\bm{x})\ne0$  and $g_5 = h_5 = 0.$}
The density $D_f(g,h)$  is, writing $k_i := g_i - h_i$
\begin{align*}
\frac{1}{2}+
\left\{
\begin{array}{ll}
 -\frac{1}{4k_1} &\text{ if } k_3=k_4=0 \text{ and }(k_2=0\text{ or }k_1=k_2),
\\
\frac{1}{4k_1} &\text{ if } k_3=k_4=0\text{ and }(k_2=2k_1\text{ or } k_1+k_2=0),
\\
0&\text{ otherwise},
\end{array}
\right.
\end{align*}
which depends only on $g(\bm{x})-h(\bm{x})$.
\item
{The case of $g_5\ne0$ or $h_5\ne0$.} Write $k_i := g_i - h_i$.
\begin{enumerate}
\item{The case of $k_3=k_4=k_5=0$.}
The density $D_f(g,h)$  is
\begin{align*}
\frac{1}{2}+
\left\{
\begin{array}{ll}
 -\frac{1}{4k_1} &\text{ if }k_2=0\text{ or } k_1=k_2,
\\
\frac{1}{4k_1} &\text{ if }   k_2=2k_1\text{ or }   k_1+k_2=0,
\\
0&\text{ otherwise}.
\end{array}
\right.
\end{align*}
\item In the case of ($k_3\ne0$ or $k_4\ne0$) and $ k_5=0$, $D_f(g,h)=\frac{1}{2}.$
\item{ The case of $k_5\ne0$ and $g_5\ne0$.} The density $D_f(g,h)$  is
\begin{align*}
\frac{1}{2}+
\left\{
\begin{array}{ll}
 -\frac{1}{4h_1} &\text{ if } h_1\ne0, h_3=h_4=h_5=0 \text{ and }
\\
&\hspace{3mm} ( h_2=0 \text{ or }h_1=h_2),
\\
\frac{1}{4h_1} &\text{ if }   h_1\ne0, h_3=h_4=h_5=0 \text{ and }
\\
&\hspace{3mm} (h_2=2h_1\text{ or }   h_1+h_2=0),
\\
0&\text{ otherwise}.
\end{array}
\right.
\end{align*}
\end{enumerate}
\end{enumerate}
It may be strange that the density seems to be given by the above for ``reduced'' polynomials for 
another polynomial $x^3+x^2+1$ and others. Of course, it is different for general polynomials, for example for $g(\bm{x})=x_1^3$,
$g(\bm{r})\equiv -2\bmod p$ if $f(x)=x^3+2$ and $g(\bm{r})\equiv -r_1^2-1\bmod p$ if $f(x)=x^3+x^2+1$.
The situation is unclear.

\noindent
{\bf Example 2} Set $f(x) = x^4+1$ and let
roots be $\alpha_1 := \frac{1+\sqrt{-1}}{\sqrt{2}}, \alpha_2 := \frac{1-\sqrt{-1}}{\sqrt{2}}=-\alpha_1^3, 
\alpha_3 := -\frac{1-\sqrt{-1}}{\sqrt{2}}=\alpha_1^3, \alpha_4 := -\frac{1+\sqrt{-1}}{\sqrt{2}}=-\alpha_1$ as before, and then the basis $\bm{m}_j$ of $LR_0\cap\mathbb{Z}^4$ 
are $(1,0,0,1),(0,1,1,0)$.
Noting that $\mathbb{Q}(f)=\mathbb{Q}[1,\alpha_1,\alpha_1^2,\alpha_2]$,
we see that for a polynomial $g(\bm{x})$  in $\mathbb{Q}[x_1,\dots,x_4]$, there are rational numbers
$c_i$ so that $g(\alpha_1,\dots,\alpha_4)=
c_0+c_1\alpha_1+c_2\alpha_1^2 + c_3\alpha_2 $.
So, we may assume that $g(\bm{x})=c_0 + c_1x_1+c_2x_1^2+ c_3x_2$.
For a permutation $\mu$, let us determine $M(f, \mu)$.
Since the condition $p\in M(f,\mu)$ is equivalent to $\alpha_i \equiv r_{\mu(i)} \bmod\frak{p}$, for $\sigma\in Gal(\mathbb{Q}(f)/\mathbb{Q})$ satisfying $\sigma(\alpha_{\mu^{-1}(1)})=\alpha_1$, it is equivalent to $\sigma(\alpha_i)\equiv r_{\mu(i)}\bmod\sigma(\frak{p})$, i.e. $\sigma(\alpha_{\mu^{-1}(i)})\equiv r_i\bmod\sigma(\frak{p})$,
 hence we have $M(f,\mu)=M(f,\mu')$ with $\mu'(1)=1$ for $\mu'$ defined by $\alpha_{\mu'(i)}=\sigma(\alpha_{\mu^{-1}(i)})$.
Hence we  assume $\mu(1)=1$. For a prime $p\in M(f,\mu)$, the conditions
$\alpha_1+\alpha_4=0,r_1+r_4=p$ imply $\mu(4)=4$, hence  $\mu(3)=2$ or $3$.
By $\alpha_3 = \alpha_1^3\equiv r_1^3\bmod\frak{p}$, we have $r_{\mu(3)}\equiv r_1^3\bmod p$.
Thus we have, supposing $\mu(1)=1$ $\mu = id.$ or the transposition $(2,3)$
and
$$
M(f,\mu)=\{p\in Spl(f)\mid r_{\mu(3)}\equiv r_1^3\bmod p\}.
$$
Note that $Spl(f) = Spl(f,id.)=M(f,id.)\cup M(f,(2,3))$.
Under the assumptions 
\begin{enumerate}
\item
the densities of $M(f,id.)$ and $M(f,(2,3))$ in $Spl(f)$ are equal to $\frac{1}{2}$,
\item
the densities $D_f(g,h;id.),D_f(g,h;(2,3))$ are equal,
\end{enumerate}
we see that $D_f(g,h;\mu)=\frac{1}{2}(D_f(g,h;id.)+D_f(g,h;(2,3)))$ is equal to
$$
\lim_{X\to\infty}\frac{\#\{p\in Spl_X(f)\mid \{\frac{g(\bm{r})}{p}\}<\{\frac{h(\bm{r})}{p}\}\}}
{\#Spl_X(f)}.
$$
Thus, in case that $g(\bm{x}),h(\bm{x})$ are linear, the density is given by Conjecture \ref{conj2}.
For example, let  $g_1(\bm{x})=2x_1-x_2,g_2(\bm{x})=x_2$; then the computer  experiment suggests 
$D_f(g_1,g_2;\mu)=\frac{1}{2}$,
which match $vol(\{ (x_1,x_2)\mid0<x_1<x_2<\frac{1}{2} ,\{2x_1-x_2\}<\{x_2\}  \})=vol(\{  (x_1,x_2)\mid
0<x_1<x_2<\frac{1}{2} ,x_1<x_2<2x_1 \})=\frac{1}{16}$ and $vol(\{  (x_1,x_2)\mid
0<x_1<x_2<\frac{1}{2} \})=\frac{1}{8}$.

We restrict polynomials to the following type similarly to the above example:
$$
g(\bm{x}) = g_1x_1 + g_2x_1^2 + g_3x_2,
$$
and write $\,g_1(\bm{x}):=g_1x_1 + g_3x_2,g_2(\bm{x}):=g_2x_1^2,$
and suppose  that $g(\bm{x}) \ne 0, h(\bm{x})\ne 0, g(\bm{x})\ne h(\bm{x})$
and either of $g_2$ or $h_2$ is not zero.
Writing $k_i:=h_i - g_i$, the condition $g(\bm{x})\ne h(\bm{x})$ is equivalent to
$k_i\ne 0$ for some $i$.
Then the observation is that, independently of $\mu$ $D_f(g,h)
:=D_f(g,h;\mu)$ $(\mu=id.$ or $(2,3))$ is
\begin{enumerate}
    \item 
$$
D_f(g,h)+D_f(h,g)=1.
$$ 
\item
{The case of $g(\bm{x}) = g_1x_1+g_3x_2,
h(\bm{x}) = h_1x_1+h_2x_1^2+h_3x_2$ with
 $ h_2(\bm{x})\ne0$.}
Then  $D_f(g,h) $ is   
$$
\frac{1}{2} +
\left\{
\begin{array}{ll}
\frac{-1}{3g_3}&\text{ if }  g_1=0,g_3\equiv0\bmod 2,
\\[2mm]
\frac{1}{6g_3} &\text{ if }  g_1=0,g_3\equiv1\bmod 2,
\\[2mm]
\frac{1}{3g_1}&\text{ if }  g_3=0,
\\[2mm]
\frac{-1}{3g_1}&\text{ if }  g_1 +g_3=0,
\\[2mm]
0&\text{ otherwise, i.e. if }
 g_1g_3(g_1 +g_3)\ne0,
\end{array}
\right.
$$
which depends only on $g(\bm{x})$.
\item
{The case of $g(\bm{x}) = g_1x_1+g_2x_1^2+g_3x_2,
h(\bm{x}) = h_1x_1+h_2x_1^2+h_3x_2$ with
$g_2h_2\ne0$.}
Then  $D_f(g,h) $ is
$$
\frac{1}{2}+
\left\{
\begin{array}{ll}
\frac{1}{3k_3}& \text{ if }k_1=0,k_2 =0 , k_3\equiv0\bmod2 ,
\\[2mm]
\frac{-1}{6k_3}&\text{ if } k_1=0,k_2 =0 , k_3\equiv1\bmod2 ,
\\[2mm]
\frac{-1}{3k_1}&\text{ if } k_2=0, k_3=0,
\\[2mm]
\frac{1}{3k_1}&\text{ if } k_2=0,k_1+k_3=0,
\\[2mm]
0&\text{ otherwise, i.e. if }
k_2\ne0\text{ or }k_1k_3(k_1+k_3)\ne0,\end{array}
\right.
$$
which depends only on $g(\bm{x})-h(\bm{x})$.
\end{enumerate}
The above seems to be true for another polynomial $f(x) = (x^2 + 2x)^2 - 2(x^2 + 2x) - 1$  with Galois group 
$\mathbb{Z}/4\mathbb{Z}$, too. 
The basic information is : Let a root of $f(x)$ be $\alpha$ and roots be
$\alpha_1:=\alpha,\alpha_2:=-\alpha^3-3\alpha^2+1,\alpha_3:=\alpha^3+3\alpha^2-3,\alpha_4:=-\alpha-2$.
Then linear relations among roots are spanned by $\alpha_1+\alpha_4=\alpha_2+\alpha_3=-2$, and we see $\hat{\bm{G}}=\langle(1,4),(2,3),(1,2)(3,4)\rangle$ and $r_1+r_4=r_2+r_3=p-2$. 
The condition $Spl(f,\sigma)\ne \emptyset$ is equivalent to $\sigma\in \hat{G}$ and $\hat{\bm{G}}/\bm{G}_0=\langle(1,4)\rangle$. A prime $p\in Spl(f,id)$ is in $M(f,id)$ if and only if $r_2\equiv -r_1^3-3r_1^2+1\bmod p$
(in $M(f,(1,4))$ if and only if $r_2\equiv r_1^3+3r_1^2-3\bmod p$).

Suppose that $g_2h_2\ne0$; then we see $D_f(g,h) = \frac{1}{2}$ in the case of
$g_2\ne h_2$, otherwise $D_f(g,h)=D_f(g-h,l)$ for any polynomial $l(\bm{x})$
with $l_2\ne 0$.


$\bullet$ In this paragraph, for integers $m,p$, we denote the integer $a$ satisfying $a\equiv m \bmod p$ and $0\le a<p$ by $[m]_p$, and let the polynomial $f(x)$ be irreducible and of degree $n\,(>1)$ with roots $\alpha_i$ as usual.
\begin{lem}\label{lem7.3}
Let $a$ be a positive integer.

\noindent
{\em(i)}
For a polynomial  $g(x)\in\mathbb{Z}[x]$,  there are only finitely many primes $p$ satisfies $ [g(r)]_p= a$ (resp. $ [g(r)]_p= p-a$) in the case of $g(x)-a$ ( resp. $g(x)+a$)
being not divisible by $f(x)$  in the ring $\mathbb{Q}[x]$.

\noindent
{\em(ii)}
For  a polynomial  $g(\bm{x})\in\mathbb{Z}[x_1,\dots,x_n]$,
there are only finitely many primes $p\in Spl(f)$ such that $[g(\bm{r})]_p= a$
(resp.  $ [g(r)]_p= p-a$) if  $g(\alpha_{\sigma(1)},\dots,\alpha_{\sigma(n)})\ne a$  (resp.  $g(\alpha_{\sigma(1)},\dots,\alpha_{\sigma(n)})\ne -a$) for any permutation $\sigma$.
Here $\bm{r}=(r_1,\dots,r_n)$
is the vector of  local roots for $p\in Spl(f)$. 
\end{lem}
\proof
{(i)
Suppose that there are infinitely many primes $p$ satisfying $  [g(r)]_p= a$.
Writing $g_0(x)=g(x)-a$, we see that there are infinitely many primes $p$ such that 
there is an integer $r$ satisfying $f(r)\equiv g_0(r) \equiv0\bmod p$.
Since the polynomial $f(x)$ is irreducible and  $g_0(x)$ is not divisible by $f(x)$, 
 there are integral polynomials $h_1(x),h_2(x)$ and a non-zero integer $h$ such that $h_1(x)f(x)+h_2(x)g_0(x)=h$.
So $h$ is divisible by infinitely many primes as above, which is the contradiction.
In the other case, we have only to put $g_0(x)=g(x)+a$.

\noindent
(ii) Suppose that there are infinitely many primes $p\in Spl(f)$ such that  $ [g(\bm{r})]_p=a$.
Then there is a permutation $\sigma$ such that $\alpha_{\sigma(i)}\equiv r_i
\bmod\frak{p}$ for  infinitely many primes $p\in Spl(f)$, where $\frak{p}$ is a
prime ideal over $p$ in $\mathbb{Q}(f)$, which implies $g(\alpha_{\sigma(1)},\dots,\alpha_{\sigma(n)})\equiv a\mod\frak{p}$, that is  $g(\alpha_{\sigma(1)},\dots,\alpha_{\sigma(n)})=a$.
It is the contradiction.
The proof of the other case is similar.
\qed
}
\begin{lem}\label{lem7.4}
Let $a$ be a positive integer.
{\em(i)} Let $g_i(x)$ $(i=1,2)$ be
integral polynomials for which $g_2(x)-g_1(x)-a$ is not divisible by  $f(x)$.
Then there are only finitely many primes $p\in Spl(f)$ such that
$$
 [g_2(r)]_p = [g_1(r)]_p+a\quad\text{for some root $r$ of }f(x)\equiv0\bmod p.  
$$

{\em(ii)} Let  $g_i(\bm{x})$ $(i=1,2)$ be polynomials in $\mathbb{Z}[x_1,\dots,x_n]$ and $a$ a positive integer.
Suppose that  $g_2(\alpha_{\sigma(1)},\dots,\alpha_{\sigma(n)}) - g_1(\alpha_{\sigma(1)},\dots,\alpha_{\sigma(n)})\ne a$ for every permutation $\sigma$.
Then there are only finitely many primes $p\in Spl(f)$ such that
$$
 [g_2(\bm{r})]_p= [g_1(\bm{r})]_p+a.
$$ 
\end{lem}
\proof
{(i) Suppose that there are infinitely  many primes $p\in Spl(f)$, which satisfies the equality in question.
By writing $g(x):=g_2(x)-g_1(x)$, the supposition means that there are infinitely many primes $p$ such that  $[g(r)]_p \equiv [g_2(r)]_p-[g_1(r)]_p=a\bmod p$, 
hence $[g(r)]_p=a$.
The previous lemma completes the proof.

\noindent
(ii) Assume that there are infinitely many primes $p$ satisfying $[g_2(\bm{r})]_p - [g_1(\bm{r})]_p = a$.
Writing $g(\bm{x}):=g_2(\bm{x})-g_1(\bm{x}) $, we see that $[g(\bm{r})]_p
\equiv[g_2(\bm{r})]_p - [g_1(\bm{r})]_p \equiv a\bmod p$, 
hence  $[g(\bm{r})]_p=a$. The previous lemma completes the proof.

\qed
}
\begin{prop}\label{prop7.8}
Let $a, b$ be integers.

\noindent
{\em (i)}
Let  $g_i(x)$ $(i=1,2)$ be
polynomials over $\mathbb{Z}$ such  that none of $g_1(x),g_2(x)$, $g_2(x)-g_1(x)$ are congruent to a constant modulo  $f(x)$.
Then for a prime $p$ with an integer $r$ satisfying $f(r)\equiv 0 \bmod p$,
the inequalities $\{\frac{g_1(r)+a}{p}\}<\{\frac{g_2(r)+b}{p}\}$ and $\{\frac{g_1(r)}{p}\}
<\{\frac{g_2(r)}{p}\}$ are equivalent except a finitely many primes.

\noindent
{\em (ii)}
Let  $g_i(\bm{x})$ $(i=1,2)$ be
polynomials in $\mathbb{Z}[x_1,\dots,x_n]$ such  that none of $g_1(\bm{x}),g_2(\bm{x})$, $g_2(\bm{x})-g_1(\bm{x})$ are   rational integers
at $\bm{x}=(\alpha_{\sigma(1)}, \dots,\alpha_{\sigma(n)})$ for every permutation $\sigma$.
Then for a prime $p\in Spl(f)$,
the inequalities $\{\frac{g_1(\bm{r})+a}{p}\}<\{\frac{g_2(\bm{r})+b}{p}\}$ and 
$\{\frac{g_1(\bm{r})}{p}\} <\{\frac{g_2(\bm{r})}{p}\}$ are equivalent except a finitely many primes.
\end{prop}
\proof
{(i)
For simplicity, the notation $A\overset{f}{\Leftrightarrow}B$ means that statements 
$A$ and $B$ are equivalent except finitely many primes $p$ in this proof.
We see that 
\begin{align*}
&\left\{\frac{g_1(r)+a}{p}\right\}<\left\{\frac{g_2(r)+b}{p}\right\} 
\\
\Leftrightarrow
&
\left\{\frac{[g_1(r)]_p+a}{p}\right\}<   \left\{\frac{[g_2(r)]_p+b}{p}\right\}
\\
\overset{f}{\Leftrightarrow}
&
[g_1(r)]_p+a<[g_2(r)]_p+b
\\
&\hspace{5mm}\text{ (since $0<[g_1(r)]_p+a,[g_2(r)]_p+b<p$ by Lemma \ref{lem7.3})}
\\
\Leftrightarrow
&
[g_2(r)]_p\le [g_1(r)]_p<[g_2(r)]_p+b-a, \text{ or }
\\
&    [g_1(r)]_p<\min([g_2(r)]_p,[g_2(r)]_p+b-a) 
\\
\overset{f}{\Leftrightarrow}
&[g_1(r)]_p<\min([g_2(r)]_p,[g_2(r)]_p+b-a)\quad\text{ (by Lemma \ref{lem7.4})},
\end{align*}
which is equivalent to $[g_1(r)]_p<[g_2(r)]_p$, i.e. $\{\frac{g_1(r)}{p}\}
 <\{\frac{g_2(r)}{p}\}$ in the case of  $b-a\ge0$.
Suppose  $b-a<0$; then the above inequality is 
$ [g_1(r)]_p<[g_2(r)]_p+b-a$. 
Lemma \ref{lem7.4} says that there are only  finitely many
primes $p$ satisfying $[g_2(r)]_p+b-a\le [g_1(r)]_p\le [g_2(r)]_p$, 
hence we have $[g_1(r)]_p<\min([g_2(r)]_p,[g_2(r)]_p+b-a)\overset{f}{\Leftrightarrow}[g_1(r)]_p<[g_2(r)]_p$.
Thus,
 we have $\{\frac{g_1(r)+a}{p}\}<\{\frac{g_2(r)+b}{p}\}   \overset{f}{\Leftrightarrow} [g_1(r)]_p<[g_2(r)]_p\,(\Leftrightarrow\{\frac{g_1(r)}{p}\}<\{\frac{g_2(r)}{p}\}) $.
The other case is similar.
\qed
}

\subsection{Polynomial without non-trivial linear relations among roots}\label{sec8.1}
In this subsection, assume that  the polynomial  $f(x)$ is of degree $\tilde{n}\,(>1)$ and has no non-trivial linear relations among roots, i.e. $t:=\dim LR=1$, and for linear forms $g_1(x)=mx,g_2(x)=nx$ with  distinct non-zero integers $m,n$, 
we study the density
$$
\lim_{X\to\infty}\frac{\#\{p\in Spl_X(f)\mid \{\frac{g_1(r)}{p}\}<\{\frac{g_2(r)}{p}\} \text{ for }^\exists r \text{ s.t. }f(r)\equiv0\bmod p \}}
{\#Spl_X(f)},
$$
where $\{\frac{g_i(r)}{p}\}$ denotes the decimal part of $\frac{g_i(r)}{p}$
as usual.

The condition $t=1$ implies $\hat{\bm{G}}=S_n$, and so $Spl(f,{}^\forall\sigma)
=Spl(f)$.
We see that
\begin{align*}
&\#\{p\in Spl_X(f)\mid \left\{\frac{mr}{p} \right\}  < \left\{\frac{nr}{p}\right\}\text{ for }^\exists r \in\mathbb{Z}\text{ s.t. }f(r)\equiv0\bmod p \}
\\
=&\#Spl_X(f)-\#\{p\in Spl_X(f)\mid \left\{\frac{nr}{p} \right\}  \le \left\{\frac{mr}{p}\right\}\text{ for }^\forall r \text{ s.t. }f(r)\equiv0\bmod p \},
\end{align*}
hence
\begin{align*}
d_{m,n}:=&\lim_{X\to\infty}\frac{\#\{p\in Spl_X(f)\mid \{\frac{mr}{p} \}  < \{\frac{nr}{p}\}\text{ for }^\exists r \text{ s.t. }f(r)\equiv0\bmod p \}}
{\#Spl_X(f)}
\\
=&\,1-\lim_{X\to\infty}\frac{\#\{p\in Spl_X(f)\mid \{\frac{nr}{p} \}  < \{\frac{mr}{p}\}\text{ for }^\forall r \text{ s.t. }f(r)\equiv0\bmod p \}}{\#Spl_X(f)}
\\
=&\,
1 - \frac{vol(I_{n,m}^{\tilde{n}}\cap\hat{\mathfrak{D}}_{\tilde{n}})}{vol(\hat{\mathfrak{D}}_{\tilde{n}})}
\quad(\text{under Conjecture \ref{conj2}}),
\end{align*}
since the number of the prime $p$ satisfying  $\{\frac{nr}{p} \}  = \{\frac{mr}{p}\}$ is finite
and  we write 
$$
I_{n,m} := \{x\in(0,1) \mid\{nx\}<\{mx\}\}.
$$
Let  us evaluate the ratio of volumes.
Note that the dimension $\dim I_{n,m}^{\tilde{n}}\cap\hat{\mathfrak{D}}_{\tilde{n}}$ is less than or equal to ${\tilde{n}}-1$ and the volume is considered as the $({\tilde{n}}-1)$-dimensional set under the assumption $\dim LR=1$.
We know $vol(\hat{\mathfrak{D}}_{\tilde{n}})=\frac{\sqrt{{\tilde{n}}}}{{\tilde{n}}!}$.
On the other hand, we see that
\begin{align*}
&\{(x_1,\dots,x_{\tilde{n}})\mid0\le x_i\le1, x_i\in I_{n,m}\,(^\forall i),\sum x_i\in\mathbb{Z}\}
\\
=&\mathop{\cup}_{\sigma\in S_{\tilde{n}}}\{(x_1,\dots,x_{\tilde{n}})\mid0\le x_{\sigma(1)}\le \dots\le x_{\sigma({\tilde{n}})}\le1,
 x_i\in I_{n,m}\,(^\forall i),\sum x_i\in\mathbb{Z}\},
\end{align*}
and the permutation induces the orthogonal transformation on $\mathbb{R}^{\tilde{n}}$, and the dimension of the intersection of the subsets corresponding to distinct permutation is less than ${\tilde{n}}-1$,
hence we see 
$$
vol(I_{n,m}^{\tilde{n}}\cap\hat{\mathfrak{D}}_{\tilde{n}})
=\frac{1}{{\tilde{n}}!}vol(
\{(x_1,\dots,x_{\tilde{n}})\in(0,1)^{\tilde{n}}\mid x_i\in I_{n,m},\sum x_i\in\mathbb{Z}\}
)
$$
and  
\begin{equation}\label{eq85}
\frac{vol(I_{n,m}^{\tilde{n}}\cap\hat{\mathfrak{D}}_{\tilde{n}})}{vol(\hat{\mathfrak{D}}_{\tilde{n}})}
=\frac{1}{\sqrt{{\tilde{n}}}}vol(
\{(x_1,\dots,x_{\tilde{n}})\in(0,1)^{\tilde{n}}\mid x_i\in I_{n,m},\sum x_i\in\mathbb{Z}\}
).
\end{equation}
In the rest of this subsection we will show the following :
\begin{thm} \label{thm7.1}
For  integers  $M,N$  satisfying $0<N<M$ and $(M,N)=1$, we have
\begin{align*}
&\frac{vol(I_{N,M}^{\tilde{n}}\cap\hat{\mathfrak{D}}_{\tilde{n}})}{vol(\hat{\mathfrak{D}}_{\tilde{n}})}
\\
=& \frac{1}{2^{\tilde{n}}}-
\frac{B_{\tilde{n}}(0)}{\tilde{n}!(MN(M-N))^{\tilde{n}}}
\biggl\{
3(MN(M-N))^{\tilde{n}}+M^{2\tilde{n}}+N^{2\tilde{n}}+(M-N)^{2\tilde{n}} 
\\
&\hspace{8mm}  -2((M-N)N)^{\tilde{n}} -2M^{\tilde{n}}\left((M-N)^{\tilde{n}}+N^{\tilde{n}}\right)
\biggr\}
,
\end{align*}
where $B_{\tilde{n}}(x)$ is the Bernoulli polynomial.
\end{thm}
The above matches computer experiment of $d_{M,N}$.
We note that 
\begin{itemize}
\item[(i)]
the ratio of volumes is independent of $M,N$ for  odd $\tilde{n}>1$,
since $B_{\tilde{n}}(0)=0$ for odd $\tilde{n}>1$.
\item[(ii)]
Let $f(x)$ be quadratic; then  the above shows  $\frac{vol(I_{N,M}^2\cap\hat{\mathfrak{D}}_2)}{vol(\hat{\mathfrak{D}}_{2})}=0$.
As a matter of fact, we show,  more strongly in Proposition \ref{prop7.3} that 
$I_{N,M}^2\cap\hat{\mathfrak{D}}_2$ is a finite set.
The set $\{p\in Spl_X(f)\mid \{\frac{nr}{p} \}  < \{\frac{mr}{p}\}\text{ for }^\forall r \text{ s.t. }f(r)\equiv0\bmod p \}$
is also a finite one as  in Proposition \ref{prop7.5}.
\end{itemize}
\subsubsection{Bernoulli polynomial}
In this subsection, let us  recall several facts on Bernoulli polynomials.
For non-negative integer $m$, we write
\begin{align*}
\Delta_m(f(x))&:=\frac{d^mf(x)}{dx^m}|_{x=0},
\end{align*}
for example
$$
\Delta_m(e^{rx})=r^m.
$$
We define 
Bernoulli polynomials $B_n(t)$  by
\begin{align*}
\sum_{n=0}^\infty B_n(t)\frac{x^n}{n!}&:=\frac{xe^{tx}}{e^x-1}=\left(\sum_{k=0}^\infty \frac{B_k(0)}{k!}x^k\right)
\left(\sum_{m=0}^\infty\frac{(tx)^m}{m!}\right),
\end{align*}
that is
\begin{align*}
\Delta_n\left(\frac{xe^{tx}}{e^x-1}\right)=
B_n(t)=\sum_{i=0}^n\binom{n}{i}B_{n-i}(0)t^i,\quad B_m(0)=\Delta_m\left(\frac{x}{e^x-1}\right).
\end{align*}
In this paper, the notation  $B_n$ denotes the Bernoulli polynomial, and so we use the notation  $B_n(0)$ 
for the  Bernoulli number.

We see that, for a real number $a$ and a non-negative integer $m$
\begin{align*}
&\frac{1}{m+1}(B_{m+1}(a+1)-B_{m+1}(a))
\\
=&\frac{1}{m+1}
\left(\Delta_{m+1}\left(\frac{xe^{(a+1)x}}{e^x-1}\right)-\Delta_{m+1}\left(\frac{xe^{ax}}{e^x-1}\right)\right)
\\
=&\frac{\Delta_{m+1}(xe^{ax})  }{m+1}
\\
=&\,a^m,
\end{align*}
hence for integers $ n, m\ge0$,
\begin{equation*}
\sum_{l=0}^n (a+l)^m =\frac{1}{m+1}(B_{m+1}(a+n+1) - B_{m+1}(a)).
\end{equation*}
Therefore, for a positive real number $a$ and a positive integer $n>1$, we see that
\begin{align}\nonumber
&\sum_{l\ge0} \max(0, a-l)^{n-1}
\\\nonumber
=&\sum_{0\le l \le \lfloor a\rfloor}(a - l)^{n-1}
\\\nonumber
=&\sum_{0\le l \le \lfloor a\rfloor}(\{a\}+\lfloor a\rfloor  - l)^{n-1}
\\\nonumber
=&\frac{1}{n}(B_n(\{a\}+\lfloor a\rfloor+1)-B_n(\{a\}))
\\ \label{eq86}
=&\frac{1}{n}(B_n(a+1)-B_n(\{a\})),
\end{align}
which we need to prove the theorem. 

\subsubsection{Proof of Theorem \ref{thm7.1}}
It is convenient to introduce the notation $\fallingdotseq$ : 
For sets $S,T$, we write $S\fallingdotseq T$ if and only if
$\#(S\setminus T) + \#(T\setminus S)<\infty$.
\begin{prop}\label{prop7.1}
Let $n,m$ be non-zero distinct integers, and write $n=dN$, $m = dM$ for $d:=(n,m)$.
Then we see that
\begin{equation}\label{eq87}
I_{n,m}= \{x\in(0,1) \mid\{nx\}<\{mx\}\}
\fallingdotseq \cup_{k=0}^{d-1}\left\{\left.\frac{k+\epsilon}{d} \right| \epsilon\in I_{N,M}\right\},
\end{equation}
and under the assumptions $M>N>0$ and $(M,N)=1$
\begin{equation}\label{eq88}
I_{N,M}\fallingdotseq \cup_{K=1}^{M-1}
\left\{
\begin{array}{ll}
(\frac{1}{M}(K-\frac{L-M}{N}),\frac{K}{M}) &\text{ if }L>M,
\\[2mm]
(\frac{1}{M}(K-\frac{M-L}{M-N}),\frac{K}{M}) &\text{ if }L<M,
\end{array}
\right.
\end{equation} 
where the integer $L$ is defined by
$$
L\equiv NK\bmod M,N\le L \le M+N-1.
$$
\end{prop}
\proof{
The condition $x\in I_{n,m}$ means  $\{nx\}<\{mx\}$, i.e. $\{dNx\}<\{dMx\}$, hence
$\{dx\}\in I_{N,M}$. We have only to  write $dx = k+\{dx\}$ for an integer $k$ for the proof of \eqref{eq79 }.
Let us show the equation \eqref{eq88}.
Take a number $x\in I_{N,M}$, which implies $Mx\not\in\mathbb{Z}$ and write $x=\frac{K-\epsilon}{M}$
$(1\le K\le M, K\in\mathbb{Z}, 0<\epsilon<1)$.
We note that the condition $K=M$ is equivalent to $L=M$ by the definition of $L$ and the assumption $M>N,(M,N)=1$.
Then it is easy to see that $\{Mx\}=1-\epsilon$ and $\{Nx\}= \{\frac{NK-N\epsilon}{M}\}=
\{\frac{L-N\epsilon}{M}\}$.
The inequalities $0<L-N\epsilon<2M$ show that $\{Nx\}=\frac{L-N\epsilon}{M}$ or $
\frac{L-N\epsilon}{M}-1$ according to  $ L-N\epsilon<M$ or $ L-N\epsilon\ge M$.
Let us  note that

(i) $\frac{L-M}{N}<\frac{2M-L}{M-N}$ which
follows from the assumption  $L<M+N$,  
 
 (ii) $\frac{L-N\epsilon}{M}-1<1-\epsilon\Leftrightarrow
 \epsilon<\frac{2M-L}{M-N}$,
and

 (iii) $\frac{L-N\epsilon}{M}<1-\epsilon\Leftrightarrow
\epsilon<\frac{M-L}{M-N}$.

\noindent
First, suppose $L<M$ : By $0< L-N\epsilon<M$, we have   $\{Nx\}=\frac{L-N\epsilon}{M}$,
hence $\{Nx\}<\{Mx\}$ $\Leftrightarrow$ $\frac{L-N\epsilon}{M} <1-\epsilon$ $\Leftrightarrow$ 
$0<\epsilon<\frac{M-L}{M-N}$ by (iii). 
Next, suppose $L\ge M$:
Under the supposition $\{Nx\}<\{Mx\}$, the condition $L-N\epsilon<M$ implies 
$\{Mx\}=1-\epsilon>\{Nx\}=\frac{L-N\epsilon}{M}$,
which is equivalent to $(0<)\,\epsilon<\frac{M-L}{M-N}$, which contradicts the assumption $L\ge M$.
Hence the condition $\{Nx\}<\{Mx\}$ implies $L-N\epsilon\ge M$, i.e. $\epsilon\le\frac{L-M}{N}$.
Conversely, suppose $\epsilon\le\frac{L-M}{N}$, which is equivalent to $L-N\epsilon\ge M$
and implies $\epsilon< \frac{ 2M-L}{M-N}$ by (i) , 
hence  $\{Nx\}=\frac{L-N\epsilon}{M}-1$,
and the property (ii) imply $\{Nx\}<\{Mx\}$.   
If $K=M$ happens, then we have $L=M$ and the contradiction $\{Nx\}=\{\frac{M-N\epsilon}{M}\}=1-\frac{N}{M}\epsilon
>1-\epsilon=\{Mx\}$.

\qed
}

Hereafter, we assume that integers $M,N$ satisfy $M>N>0$ and $(M,N)=1$ as in Theorem \ref{thm7.1}.

We write, for an integer $K$
\begin{equation*}
\tau(K):=
\left\{
\begin{array}{ll}
\frac{L-M}{N}&\text{ if }L\ge M,
\\[2mm]
\frac{M-L}{M-N}&\text{ if }L < M,
\end{array}
\right.
\end{equation*}
where $L$ is the integer defined by $L\equiv NK\bmod M$, and $ N\le L \le M+N-1$.
It is easy to see $\tau(0)=0,\tau(1)=1$
By Proposition \ref{prop7.1},
we have 
\begin{equation}\label{eq89}
I_{N,M}\fallingdotseq \cup_{K=1}^{M-1}S_{\tau(K)}
\quad\text{ with }S_{\tau(K)}:=\left(\frac{K-\tau(K)}{M},\frac{K}{M}\right).
\end{equation}
\begin{prop}\label{prop7.2}
The mapping $\tau$ is the bijection from $\{1,2,\dots,M-1\}$ to $\Sigma:=\Sigma_1\cup\Sigma_2$,
where $\Sigma_1:=\{\frac{1}{N},\dots,\frac{N-1}{N}\}$,   
$\Sigma_2:=\{\frac{1}{M-N},\dots,\frac{M-N}{M-N}\}$
and $\tau(K)=K$ holds in the ring $\mathbb{Z}/M\mathbb{Z}$. 
The set $\Sigma_1$ is empty if $N=1$.
Moreover, we have
\begin{align*}
K-\tau(K)=\left\{
\begin{array}{lll}
\frac{MA_1}{N} & (1\le {}^\exists A_1\le N-1)& \text{ if } \tau(K)\in\Sigma_1,
\\[2mm]
\frac{MA_2}{M-N}& (0\le {}^\exists A_2 \le M-N-1)& \text{ if } \tau(K)\in\Sigma_2,
\end{array}
\right.
\end{align*}
and $K-\tau(K)$ $(K=1,\dots,M-1)$ are distinct.
\end{prop}
\proof{
It is clear that the value $\tau(K)$ is  in the set  $\Sigma$ and $\#\Sigma=M-1$.
Since $(M,M-N)=(M,N) =  1$, we see that $\tau(K)=\frac{L}{N}=K$ in $\mathbb{Z}/M\mathbb{Z}$,
which implies the injectivity of $\tau$.
Write $NK=L+aM$ $(a\in \mathbb{Z})$.
Suppose that $L>M$; it implies $K-\tau(K)=\frac{M}{N}(a+1)$, and from inequalities $ N\le L \le M+N-1$
follows $\frac{N(K-1)}{M}-1+\frac{1}{M}\le a\le \frac{K-1}{M}N$,  hence $-1< a<N$.
If $a=N-1$  occurs, then $L=NK-(N-1)M$ holds and then the assumption $L>M$ implies $NK>NM$,
which contradicts $K\le M-1$.  
Next, suppose that $L<M$; then we see $K-\tau(K)=\frac{M}{M-N}(K-1-a)$ and inequalities $ N\le L \le M+N-1$
imply $K -\frac{N}{M}(K-1)-1\le K-1-a\le(1-\frac{N}{M})K+\frac{N-1}{M}$,
hence $0\le K-1-a \le(1-\frac{N}{M})(M-1)+\frac{N-1}{M}<M-N-1+2\frac{N}{M}-\frac{1}{M}< M-N+1$.
If $K-1-a=M-N$  occurs, then the assumption $L<M$ implies $NK-aM<M$, hence $K-M+N=a+1>\frac{NK}{M}$,
i.e. $(M-N)K>(M-N)M$, which is the contradiction.
Next, assume that $K_1-\tau(K_1)=K_2-\tau(K_2)$ for $1\le K_1<K_2\le M-1$; it implies the contradiction 
$1\le K_2-K_1=\tau(K_2)-\tau(K_1)<1$.

\qed 
}

When we change  the domain of the mapping $\tau$ from $\{1,\dots,M-1\}$ to  $\{0,1,\dots,M-1\}$,
the set $\{0,1,\dots,M-1\}$ corresponds , through $K\to K-\tau(K)$ to
$\Sigma':=\Sigma'_1\sqcup\Sigma'_2$ for
$$
\Sigma'_1:=\left\{\frac{MA_1}{N}\mid0\le A_1\le N-1\right\},
\Sigma'_2:=\left\{\frac{MA_2}{M-N}\mid0\le A_2\le M-N-1\right\},
$$
where $K=0,1$ are supposed to correspond to $0\in\Sigma'_1,0\in\Sigma'_2$, respectively.
Now we see that
\begin{align}\nonumber
&vol(
\{(x_1,\dots,x_{\tilde{n}})\in(0,1)^{\tilde{n}}\mid x_i\in I_{N,M},\sum x_i\in\mathbb{Z}\}
)
\\\nonumber
=\,&\sum_{K_1,\dots,K_{\tilde{n}}=1}^{M-1}vol(
\{(x_1,\dots,x_{\tilde{n}})\in(0,1)^{\tilde{n}}\mid x_i\in S_{\tau(K_i)},\sum x_i\in\mathbb{Z}\}
\\\nonumber
\intertext{writing $x_i=\frac{K_i-\epsilon_i}{M}$ $(0<\epsilon_i<\tau(K_i))$}
=\,&
\frac{1}{M^{{\tilde{n}}-1}}
\sum_{K_i=1\atop(1\le i \le \tilde{n})}^{M-1}vol
\left(
\left\{
(\epsilon_1,\dots,\epsilon_{\tilde{n}})\in(0,1)^{\tilde{n}}\left|
\begin{array}{l}
0<\epsilon_i<\tau(K_i)  ,\sum \epsilon_i\in\mathbb{Z},
\\\label{eq90}
\sum \epsilon_i\equiv\sum K_i \bmod M
\end{array}
\right.
\right\} 
\right).
\end{align}
Before the  evaluation of the volume of the above,
let us show the special case of $\tilde{n}=2$.
\begin{prop}\label{prop7.3}
$I_{N,M}^{2}\cap\hat{\mathfrak{D}}_2$ is a finite set.
\end{prop}
\proof{
By \eqref{eq89}, we have only to sow that $(\cup_{K=1}^{M-1}S_{\tau(K)})\cap\hat{\mathfrak{D}}_2$ is empty.
Suppose that it is not empty and for $x_i=\frac{K_i-\tau(K_i)+\epsilon_i}{M}$ with $0<\epsilon_i<\tau(K_i),1\le K_i
\le M -1$ $(i=1,2)$, assume that $x_1+x_2\in\mathbb{Z}$. 
The equalities $0\le\sum_{i=1}^2 \frac{K_i-\tau(K_i)}{M}<x_1+x_2<\sum_{i=1}^2\frac{K_i}{M}<2$ imply $x_1+x_2=1$,
hence $K_1+K_2=M+\tau(K_1)-\epsilon_1+\tau(K_2)-\epsilon_2$.
It implies $M<K_1+K_2<M+\tau(K_1)+\tau(K_2)\le M+2$, that is $K_1+K_2=M+1$,
hence 
\begin{equation}\label{eq91}
\tau(K_1)+\tau(K_2)-\epsilon_1-\epsilon_2=1.
\end{equation}
Let $L_i$ $(i=1,2)$ be integers defining $\tau(K_i)$, that is $L_i\equiv NK_i\bmod M, N\le L_i\le M+N-1$;
then we have $L_1+L_2\equiv N(K_1+K_2)\equiv N(M+1) \equiv N\bmod M$.
Writing $L_1+L_2=N+kM$ $(k\in\mathbb{Z})$, we see $N \le L_2=N+kM-L_1\le M+N-1$ and $L_1\le kM\le L_1+M-1
\le 2M+N-2$, which implies $k =1,2$.
First, suppose $k=1$; then $L_1+L_2=N+M$ and $0\le L_1-N=M-L_2$, hence $\tau(K_1)=\frac{M-L_1}{M-N}$ by definition of $\tau$, similarly  $\tau(K_2)=\frac{M-L_2}{M-N}$, which imply $\tau(K_1)+\tau(K_2)=1$ and so the contradiction $\epsilon_1+\epsilon_2=0$ by \eqref{eq91}.
Next, suppose $k=2$; we see that $L_1=(N+M-L_2) +M>M$ and $\tau(K_1)=\frac{L_1-M}{N}$, and similarly
$\tau(K_2)=\frac{L_2-M}{N}$.
They also imply the contradiction $\tau(K_1)+\tau(K_2)=1$.

\qed
}
\begin{prop}\label{prop7.4}
Let $f(x)$ be a monic integral polynomial without rational roots, and let $n$ be a non-zero integer, and $A$ a positive number.
If a prime $p$ is sufficiently large, then for any integer $r$ satisfying $f(r)\equiv0\bmod p$, $\{\frac{nr}{p}\}\ge\frac{A}{p}$ holds.
\end{prop}
\proof{
Suppose that for a  prime $p$ and an integer $r$,  $f(r)\equiv0\bmod p$ and $\{\frac{nr}{p}\}<\frac{A}{p}$ hold.
We may assume that $0\le r<p$, and
writing
$$
nr=kp+R \quad(k,R\in\mathbb{Z},0\le R<p),
$$
we see that $|k|p<(|n|+1)p$, and $0\le R<A$ by the assumption $\left\{\frac{nr}{p}\right\}<\frac{A}{p}$, hence
 the possibility of $k,R$ is finite.
Assume that there are infinitely many such primes $p$; then for some $k_0,R_0$ there are infinitely many primes $p$ such that $nr=k_0p+R_0$ with $f(r)\equiv0\bmod p$.
Then, for the integral polynomial $g(x):=n^{\deg f}f(\frac{x}{n})$, we see that $g(R_0)\equiv g(nr)\equiv n^{\deg f}f(r)
\equiv0\bmod p$, that is $g(R_0)=0$, which is the contradiction.
\qed
}
\begin{prop}\label{prop7.5}
Let $f(x)=x^2-ax+b$ be an integral irreducible polynomial, and let $m,n$ be non-zero distinct integers.
Then there are only finitely many  primes $p$ such that $f(x)\equiv0\bmod p$ has two roots $r_1,r_2\in\mathbb{Z}$ such that $r_1\not\equiv r_2\bmod p$
\begin{equation}\label{eq92}
\left\{\frac{nr_i}{p}\right\}<\left\{\frac{mr_i}{p}\right\}\quad (i=1,2).
\end{equation}
\end{prop}
\proof{
Suppose that  the inequalities \eqref{eq2} for a sufficiently large prime $p$; then by $r_1+r_2\equiv a\bmod p$ we have 
$\left\{\frac{nr_2}{p}\right\}<\left\{\frac{mr_2}{p}\right\}\Leftrightarrow$
$\left\{\frac{n(a-r_1)}{p}\right\}<\left\{\frac{m(a-r_1)}{p}\right\}$$\Leftrightarrow\left\{\frac{na}{p}-\left\{\frac{nr_1}{p}\right\}\right\}<\left\{\frac{ma}{p}-\left\{\frac{mr_1}{p}\right\}\right\}\Leftrightarrow$
$ \left\{\frac{nr_1}{p}\right\}-\frac{na}{p}>\left\{\frac{mr_1}{p}\right\}\linebreak[3]-\frac{ma}{p}$
by Proposition \ref{prop7.4}.
Suppose that there are infinitely many primes satisfying \eqref{eq92}.
Then, writing $nr_1=kp+R,\,mr_1=k'p+R'$ $(k,k'\in\mathbb{Z},0\le R,R'<p)$, the above inequality and the assumption \eqref{eq2} imply $R-na> R'-ma$,
 i.e. $(n-m)a<R-R'=p(\{\frac{nr_1}{p}\}-\{\frac{mr_1}{p}\})<0$.
 Therefore there is  an integer $r_0$ with $(n-m)a<r_0<0$ such
 that $(n-m)r_1\equiv R-R' \equiv r_0\bmod p$ for infinitely many primes $p$.
Writing $g(x)=(n-m)^{\deg f}f(\frac{x}{n-m})\in\mathbb{Z}$, we have $g(r_0)\equiv (n-m)^{\deg f}f(r_1)\equiv0\bmod p$
 for infinitely many primes $p$, which contradicts the irreducibility of the polynomial $f(x)$.

\qed
}


Let us begin the evaluation of the volume.
Write $\tilde{M}(x):=\max(0,x)$ for simplicity.
\begin{lem}
Let $a,b,m$ be real numbers and suppose  $a\le b$ and $m\ne -1$.
Then we have
\begin{equation}
\int_a^b \tilde{M}(t - w)^mdw=\frac{1}{m+1}\left(\tilde{M}(t-a)^{m+1}-\tilde{M}(t-b))^{m+1}\right).
\end{equation}
\end{lem}
\proof{
By writing $t-w = W$, the left hand is equal to
\begin{align*}
\int_{t-a}^{t-b}\tilde{M}(W)^m(-dW)
&=-\left(
\int_{-\infty}^{t-b}\tilde{M}(W)^mdW - \int_{-\infty}^{t-a}\tilde{M}(W)^mdW
\right)
\\
&=\int_{0}^{t-a}\tilde{M}(W)^mdW - \int_{0}^{t-b}\tilde{M}(W)^mdW
\\
&=
\frac{1}{m+1}\left( \tilde{M}(t-a)^{m+1}-\tilde{M}(t-b)^{m+1}\right).
\end{align*}
\qed
}

Here we introduce the notation, for $x,\tau_1,\dots,\tau_n\in\mathbb{R}$
\begin{align*}
W_l(x;\tau_1,\dots,\tau_n)&:=
\sum_{0\le k \le n}\sum_{1\le i_1<\dots<i_k\le n}(-1)^k\tilde{M}(x-\tau_{i_1}-\dots-\tau_{i_k})^l
\\
&=\sum_{S}(-1)^{|S|}\tilde{M}(x-\sum_{i\in S}\tau_{i})^l,
\end{align*}
where $S$ runs over all $2^n$ subsets of $\{1, \dots,n\}$ and $|S|=\#S$.
It is obvious that
\begin{equation*}
W_l(x;\tau_1,\dots,\tau_{n})=W_l(x;\tau_{\sigma(1)},\dots,\tau_{\sigma(n)})
\end{equation*}
for any permutation $\sigma$. 
 Moreover, we see
\begin{align}\nonumber
&W_l(x;\tau_1,\dots,\tau_{n})
\\ \label{eq94}
=\,&W_l(x;\tau_1,\dots,\tau_{n-1})-W_l(x-\tau_n;\tau_1,\dots,\tau_{n-1}).
\end{align}
In particular, the equation  \eqref{eq94} says that
\begin{equation}\label{eq95}
W_l(x;\tau_1,\dots,\tau_n)=0 \text{ if }{}^\exists\tau_i=0.
\end{equation}
\begin{prop}
For positive numbers $\tau_1,\dots,\tau_n$, the volume  of  the set
\begin{equation}\nonumber
V_n(x;\tau_1,\dots,\tau_n):=\{(x_1,\dots,x_n)\mid 0\le x_i \le \tau_i,\sum_{i=1}^n x_i\le x\}
\end{equation}
is
\begin{align}\label{eq96}
&\frac{1}{n!}W_n(x;\tau_1,\dots,\tau_n).
\end{align}
\end{prop}
\proof{
We use the induction on $n$.
Write $U_n(x)=vol(V_n(x;\tau_1,\dots,\tau_n))$ simply.
For $n = 1$, it is easy to see that
$$
U_1(x)=\tilde{M}(x)-\tilde{M}(x-\tau_1)=\frac{1}{1!}W_1(x;\tau_1).
$$
Supposing \eqref{eq96}, we see that $U_{n+1}(x)$ is 
\begin{align*}
&\int_{x_{n+1}=0}^{\tau_{n+1}} \dots\int_{x_{1}=0,\atop\sum x_i\le x}^{\tau_{1}} 1\,dx_1\dots dx_{n+1}
\\
=&\int_{x_{n+1}=0}^{\tau_{n+1}} U_n(x-x_{n+1})dx_{n+1}
\\
=&\int_{x_{n+1}=0}^{\tau_{n+1}}
\frac{1}{n!}\sum_{k=0}^n(-1)^k
\left\{ 
\sum_{1\le i_1<\dots<i_k\le n}\tilde{M}(x-x_{n+1}-\tau_{i_1}-\dots-\tau_{i_k})^n
\right\}dx_{n+1}
\\
=&\frac{1}{(n+1)!}\sum_{k=0}^n(-1)^k\sum_{1\le i_1<\dots<i_k\le n}
\left\{ 
\tilde{M}(x-\tau_{i_1}-\dots-\tau_{i_k})^{n+1}\right.
\\
&\hspace{50mm}\left.
-\tilde{M}(x-\tau_{i_1}-\dots-\tau_{i_k} -\tau_{n+1})^{n+1}
\right\}
\\
=&
\frac{1}{(n+1)!}\sum_{k=0}^n (-1)^k   \{ \sum_{1\le i_1<\dots<i_k\le n}
\tilde{M}(x-\tau_{i_1}-\dots-\tau_{i_k})^{n+1}\}
\\
&+
\frac{1}{(n+1)!}\sum_{j=1}^{n+1} (-1)^j   \{ \sum_{1\le i_1<\dots<i_j=n+1}
\tilde{M}(x-\tau_{i_1}-\dots-\tau_{i_j})^{n+1}\}
\\
=& \,U_{n+1}(x).
\end{align*}
\qed}

\noindent
{\bf{Example}}
It is easy to check, by drawing the figure
$$
vol(V_2(x;\tau_1,\tau_2))=\frac{1}{2}\{\tilde{M}(x)^2-\tilde{M}(x-\tau_1)^2-\tilde{M}(x-\tau_2)^2+\tilde{M}(x-\tau_1-\tau_2)^2\}.
$$
We note that the proposition implies the following equation :
 \begin{align*}
\sum_{S} (-1)^{|S|}    
(c-\sum_{i\in S}\tau_i)^{l}
=\,
\left\{
\begin{array}{cll}
0&\text{ if }&0\le l\le n-1,
\\
n!\prod_i\tau_i&\text{ if }&l=n,
\end{array}
\right.
\end{align*}
where $\tau_1,\dots,\tau_n$ and  $c$ are variables and $S$ runs over all $2^n$ subsets of $\{1,\dots,n\}$.

\begin{prop}\label{prop7.7}
For positive numbers $\tau_1,\dots,\tau_n$, the $n-1$-dimensional volume  of 
the set
\begin{equation}
S_n(x;\tau_1,\dots,\tau_n):=\{(x_1,\dots,x_n)\mid 0\le x_i \le \tau_i,\sum_{i=1}^n x_i= x\}
\end{equation}
is
\begin{equation}\label{eq98}
\frac{\sqrt{n}}{(n-1)!}W_{n-1}(x;\tau_1,\dots,\tau_n).
\end{equation}
\end{prop}
\proof{
Using  orthnormal basis 
\begin{align*}
{\bm{f}}_1&:=\frac{1}{\sqrt{1+1^2}}(1,-1,0,\dots,0),
\\
\vdots
\\
{\bm{f}}_k&:=\frac{1}{\sqrt{k+k^2}}(1,\dots,1,-k,0,\dots,0),
\\
\vdots
\\
{\bm{f}}_{n-1}&:=\frac{1}{\sqrt{(n-1)+(n-1)^2}}(1,\dots,1,-(n-1)),
\\
{\bm{f}}_n&:=\frac{1}{\sqrt{n}}(1,\dots,1),
\end{align*}
and the transformation $(y_1,\dots,y_n)=(x_1,\dots,x_n)({}^t\bm{f}_1,\dots,{}^t\bm{f}_n)$,
that is $\sqrt{n}y_n=x_1+\dots+x_n$,
we see, denoting $S_n(x;\tau_1,\dots,\tau_n)$ by $S_n(x)$  simply
$$
vol(V_n(x;\tau_1,\dots,\tau_n)=\int_{\bm{x}\in V_n(x)}1\,d\bm{x}=
\int_{\sqrt{n}y_n\le x}vol(S_n(\sqrt{n}y_n))dy_n,
$$
hence $U_n'(x)=\sqrt{n}^{-1}vol(S_n(x))$, i.e. $vol(S_n(x))=\sqrt{n}\,U_n'(x)$.

\qed
}

\noindent
{\bf{Example}}
The length of $S_2(x;\tau_1,\tau_2)$
is easily checked to be
\begin{align*}
\sqrt{2}\{\tilde{M}(x)-\tilde{M}(x-\tau_1)-\tilde{M}(x-\tau_2)+\tilde{M}(x-\tau_1-\tau_2)\}=\sqrt{2}U_2'(x).
\end{align*}

We note that the volume of $S_n(x;\tau_1,\dots,\tau_n)$ is positive if and only if $0<x<\sum \tau_i$ holds,
and that $vol(S_n(x;\tau_1,\dots,\tau_n))=vol(S_n(x;\tau_{\sigma(1)},\dots,\tau_{\sigma(n)}))$ for every permutation
$\sigma\in S_n$, since a permutation induces the orthogonal transformation.
In particular, for positive numbers $\tau_1,\dots,\tau_n$ and a real number $x$,
$W_{n-1}(x;\tau_1,\dots,\tau_n)>0$ if and only if $0<x<\sum \tau_i$ holds.
\vspace{5mm}

Now we see that, using \eqref{eq85}, \eqref{eq82 }
\begin{align*}
&\frac{vol(I_{N,M}^{\tilde{n}}\cap\hat{\mathfrak{D}}_{\tilde{n}})}{vol(\hat{\mathfrak{D}}_{\tilde{n}})}
\\
=\,&\frac{1}{\sqrt{{\tilde{n}}}    }vol(
\{(x_1,\dots,x_{\tilde{n}})\in(0,1)^{\tilde{n}}\mid x_i\in I_{N,M},\sum x_i\in\mathbb{Z}\}
)
\\
=\,&
\frac{1}{\sqrt{{\tilde{n}}} M^{{\tilde{n}}-1} }\sum_{1\le K_1,\dots,K_{\tilde{n}}<M}
\sum_{l\in\mathbb{Z}}vol(S_n(\sum K_i-Ml;\tau_1,\dots,\tau_n))
\\
=\,&
\frac{1}{(\tilde{n}-1)! M^{{\tilde{n}}-1} }    \sum_{1\le K_1,\dots,K_{\tilde{n}}<M}\sum_{l\in\mathbb{Z}}
W_{\tilde{n}-1}\left(\sum K_i-Ml;\tau_1,\dots,\tau_{\tilde{n}}\right)
\end{align*}
where $\tau_j=\tau(K_j)$, and note that
$$
W_{\tilde{n}-1}\left(\sum K_i-Ml;\tau_1,\dots,\tau_{\tilde{n}}\right)>0
\Leftrightarrow 0<\sum K_i-Ml<\sum \tau_i,
$$
which implies  $0\le l \le \tilde{n}$,
and continuing the above
\begin{align*}
=\,&
\frac{1}{(\tilde{n}-1)! M^{{\tilde{n}}-1} }    \sum_{0\le K_1,\dots,K_{\tilde{n}}<M}\sum_{l=0}^{\tilde{n}}
W_{\tilde{n}-1}\left(\sum K_i-Ml;\tau_1,\dots,\tau_{\tilde{n}}\right)
\\
\intertext{by $\tau(0)=0$ and \eqref{eq95} }
=\,&
\frac{1}{(\tilde{n}-1)! M^{{\tilde{n}}-1} }    \sum_{0\le K_1,\dots,K_{\tilde{n}}<M}\sum_{l=0}^{\tilde{n}}
\sum_{S}(-1)^{|S|}\tilde{M}\left(\sum K_i-Ml -\sum_{i\in S}\tau_i\right)^{\tilde{n}-1}
\end{align*}
where $S$ runs over all subsets of $\{1,\dots,\tilde{n}\}$.
Thus we see that 
\begin{align*}
&(\tilde{n}-1)! M^{{\tilde{n}}-1}\frac{vol(I_{N,M}^{\tilde{n}}\cap\hat{\mathfrak{D}}_{\tilde{n}})}{vol(\hat{\mathfrak{D}}_{\tilde{n}})}
\\
=\,&\sum_{S}(-1)^{|S|} \sum_{0\le K_i\le M-1\atop(1\le i \le\tilde{n})}\sum_{l=0}^{\tilde{n}}
\tilde{M}\left(\sum_{i\not\in S} K_i +\sum_{j\in S}(K_j-\tau_j)-Ml\right)^{\tilde{n}-1},
\end{align*}
where $K_j-\tau_j$ runs over $\Sigma':=\Sigma'_1\sqcup\Sigma'_2$ as stated after Proposition \ref{prop7.2} and continuing
\begin{align*}
=\,&\sum_{k=0}^{\tilde{n}}(-1)^{k}\binom{\tilde{n}}{k}
 \sum_{0\le K_i\le M-1(i> k), \atop\tau'_j \in \Sigma'(j\le k)}\sum_{l=0}^{\tilde{n}}
\tilde{M}\left(\sum_{i>k} K_i +\sum_{j\le k}\tau'_j-Ml\right)^{\tilde{n}-1}
\\
=\,&{M^{\tilde{n}-1}}\sum_{k=0}^{\tilde{n}}(-1)^{k}\binom{\tilde{n}}{k}
 \sum_{0\le K_i\le M-1(i> k), \atop\tau'_j \in \Sigma'(j\le k)}\sum_{l=0}^{\tilde{n}}
\tilde{M}\left(\frac{\sum_{i>k} K_i +\sum_{j\le k}\tau'_j}{M}-l\right)^{\tilde{n}-1}
\\
=\,&
\frac{M^{\tilde{n}-1}}{\tilde{n}}\sum_{k=0}^{\tilde{n}}(-1)^{k}\binom{\tilde{n}}{k}
 \sum_{0\le K_i\le M-1(i> k), \atop\tau'_j \in \Sigma'(j\le k)} 
\\
 &\hspace{10mm}\left(
 B_{\tilde{n}}\left(\frac{\sum_{i>k} K_i +\sum_{j\le k}\tau'_j}{M}+1\right)
-B_{\tilde{n}}\left(\left\{\frac{\sum_{i>k} K_i +\sum_{j\le k}\tau'_j}{M}\right\}\right) 
  \right),
\end{align*}
by \eqref{eq86}.
For an integer $k$ $(0\le k \le \tilde{n})$, we see that
\begin{align*}
 T_{1,k}&:=\sum_{0\le K_i\le M-1(i> k), \atop\tau'_j \in \Sigma'(j\le k)} 
B_{\tilde{n}}\left(\frac{\sum_{i>k} K_i +\sum_{j\le k}\tau'_j}{M}+1\right)
\\
&=
\sum_{0\le K_i\le M-1(i> k), \atop\tau'_j \in \Sigma'(j\le k)} 
\Delta_{\tilde{n}}\left(\frac{x}{e^x-1}e^{\left(\frac{\sum_{i>k} K_i +\sum_{j\le k}\tau'_j}{M}+1\right)x}  \right)
\\
&=
\Delta_{\tilde{n}}\left(\frac{xe^x}{e^x-1}\cdot
\left(\sum_{0\le K\le M-1}e^{\frac{K }{M}x}\right)^{\tilde{n}-k}
\cdot
\left(\sum_{\tau' \in \Sigma'} e^{\frac{\tau'}{M}x}\right)^k\right),
\end{align*}
where 
\begin{align*}
\sum_{0\le K\le M-1}e^{\frac{K }{M}x}=\frac{e^x  -   1}{e^{\frac{x}{M}}-  1  }
\end{align*}
and
\begin{align*}
\sum_{\tau' \in \Sigma'} e^{\frac{\tau'}{M}x}&=\sum_{A_1=0}^{N-1}e^{\frac{A_1}{N}x}
+\sum_{A_2=0}^{M-N-1}e^{\frac{A_2}{M-N}x}
\\
&=\frac{e^x-1}{e^{\frac{x}{N}}-1}
+\frac{e^x-1}{e^{\frac{x}{M-N}}-1}.
\end{align*}
Therefore we see that 
\begin{align*}
T_{1,k}&=\Delta_{\tilde{n}}\left( 
\frac{xe^x}{e^x-1}\cdot\left(  \frac{e^x-1}{e^{\frac{x}{M}}-1}     \right)^{\tilde{n}-k}    
\cdot\left( \frac{e^x-1}{e^{\frac{x}{N}}-1}
+\frac{e^x-1}{e^{\frac{x}{M-N}}-1}  \right)^k
\right),
\end{align*}
and so
\begin{align*}
&\sum_{k=0}^{\tilde{n}}(-1)^k\binom{\tilde{n}}{k}T_{1,k}
\\
=&
\Delta_{\tilde{n}}\left( \frac{xe^x}{e^x-1}
\sum_{k=0}^{\tilde{n}}(-1)^k\binom{\tilde{n}}{k}
\cdot\left(  \frac{e^x- 1}{e^{\frac{x}{M}}-1}     \right)^{\tilde{n}-k}    
\cdot\left( \frac{e^x-1 }{e^{\frac{x}{N}}-1}
+\frac{e^x-1}{e^{\frac{x}{M-N}}-1}  \right)^k
\right)
\\
=&
\Delta_{\tilde{n}}\left( 
\frac{xe^x}{e^x-1}
\left(
  \frac{e^x-1}{e^{\frac{x}{M}}-1}
  -
  \frac{e^x-1}{e^{\frac{x}{N}}-1}
-\frac{e^x-1}{e^{\frac{x}{M-N}}-1}
\right)^{\tilde{n}}
\right)
\\
=&
\Delta_{\tilde{n}}
\left(
f(x)g(x)^{\tilde{n}}
\right),
\end{align*}
where
$$
f(x):=\frac{xe^x}{e^x-1},\quad g(x):= \frac{e^x-1}{e^{\frac{x}{M}}-1}
  -
  \frac{e^x-1}{e^{\frac{x}{N}}-1}
-\frac{e^x-1}{e^{\frac{x}{M-N}}-1}.
$$
We see that $\Delta_0(f)=f(0)=1,\Delta_0(g)=g(0)=0,\Delta_1(g)=\frac{1}{2}$, and
\begin{align*}
\Delta_{\tilde{n}}(fg^{\tilde{n}})&=\sum_{k_1+\dots+k_{\tilde{n}+1}=\tilde{n},\atop{}^\forall k_i\ge0}
\frac{\tilde{n}!}{k_1!\dots k_{\tilde{n}+1}!}\Delta_{k_1}(f)\Delta_{k_2}(g)\dots\Delta_{k_{\tilde{n}+1}}(g)
\\
&=\tilde{n}!\,\Delta_1(g)^{\tilde{n}}=\frac{\tilde{n}!}{2^{\tilde{n}}},
\end{align*}
thus
\begin{equation*}
\sum_{k=0}^{\tilde{n}}(-1)^k\binom{\tilde{n}}{k}T_{1,k}=\frac{\tilde{n}!}{2^{\tilde{n}}}.
\end{equation*}
Next, for an integer $k$ $(0\le k \le \tilde{n})$, let us evaluate
\begin{align*}
 T_{2,k}&:=\sum_{0\le K_i\le M-1(i> k), \atop\tau'_j \in \Sigma'(j\le k)} 
B_{\tilde{n}}\left(\left\{
\frac{\sum_{i>k} K_i +\sum_{j\le k}\tau'_j}{M}
\right\}\right),
\end{align*}
where $\frac{\tau'_j}{M}$ runs over the set $\Sigma'':=\Sigma''_1 \sqcup \Sigma''_2 $ for
$$
\Sigma''_1:=\left\{\frac{A_1}{N}\mid0\le A_1\le N-1\right\},
\Sigma''_2:=\left\{\frac{A_2}{M-N}\mid0\le A_2\le M-N-1\right\}.
$$
First of all, we note that, for  positive integers $d,d_1,d_2$ with $(d_1,d_2)=1$
\begin{equation}\label{eq99}
\sum_{j=0}^{d-1}B_{\tilde{n}}\left(
\frac{j}{d}
\right)
=\frac{B_{\tilde{n}} (0)  }{d^{\tilde{n}-1}},   
\sum_{0\le j_i\le d_i-1,\atop(i=1,2)}B_{\tilde{n}}\left(
\left\{\frac{j_1}{d_1}+\frac{j_2}{d_2}\right\}
\right)
=\frac{B_{\tilde{n}} (0)  }{(d_1d_2)^{\tilde{n}-1}}.
\end{equation}
Because,
\begin{align*}
&\sum_{j=0}^{d-1}B_{\tilde{n}}\left(
\frac{j}{d}
\right)
=\sum_{j=0}^{d-1}\Delta_{\tilde{n}}\left(
\frac{xe^{\frac{jx}{d}}}{e^x-1}
\right)
=\Delta_{\tilde{n}}\left(
\frac{x}{e^x-1}
\sum_{j=0}^{d-1}
e^{\frac{jx}{d}}
\right)
\\
=&
\Delta_{\tilde{n}}\left(
\frac{x}{e^{\frac{x}{d}}-1}
\right)=
\Delta_{\tilde{n}}\left(
d\sum_{k=0}^\infty\frac{B_k(0)}{k!}\left(\frac{x}{d}\right)^k
\right)
=
\frac{B_{\tilde{n}} (0)  }{d^{\tilde{n}-1}},
\end{align*}
and
$$
\sum_{0\le j_i\le d_i-1,\atop(i=1,2)}B_{\tilde{n}}\left(
\left\{\frac{j_1}{d_1}+\frac{j_2}{d_2}\right\}
\right)
=
\sum_{0\le j \le d_1d_2-1}B_{\tilde{n}}\left(
\frac{j}{d_1d_2}
\right).
$$
We see that
\begin{align*}
T_{2,k}&=
\sum_{0\le K_i\le M-1\atop(i> k)}
\sum_{l=0}^k\binom{k}{l} 
\sum_{\tau''_i\in\Sigma''_1(i\le l),\atop\tau''_j\in\Sigma''_2(l+1\le j\le k)}
\\
&\hspace{10mm}
B_{\tilde{n}}\left(\left\{
\frac{\sum_{i>k} K_i}{M} +\sum_{1\le i\le l}\tau''_i+\sum_{l+1\le j\le k}\tau''_j
\right\}\right)
\\
&=
\sum_{0\le K_i\le M-1\atop(i> k)}
\biggl\{
\sum_{1\le a_2\le M-N}
B_{\tilde{n}}\left(\left\{
\frac{\sum_{i>k}K_i}{M} +\frac{a_2}{M-N}
\right\}\right)
\\
&\hspace{12mm}
\times\#\{(A_{1},\dots,A_{k})\mid1\le A_i\le M-N ,\sum _i A_i\equiv a_2\bmod M-N\}
\\
&\hspace{8mm}+\sum_{l=1}^{k-1}\binom{k}{l} 
\sum_{1\le a_1\le N,\atop1\le a_2\le M-N}
\Bigl\{B_{\tilde{n}}\left(\left\{
\frac{\sum_{i>k}K_i}{M} +\frac{a_1}{N} +\frac{a_2}{M-N}
\right\}\right)
\\
&\hspace{12mm}
\times\#\{(A_{1},\dots,A_{l})\mid1\le A_i\le N ,\sum _i A_i\equiv a_1\bmod N\}
\\
&\hspace{12mm}
\times\#\{(A_{l+1},\dots,A_{k})\mid 1\le A_i\le M-N,\sum A_i\equiv a_2\bmod M-N\}\hspace{-1mm}
\Bigr\}
\\
&\hspace{8mm}+\sum_{1\le a_1\le N}
B_{\tilde{n}}\left(\left\{
\frac{\sum_{i>k}K_i}{M} +\frac{a_1}{N}
\right\}\right)
\\
&\hspace{12mm}
\times
\#\{(A_{1},\dots,A_{k})\mid 1\le A_i\le N,\sum A_i\equiv a_1\bmod N\}
\biggr\}
\\&=
\sum_{0\le K_i\le M-1\atop(i> k)}
\biggl\{
\sum_{1\le a_2\le M-N}
B_{\tilde{n}}\left(\left\{
\frac{\sum_{i>k}K_i}{M} +\frac{a_2}{M-N}
\right\}\right)
(M-N)^{k-1}
\\
&\hspace{8mm}+\sum_{l=1}^{k-1}\binom{k}{l} 
\sum_{1\le a_1\le N,\atop1\le a_2\le M-N}
B_{\tilde{n}}\left(\left\{
\frac{\sum_{i>k}K_i}{M} +\frac{a_1}{N} +\frac{a_2}{M-N}
\right\}\right)
\\
&\hspace{40mm}
\times N^{l-1}(M-N)^{k-l-1}
\\
&\hspace{8mm}+\sum_{1\le a_1\le N}
B_{\tilde{n}}\left(\left\{
\frac{\sum_{i>k}K_i}{M} +\frac{a_1}{N}
\right\}\right)
N^{k-1}
\biggr\}.
\end{align*}
Hence, we see that
\begin{align*}
T_{2,\tilde{n}}
&=
\sum_{1\le a_2\le M-N}
B_{\tilde{n}}\left(\left\{
\frac{a_2}{M-N}
\right\}\right)
(M-N)^{\tilde{n}-1}
\\
&\hspace{8mm}+\sum_{l=1}^{\tilde{n}-1}\binom{\tilde{n}}{l} 
\sum_{1\le a_1\le N,\atop1\le a_2\le M-N}
B_{\tilde{n}}\left(\left\{
\frac{a_1}{N} +\frac{a_2}{M-N}
\right\}\right)
\\
&\hspace{40mm}
\times N^{l-1}(M-N)^{\tilde{n}-l-1}
\\
&\hspace{8mm}+\sum_{1\le a_1\le N}
B_{\tilde{n}}\left(\left\{
\frac{a_1}{N}
\right\}\right)
N^{\tilde{n}-1}
\biggr\}
\\
&=
B_{\tilde{n}}(0)+\sum_{l=1}^{\tilde{n}-1}\binom{\tilde{n}}{l} 
\frac{B_{\tilde{n}}(0)}{N^{\tilde{n}-l}(M-N)^{l}}
+B_{\tilde{n}}(0)
\\
&=
\left(
2+
\left(
\frac{1}{N}+\frac{1}{M-N}
\right)^{\tilde{n}}
-\frac{1}{N^{\tilde{n}}}-\frac{1}{(M-N)^{\tilde{n}}} 
\right)B_{\tilde{n}} (0)
\end{align*}
by \eqref{eq99} and for $1\le k<\tilde{n}$ we have
\begin{align*}
T_{2,k}&=
\frac{B_{\tilde{n}}(0)}{M^{k}(M-N)^{\tilde{n}-k}}
+\sum_{l=1}^{k-1}\binom{k}{l}\frac{B_{\tilde{n}}(0)}{M^{k}N^{\tilde{n}-l}(M-N)^{\tilde{n}-k+l}}
\\
&+\frac{B_{\tilde{n}}(0)}{M^kN^{\tilde{n}-k}}
\\
&=\biggl(\frac{1}{M^{k}(M-N)^{\tilde{n}-k}}
+\frac{M^k-N^k-(M-N)^k}{M^{k}(N(M-N))^{\tilde{n}}}+\frac{1}{M^{k}N^{\tilde{n}-k}}
\biggr)B_{\tilde{n}}(0)
\end{align*}
and
\begin{align*}
T_{2,0}&=\sum_{0\le K_i\le M-1,\atop(i\ge1)}B_{\tilde{n}}\left(\left\{
\frac{\sum_{i\ge1}K_i}{M}
\right\}\right)
=B_{\tilde{n}}(0).
\end{align*}
Thus we have
\begin{align*}
&\sum_{k=0}^{\tilde{n}}(-1)^k\binom{\tilde{n}}{k}T_{2,k}
\\=&
\biggl\{
1
\\
+&
\sum_{k=1}^{\tilde{n}-1}(-1)^k\binom{\tilde{n}}{k}
\biggl(\frac{1}{M^{k}(M-N)^{\tilde{n}-k}}
+\frac{M^k-N^k-(M-N)^k}{M^{k}(N(M-N))^{\tilde{n}}}+\frac{1}{M^{k}N^{\tilde{n}-k}}
\biggr)
\\
+&
(-1)^{\tilde{n}}\biggl(
2+
\left(
\frac{1}{N}+\frac{1}{M-N}\right)^{\tilde{n}}
-\frac{1}{N^{\tilde{n}}}-\frac{1}{(M-N)^{\tilde{n}}} 
\biggr)
\biggr\}B_{\tilde{n}}(0)
\\=&
\biggl\{
1
\\
+&
\frac{N^{\tilde{n}}}{(M(M-N))^{\tilde{n}}} + \frac{(M- N)^{\tilde{n}}}{(MN)^{\tilde{n}}} -\frac{1}{(MN)^{\tilde{n}}}-\frac{1}{(M(M-N))^{\tilde{n}}}
\\
-&\frac{2(-1)^{\tilde{n}}}{M^{\tilde{n}}}-\frac{1}{(M-N)^{\tilde{n}}}-\frac{1}{N^{\tilde{n}}}+\frac{(-1)^{\tilde{n}}}{(M(M-N))^{\tilde{n}}}+\frac{(-1)^{\tilde{n}}}{(MN)^{\tilde{n}}}+\frac{1-(-1)^{\tilde{n}}}{(N(M-N))^{\tilde{n}}}
\\
+&
(-1)^{\tilde{n}}\biggl(
2+
\left(
\frac{1}{N}+\frac{1}{M-N}\right)^{\tilde{n}}
-\frac{1}{N^{\tilde{n}}}-\frac{1}{(M-N)^{\tilde{n}}} 
\biggr)
\biggr\}B_{\tilde{n}}(0)
\\
=&
\frac{1}{(MN(M-N))^{\tilde{n}}}\biggl\{
3(MN(M-N))^{\tilde{n}}+M^{2\tilde{n}}+N^{2\tilde{n}}+(M-N)^{2\tilde{n}} 
\\
&\hspace{30mm}  -2((M-N)N)^{\tilde{n}}
 -2M^{\tilde{n}}((M-N)^{\tilde{n}}+N^{\tilde{n}})
\biggr\}
B_{\tilde{n}}(0),
\end{align*}
where we replaced $(-1)^{\tilde{n}}$ by $1$ by virtue of  $B_n(0)=0$ for odd $n>1$.
These complete the proof of Theorem \ref{thm7.1}.


%
%
\section{Other direction}\label{sec9}
We know that for a prime $p$, the condition  $p\in Spl(f)$ and  the condition that $f(x)$ has $n(=\deg f(x))$ roots 
in the local field $\mathbb{Q}_p$ are equivalent except finitely many primes.
So, it may be natural to ask,  for a natural number $s$  how  the distribution of roots of $f(x)\equiv 0\bmod p^s$ is,
 how the distribution of the coefficient of $p^s$ of
the standard $p$-adic expansion of roots in $\mathbb{Q}_p$ is,
and  relations among them of several $s$'s.
   
With respect to the distribution of roots with a congruence condition, we may consider the distribution of roots  $r_{i,s}$ for a positive integer $s$ such that
\begin{equation*}
\left\{
\begin{array}{l}
f(r_{i,s})\equiv0\bmod p^s\,\,\,\,\,\,(1\le {}^\forall i\le n),
\\
r_{i,s}\equiv R_i\bmod L\quad\quad(1\le {}^\forall i\le n),
\\
0\le r_{1,s}\le\dots\le r_{n,s}<p^sL
\end{array}
\right.
\end{equation*}
with
\begin{equation*}
\sum_{i=1}^n m_{j,i}r_{\sigma(i),s}=m_j+k_jp^s\quad (1\le{}^\forall j\le t).
\end{equation*}
This problem is somewhat  easy compared with the previous one, judging from the case of degree $1$.

We fixed the integer $s$ in the above, but contrary to it, we may ask  the density of $s$ ($s\to\infty$)
 such that there exist integers $r_{i,s}$ satisfying above equations for a fixed prime $p\in Spl(f)$,
 $L,R_i$
(cf. 3.2 in \cite{K6}).
To make data in this case by computer, it takes much time.

One may ask about the equi-distribution similar to Conjecture \ref{conj2} with congruence conditions.

\end{document}